\DeclareSymbolFontAlphabet{\mathbb}{AMSb} %to ensure that the meaning of \mathbb does not change
\DeclareSymbolFontAlphabet{\mathbbl}{bbold}
\newcommand{\Prism}{{\mathlarger{\mathbbl{\Delta}}}} 
\newsavebox{\@brx}
\newcommand{\llangle}[1][]{\savebox{\@brx}{\(\m@th{#1\langle}\)}%
  \mathopen{\copy\@brx\kern-0.5\wd\@brx\usebox{\@brx}}}
\newcommand{\rrangle}[1][]{\savebox{\@brx}{\(\m@th{#1\rangle}\)}%
  \mathclose{\copy\@brx\kern-0.5\wd\@brx\usebox{\@brx}}}
\numberwithin{equation}{section}
\theoremstyle{plain}
\newtheorem{theorem}[equation]{Theorem}
\newtheorem{proposition}[equation]{Proposition}
\newtheorem{lemma}[equation]{Lemma}
\newtheorem{claim}[equation]{Claim}
\newtheorem{corollary}[equation]{Corollary}
\theoremstyle{definition}
\newtheorem{definition}[equation]{Definition}
\newtheorem{construction}[equation]{Construction}
\newtheorem{example}[equation]{Example}
\newtheorem{remark}[equation]{Remark}
\newtheorem{warning}[equation]{Warning}
\newcommand{\Rmnum}[1]{\expandafter\@slowromancap\romannumeral #1@}
\DeclareMathOperator{\gr}{gr}
\DeclareMathOperator{\id}{id}
\DeclareMathOperator{\Sym}{Sym}
\DeclareMathOperator{\Gal}{Gal}
\DeclareMathOperator{\Spec}{Spec}
\DeclareMathOperator{\Spf}{Spf}
\DeclareMathOperator{\coker}{coker}
\DeclareMathOperator{\et}{\acute{e}t}
\DeclareMathOperator{\dR}{dR}
\DeclareMathOperator{\cris}{crys}
\DeclareMathOperator{\Mod}{Mod}
\DeclareMathOperator{\Fil}{Fil}
\DeclareMathOperator{\st}{st}
\DeclareMathOperator{\tor}{tor}
\DeclareMathOperator{\Tor}{Tor}
\DeclareMathOperator{\rank}{rank}
\DeclareMathOperator{\BK}{BK}
\DeclareMathOperator{\qsyn}{{qSyn}}
\DeclareMathOperator{\RG}{{R\Gamma}}
\DeclareMathOperator{\Len}{{length}}
\newcommand{\FM}{\mathfrak{M}}
\newcommand{\LL}{{\mathbb{L}}}
\newcommand{\NN}{{\mathbb{N}}}
\newcommand*{\Z}{\ensuremath{\mathbb{Z}}}
\newcommand*{\Kbar}{\overline{K}}
\newcommand*{\m}{\mathfrak{M}}
\newcommand*{\n}{\mathfrak{N}}
\newcommand*{\s}{\mathfrak{S}}
\newcommand*{\C}{\mathbf{C}}
\newcommand*{\calM}{\mathcal{M}}
\renewcommand*{\O}{\mathcal{O}}
\newcommand{\rH}{{\rm H}}
\renewcommand*{\int}{\ensuremath{\mathrm{int}}}
\renewcommand*{\u}[1]{\underline{#1}}
\renewcommand*{\o}[1]{\overline{#1}}
\newcommand*{\wh}[1]{\widehat{#1}}
\newcommand*{\inj}{\hookrightarrow}
\newcommand*{\onto}{\twoheadrightarrow}
\renewcommand{\tilde}{\widetilde}
\renewcommand{\bar}{\overline}
\newcommand{\BL}{{\mathbb L}}
\newcommand{\ku}{{k[\![u]\!]}}
\newcommand{\gt}{{\mathfrak t}}
\newcommand{\cI}{\mathcal I}
\DeclareMathOperator{\fib}{fib}
\newcommand{\ue}{\underline{\varepsilon}}
\newcommand{\upi}{\underline{\pi}}
\date{First version December 8, 2020; Revised version \today.}
\begin{document}

\title{Comparison of prismatic cohomology and derived de Rham cohomology}

\author{Shizhang Li}

\author{Tong Liu}

%\address{University of Arizona, Tucson}
%\curraddr{Department of Mathematics, 617 N. Santa Rita Ave., Tucson AZ. 85721}
%\email{cais@math.arizona.edu}
%\thanks{The author is supported by }

\subjclass[2010]{Primary: 14F30 %p-adic cohomology, crystalline cohomology
Secondary: 11F80} % Galois representations

\keywords{Prismatic cohomology, derived de Rham cohomology, Kisin Modules, crystalline cohomology}
%\dedicatory{}

\begin{abstract}
We establish a comparison isomorphism between prismatic cohomology and derived de Rham cohomology respecting various structures, 
such as their Frobenius actions and filtrations.
As an application, when $X$ is a proper smooth formal scheme over $\O_K$ with $K$ being a $p$-adic field, 
we improve Breuil--Caruso's theory on comparison between torsion crystalline cohomology and torsion \'etale cohomology. 
\end{abstract}

%Title in French: Comparaison entre la cohomologie prismatique et la cohomologie de de Rham d\'eriv\'ee
%\begin{abstract}
%On montre un th\'eor\`eme de comparaison entre la cohomologie prismatique et la cohomologie de de Rham d\'eriv\'ee 
%qui respecte diverses structures telles que le Frobenius et la filtration. 
%Comme application, on \'etend le th\'eor\`eme de comparaison de Breuil--Caruso entre la cohomologie cristalline de torsion et 
%la cohomologie \'etale de torsion au cas d'un sch\'ema formel $X$ propre et lisse sur $\O_K$, o\`u $K$ est un corps $p$-adique.
%\end{abstract}

\maketitle

\tableofcontents

%\tableofcontents

\section{Introduction}\label{Intro}
Let $k$ be a perfect field of characteristic $p>0$ and $K$ a totally
ramified degree $e$ field extension of $W(k)[1/p]$. 
Fix an algebraic closure $\o{K}$ of $K$,
denote its $p$-adic completion by $\C$, and use $\O_{\C}$ to denote its ring of integers.
Let $X$  be a smooth proper formal scheme over $\O_K$
with (rigid analytic) geometric generic fiber $X_{\bar \eta}$. Write $X_n : = X \times_{\Z} \Z/ p ^n \Z$. 
Starting from \cite{FontaineMessing} and \cite{Katovanishingcycles}, lots of efforts have been made in investigating the relationship between the crystalline cohomology  (and other variants) and the \'etale cohomology attached to $X$.  

When $e=1 $,  it is proved by Fontaine--Messing (\cite{FontaineMessing}) and Kato (\cite{Katovanishingcycles})
that if  $X$ is a proper\footnote{Projective in Kato's paper.} smooth scheme over   $\O_K = W(k)$,
then $ \rH^i  _{\cris}(X_n / W_n (k) )$ admits a Fontaine--Laffaille module structure 
when $i \leq p -1$ and the functor $T_{\cris}$ on the category of Fontaine--Laffaille modules (from Fontaine--Laffaille theory)
satisfies $T_{\cris}(\rH ^i_{\cris} (X_n /W_n(k)))\simeq \rH ^i _{\et} (X_{\bar \eta}, \Z/ p ^n \Z)$  as $G_K$-modules when $i \leq p-2$.  

When $e> 1$, more complicated base ring has to be introduced. Fix a uniformizer $\pi $ of $K$ and  $ E= E(u)\in W(k)[u]$ the Eisenstein polynomial of $\pi$.  
Let $S$ be the $p$-adic completion of the PD envelope of $W(k)[u]$ for the ideal $(E) $. 
Note that $S$ admits:
\begin{itemize}
    \item a Frobenius action $\varphi: S\to S$ which extends the Frobenius $\varphi$ on $W(k)$ and satisfies $\varphi (u)= u ^p$;
    \item a filtration $\Fil ^i S $ which is the $p$-complete $i$-th PD ideal; and
    \item a monodromy operator $N :  S \to S$ via $N(f(u)) = \frac{df}{du} (-u)$.
\end{itemize}
In \cite{Bre98}, Breuil introduced the notion of a \emph{Breuil module} to describe the structure of $\rH ^i _{\cris} (X_n / S_n)$, 
and  constructed a functor $T_{\st, \star}$  from the category of Breuil modules to the category of $\Z_p$-representations of $G_K$.  
Here, a Breuil module is a datum consisting of a finite $S$-module $\calM$ together with a one-step filtration $\Fil^h\calM\subset \calM$, 
a ``divided Frobenius'' $\varphi_h: \Fil^h\calM \to \calM$, and a monodromy operator $N : \calM \to \calM$ which satisfies some conditions given in \S \ref{subsec-Breuilmodules}.   

Following ideas of Breuil, Caruso proved the following.

\begin{theorem}[\cite{CarusoInvent}]\label{thm-caruso} 
Let $X$ be a proper semi-stable scheme over $\O _K$. Then its log-crystalline cohomology $\rH ^i_{\rm log\text{-}crys} (X_n /S_n)$ has a Breuil module structure
and $T_{\st, \star  }(\rH ^i_{\rm log\text{-}crys} (X_n /S_n))\simeq \rH^i _{\et} (X_{\bar \eta} , \Z/ p ^n \Z) (i)$ 
as $G_K$-modules for $ e(i+1) < p -1$ if $n > 1$ and $ei <p-1$ for $n =1$. 
\end{theorem}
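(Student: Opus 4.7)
The plan is to follow the strategy pioneered by Fontaine--Messing and adapted to the ramified, semi-stable setting by Kato and Breuil: first endow the log-crystalline cohomology with its natural Breuil module structure, and then identify its image under $T_{\st,\star}$ with the étale cohomology via the log-syntomic complex.

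For the first step, set $\calM := \rH^i_{\rm log\text{-}crys}(X_n/S_n)$. The filtration $\Fil^h \calM$ is inherited from the PD filtration on $S$ acting on the log-de Rham complex; the Frobenius $\varphi \colon \calM \to \calM$ is induced by the Frobenius on $S$ together with the crystalline Frobenius; the divided Frobenius $\varphi_h \colon \Fil^h\calM \to \calM$ exists because $\varphi(\Fil^h S) \subset p^h S$, so that the restriction of $\varphi$ to $\Fil^h \calM$ is divisible by $p^h$ (one of course has to verify this honestly, not merely derivedly); and the monodromy $N$ comes from the log-structure together with the derivation $N = -u\,d/du$ on $S$. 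Verification of the Breuil module axioms then reduces to checking Griffiths-transversality type compatibilities among $N$, $\varphi_h$, and $\Fil^{\bullet}$, which one can do locally on semi-stable affine charts following Breuil.

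For the second step, I would invoke the log-syntomic complex $\scrS_n(r)_X$ on the log-syntomic site of $X$, defined in the relevant range as the cone of $1 - \varphi_r$ acting on an appropriate filtered piece of the log-crystalline structure sheaf. The fundamental Fontaine--Messing--Kato--Tsuji comparison provides a canonical map $\scrS_n(r)_X \to i^\ast \rR j_\ast (\Z/p^n\Z)(r)'$ which is a quasi-isomorphism in degrees $\leq r$ provided the PD structures are tame enough: this is exactly where the numerical bound $e(i+1) < p-1$ (respectively $ei < p-1$ for $n=1$) enters, since it controls the $p$-adic valuation of $p^r/r!$ and guarantees that the syntomic complex has the correct cohomology. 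Taking $r = i$ and passing to global sections on $X_{\bar\eta}$ then yields a $G_K$-equivariant comparison between the log-syntomic cohomology of $X$ and $\rH^i_\et(X_{\bar\eta}, \Z/p^n\Z)(i)$.

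Finally, one must match this geometric picture with the algebraic functor $T_{\st,\star}$. By construction, $T_{\st,\star}(\calM)$ is obtained by applying a period-ring functor (involving $\widehat{A}_{\st}$ or its mod $p^n$ analogue) and extracting $\varphi_h$-invariants of a suitable filtered piece; the syntomic computation, on the other hand, expresses the same étale cohomology group as $\varphi_r$-invariants on log-crystalline cohomology. The main obstacle, and the technical heart of the proof, is to construct a functorial, $G_K$-equivariant isomorphism between the global sections of $\scrS_n(r)_X$ and $T_{\st,\star}(\calM)$ that respects all the Breuil module structures --- and to do so precisely within the stated numerical range, outside of which both the syntomic-to-vanishing-cycles map and the faithfulness of $T_{\st,\star}$ on Breuil modules of weight $\leq i$ fail.
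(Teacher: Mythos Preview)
This theorem is not proved in the present paper: it is stated in the introduction as a result of Caruso, with the citation \cite{CarusoInvent} attached directly to the theorem environment. The paper does not supply its own argument for it; on the contrary, the paper \emph{uses} the $n=1$ case of this theorem as a black-box input in the proof of \Cref{cor-small-Breuil}, where it is invoked to get the induction on $n$ started (``The main theorem of \cite{CarusoInvent} shows that $\rH^i_{\cris}(X_1/S_1)$ is a Breuil module when $n=1$ and $ei<p-1$'').

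Your sketch is a reasonable outline of how the original Breuil--Caruso proof goes --- log-syntomic complexes, the Fontaine--Messing--Kato comparison with nearby cycles, and then matching the syntomic description with $T_{\st,\star}$ --- but there is nothing in this paper to compare it against. The present paper's contribution is precisely to give a \emph{different} route to a stronger statement (\Cref{Thm-intro-1}) in the smooth case, via prismatic cohomology and generalized Kisin modules, thereby avoiding the log-syntomic machinery and improving the bound from $e(i+1)<p-1$ to $ei<p-1$ for all $n$; see \Cref{rem-better-than-caruso}. So if your goal is to understand the paper, you should not be trying to reprove Caruso's theorem but rather to see how the prismatic approach of \S\S 6--7 bypasses it (except for the one appeal at $n=1$).
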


As new cohomology theories have been introduced in \cite{BMS1}, \cite{BMS2} and \cite{BS19},
it is natural to ask whether these new cohomology theories can recover the aforementioned results due to 
Fontaine--Messing, Breuil, and Caruso, and hopefully even improve these results. 
In this paper, we use these new cohomology theories, in particular, prismatic cohomology and derived de Rham cohomology, 
to study torsion crystalline cohomology, torsion \'etale cohomology, and their relationship. We obtain the following result: 
\begin{theorem}\label{Thm-intro-1}  
Let  $X$  be a smooth proper formal scheme over $\O_K$
with geometric generic fiber $X_{\bar \eta}$, and let $i$ be an integer satisfying $ei < p-1$. 
Then $\rH ^i _{\cris} (X_n / S_n)$ has a structure of Breuil modules and 
$T_{\st, \star}\left(\rH ^i _{\cris} (X_n /S _n) \right)\simeq \rH ^i _{\et}(X_{\bar \eta}, \Z/ p ^n \Z)(i)$  as $\Z_p [G_K ]$-modules. 
\end{theorem}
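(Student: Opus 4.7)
The plan is to deduce both assertions by combining the paper's main comparison theorem between prismatic cohomology and derived de Rham cohomology (respecting Frobenius and filtration) with Kisin's theory of $(\varphi, \FS)$-modules and Breuil's functor $T_{\st, \star}$. Let $\FS = W(k)[\![u]\!]$ with $\varphi(u) = u^p$, view $(\FS, (E))$ as the Breuil--Kisin prism, and write $\FM := \RG_{\Prism}(X/\FS)$. Two comparisons for $\FM$ will drive the argument: the étale comparison of Bhatt--Scholze identifies an appropriate localization of $\FM/p^n$ with $\RG_{\et}(X_{\bar\eta}, \Z/p^n\Z)$ up to a Tate twist, and the paper's main theorem upgrades the crystalline comparison to a filtered $\varphi$-isomorphism
\[
\FM \otimes^{L}_{\FS, \varphi} S \;\simeq\; \RG_{\cris}(X/S),
\]
matching the Nygaard filtration on the left with the PD filtration on the right. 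Under the hypothesis $ei < p-1$, I would first verify that $H^i$ commutes with both base changes and with reduction mod $p^n$, so that $H^i(\FM)$ is a finite Breuil--Kisin module of height $\leq i$ and $H^i(\FM) \otimes_{\FS, \varphi} S_n \simeq H^i_{\cris}(X_n / S_n)$.

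Second, I would transport structures to produce a Breuil-module datum on $\calM := H^i_{\cris}(X_n/S_n)$: the filtration $\Fil^i \calM$ and divided Frobenius $\varphi_i$ are read off from the Nygaard piece together with the prismatic ``$\varphi/E^i$'' isomorphism, while the monodromy $N$ comes from the canonical action of the derivation $-u\, d/du$ on $\RG_{\cris}(X/S)$. The range $ei < p-1$ is precisely what forces the Breuil-module axioms and the Breuil/Kisin categorical equivalence in height $\leq i$ to hold. For the Galois-equivariant identification I would then chain together
\[
T_{\st, \star}(\calM) \;\simeq\; T_{\FS}\bigl(H^i(\FM)/p^n\bigr) \;\simeq\; H^i_{\et}(X_{\bar\eta}, \Z/p^n\Z)(i),
\]
the first isomorphism being the compatibility of $T_{\st, \star}$ with Kisin's functor $T_{\FS}$ under the base change $\FS \to S$ (valid in height $\leq i$ when $ei < p-1$), and the second being the Bhatt--Scholze étale comparison applied to the torsion Breuil--Kisin module $H^i(\FM)/p^n$, with the Tate twist absorbed into the normalization of $T_{\FS}$.

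The main obstacle I expect lies in the first step: verifying in the precise range $ei < p-1$ that $H^i(\FM)$ is sufficiently well behaved (finitely presented, appropriately flat, torsion-free where needed) for both the étale and crystalline base changes to commute with taking $H^i$, and that the Nygaard filtration on $H^i(\FM)$ transports to Breuil's PD filtration on $H^i_{\cris}(X_n/S_n)$ under the Frobenius-twisted base change. This is exactly where the paper's comparison between prismatic cohomology and Nygaard-filtered derived de Rham cohomology does the work. Once those structural compatibilities are pinned down, the subsequent identifications with Kisin and Breuil modules, and hence with the torsion étale cohomology, are formal. The sharpening from Caruso's bound $e(i+1) < p-1$ to $ei < p-1$ reflects precisely the gain of one degree that the Nygaard filtration enjoys over the PD filtration.
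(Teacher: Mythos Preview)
Your outline captures the broad architecture correctly, but it misses the single technical idea on which the whole argument turns: controlling the $u$-torsion in the mod $p^n$ prismatic cohomology, and in particular in degree $i+1$.

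Concretely, the comparison $\RG_\Prism(X/\FS)\otimes^{\BL}_{\FS,\varphi} S \simeq \RG_{\cris}(X/S)$ gives, after derived mod $p^n$, a short exact sequence
\[
0 \to S\otimes_{\varphi,\FS}\m^i_n \to \rH^i_{\cris}(X_n/S_n) \to \Tor_1^{\FS}(\m^{i+1}_n,\varphi_*S) \to 0,
\]
where $\m^j_n := \rH^j_\Prism(X_n/\FS_n)$. So $\underline{\calM}(\m^i_n)\simeq \rH^i_{\cris}(X_n/S_n)$ holds precisely when this $\Tor_1$ vanishes, which is equivalent to $\m^{i+1}_n$ having no $u$-torsion. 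The paper proves (Theorem~\ref{thm-main-1}) that $\rH^i_{\cris}(X_n/S_n)$ is a Breuil module if and only if both $\m^i_n$ and $\m^{i+1}_n$ are $u$-torsion free. The condition $ei<p-1$ handles $\m^i_n$ directly via an elementary height argument (Lemma~\ref{lem-control-torsion}), but $\m^{i+1}_n$ has height $i+1$, and the same argument would require $e(i+1)<p-1$ --- exactly Caruso's weaker bound.

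This is where your explanation of the improvement over Caruso goes wrong. The sharpening is \emph{not} a consequence of the Nygaard filtration gaining a degree over the PD filtration. Instead, the paper \emph{uses Caruso's theorem for $n=1$ as input}: since Caruso already proves the Breuil-module structure for $n=1$ under $ei<p-1$, Theorem~\ref{thm-main-1} then implies $\m^{i+1}_1$ is $u$-torsion free. The passage to general $n$ is a separate induction (Corollary~\ref{cor-small-Breuil}) via the long exact sequence linking $\m^{i+1}_1$, $\m^{i+1}_{n-1}$, $\m^{i+1}_n$, together with a hands-on matrix argument showing the relevant cokernel remains $u$-torsion free. None of this is visible in your proposal, and without it the step ``$H^i$ commutes with both base changes and with reduction mod $p^n$'' simply does not go through in the range $ei<p-1$.
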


Here the additional data of the Breuil module structure is roughly given by the following:
\begin{itemize}
\item the filtration is given by the cohomology of the PD powers of a natural PD ideal sheaf $\cI_{\cris}$ 
on the crystalline site $\rH^i_{\cris}(X_n/S_n, \cI ^{[h]}_{\cris})$;
\item the $N$ is a disguise of the connection given by the crystal nature of crystalline cohomology; and
\item the divided Frobenius is induced by a natural map of (quasi-)syntomic sheaves.
\end{itemize}
From now on, when we talk about $\rH ^i _{\cris} (X_n / S_n)$, we always implicitly think of it carrying these additional data.

\begin{remark}
\label{rem-better-than-caruso} 
\leavevmode
\begin{enumerate}
    \item 
Let us highlight the difference between Caruso's results and our theorem above. 
\begin{enumerate} 
\item The $X$ in our theorem is a \emph{smooth} proper \emph{formal} scheme over $\O_K$,
whereas the $X$ in \cite{CarusoInvent} is a semi-stable $\O_K$-model of a smooth proper $K$-variety.
\item Our restriction on $e$ and $i$ is $ei < p -1$ for any $n$ while the restriction in \cite{CarusoInvent} is $ei < p-1$ for $n=1$ and $e(i +1) < p-1$ for $n > 1$. 
\end{enumerate}
\item We actually use another functor $T_S$ relating torsion crystalline and \'{e}tale cohomology in the above theorem. 
But $T_S$ and $T_{\st, \star}$ are essentially the same. See \S \ref{subsec-two-T}. 
\end{enumerate}
\end{remark}

Now let us discuss the strategy of this paper to see how prismatic cohomology and (derived)  de Rham cohomology come into the picture.
Let $\s= W(k)[\![u]\!]$ equipped with the Frobenius morphism $\varphi$ extending (arithmetic) Frobenius $\varphi$ on $W(k)$ and $\varphi (u) = u ^p$. 
Then $(\s , (E))$ is the so-called Breuil--Kisin prism. 
Classically, an \emph{(\'etale) Kisin module} of height $h$ is a finite $u$-torsion free $\s$-module $\m$, together with a semi-linear map 
$\varphi_\m \colon \m \to \m$ so that the cokernel of  $1\otimes\varphi_\m \colon \s \otimes_{\varphi, \s}\m \to \m$  is killed by $E^h$. 
By definition, $\varphi ^* \m \coloneqq \s \otimes_{\varphi, \s} \m$ admits a \emph{Breuil--Kisin (BK) filtration} 
$\Fil^h \varphi ^* \m \coloneqq (1\otimes \varphi_\m) ^{-1} (E^h \m)$, which plays an important technical role later. 
It is well-known that Kisin module theory is a powerful tool in \emph{abstract} integral $p$-adic Hodge theory: 
the study of   $\Z_p$-lattices in crystalline  (semi-stable) representations and their modulo $p^n$-representations, 
which can been seen as the arithmetic counterpart  of 
$\rH ^n _{\et} (X_{\bar \eta} , \Z_p)$ and $\rH ^i _{\et} (X_{\bar \eta}, \Z/ p ^n \Z)$. 
Also the relationship between Kisin modules, Galois representations and Breuil modules
%which has geometric interpretation as $\rH^i_{\cris} (X_n /S_n)$,  
are known in the abstract theory. 
In particular, the functor $\underline \calM \colon \m \mapsto  \underline \calM (\m) \coloneqq S \otimes_{\varphi, \s}\m$
sends a Kisin module $\m$ of height $h \leq p-1$ to a Breuil module (without $N$-structures) where 
\[\Fil^h \calM (\m) \coloneqq \{x \in \calM(\m)| (1\otimes \varphi_{\m}) (x) \in \Fil^h S \otimes_{\s}\m \}\subset \underline \calM(\m)\]
and $\varphi_h \colon \Fil^h \calM (\m) \overset {1\otimes \varphi_{\m} }{\longrightarrow} \Fil^h S \otimes _{\s} \m \overset{\varphi_h \otimes 1}\longrightarrow S \otimes_{\varphi, \s}\m = \underline \calM (\m)$ 
where $\varphi_h \colon \Fil ^h S \to S$ is defined by $\varphi_h (x) = \dfrac{\varphi (x) }{p ^h}$. 
See \S  \ref{subsec-Breuilmodules} for more details.

It turns out that prismatic cohomology $\rH ^i _{\Prism} (X /\s)$ gives geometric realizations of Kisin modules, 
in the sense that $\rH^i_{\Prism} (X/\s)$ modulo its $u^\infty$-torsion submodule is an \'etale Kisin module of height $i$
(see \S \ref{subsec-Galos rep and Kisin},  \S \ref{subsec-prism-is-kisin} and the discussion below for more details). 
Suggested by the functor $\underline\calM$ in the abstract theory, 
one naturally expects the following comparison between Breuil--Kisin prismatic cohomology and crystalline cohomology: 
\begin{equation}
\label{eqn-1}
\RG _{\Prism} (X/\s) \otimes^\BL_{\s, \varphi} S \simeq \RG_{\cris} (X/ S).    
\end{equation}
This comparison follows from \cite[Theorem 5.2]{BS19} and base change of prismatic cohomology.
This is pointed out to us by Koshikawa.

Inspired by the above discussion, we show in this paper the following comparison result:
\begin{theorem}[{see \Cref{comparing pris and crys} and \Cref{global comparison}}]
\label{comparison in introduction}
Let $(A,I)$ be a bounded prism, and let $X$ be a smooth proper ($p$-adic) formal scheme over $\Spf(A/I)$.
Then we have a functorial isomorphism
\[
\mathrm{R\Gamma}_{\Prism}(X/A) \otimes^\BL_{A, \varphi_A} A \otimes^\BL_A \dR_{(A/I)/A}^\wedge 
\cong \mathrm{R\Gamma}(X, \dR_{-/A}^\wedge),
\]
which is compatible with base change in the prism $(A,I)$.
\end{theorem}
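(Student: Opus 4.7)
The plan is to reduce the statement to the crystalline comparison of \cite[Theorem 5.2]{BS19}, by identifying the Hodge-completed derived de Rham terms with appropriate crystalline cohomologies. Let $\bar A := A/I$ and let $A_{\cris}$ denote the $p$-completed divided power envelope of $A$ along $I$, which is naturally a PD thickening of $\bar A$.

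First I would identify $\dR^\wedge_{\bar A/A}$ with $A_{\cris}$ as $A$-algebras. Since $(A,I)$ is a prism, $I$ is a Cartier divisor in $A$, so the cotangent complex $\BL_{\bar A/A}$ equals $I/I^2[1]$, placed in homological degree $1$. The Hodge-graded pieces of $\dR_{\bar A/A}$ are therefore the divided powers $\Gamma^n(I/I^2)[-n]$, and Hodge completion together with $p$-adic completion recover $A_{\cris}$ by the classical Illusie--Quillen calculation of derived de Rham cohomology for a regular closed immersion.

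Next I would identify $\RG(X, \dR^\wedge_{-/A})$ with crystalline cohomology $\RG_{\cris}(X/A_{\cris})$. Transitivity of derived de Rham cohomology applied to $A \to A_{\cris} \to \bar A$ (using $\dR^\wedge_{A_{\cris}/A} \simeq A_{\cris}$, which again follows from the Illusie--Quillen computation since $A_{\cris}$ is itself Hodge-completed derived de Rham of $\bar A$ over $A$) yields $\dR^\wedge_{-/A} \simeq \dR^\wedge_{-/A_{\cris}}$ as sheaves on $X$. Since $A_{\cris} \twoheadrightarrow \bar A$ is a $p$-adic PD thickening and $X$ is smooth over $\bar A$, Bhatt's theorem comparing derived de Rham and crystalline cohomology then identifies $\RG(X, \dR^\wedge_{-/A_{\cris}})$ with $\RG_{\cris}(X/A_{\cris})$; locally one picks a smooth formal lift of $X$ to $A_{\cris}$ and computes derived de Rham via its classical de Rham complex.

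Combining these two identifications, the desired isomorphism reduces to
\[ \RG_{\cris}(X/A_{\cris}) \cong \RG_{\Prism}(X/A) \otimes^\BL_{A,\varphi_A} A_{\cris}, \]
which is precisely the crystalline comparison \cite[Theorem 5.2]{BS19}. Base change compatibility in $(A,I)$ is inherited from the compatibility of each of the three theories (prismatic, derived de Rham, and crystalline) with base change, together with the functoriality of the PD envelope. The main obstacle I anticipate is the second identification, because $A \to A_{\cris}$ is far from a smooth map and one has to carefully interleave the Hodge and $p$-adic completions when invoking transitivity of derived de Rham and Bhatt's comparison in a functorial, sheaf-theoretic way on $X$.
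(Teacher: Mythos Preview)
Your first step—identifying $\dR^\wedge_{(A/I)/A}$ with the $p$-completed PD envelope $A_{\cris}$—fails for general bounded prisms. The paper explicitly records the counterexample: for $A = \mathbb{Z}_p$ and $I = (p)$ one has $\dR^\wedge_{\mathbb{F}_p/\mathbb{Z}_p} \cong \mathbb{Z}_p\llangle T\rrangle^\wedge/(T-p)$, whereas the classical PD envelope is just $\mathbb{Z}_p$. The Illusie--Quillen identification you invoke only holds when $A/I$ is $p$-torsion-free (i.e.\ when the prism is \emph{transversal}); otherwise the derived and classical PD envelopes diverge after $p$-completion. Your route via \cite[Theorem~5.2]{BS19} therefore recovers the comparison only in the transversal case—which the paper itself remarks immediately after the statement is already known—and does not address the general bounded case that is the actual content of the theorem. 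Your second step inherits the same defect: the identification of $p$-completed derived de Rham with crystalline cohomology over $A_{\cris}$ again needs transversality (the paper's Proposition on $\dR$ versus $\mathrm{R\Gamma_{crys}}$ only gives a map $\dR^\wedge_{R/A}\widehat{\otimes}_{\dR_A(I)^\wedge}\mathcal{A}\to \mathrm{R\Gamma_{crys}}(R/\mathcal{A})$, not an isomorphism from $\dR^\wedge_{R/A}$ itself).

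The paper proceeds differently: it first constructs a natural map over \emph{all} bounded prisms by comparing prismatic envelopes with derived de Rham complexes term by term along a \v{C}ech--Alexander resolution (the key input being an explicit base-change identity $B\{J/I\}^\wedge\widehat{\otimes}_{B,\varphi_B}B\widehat{\otimes}_A\dR_A(I)^\wedge\cong \dR_B(J)^\wedge$ for regular sequences), and assembles these via a cosimplicial limit. Only after this functorial map exists does the paper verify it is an isomorphism, by base-changing from the universal oriented prism, which happens to be transversal with flat Frobenius. Without a map defined over every bounded prism, one cannot legitimately pass to a special case to check the isomorphism, and your identifications do not furnish such a map in general.
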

Here $\dR_{-/A}^\wedge$ denotes the (relative to $A$) $p$-adic derived de Rham complex introduced by Illusie in \cite[Chapter VIII]{Ill72}
and studied extensively by Bhatt in \cite{Bha12}.
In fact, when $A/I$ is $p$-torsion free, this is known due to \cite[Theorem 5.2]{BS19}.
Our proof follows closely the proof of crystalline comparison in \cite{BS19}.

As a consequence, the above gives several comparison results, all of which were known due to work of 
Bhatt, Morrow, and Scholze \cite{BMS1}, \cite{BMS2}, \cite{BS19}.
\begin{example}
By \cite[Theorem 3.27]{Bha12}, we know that when $A/I$ is $p$-torsion free, the derived de Rham
complex appearing above is given by certain crystalline cohomology.
With this being said, we can explain what the above comparison gives in concrete situations.
\begin{enumerate}
\item BMS2/Breuil--Kisin prism: when $(A,I) = (\s, (E))$, then the above comparison becomes \Cref{eqn-1} which,
as mentioned above, was obtained in \cite{BS19}.
As a consequence, we see that Breuil's crystalline cohomology groups $\rH^i_{\cris}(X/S)$ are finitely presented $S$-module,
see \Cref{fp proposition}.
To the best of our knowledge, coherence of $S$ is unknown, and we are unaware of any other means showing that these cohomology groups
are finitely presented.
We thank Bhatt for pointing out this application to us.
\item BMS1: when $(A,I) = (A_{\inf}, \ker(\theta))$ is the perfect prism associated with $\mathcal{O}_{\C}$, then the above comparison
says
\[
\mathrm{R\Gamma}_{\Prism}(X/A_{\inf}) \otimes^\BL_{A_{\inf}, \varphi} A_{\inf} \otimes^\BL_{A_{\inf}} A_{\cris}
\cong \mathrm{R\Gamma_{\cris}}(X/A_{\inf}).
\]
Recall \cite[Theorem 17.2]{BS19} states that the first base change of the left hand side gives the $A_{\inf}$-cohomology theory constructed
in \cite{BMS1}. Then our comparison here becomes the one established by \cite[Theorem 1.8.(iii)]{BMS1} (see also \cite{Yao19}).
\item PD prism: suppose $I \subset A$ admits a PD structure $\gamma$. Then our comparison implies
\[
\mathrm{R\Gamma}_{\Prism}(X/A) \otimes^\BL_{A, \varphi_A} A
\cong \mathrm{R\Gamma_{\cris}}(X/(A, I, \gamma)).
\]
When $I = (p)$, then the above is nothing but the crystalline comparison established in \cite[Theorem 1.8.(1)]{BS19}.
Notice here the left hand side does not depend on the choice of $\gamma$, consequently neither does the right hand side.
Another class of potentially interesting PD prisms consists of $(W(S), V(1))$ for any bounded $p$-complete ring $S$.
\item de Rham comparison: there is a natural map $\gr^0 \colon \dR_{R/A}^\wedge \to R^\wedge$ given by ``quotient out'' first Hodge filtration.
Our comparison result above, after composing with this further base change, gives
\[
\mathrm{R\Gamma}_{\Prism}(X/A) \otimes^\BL_{A, \varphi_A} A \otimes^\BL_A A/I
\cong \mathrm{R\Gamma_{dR}}(X/(A/I))^\wedge;
\]
here, we have used \cite[Proposition 3.11]{GL20} to identify the result of right hand side under this base change.
This is the de Rham comparison given by \cite[Theorem 1.8.(3)]{BS19}.
\end{enumerate}
\end{example}

In (1)-(3) above, the crystalline comparison \cite[Theorem 5.2]{BS19} also yields comparison isomorphisms.
Note that there are at least two comparison isomorphisms in above discussion,
and we just claimed that they give rise to commutative diagrams, which might worry some readers.
To assure these readers, we establish the following rigidity of $p$-adic derived de Rham cohomology theory.
\begin{theorem}[{see \Cref{functorial endomorphism theorem} and \Cref{functorial endo remark}}]
Let $(A,I)$ be a prism such that $A/I$ is $p$-torsion free.
Then the functor $R \mapsto \dR_{R/A}^\wedge$ from the category of smooth $(A/I)$-algebras to $\mathrm{CAlg}(D(\dR_{(A/I)/A}^\wedge))$
has no automorphism.
Similar statement holds for the functor $R \mapsto \dR_{R/(A/I)}^\wedge$.
\end{theorem}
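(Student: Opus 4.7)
The strategy is to reduce the problem to the single-variable polynomial algebra $R_0 := (A/I)[x]$ and to compute $\phi_{R_0}$ explicitly there. Throughout, write $F$ for the functor $R \mapsto \dR_{R/A}^\wedge$ valued in $\mathrm{CAlg}(D(\dR_{(A/I)/A}^\wedge))$; a natural automorphism $\phi = (\phi_R)$ of $F$ then consists of $\dR_{(A/I)/A}^\wedge$-linear commutative algebra automorphisms $\phi_R$ of $\dR_{R/A}^\wedge$, functorial in $R$, and in particular $\phi_{A/I} = \mathrm{id}_{\dR_{(A/I)/A}^\wedge}$.

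The first step is a reduction to $R_0$. Using that $\dR_{-/A}^\wedge$ satisfies \'etale (hence Zariski) descent in the source variable and is compatible with tensor products of $(A/I)$-algebras (producing $p$-completed derived tensor products over $\dR_{(A/I)/A}^\wedge$), naturality of $\phi$ combined with its algebra structure propagates the identity from $R_0$ to polynomial algebras $(A/I)[x_1,\dots,x_n]$: applying naturality to the two inclusions $R_0 \hookrightarrow R_0 \otimes_{A/I} R_0$ forces $\phi = \mathrm{id}$ on the two-variable case, and induction handles the rest. Functoriality against localization maps and \'etale descent then extend the identity to arbitrary smooth $(A/I)$-algebras.

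For the local case, \cite[Theorem 3.27]{Bha12} identifies $\dR_{R_0/A}^\wedge$ with a \emph{discrete} ring, using $p$-torsion freeness of $A/I$ and smoothness of $R_0$: namely, the $p$-adic completion of the divided-power envelope of the surjection $A[x] \twoheadrightarrow R_0$, which as a $\pi_0(\dR_{(A/I)/A}^\wedge)$-algebra is the $p$-adic completion of the polynomial ring in $x$. Consequently $\phi_{R_0}$ is determined by the image $\phi_{R_0}(x) \in \pi_0(\dR_{R_0/A}^\wedge)$. Applying naturality of $\phi$ against the family of evaluation maps $\mathrm{ev}_a \colon R_0 \to A/I$, $x \mapsto a$, for various $a \in A/I$, and using $\phi_{A/I} = \mathrm{id}$, yields the identity $\dR(\mathrm{ev}_a)(\phi_{R_0}(x)) = \dR(\mathrm{ev}_a)(x)$ for every such $a$, and this pins down $\phi_{R_0}(x) = x$.

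The main obstacle is this final step: rigorously translating the family of naturality constraints (one for each $a \in A/I$) into the polynomial identity $\phi_{R_0}(x) = x$ inside the completed polynomial ring. The $p$-torsion freeness of $A/I$ enters essentially here, ensuring that the image of $A/I$ inside $\pi_0(\dR_{(A/I)/A}^\wedge)$ is sufficiently rich that distinct elements of the completed polynomial ring cannot agree under every evaluation. The parallel statement for the functor $R \mapsto \dR_{R/(A/I)}^\wedge$ is obtained by the identical argument with $A$ replaced by $A/I$, which is itself $p$-torsion free by hypothesis.
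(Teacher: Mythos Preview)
Your reduction strategy is reasonable, but the local computation at $R_0=(A/I)[x]$ contains a genuine error: the object $\dR_{R_0/A}^\wedge$ is \emph{not} discrete. Bhatt's theorem identifies $\dR_{R_0/A}^\wedge$ with crystalline cohomology, but crystalline cohomology of a smooth $(A/I)$-algebra over $\mathcal{A}$ is computed by the de~Rham complex of a smooth lift, not by the PD envelope alone. Concretely, using K\"unneth for $(A/I)[x]=(A/I)\otimes_A A[x]$ one gets
\[
\dR_{R_0/A}^\wedge \;\simeq\; \dR_{(A/I)/A}^\wedge \,\widehat\otimes_A\, \dR_{A[x]/A}^\wedge \;\simeq\; \bigl(\mathcal{A}\langle x\rangle \xrightarrow{\,d\,} \mathcal{A}\langle x\rangle\,dx\bigr),
\]
a two-term complex with nontrivial $\rH^1$ (the class of $x^{p-1}\,dx$ is not exact integrally). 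In fact $\pi_0(\dR_{R_0/A}^\wedge)=\ker(d)=\mathcal{A}$ consists only of constants, so there is no element ``$x$'' in $\pi_0$ to which you can apply $\phi_{R_0}$; the sentence ``$\phi_{R_0}$ is determined by $\phi_{R_0}(x)$'' has no meaning. The same issue arises for $\dR_{R_0/(A/I)}^\wedge$.

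This is precisely why the paper does \emph{not} argue directly on smooth algebras. Instead it left Kan extends $\dR_{-/A}^\wedge$ to the quasisyntomic site and works on the basis of large quasisyntomic $(A/I)$-algebras, where the values are genuinely discrete. The key test object there is $A/I\langle X^{1/p^\infty}\rangle$, for which $\mathbb{L}_{-/(A/I)}^\wedge$ vanishes and $\dR_{-/A}^\wedge\cong\mathcal{A}\langle X^{1/p^\infty}\rangle$ is an honest ring; a functorial endomorphism then \emph{is} determined by the image of $X$. Rather than evaluation maps, the paper uses functoriality against the multiplication map $X\mapsto YZ$ (forcing $X\mapsto X^i$ for a single $i$), an additive-type map (forcing $i\in p^{\mathbb{Z}}$), and a PD-envelope argument (ruling out $i<0$), so the only automorphism is $X\mapsto X$. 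Your evaluation idea, even on the perfectoid test object, would still require justifying that ``enough'' specializations exist in $A/I$, which is delicate; the paper's multiplicative/additive constraints sidestep this entirely.
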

Therefore, whenever one has a diagram of functorial comparisons between various cohomology theories and the $p$-adic derived de Rham
cohomology, the diagram is always forced to be commutative.
Our method of proving such rigidity is largely inspired by \cite[Sections 10.3 and 10.4]{BLM18} and \cite[Section 18]{BS19}.
In view of rigidity aspects of $p$-adic derived de Rham complexes, 
we would like to mention a recent result of Mondal \cite{Mon20}: 
roughly speaking, there is a \emph{unique} deformation of de Rham cohomology
from characteristic $p$ to Artinian local rings given by crystalline cohomology (c.f.~\cite[Theorem 10.1.2]{BLM18} for the case of deformation over $\mathbb{Z}_p$).
Let us mention that a recent collaboration between Mondal and the first named author \cite{LM21} computes
endomorphisms of $p$-adic derived de Rham cohomology in various $p$-adic settings.

Next, we discuss compatibility of additional structures on both sides being compared in \Cref{comparison in introduction}, most notably the Frobenius action
and filtration.
In \S \ref{Frobenii} we define a natural Frobenius action on $p$-adic derived de Rham complex assuming the base ring $A$ is a $p$-torsion free
$\delta$-ring. 
Therefore the right hand side is equipped with a Frobenius action.
The left hand side admits a Frobenius action as well,
by extending the Frobenius action on prismatic cohomology, as $A \to \dR_{(A/I)/A}^\wedge$ is compatible with Frobenii on them.
The two Frobenii on two sides in \Cref{comparison in introduction} agree when $A$ is $p$-torsion free, see \Cref{compatible with Frobenii}.
Let us remark that these $p$-torsion free conditions most likely can be relaxed, with extra work in developing
the theory of ``derived $\delta$-rings''. We  expect the above theoretical results to hold verbatim.

The story of comparing filtrations is our main new contribution to this theory and it is quite involved. Let us rewrite the comparison:
\[
\varphi^*\mathrm{R\Gamma}_{\Prism}(X/A) \otimes^\BL_A \dR_{(A/I)/A}^\wedge 
\cong \mathrm{R\Gamma}(X, \dR_{-/A}^\wedge).
\]
There are $3$ natural filtrations here: 
\begin{itemize}
    \item the Nygaard filtration $\Fil^{\bullet}_{\mathrm{N}}(\Prism^{(1)}_{-/A})$ on $\varphi^*\mathrm{R\Gamma}_{\Prism}(X/A)$, see~\cite[Section 15]{BS19};
    \item the $I$-adic filtration on $A$; and
    \item the Hodge filtration $\Fil^{\bullet}_{\mathrm{H}}(\dR_{-/A}^\wedge)$ on $\dR_{(A/I)/A}^\wedge$ and $\mathrm{R\Gamma}(X, \dR_{-/A}^\wedge)$.
\end{itemize}
They are related in the following fashion.
\begin{theorem}[{see \Cref{smooth H and N filtration}}]
Let $(A,I)$ be a prism such that $A/I$ is $p$-torsion free, and let $X$ be a smooth proper ($p$-adic) formal scheme over $\Spf(A/I)$.
\begin{enumerate}
\item The isomorphism in \Cref{comparison in introduction} refines to a filtered isomorphism:
\[
\bigg(\mathrm{R\Gamma}(X, \Fil^{\bullet}_{\mathrm{N}}(\Prism^{(1)}_{-/A}))\bigg) 
\widehat{\otimes}^\BL_{(A, I^{\bullet})} (\mathcal{A}, \mathcal{I}^{[\bullet]})
\cong \mathrm{R\Gamma}(X, \Fil^{\bullet}_{\mathrm{H}}(\dR_{-/A}^\wedge)),
\]
where the left hand side denotes the $p$-complete derived tensor of filtered objects over the filtered ring $(A, I^{\bullet})$
provided by the lax symmetric monoidal structure on the filtered derived infinity category.

In particular, we obtain a graded isomorphism between graded algebras:
\[
\bigg(\gr^*_{\mathrm{N}}\mathrm{R\Gamma}(X, \Prism^{(1)}_{-/A}) \widehat{\otimes}^\BL_{\Sym^*_{A/I}(I/I^2)} \Gamma^*_{A/I}(I/I^2)\bigg)
\cong \bigg(\gr^*_{\mathrm{H}}\mathrm{R\Gamma}(X, \dR_{-/A}^\wedge)\bigg).
\]
\item The isomorphism in \Cref{comparison in introduction} induces natural isomorphisms:
\[
\mathrm{R\Gamma}(X, \Prism^{(1)}_{-/A}/\Fil^i_{\mathrm{N}}) \cong
\mathrm{R\Gamma}(X, \dR_{-/A}^\wedge/\Fil^i_{\mathrm{H}})
\]
for all $i \leq p$.
\end{enumerate}
Moreover these isomorphisms are functorial in $X$ and $A$.
\end{theorem}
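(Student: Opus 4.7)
The plan is to establish part (1) by descending to a quasi-syntomic cover and verifying the filtered isomorphism at the sheaf level, and then to deduce part (2) essentially formally. Using quasi-syntomic descent for the filtered sheaves $\Fil^\bullet_{\rm N}\Prism^{(1)}_{-/A}$ and $\Fil^\bullet_{\rm H}\dR^\wedge_{-/A}$, I would reduce to producing the filtered comparison over a $p$-completely faithfully flat quasiregular semiperfectoid $(A/I)$-algebra $R$. On such $R$ both filtered objects sit in cohomological degree zero and admit concrete descriptions: $\Fil^i_{\rm N}\Prism^{(1)}_{R/A} = \{x : \varphi(x) \in I^i\Prism_{R/A}\}$ by \cite[\S15]{BS19}, while by \cite[Theorem~3.27]{Bha12} the filtered object $(\dR^\wedge_{R/A}, \Fil^\bullet_{\rm H})$ is the $p$-completed PD envelope of a surjection from a $p$-completely free $A$-algebra onto $R$ equipped with its PD-power filtration.

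Next I would upgrade the comparison of \Cref{comparison in introduction} to a filtered map. The ring map $A \to \dR^\wedge_{(A/I)/A}$ carries $I^\bullet$ into the $p$-completed PD powers $\Fil^\bullet_{\rm H}$ (this uses $p$-torsion freeness of $A/I$), so it is a map of filtered rings $(A, I^\bullet) \to (\dR^\wedge_{(A/I)/A}, \Fil^\bullet_{\rm H})$. The construction of the comparison uses the divided Frobenius $\varphi_i\colon \Fil^i_{\rm N}\Prism^{(1)} \to I^i\Prism$ composed with the natural map from $\Prism$ into the appropriate PD envelope, and thus promotes naturally to a filtered morphism
\[
\big(\Prism^{(1)}_{R/A},\Fil^\bullet_{\rm N}\big) \widehat{\otimes}^\BL_{(A, I^\bullet)} \big(\dR^\wedge_{(A/I)/A}, \Fil^\bullet_{\rm H}\big) \longrightarrow \big(\dR^\wedge_{R/A}, \Fil^\bullet_{\rm H}\big).
\]
To check it is an isomorphism, one passes to associated gradeds. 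The lax symmetric monoidal structure on filtered derived $\infty$-categories identifies the graded of the source with $\gr^*_{\rm N}\Prism^{(1)}_{R/A} \widehat{\otimes}^\BL_{\Sym^*_{A/I}(I/I^2)} \Gamma^*_{A/I}(I/I^2)$. By the Nygaard-graded calculation of \cite[\S15]{BS19}, $\gr^i_{\rm N}\Prism^{(1)}_{R/A}$ is a Breuil--Kisin twist of an appropriate piece of the Hodge--Tate complex of $R/(A/I)$. Combining this with the transitivity triangle $L_{(A/I)/A}\otimes^\BL_{A/I}R \to L_{R/A}\otimes^\BL R \to L_{R/(A/I)}$ and a $p$-complete Koszul-style identification relating $\Sym^*(I/I^2)$ to $\Gamma^*(I/I^2)$ assembles the graded of the source into $\wedge^*L^\wedge_{R/A}$, matching the Hodge graded $\gr^*_{\rm H}\dR^\wedge_{R/A}$ on the target.

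The main obstacle is precisely this graded identification: carefully tracking the Breuil--Kisin twists by powers of $I/I^2$ throughout, and carrying out the $p$-complete derived comparison between symmetric and divided power resolutions of the conormal $I/I^2$. A free-resolution argument in the spirit of the Hodge--Tate comparison (and of the unfiltered comparison established earlier) should close this out.

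For part (2), the filtered comparison of (1) induces, after modding out by $\Fil^i$, an isomorphism $\Prism^{(1)}_{R/A}/\Fil^i_{\rm N} \otimes^\BL_A \dR^\wedge_{(A/I)/A}/\Fil^i_{\rm H} \simeq \dR^\wedge_{R/A}/\Fil^i_{\rm H}$. For $i \leq p$ the natural map $A/I^i \to \dR^\wedge_{(A/I)/A}/\Fil^i_{\rm H}$ is an isomorphism, since the divided power $\gamma_n$ for $n \leq p-1$ is a unit multiple of the ordinary $n$-th power while $\gamma_p(x) = x^p/p!$ is itself killed modulo $\Fil^p_{\rm H}$. Because $\Prism^{(1)}_{R/A}/\Fil^i_{\rm N}$ is naturally an $A/I^i$-module (as $I^i\Prism^{(1)} \subset \Fil^i_{\rm N}$), the derived tensor product collapses to $\Prism^{(1)}_{R/A}/\Fil^i_{\rm N}$, yielding (2). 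Functoriality in $X$ and $A$ is automatic throughout, since every construction is functorial at the sheaf level on the quasi-syntomic site.
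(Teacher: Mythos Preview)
Your overall strategy—descend to large quasi-syntomic algebras and verify the filtered isomorphism on associated gradeds—is the paper's. The substantive gap is in how you promote the comparison $\Prism^{(1)}_{R/A} \to \dR^\wedge_{R/A}$ to a filtered map. You claim this comes from the divided Frobenius $\varphi_i \colon \Fil^i_{\rm N}\Prism^{(1)} \to I^i\Prism$ followed by a map from $\Prism$ into a PD envelope, but this is not how the comparison is built: the map is simply the unit $\Prism^{(1)}_{R/A} \to \Prism^{(1)}_{R/A} \widehat{\otimes}_A \mathcal{A} \cong \dR^\wedge_{R/A}$, and there is no relevant map out of $\Prism_{R/A}$ itself to use. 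Showing that this unit map carries $\Fil^i_{\rm N}$ into $\Fil^i_{\rm H}$ is the real work, and the paper devotes all of \S4.3 to it. The argument there first establishes that any filtered lift is unique (a rigidity statement using that the relevant sheaves are discrete on large quasi-syntomic covers), and then proves the containment $\Fil^i_{\rm N} \subset \Fil^i_{\rm H}$ by induction on $i$: the case $i=1$ comes from the fact that both $\Prism^{(1)}/\Fil^1_{\rm N}$ and $\dR/\Fil^1_{\rm H}$ are identified with the structure sheaf via the de Rham comparison modulo $I$, and the inductive step uses multiplicativity together with the observation that $\Fil^i_{\rm H}\dR_{S/(A/I)}/\Sym^i(\Fil^1_{\rm H})$ has $p$-adically dense $p$-power torsion, forcing a certain induced map into the $p$-torsion-free target $\dR^\wedge_{S/A}/\Fil^i_{\rm H}$ to vanish. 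Your sketch of the graded identification via the transitivity triangle is in the right spirit but imprecise (e.g.\ $\gr^*_{\rm H}\dR = \Gamma^*(L[-1])$, not $\wedge^* L$); the paper organizes this step via auxiliary ``vertical filtrations'' on both graded algebras whose graded pieces are visibly matched by $\id \otimes (\Sym^*(I/I^2) \to \Gamma^*(I/I^2))$.

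A smaller issue: your deduction of (2) from (1) is not literally correct. Modding out the filtered tensor product by its $i$-th filtration does not produce $(\Prism^{(1)}/\Fil^i_{\rm N}) \otimes^\BL_A (\mathcal{A}/\mathcal{I}^{[i]})$; the Day-convolution filtration does not truncate that way. The paper instead argues directly that the map $\Prism^{(1)}_{S/A}/\Fil^i_{\rm N} \to \dR^\wedge_{S/A}/\Fil^i_{\rm H}$ (available once the filtered map exists) is injective with $(i-1)!$-torsion cokernel, via an explicit PD-envelope computation on a convenient presentation of $S$; for $i \leq p$ this cokernel vanishes.
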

Here $\mathcal{A}$ denotes the $p$-adic PD envelope of $A \twoheadrightarrow A/I$, and $\mathcal{I}^{\bullet}$ denotes the
filtration of PD powers of the ideal $\ker(\mathcal{A} \twoheadrightarrow A/I)$.
For a (somewhat) concrete description of the filtration on $\varphi^*\mathrm{R\Gamma}_{\Prism}(X/A) \otimes^\BL_A \dR_{(A/I)/A}^\wedge$
appearing in the above Theorem, we refer readers to \cite[Construction 3.9]{GL20}.
Note that by combining aforesaid result of Bhatt \cite[Theorem 3.27]{Bha12} and a classical result of Illusie \cite[Corollaire VIII.2.2.8]{Ill72},
there is a natural filtered isomorphism $(\dR_{(A/I)/A}^\wedge, \Fil^{\bullet}_{\mathrm{H}}) \cong (\mathcal{A}, \mathcal{I}^{\bullet})$.

In \cite[Section 2]{Ill20}, a comparison of Nygaard and Hodge filtration
is established for crystalline base prism $(A,I) = (W(k), (p))$, in particular
his $A/I$ is entirely $p$-torsion.
It seems reasonable to expect the comparison of filtration holds for general
base prisms.
Both Bhatt and Illusie have sketched to us an approach
of resolving base prism by prisms $(A,I)$ such that $A/I$ is $p$-torsion free,
to reduce the general comparison to our theorem above.
We choose to not pursue that direction further in this paper, as our final
application only uses the comparison when the base is the Breuil--Kisin prism.

With the above general preparation, we are ready to show $\underline{\calM} (\rH ^i _{\Prism} (X/\s))\simeq \rH^i _{\cris} (X/S) $ (when $\rH^i_{\Prism} (X/\s)$ is $u$-torsion free). 
In order to treat $p ^n$-torsion cohomologies in Theorem \ref{Thm-intro-1}, 
we consider the derived mod $p^n$ variants of the aforementioned cohomology theories.
For example, we denote the $p ^n $-torsion prismatic cohomology as $\RG_{\Prism} (X_n/ A_n) : = \RG_{\Prism} (X/ A)\otimes ^\BL_ {\Z} \Z/ p ^n \Z$.
As pointed out by Warning \ref{Warning-not-intrinsic}, 
such  $p ^n$-torsion prismatic cohomology \emph{does not only} depend on $X_n = X \times_{\Z_p} \Z/ p ^n \Z$. 
But it is enough for our purpose to understand the $p^n$-torsion crystalline cohomology 
$\rH^i _{\cris} (X_n / S_n)$ and its relation with \'etale cohomology $\rH ^i _{\et} (X_{\bar \eta}, \Z/ p ^n \Z)$. 

Note that the cohomology groups of $\RG _{\Prism} (X_n /\s_n)$ do fit in our setting of \emph{generalized} Kisin module $\m $ of height $h$ 
(discussed in \S \ref{subsec-generalized-Kisin}),  
i.e.~a  finitely generated $\s$-module $\m$  together with a $\varphi_\s$-semi-linear map $\varphi_\m : \m \to \m$ 
and an $\s$-linear map $\psi: \m \to \varphi ^* \m$ so that $ \psi\circ (1 \otimes \varphi _\m) = E^h \id_{\varphi^* \m} $ and $(1 \otimes \varphi _\m) \circ \psi = E^h \id_{\m}$. 
The generalized Kisin module is a natural extension of classical (\'etale) Kisin module discussed above allowing $u$-torsions. 
In particular,  an \'etale  Kisin module  $\m$ of height  $h $  is a generalized Kisin module of height $h$ without $u$-torsion, where $\psi $  is just defined by   $\m \simeq E ^h \m \overset {(1\otimes \varphi_\m)^{-1}} {\simeq }\Fil^{h}_{\BK} \varphi ^*\m\subset \varphi ^*\m $,   
and similarly the BK filtration can be extended to generalized Kisin module by defining 
$\Fil^h_{\BK} \varphi^*\m \coloneqq \mathrm{Im}(\psi: \m \to \varphi ^* \m )$.   
Most importantly, $\rH ^i _{\Prism} (X_n / \s _n)$ is a generalized Kisin module of height $i$,   
and  the BK filtration on $ \varphi ^* \rH ^i_{\Prism} (X_n/ \s_n)$ exactly matches with the image of 
the Nygaard filtration  $\rH ^ i _{\qsyn} (X , \Fil^i_{\rm N} \Prism^{(1)}_n)  \to  \rH^i_{\qsyn} (X,  \Prism^{(1)}_n)$  where 
$\Fil^i_{\rm N} \Prism^{(1)}_n = \Fil^i_{\rm N} \Prism^{(1)}_{-/ \s} \otimes ^{\BL}_{\Z }\Z/ p ^n \Z $ and $ \Prism^{(1)}_n =  \Prism^{(1)}_{-/ \s} \otimes ^{\BL}_{\Z }\Z/ p ^n \Z $, 
see Proposition \ref{prop-height} and Corollary \ref{cor-BK-Nygaard}. 
One can apply many methods in the study of \'etale Kisin modules to treat $\rH^i _{\Prism} (X_n / \s_n)$ as well. 
As a consequence, we prove the following: 
\begin{theorem}\label{thm-intro-3} 
Let $A = (\s, E)$ be the Breuil--Kisin prism and write $\m ^i _n : = \rH ^i _\Prism (X_n / A_n)$.
Let $i \leq p-2$ be an integer.
Then
$\rH^{i}_{\cris} (X_n /S_n)$ has a Breuil module structure if and only if $\m^j_n$ has no $u$-torsion for $j = i , i +1$. In this scenario, we have  $\underline{\calM} (\m ^ i _  n)\simeq \rH^{i}_{\cris} (X_n /S_n)$ and $T_S (\rH^{i}_{\cris} (X_n /S_n))\simeq \rH ^i _{\et} (X_{\bar \eta}, \Z/ p ^n \Z) (i)$ as $G_K$-modules.  
\end{theorem}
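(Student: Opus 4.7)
The plan is to deduce the theorem from the derived comparison~\eqref{eqn-1} by reducing modulo $p^n$ and tracking the extra data (filtration, divided Frobenius, monodromy) through the resulting identification at the level of $\rH^i$.

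Tensoring~\eqref{eqn-1} with $\Z/p^n\Z$ gives an isomorphism
\[
\RG_{\Prism}(X_n/\s_n) \otimes^\BL_{\s_n,\varphi} S_n \;\simeq\; \RG_{\cris}(X_n/S_n).
\]
To compute $\rH^i$ of the left hand side I would run the hyper-Tor spectral sequence; the terms relevant in degree $i$ are the underived tensor $\underline{\calM}(\m^i_n) = S_n \otimes_{\varphi,\s_n}\m^i_n$ together with a $\Tor^{\s_n}_1(\m^{i+1}_n, S_n)^{\varphi}$-contribution. By writing each $\m^j_n$ as an extension of its maximal $u$-torsion free quotient by its $u$-torsion submodule and computing $\Tor$ against the $\varphi$-twisted $S_n$ (exploiting $\varphi(u) = u^p$ and the Koszul-type structure of $S$ over $\s$), the Tor contribution vanishes precisely when $\m^{i+1}_n$ is $u$-torsion free, and the underived tensor $\underline{\calM}(\m^i_n)$ itself satisfies the axioms of a Breuil module in the sense of~\S\ref{subsec-Breuilmodules} precisely when $\m^i_n$ is $u$-torsion free. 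The height bound $i \leq p-2$ places us in the range where $\underline{\calM}$ produces genuine Breuil modules.

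Under the torsion-freeness hypothesis I thereby obtain an $S_n$-linear isomorphism $\underline{\calM}(\m^i_n) \simeq \rH^i_{\cris}(X_n/S_n)$. The filtrations on both sides match by combining the filtered comparison of \Cref{smooth H and N filtration} (Nygaard $\leftrightarrow$ Hodge/PD) with the identification of the image of the Nygaard filtration on cohomology with the BK filtration on $\varphi^*\m^i_n$ (Proposition~\ref{prop-height}, Corollary~\ref{cor-BK-Nygaard}). The divided Frobenius $\varphi_h$ is then inherited directly from the generalized Kisin structure on $\m^i_n$, while the monodromy $N$ transports across because it arises on the crystalline side from the canonical connection given by the crystal structure, which on the prismatic side is encoded by the Frobenius twist $\s \xrightarrow{\varphi} S$. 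The étale statement $T_S(\rH^i_{\cris}(X_n/S_n)) \simeq \rH^i_{\et}(X_{\bar\eta},\Z/p^n\Z)(i)$ then follows by combining the abstract identity $T_S \circ \underline{\calM} \simeq T_\s$ (\S\ref{subsec-two-T}) with the étale realization of prismatic cohomology giving $T_\s(\m^i_n) \simeq \rH^i_{\et}(X_{\bar\eta},\Z/p^n\Z)(i)$ as $G_K$-modules.

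The converse runs the spectral sequence backwards: the existence of a Breuil module structure on $\rH^i_{\cris}(X_n/S_n)$ forces its underlying $S_n$-module to satisfy the relevant finiteness and flatness conditions from~\S\ref{subsec-Breuilmodules}, and tracing these through the comparison forces the Tor term in degree $i+1$ to vanish and $\underline{\calM}(\m^i_n)$ to have the expected form, together yielding $u$-torsion freeness of both $\m^i_n$ and $\m^{i+1}_n$. The main obstacle I anticipate is the careful $\Tor$-computation indicated above, namely establishing that $\Tor^{\s_n}_1(-,S_n)^{\varphi}$ detects precisely the $u$-torsion of its input and behaves well enough to drive both directions of the equivalence. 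A secondary subtlety is the identification of $N$, which requires upgrading the filtered/Frobenius-compatible comparison so that it is also compatible with the tautological stratifications on both sides.
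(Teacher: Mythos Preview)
Your high-level outline is close to the paper's strategy, and the Tor computation you anticipate is indeed carried out: the paper proves that $\Tor_1^{\s}(\m^{i+1}_n,\varphi_*S)=0$ is equivalent to $\m^{i+1}_n[u^\infty]=0$. However, there are two genuine gaps in your plan, one in each direction.

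\textbf{Forward direction.} After establishing $\underline{\calM}(\m^i_n)\simeq\calM^i_n$ on underlying modules, you assert that the filtrations match ``by combining the filtered comparison \ldots\ with Corollary~\ref{cor-BK-Nygaard}''. This only yields the inclusion $\Fil^i\underline{\calM}(\m^i_n)\subset\rH^i_{\cris}(X_n/S_n,\cI^{[i]}_{\cris})$. The reverse inclusion is the substantive step, and the paper does \emph{not} extract it from the filtered comparison alone. Instead it base-changes to $A_{\cris}$, invokes the fiber sequence $\Z_p(i)\simeq\fib(\varphi_i-1\colon\Fil^i_{\rm H}\dR^\wedge\to\dR^\wedge)$ (valid for $i\leq p-2$), and produces a commutative diagram whose left column is $T_S(\underline{\calM}(\m^i_n))\to\rH^i_{\et}(\mathcal{X}_\C,\Z/p^n\Z)(i)$. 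Both sides are then identified with $T_\s(\m^i_n)(i)$, forcing this column to be an isomorphism, and a snake-lemma chase kills $\coker(\iota^i)$. The \'etale cohomology enters essentially here, not merely as a final consequence.

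\textbf{Converse direction.} Your proposal to trace ``finiteness and flatness conditions'' back through the spectral sequence does not work: the Breuil module axioms impose no direct module-theoretic flatness on $\calM^i_n$ strong enough to force Tor vanishing. The paper's argument is a \emph{length count}: assuming $\calM^i_n$ is a Breuil module, one has $\Len_\Z T_S(\calM^i_n)=\Len_{W(k)}(\calM^i_n/I^+S)$ by Corollary~\ref{cor-length-compare}, and separately $\Len_\Z\rH^i_{\et}\leq\Len_{W(k)}(\m^i_n/u\m^i_n)\leq\Len_{W(k)}(\calM^i_n/I^+S)$ via Lemma~\ref{lem-length of both fibers} and Lemma~\ref{lemma-mod I+S}. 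The injection $T_S(\calM^i_n)\hookrightarrow\rH^i_{\et}(\mathcal{X}_\C,\Z/p^n\Z)(i)$ (again from the $\varphi_i-1$ diagram) closes the circle, forcing equality throughout, hence $\m^i_n[u^\infty]=0$ and, by Nakayama applied to $\Tor_1/I^+S$, also $\m^{i+1}_n[u^\infty]=0$.

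A minor correction: your reference for $T_S\circ\underline{\calM}\simeq T_\s$ should be Proposition~\ref{prop-Galois-compatible}, not \S\ref{subsec-two-T}; and that proposition gives only $G_\infty$-equivariance. The full $G_K$-compatibility (Proposition~\ref{prop-connection to etale via TS}) requires matching the $\nabla$-formula \eqref{eqn-action} with the natural $G_K$-action on $\rH^i_{\cris}(\mathcal{X}_n/A_{\cris,n})$, done in \S\ref{subsection-GK-action}.
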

Finally, by using  Caruso's \Cref{thm-caruso} for $n =1$, we can show that $\m_n  ^{i +1}$ has no $u$-torsion if $ei < p -1$,
hence deducing \Cref{Thm-intro-1}. 

In the end of this introduction, let us report what we now,
a year since writing this paper,
know slightly beyond the case of $e \cdot i < p-1$.
Recall Breuil asked \cite[Question 4.1]{BreuilIntegral} whether,
assuming $i < p-1$,
it is true that $\rH^i_{\cris}(X_n/S_n)$
always supports a Breuil module structure
with associated Galois representation given by
$\rH^i_{\et}(X_{\bar{\eta}}, \mathbb{Z}/p^n\Z)$.
In view of our \Cref{thm-intro-3}, what Breuil asked for is really
some module theoretic structure of prismatic cohomology of $X$:
%he is secretly asking 
whether the prismatic cohomology always has no $u$-torsion
when $i < p$.
In a sequel to this paper, among other things,
we study $u^\infty$-torsions in prismatic cohomology.
We obtain some results in the boundary case: that is $e \cdot i = p-1$.
Let us restrict ourselves further to the two extrema of the boundary case,
and our relevant findings is summarized below:
\begin{itemize}
\item When $i = 1$, Breuil's question amounts to vanishing of $u$-torsion
in the first and second prismatic cohomology.
The first prismatic cohomology is always $u$-torsion free.
The second prismatic cohomology having $u$-torsion is showed to be equivalent to
the failure of having Albanese abelian (formal) scheme of $X$,
by which we mean a map $X \to A$ with both central and 
generic fibres being the Albanese map.
This was studied by Raynaud \cite{Ray79} and we extend some of his results
using this prismatic perspective.
We generalize a construction in \cite[Subsection 2.1]{BMS1} to produce
counterexamples to Breuil's question with $e = p-1 > 1$.
In fact the module structure of $\rH^1_{\cris}(X_n/S_n)$ of this example
is too pathological that it cannot possibly
support a Breuil module structure.
In particular, in hindsight, our \Cref{Thm-intro-1} is sharp.
\item When $e=1$, this is what Fontaine--Messing \cite{FontaineMessing}
and Kato \cite{Katovanishingcycles} studied.
In the boundary case $i = p-1$, we show that the Galois representation $\rho^{p-1}$
attached to the Fontaine--Laffaille structure on the $(p-1)$-st crystalline cohomology 
is not far from the $(p-1)$-st \'{e}tale cohomology of the geometric generic fibre.
%the latter embeds in the former with understandable cokernel.
%being a $p$-torsion unramified 
%Galois representation.
%Moreover the cokernel can be understood as a piece of the $p$-adic vanishing
%cycle of $X$ in cohomological degree $p$.
\end{itemize}
The case of $e \cdot i > p-1$ remains mysterious to us.
We believe the first step of investigation would be a better understanding of
$u^\infty$-torsions in prismatic cohomology, extending our results so far.
%Another insight from our results is that, beyond the case $e \cdot i < p-1$,
%one should not expect crystalline and \'{e}tale cohomology be related at the level
%of single cohomological degree. 
%Rather it should be that cohomology groups
%on the one side
%in several degrees inform us the other side's cohomology group in one degree.

We arrange our paper as follows: after collecting rudiments on prismatic cohomology and derived de Rham cohomology in \S 2,  
we  establish our comparison isomorphism between the two cohomologies in \S 3 together with Frobenius structures.  
We devote efforts to discuss various filtrations  in \S 4 and establish a filtered comparison.
We remark that the theory in \S 2--\S 4 accommodates quite general classes of prisms, which opens the possibilities to develop, 
for example, Breuil--Caruso theory,  for more general base rings. 
We hope to report the generalization in this direction in future work. 
Starting from \S 5, 
we restrict ourselves to the Breuil--Kisin prism $(\s, (E))$ and focus on structures of torsion prismatic cohomology and torsion crystalline cohomology for proper smooth formal scheme $X$ over $\O_K$. 
In \S 5 we construct a connection $\nabla$ on derived de Rham cohomology and hence on crystalline cohomology. 
The \S6 recalls classical theory of Kisin modules, Breuil modules, functors to Galois representations and 
the functor $\underline \calM$ connecting Kisin modules and Breuil modules. 
Finally \S 7 assembles all previous preparations to prove Theorem \ref{thm-intro-3}. 

\subsection*{Acknowledgement} We would like to thank Bhargav Bhatt, Bryden Cais, Luc Illusie,
Teruhisa Koshikawa, Shubhodip Mondal, Deepam Patel, and Emanuel Reinecke,
for very useful discussion and communication during the course of preparing this paper.
The first named author especially would like to thank Bhargav Bhatt for so many helpful discussions
and suggestions concerning this project.
The influence of Bhatt's work and comments on the first half of this paper should be obvious to readers.
We are also grateful to Illusie for stimulating discussion and his encouragement.

\section{Preliminaries}

Starting with this section through \Cref{section filtrations}, unless stated otherwise, all completions and (completed) tensor products are derived.

\subsection{Transversal prisms}

\begin{lemma}
\label{oriented transversal prism lemma}
Let $(A,I)$ be an oriented prism with $I = (d)$. The following are equivalent:
\begin{enumerate}
\item the sequence $(p, d)$ is Koszul regular;
\item the sequence $(p, d)$ is regular;
\item the morphism $\mathbb{Z}_p[\![T]\!] \to A$ sending $T$ to $d$
is flat.
\end{enumerate}
\end{lemma}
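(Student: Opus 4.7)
The plan is to establish $(2) \Rightarrow (1)$, $(3) \Rightarrow (2)$, and $(1) \Rightarrow (3)$, completing the cycle.

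The first two implications are essentially formal. For $(2) \Rightarrow (1)$, any regular sequence in a commutative ring is Koszul regular (standard). For $(3) \Rightarrow (2)$: the sequence $(p, T)$ is regular in $R := \Z_p[\![T]\!]$, since $p$ is a non-zero-divisor on $R$ and $T$ is a non-zero-divisor on $\FF_p[\![T]\!] = R/p$. Flat ring maps preserve regularity of sequences provided the resulting quotient in the target is nonzero. In our case $A/(p, d)$ is nonzero---otherwise derived $(p, I)$-completeness of $A$ would force $A = 0$---so $(p, d)$ is an $A$-regular sequence.

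The real content lies in $(1) \Rightarrow (3)$. Since $(p, T)$ is an $R$-regular sequence, the Koszul complex $K_R(p, T)$ is a finite free resolution of $R/(p, T) = \FF_p$. Applying $- \otimes_R A$ gives the Koszul complex $K_A(p, d)$, which by Koszul regularity of $(p, d)$ in $A$ computes $A/(p, d)$. We conclude $A \otimes^{\BL}_R \FF_p \simeq A/(p,d)$, concentrated in degree zero; equivalently $\Tor^R_i(\FF_p, A) = 0$ for all $i > 0$. To promote this Tor-vanishing to flatness of $A$ over the Noetherian local ring $R$, I would invoke a local criterion of flatness, using that $A$ is derived $(p, T)$-complete as an $R$-module (inherited from the prism axioms).

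The main obstacle is cleanly executing this final step, since $A$ is not assumed to be Noetherian and ideal-separability hypotheses in standard local flatness criteria require care. A robust devissage approach works: the short exact sequences $0 \to (p, T)^n/(p, T)^{n+1} \to R/(p, T)^{n+1} \to R/(p, T)^n \to 0$ have finite-dimensional $\FF_p$-vector spaces on the left, so the vanishing of $\Tor^R_i(\FF_p, A)$ propagates inductively to $\Tor^R_i(R/(p, T)^n, A) = 0$ for every $n$ and every $i > 0$. Thus each $A/(p, d)^n A$ is flat over the Artinian local ring $R/(p, T)^n$. These assemble into flatness of $A$ over $R$ via classical $(p, d)$-completeness of $A$ and the standard local flatness criterion; alternatively, one may appeal directly to a derived-completeness-based flatness criterion in the spirit of those appearing in \cite{BMS1}.
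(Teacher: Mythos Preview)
Your argument is correct, but it runs the cycle in the opposite direction from the paper and packages the hard step differently. The paper proves $(3)\Rightarrow(1)\Rightarrow(2)\Rightarrow(3)$: the step $(1)\Rightarrow(2)$ uses Koszul regularity to see that $A[p]$ is uniquely $d$-divisible and then invokes derived $d$-completeness to kill it; the step $(2)\Rightarrow(3)$ checks flatness prime by prime over $\Z_p[\![T]\!]$, showing directly that $A$ has no $f$-torsion for each height-one prime $(f)$ by a similar derived-completeness trick. Your route instead isolates all the work in $(1)\Rightarrow(3)$, reducing to $\Tor^R_i(\FF_p,A)=0$ and then appealing to a local flatness criterion for derived complete modules.

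One point to tighten: you write ``via classical $(p,d)$-completeness of $A$,'' but a prism is only assumed derived complete. You need the intermediate observation that your Tor vanishing (for all $R/(p,T)^n$) plus derived completeness forces $A=\varprojlim A/(p,T)^nA$, hence classical completeness; only then does the usual criterion apply. Alternatively, if you unwind the flatness criterion over $\Z_p[\![T]\!]$, you end up checking $A[p]=0$ and $A[f]=0$ for Weierstrass $f$ using derived completeness---which is exactly the paper's hands-on argument. So your approach is cleaner to state but leans on a result that, when unpacked, recovers the paper's computation; the paper's approach is more self-contained.
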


\begin{proof}
(3) implies (1): as (3) implies that 
$A \otimes_{\mathbb{Z}_p[\![T]\!]} \mathbb{Z}_p[\![T]\!]/(p, T)$ is discrete.

(1) implies (2): (1) implies that the $p$-torsions in $A$ is uniquely $d$-divisible,
and $A/p$ has no $d$-torsion.
On the other hand, we know the $p$-torsions in $A$ is derived $d$-complete,
hence must vanish. Therefore $(p,d)$ is a regular sequence.

(2) implies (3): it suffices to show that for any prime ideal 
$\mathfrak{p} \subset \mathbb{Z}_p[\![T]\!]$ the derived tensor
$A \otimes_{\mathbb{Z}_p[\![T]\!]} \mathbb{Z}_p[\![T]\!]/\mathfrak{p}$ is discrete.
When $\mathfrak{p}$ is the unique maximal ideal, this follows immediately from (2).
So we only have to deal with height $1$ primes which are always generated by
a polynomial of the form
\[
f = T^n + p \cdot (\text{lower order terms}),
\] 
and we need to show that $A$ is $f$-torsionfree.
Suppose $a \in A$ is an $f$-torsion, modulo $p$ we see that $\bar{a} \in A/p$
is a $d^n$-torsion, now (2) implies that $\bar{a} = 0 \in A/p$.
Therefore we see that $f$-torsions in $A$ is divisible by $p$.
As (2) also implies that $A$ is $p$-torsionfree, we see that $f$-torsions in $A$
is uniquely $p$-divisible.
Since $A$ is derived $p$-complete, we see that $A$ must in fact be $f$-torsionfree.
\end{proof}

We can globalize to non-oriented prisms $(A,I)$.
The following easily follows from~\Cref{oriented transversal prism lemma}.

\begin{lemma}
\label{transversal prism lemma}
Let $(A, I)$ be a prism. The following are equivalent:
\begin{enumerate}
\item there is a $(p, I)$-completely faithfully flat cover by an oriented prism
$(A', I A')$, which satisfies the equivalent conditions 
in~\Cref{oriented transversal prism lemma};
\item the ideal $I$ is $p$-completely regular;
\item Zariski locally $(p, I)$ is a regular sequence;
\item the natural morphism 
$\Spf(A) \to [\Spf(\mathbb{Z}_p[\![T]\!])/(\mathbb{G}_m)_{\mathbb{Z}_p}]$
classified by $I$ is flat.
\end{enumerate}
\end{lemma}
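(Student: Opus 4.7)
The plan is to bootstrap from \Cref{oriented transversal prism lemma} using the fact that in any prism $(A,I)$ the ideal $I$ is invertible. First I would choose, for each condition, a $(p,I)$-completely faithfully flat cover $A \to A'$ on which $IA' = (d)$ becomes principal---e.g.\ the universal orientation, or an affine Zariski cover trivializing the line bundle $I$---so that $(A', IA')$ is an oriented prism to which the preceding lemma applies.

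For (1) $\Leftrightarrow$ (2) $\Leftrightarrow$ (3): $p$-complete regularity of $I$ is fpqc-local and, because $I$ is itself Zariski-locally principal on $\Spf(A)$, coincides with the Zariski-local statement in (3). Given any oriented cover as above, \Cref{oriented transversal prism lemma} identifies regularity of the sequence $(p,d)$ in $A'$ with Koszul regularity of $(p,d)$, i.e.\ with $p$-complete regularity of $IA'$; this ascends to and descends from $I \subset A$ along any $(p,I)$-completely faithfully flat map, yielding the equivalence of (1), (2), and (3).

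For (4), I would use that the stack $[\Spf(\Z_p[\![T]\!])/(\Gm)_{\Z_p}]$ classifies invertible ideals with a distinguished element, and that the atlas $\Spf(\Z_p[\![T]\!]) \to [\Spf(\Z_p[\![T]\!])/(\Gm)_{\Z_p}]$ is a smooth cover. Pulling the classifying map of $I$ back along this atlas produces an oriented prism $(A',(d))$ and shows that flatness over the stack is equivalent to flatness of $\Z_p[\![T]\!] \to A'$ sending $T$ to $d$; this is precisely condition (3) of \Cref{oriented transversal prism lemma}, hence equivalent to the others.

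The only genuine obstacle is bookkeeping: ensuring that the translation between ``Zariski-locally'' (as in condition (3)) and ``after a $(p,I)$-completely faithfully flat cover by an oriented prism'' (as in condition (1)) is indeed an equivalence, which boils down to $I$ being a line bundle and $p$-complete regularity being fpqc-local. Once the orientation has been introduced, all four conditions are direct translations of the three equivalent conditions in the oriented case, so nothing new has to be proved.
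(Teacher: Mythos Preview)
Your proposal is correct and is essentially the argument the paper has in mind: the paper's own proof consists of the single sentence ``The following easily follows from \Cref{oriented transversal prism lemma},'' and what you have written is exactly the unpacking of that sentence---trivialize the line bundle $I$ Zariski locally (or via a $(p,I)$-completely faithfully flat cover) to reduce to the oriented case, and observe that each of the four conditions is local for the relevant topology. Your version is simply more explicit than the paper's.
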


Let us explain the morphism in (4) above: Zariski locally $I$ is generated
by a nonzerodivisor $d$, hence Zariski locally we get a map
$\Spf(A) \to \Spf(\mathbb{Z}_p[\![T]\!])$, 
and on overlap these generators differ by a unit in $A$, hence globally
we have a morphism to the quotient stack.
Alternatively, we can understand this map as the composition of
the universal map $\Spf(A) \to \Sigma$
introduced by Drinfeld~\cite[Section 1.2]{Dr20},
and $\Sigma \to [\Spf(\mathbb{Z}_p[\![T]\!])/(\mathbb{G}_m)_{\mathbb{Z}_p}]$
induced by $W_{\mathrm{prim}} \to \Spf(\mathbb{Z}_p[\![T]\!])$
sending a Witt vector $(x_0, x_1, \ldots)$ to $x_0$.

\begin{definition}
A prism $(A, I)$ is said to be \emph{transversal}
if it satisfies the equivalent conditions in~\Cref{transversal prism lemma}.
\end{definition}

For the remaining of this subsection, let us assume $(A,I)$ to be a transversal prism.
Denote the $p$-completed PD envelope of $A \twoheadrightarrow A/I$ by $\mathcal{A}$, and
denote the kernel of $\mathcal{A} \twoheadrightarrow A/I$ by $\mathcal{I}$.

\begin{example}
Let us list some examples of transversal prisms.
\begin{enumerate}
\item The universal oriented prism is transversal.
\item The Breuil--Kisin prism~\cite[Example 1.3.(3)]{BS19} is transversal. 
We have $A = \mathfrak{S}$ and $\mathcal{A}$ is classically denoted by $S$
in classical literature concerning Breuil modules.
\item Let $\C$ be an algebraically closed complete non-Archimedean field extension of
 $\mathbb{Q}_p$. Then the perfect prism associated with $\mathcal{O}_\C$ is transversal.
We have $A = A_{\mathrm{inf}}$ and $\mathcal{A} = A_{\mathrm{crys}}$.
\end{enumerate}
\end{example}

Although $\mathcal{A}$ is usually not flat over $A$,
it has $p$-completely finite Tor dimension.
In the next subsection we shall see that this is a general phenomenon
about derived de Rham complex and regularity of $I$.

\begin{lemma}
\label{calA Tor dim 1}
Let $(A,I)$ be a transversal prism. 
Then $A \to \mathcal{A}$ has $p$-complete amplitude
in $[-1,0]$, in particular $p$-completely base changing along $A \to \mathcal{A}$
commutes with taking totalizations in $D^{\geq 0}(A)$.
\end{lemma}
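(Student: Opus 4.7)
The plan is first to reduce to the oriented case and then to exploit the PD filtration on $\mathcal{A}$.

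For the reduction, by~\Cref{transversal prism lemma} I may choose a $(p, I)$-completely faithfully flat cover $A \to A'$ with $(A', (d))$ an oriented transversal prism. The $p$-completed PD envelope is compatible with $(p, I)$-completely flat base change, and $p$-complete Tor amplitude both descends and ascends along $(p, I)$-completely faithfully flat maps, so it suffices to treat the case $I = (d)$ with $(p, d)$ a regular sequence in $A$.

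In the oriented setting, I will use the classical structural fact that the PD envelope of a regular ideal carries a complete decreasing filtration by PD powers $\mathcal{A} \supset \mathcal{I} \supset \mathcal{I}^{[2]} \supset \cdots$ whose associated graded is canonically
\[
\mathrm{gr}^n_{\mathrm{PD}}\mathcal{A} \;\cong\; \Gamma^n_{A/I}(I/I^2).
\]
Because $I = (d)$ is principal generated by a nonzerodivisor, $I/I^2$ is a free $A/I$-module of rank one, so each $\Gamma^n_{A/I}(I/I^2) \cong A/I$ as an $A$-module. The Koszul resolution $[A \xrightarrow{d} A]$ presents $A/I$ as a perfect $A$-module of Tor amplitude exactly $[-1,0]$, and by iterating extensions along the finite quotient filtration, each truncation $\mathcal{A}/\mathcal{I}^{[n+1]}$ has $p$-complete Tor amplitude in $[-1, 0]$.

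It remains to pass to the derived $p$-complete inverse limit $\mathcal{A} \simeq R\lim_n \mathcal{A}/\mathcal{I}^{[n+1]}$. The transition maps are surjections with kernels of Tor amplitude $[-1, 0]$, so a Mittag--Leffler-type argument identifies the derived and classical inverse limits cohomology-degree-wise, preserving the amplitude bound. The ``in particular'' clause is then formal: any functor $D(A) \to D(\mathcal{A})$ of $p$-complete Tor amplitude in $[-1, 0]$ sends $D^{\geq 0}$ into $D^{\geq -1}$, and such functors automatically commute with totalizations of cosimplicial objects in $D^{\geq 0}$ (by reducing to Postnikov towers, where bounded Tor amplitude justifies the interchange of the functor with the relevant connective limits).

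I anticipate that the most delicate step will be carefully controlling the interaction between derived $p$-completion and the PD filtration's inverse limit, to ensure the amplitude bound genuinely survives the limiting process; the remaining ingredients are either standard PD theory or direct applications of~\Cref{transversal prism lemma}.
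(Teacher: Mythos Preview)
Your reduction to the oriented case and the computation that each finite truncation $\mathcal{A}/\mathcal{I}^{[n+1]}$ has $p$-complete Tor amplitude in $[-1,0]$ are both fine. The gap is the identification $\mathcal{A} \simeq R\lim_n \mathcal{A}/\mathcal{I}^{[n+1]}$: the $p$-completed PD envelope is \emph{not} complete for the PD filtration. To see this concretely, reduce modulo $p$: then $\mathcal{A}/p \cong D_{A/p}(d) \cong (A/p)/(d^p)[t_1,t_2,\ldots]/(t_i^p)$ with $t_i$ the image of $\gamma_{p^i}(d)$, and each $t_i$ is nonzero. The partial sums of $\sum_{k\geq 1}\gamma_{p^k}(d)$ give a compatible system in $\lim_n \mathcal{A}/\mathcal{I}^{[n+1]}$, but the sum does not converge $p$-adically in $\mathcal{A}$ since every $\gamma_{p^k}(d)$ has nonzero image in $\mathcal{A}/p$. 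So the inverse-limit step cannot go through, and the Mittag--Leffler remark does not rescue it: even granting that $R\lim = \lim$ here, you would be computing the Tor amplitude of the wrong ring.

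The paper's argument sidesteps this entirely by reducing modulo $p$ \emph{first} (this is exactly what ``$p$-complete amplitude'' lets you do) and then observing, as above, that $D_{A/p}(d)$ is free over $(A/p)/(d^p)$; since $d^p$ is a nonzerodivisor in $A/p$, the module $(A/p)/(d^p)$ has Tor amplitude $[-1,0]$ over $A/p$, and freeness transfers this bound to $D_{A/p}(d)$. No limiting process is needed. If you want to salvage your filtration idea, the honest version is to note that $\mathcal{A}/p$ is a \emph{free} module over $(A/p)/(d^p)$ (equivalently, a direct sum of copies of a module of Tor amplitude $[-1,0]$), which is exactly the paper's observation.
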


\begin{proof}
It suffices to check the statement Zariski locally on $\Spf(A)$,
hence we may assume the prism to be oriented, say $I = (d)$.
Then we may base change to $A/p$.
So we need to check that given an $\mathbb{F}_p$-algebra $R$,
and a nonzerodivisor $d \in R$, the divided power algebra
$S = D_R(d)$ has Tor amplitude in $[-1,0]$ over $R$.
This follows from the fact that $d^p = 0$ in $S$ and
$S$ is a free $R/(d^p)$-module.
The commutation of tensor and totalization now follows from~\cite[Lemma 4.20]{BS19}.
\end{proof}

\subsection{Envelopes and derived de Rham cohomology}

Let $(A,I)$ be a bounded prism.
In this subsection we review derived de Rham complex of simplicial $A$-algebras relative to $A$.

First we want to spell out explicitly the process of freely adjoining divided powers
or delta powers of elements mentioned in \cite[subsections 2.5-2.6 and section 3]{BS19}.

\begin{construction}
\label{definition of envelopes}
(0) Recall $I$ is locally generated by a nonzerodivisor in $A$.
Let $A_i$ be an affine open cover of $\Spf(A)$ such that $I \cdot A_i = (d_i)$ where $d_i \in I$. 
There is an $A$-algebra $A[I \cdot x]$ by glueing $A_i[x_i]$'s via
$x_i = \frac{d_i}{d_j}x_j$, it has a surjection $A[I \cdot x] \twoheadrightarrow A$
by glueing maps $x_i \mapsto d_i$.
Alternatively one may directly define
\[
A[I \cdot x] \coloneqq \bigoplus_{n \geq 0} I^n
\]
with the evident surjection being the natural inclusion on each factor.
It can also be seen as the ring of functions on the total space of the line bundle $I^{-1}$
on $\Spec(A)$.

Similarly there is a $\delta$-$A$-algebra $A\{ I \cdot x\}$
by glueing $A_i\{x_i\}$'s via
$A_i\{x_i\} \otimes_{A_i} A_{ij} \xrightarrow{x_i \mapsto \frac{d_i}{d_j}x_j} 
A_j\{x_j\} \otimes_{A_j} A_{ij}$
with a surjection $A\{I \cdot x\} \twoheadrightarrow A$ by glueing maps
$x_i \mapsto d_i$.
Alternatively one may directly define
\[
A\{ I \cdot x\} \coloneqq \bigotimes_A^{m \geq 1} \big( \bigoplus_{n \geq 0} (\delta^m(I))^n \big)
\]
with the evident surjection being the natural map on each tensor factor.
This can also be seen as the ring of functions on the total space of an infinite rank vector bundle
on $\Spec(A)$.

Note that the above construction can be generalized to the case where $I$ is replaced
by a line bundle $\mathcal{L}$ on $\Spec(A)$.
In particular one can make sense of $A\{I^{-1} \cdot x\}$ and $A\{\varphi(I) \cdot x\}$.
We remark that there is a natural map $A\{x\} \to A\{I^{-1} \cdot y\}$ by glueing
the maps $x \mapsto d_i y_i$ which we short hand as $x \mapsto y$.

(1) Let $B$ be an $A$-algebra, let $f_1, \ldots, f_r$ be a finite set of elements in $B$.
The \emph{simplicial $B$-algebra obtained by freely adjoining divided powers of $f_i$}
is denoted
by $B \llangle f_i \rrangle$ and
defined to be the derived tensor of the following:
\[
\xymatrix{
\mathbb{Z}[x_1, \ldots, x_r] \ar[r]^-{x_i \mapsto f_i} \ar[d] & B \\
D_{\mathbb{Z}[x_1, \ldots, x_r]} (x_1, \ldots x_r). &
}
\]
The \emph{simplicial $A$-algebra obtained by freely adjoining divided powers of $I$}
is denoted by $A \llangle I \rrangle$ and defined to be the derived tensor of the following:
\[
\xymatrix{
A[I \cdot x] \ar[r]^-{x_i \mapsto d_i} \ar[d] & A \\
D_{A[I \cdot x]}(\ker(A[I \cdot x] \twoheadrightarrow A)), &
}
\]
alternatively one may define it as the glueing of the simplicial $A$-algebras
$A_i \otimes_{x \mapsto d_i, A[x]} D_{A[x]}(x)$.

The \emph{simplicial $B$-algebra obtained by freely adjoining divided powers of $I, f_i$},
denoted by $B \llangle I, f_i \rrangle$,
is defined as the derived tensor of the above two algebras over $A$.

(2) Let $B$ be a $\delta$-$A$-algebra, let $f_1, \ldots, f_r$ be a finite set of elements in $B$.
We define $B\{\frac{f_i}{p}\}$ as derived pushout of the following diagram of simplicial algebras:
\[
\xymatrix{
A\{x_1, \ldots, x_r\} \ar[r]^-{x_i \mapsto f_i} \ar[d]^-{x_i \mapsto p \cdot y_i} & B \\
A\{y_1, \ldots, y_r\}. &
}
\]
We define $A\{\varphi(I)/p\}$ as derived pushout of the following:
\[
\xymatrix{
A\{I \cdot x\} \ar[r]^-{\varphi} \ar[d]^{x \mapsto p \cdot y} & A \\
A\{I \cdot y\}, &
}
\]
alternatively one may define it as the glueing of the simplicial $\delta$-$A$-algebras
$A_i\{\frac{\varphi_A(d_i)}{p}\}$.

Analogously $B\{\frac{\varphi(I)}{p}, \frac{f_i}{p}\}$ is defined as derived tensoring the above
two algebras over $A$.

(3) Given a sequence $(f_1, \ldots, f_r)$ of elements inside a ring $B$, 
we use notation
\[
\dR_{B}(f_1, \ldots, f_r)^\wedge \coloneqq \dR_{\mathrm{Kos}(B; f_1, \ldots, f_r)/B}^\wedge
\]
to denote the derived $p$-completed
derived de Rham complex of $\mathrm{Kos}(B; f_1, \ldots, f_r)$,
viewed as a simplicial $B$-algebra, over $B$.

Similarly when $B$ is an $A$-algebra, we denote
\[
\dR_{B}(I)^\wedge \coloneqq \dR_{(B \otimes_A (A/I))/B}^\wedge,
\]
and 
\[
\dR_{B}(I, f_i)^\wedge \coloneqq \dR_{\big(\mathrm{Kos}(B; f_1, \ldots, f_r) \otimes_A (A/I)\big)/B}^\wedge.
\]
Let $J$ be an ideal inside $B$, then we denote
\[
\dR_B(J)^\wedge \coloneqq \dR_{(B/J)/B}^\wedge.
\]
Here all the completion are derived $p$-completion.
\end{construction}

\begin{remark}\label{rem-2.7}
(1) Let $B = A\{x\}^\wedge$, note that $x$ is $(p,I)$-completely regular relative to $A$.
Using \cite[Proposition 3.13]{BS19}, we can get a $B$-algebra $C \coloneqq B\{\frac{x}{I}\}^\wedge$
which is locally (on $\Spf(A)$ as one needs to trivialize the line bundle $I$) given by
$C = A\{y\}^\wedge$ together with $B$-algebra structure $x \mapsto d \cdot y$
where $d$ is the local generator of $I$.
One checks immediately that, in our notation here, we have $C \cong A\{I^{-1} \cdot y\}^\wedge$
with $B$-structure given by ($(p,I)$-completion of) $x \mapsto y$.

In fact, after examining the proof of \cite[Proposition 3.13]{BS19}, one finds that
in the situation described in loc.~cit.~, the algebra $B\{\frac{J}{I}\}^\wedge$
is the derived $(p,I)$-complete pushout of the following diagram:
\[
\xymatrix{
A\{x_1, \ldots, x_r\} \ar[r] \ar[d]^-{x_i \mapsto y_i} & B \\
A\{I^{-1} \cdot y_i\}. &
}
\]

(2) We warn readers that when $J = (f_1, \ldots, f_r)$ is an ideal inside $B$, 
the two simplicial $B$-algebras $\dR_B(J)^\wedge$ and $\dR_B(f_1, \ldots, f_r)^\wedge$
are usually different.
These two agree when $(f_i)$ is a $p$-completely Koszul regular sequence.
\end{remark}

Below we shall see the relation between derived de Rham complex, divided power envelopes,
and prismatic envelopes which directly follows from \cite[Subsection 2.5]{BS19}.

\begin{lemma}
\label{three envelopes}
(1) Let $B$ be an $A$-algebra, let $\{f_1, \ldots, f_r\}$ be a finite set of elements of $B$.
Then we have the following identification of derived $p$-complete simplicial $B$-algebras:
\[
\dR_B(f_1, \ldots, f_r)^\wedge \cong
B \llangle f_i \rrangle^\wedge.
\]
Similarly we have an identification:
\[
\dR_B(I)^\wedge \cong
B \llangle I \rrangle^\wedge.
\]

(2) Let $B$ be a $\delta$-$A$-algebra, let $\{f_1, \ldots, f_r\}$ be a finite set of elements of $B$.
Then we have the following identification of derived $p$-complete simplicial $B$-algebras:
\[
B \llangle f_i \rrangle^\wedge \cong
B\{\frac{\varphi(f_i)}{p}\}^\wedge.
\]
Similarly we have an identification:
\[
B \llangle I \rrangle^\wedge \cong
B\{\frac{\varphi(I)}{p}\}^\wedge.
\]
\end{lemma}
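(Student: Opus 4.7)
The plan is to assemble both identifications from two standard inputs: base change for derived de Rham cohomology together with Illusie's identification of derived de Rham cohomology along a regular closed immersion with the $p$-completed divided power envelope, and the classical $\delta$-ring calculation relating divided powers to dividing Frobenius by $p$, as indicated in \cite[Subsection 2.5]{BS19}.

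For part (1), I first treat the case of finitely many elements. Presenting $\mathrm{Kos}(B; f_1, \ldots, f_r)$ as the derived pushout $B \otimes^{\BL}_{\mathbb{Z}[x_1, \ldots, x_r]} \mathbb{Z}$, where the two maps send $x_i$ to $f_i$ and to $0$ respectively, base change for derived de Rham cohomology yields
\[
\dR_{B}(f_1, \ldots, f_r)^\wedge \;\cong\; B \,\widehat{\otimes}^{\BL}_{\mathbb{Z}[x_1, \ldots, x_r]}\, \dR_{\mathbb{Z}/\mathbb{Z}[x_1, \ldots, x_r]}^\wedge,
\]
since $\dR_{B/B}^\wedge \cong B$. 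Illusie's theorem \cite[Ch. VIII]{Ill72} identifies $\dR_{\mathbb{Z}/\mathbb{Z}[x_1,\ldots,x_r]}^\wedge$ with the $p$-completion of $D_{\mathbb{Z}[x_1,\ldots,x_r]}(x_1,\ldots,x_r)$, and comparing with \Cref{definition of envelopes}(1) yields the claim. The statement for $\dR_B(I)^\wedge$ is proved identically, replacing $\mathbb{Z}[x_1,\ldots,x_r] \twoheadrightarrow \mathbb{Z}$ by $A[I \cdot x] \twoheadrightarrow A$, using that the latter is Zariski locally a regular closed immersion so that Illusie's identification applies.

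For part (2), the key input is the standard $\delta$-ring calculation: if $B$ is a $\delta$-$A$-algebra and $f \in B$, then $\varphi(f) = f^p + p\, \delta(f)$, so formally $\varphi(f)/p - \delta(f) = f^p/p = (p-1)!\cdot \gamma_p(f)$, and $(p-1)! \equiv -1 \pmod{p}$ is a unit by Wilson's theorem. Consequently, after $p$-completion, freely adjoining $\varphi(f_i)/p$ to $B$ as a $\delta$-$A$-algebra produces the same object as freely adjoining divided powers of $f_i$: both are presented as the same pushout up to this invertible rescaling, and the comparison is easy to write down by identifying the defining pushout squares of \Cref{definition of envelopes}(1) and (2). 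For the $I$ version one glues this identification over the local generators $d_i$ of $I$ from \Cref{definition of envelopes}(0), using that the transitions $x_i = (d_i/d_j)\, x_j$ intertwine the two constructions compatibly with the $\delta$-structure on both sides.

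The main technical point is Zariski glueing for the $I$-statements: both sides are defined as glueings of local principal-ideal models, and one must check that the natural identifications constructed above commute with the transition maps. Since base change of derived de Rham cohomology, the $p$-complete PD envelope, and the construction $B \mapsto B\{\varphi(I)/p\}^\wedge$ are all local on $\Spf(A)$ and the relevant identifications are natural in $A$, this reduces to tracking the line bundle structure through the pushout diagrams in \Cref{definition of envelopes}, which is routine but is the only step requiring genuine bookkeeping.
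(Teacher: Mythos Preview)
Your approach is essentially the same as the paper's: reduce by base change to a universal situation and then invoke the Illusie/Bhatt comparison for (1) and the $\delta$-ring computation of \cite[\S 2.5]{BS19} for (2). Two small points deserve attention.

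For the $I$-statement in (1), your argument replaces $\mathbb{Z}[x_i] \twoheadrightarrow \mathbb{Z}$ by $A[I \cdot x] \twoheadrightarrow A$ and appeals to ``Illusie's identification'' for the latter. But Bhatt's comparison \cite[Theorem 3.27]{Bha12} requires both rings to be $\mathbb{Z}_p$-flat, and a bounded prism $A$ need not be $p$-torsion free; a regular closed immersion alone is not enough. The paper fixes this by first base-changing to the initial oriented prism (which is $p$-torsion free), whereupon both the $f_i$ case and the $I$ case reduce to a single variable over a $\mathbb{Z}_p$-flat base. Alternatively, you could argue Zariski locally that $I = (d)$ with $d$ a nonzerodivisor, so $\dR_B(I)^\wedge$ agrees locally with $\dR_B(d)^\wedge$, and then your $f_i$ argument with $r = 1$ over $\mathbb{Z}[x]$ applies directly; this avoids ever needing $A$ itself to be $p$-torsion free.

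For (2), the relation $\varphi(f)/p - \delta(f) = (p-1)!\,\gamma_p(f)$ is indeed the seed of the argument, but by itself it only produces $\gamma_p(f)$, not all $\gamma_n(f)$. The full identification requires an induction using the $\delta$-structure to manufacture higher divided powers, which is exactly the content of \cite[Lemma 2.36]{BS19}; the paper simply cites that lemma. Your sketch is correct in spirit but should either carry out that induction or point to the lemma rather than presenting the first step as the whole story.
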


\begin{proof}
By base change property of their constructions, we reduce ourselves to
the case where $B = A\{x_1, \ldots, x_r\}$ with $f_i = x_i$.
Again by base change we may assume $A$ is the initial oriented prism,
in particular it is flat over $\mathbb{Z}_p$ and $I = (d)$ is generated by a nonzerodivisor.
So we can focus on the case concerning finite set of elements of $B$,
we may further reduce to the case where the set is a singleton.

Now the identification in (1) follows from (the limit version of) \cite[Theorem 3.27]{Bha12}
and \cite[Th\'{e}or\`{e}me V.2.3.2]{crystal2}.
The identification in (2) follows from \cite[Lemma 2.36]{BS19}.
\end{proof}

We deduce a consequence concerning the Tor amplitude of $\dR_A(I)^\wedge$ over $A$,
generalizing \Cref{calA Tor dim 1}.

\begin{lemma}
\label{dR Tor dim 1}
Let $(A,I)$ be a prism. 
Then $A \to \dR_A(I)^\wedge$ has $p$-complete amplitude
in $[-1,0]$, in particular $p$-completely base changing along $A \to \dR_A(I)^\wedge$
commutes with taking totalizations in $D^{\geq 0}(A)$.
\end{lemma}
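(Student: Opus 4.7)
The plan is to reduce the question to an essentially combinatorial Zariski local calculation via the explicit presentation of $A\llangle I \rrangle$ given in \Cref{definition of envelopes}(1). Since $p$-complete Tor amplitude can be checked after flat base change, and since $I$ is an invertible ideal, we may work Zariski locally and assume $I = (d)$ for some nonzerodivisor $d \in A$. By \Cref{three envelopes}(1), $\dR_A(I)^\wedge \simeq A\llangle I\rrangle^\wedge$, and locally \Cref{definition of envelopes}(1) presents this as the $p$-completion of the derived tensor
\[
A\llangle I\rrangle \;=\; A \otimes^{\BL}_{A[x]} D_{A[x]}(x-d),
\]
where $A[x] \to A$ sends $x \mapsto d$ so that $\ker(A[x] \twoheadrightarrow A) = (x-d)$.

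Since $x-d$ is monic in $A[x]$, it is a nonzerodivisor, and the Koszul complex $\bigl[A[x] \xrightarrow{\cdot(x-d)} A[x]\bigr]$ furnishes a free $A[x]$-resolution of $A$. Tensoring this resolution with $D_{A[x]}(x-d)$ gives the two-term model
\[
A\llangle I\rrangle \;\simeq\; \Bigl[\, D_{A[x]}(x-d) \xrightarrow{\cdot(x-d)} D_{A[x]}(x-d) \,\Bigr]
\]
sitting in cohomological degrees $[-1, 0]$. The substitution $y := x - d$ identifies $D_{A[x]}(x-d)$ with $A\langle y\rangle$, which is \emph{free} over $A$ on the basis $\{y^{[n]}\}_{n \geq 0}$ of divided powers. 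Hence $A\llangle I \rrangle$ is a two-term complex of free $A$-modules in amplitude $[-1, 0]$, so it has honest Tor amplitude $[-1, 0]$ over $A$.

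Derived $p$-completion only weakens this statement: by derived Nakayama, we have $\dR_A(I)^\wedge \otimes^{\BL}_A A/p \simeq A\llangle I\rrangle \otimes^{\BL}_A A/p$, and the right hand side has Tor amplitude $[-1, 0]$ over $A/p$. Therefore $A \to \dR_A(I)^\wedge$ has $p$-complete Tor amplitude $[-1, 0]$. The commutation with totalizations in $D^{\geq 0}(A)$ then follows from \cite[Lemma 4.20]{BS19}, exactly as in the proof of \Cref{calA Tor dim 1}. I do not anticipate any serious obstacle. The only conceptual point worth noting is that, in contrast to the transversal case treated in \Cref{calA Tor dim 1}—where one argues modulo $p$ and exploits $d^p = 0$ in the resulting PD envelope—here we sidestep all $p$-torsion subtleties by producing the two-term Koszul model over $A[x]$ \emph{before} any reduction modulo $p$.
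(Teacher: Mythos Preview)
Your proof is correct but follows a genuinely different route from the paper's.

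The paper argues by base change: after Zariski localizing to orient the prism, it reduces to the initial oriented prism, which is transversal; there one identifies $\dR_A(I)^\wedge$ with the classical $p$-completed PD envelope $D_A(I)^\wedge$ and invokes \Cref{calA Tor dim 1}, whose proof reduces modulo $p$ and uses that in characteristic $p$ the PD envelope $D_R(d)$ satisfies $d^p=0$ and is free over $R/(d^p)$.

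Your argument instead produces an explicit two-term Koszul model $[A\langle y\rangle \xrightarrow{\cdot y} A\langle y\rangle]$ for $A\llangle I\rrangle$ directly over the given prism $A$, with both terms free as $A$-modules. This avoids the reduction to the transversal case in the final step and in fact yields the marginally stronger statement that $A\llangle I\rrangle$ (before $p$-completion) already has honest Tor amplitude $[-1,0]$ over $A$. The trade-off is that you still rely on \Cref{three envelopes}(1) to identify $\dR_A(I)^\wedge$ with $A\llangle I\rrangle^\wedge$, and the proof of that lemma itself reduces to the universal oriented prism; so the dependence on the transversal case is not completely eliminated, only relocated. Still, your conclusion from that point is cleaner: no characteristic-$p$ structure theory of PD envelopes is needed, just the observation that $D_{A[x]}(x-d)\cong A\langle y\rangle$ is $A$-free.

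One minor remark: your phrase ``by derived Nakayama'' is slightly imprecise. What you are really using is that the cofiber of the completion map $A\llangle I\rrangle \to A\llangle I\rrangle^\wedge_p$ has $p$ acting invertibly, so tensoring with any bounded $p$-power-torsion $A$-module kills it; hence $\dR_A(I)^\wedge \otimes^{\BL}_A N \simeq A\llangle I\rrangle \otimes^{\BL}_A N$ for such $N$, and your two-term free model then places this in $D^{[-1,0]}$. This is standard and causes no trouble.
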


\begin{proof}
We may check this statement locally on $\Spf(A)$, hence we may assume $I = (d)$.
Next, by base change, we may assume $A$ to be the initial oriented prism,
in particular we may assume it to be transversal.
Using \Cref{three envelopes} (1), we see now
$\dR_A(I)^\wedge$ is the $p$-completion of the divided power envelope $D_A(I)^\wedge$.
This reduces the Lemma to \Cref{calA Tor dim 1}.
\end{proof}

We also have a prototype base change formula which will be used in the next section
to establish a general comparison.

\begin{lemma}
\label{naive base change}
Let $(A,I)$ be a prism, denote the composition 
$A\{x\} \xrightarrow{\varphi_{A}, x \mapsto \varphi(z)} A\{z\} \to \dR_{A\{z\}}(I)^\wedge$ by $f$.
Then we have a base change formula:
\[
A\{I^{-1} \cdot x\} \widehat{\otimes}_{A\{x\}, f} \dR_{A\{z\}}(I)^\wedge
\cong \dR_{A\{z\}}(I,z)^\wedge,
\]
\end{lemma}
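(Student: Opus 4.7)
The plan is to compute the LHS pushout in local coordinates on $(A,I)$, exploit the Frobenius-semilinearity of $f$ on the base $A$ to pick up a factor of $\varphi(d)$ in the identification defining the pushout, and then use that $\varphi(d)$ is $p$ times a unit in the $p$-complete PD envelope $\dR_{A\{z\}}(I)^\wedge$, so that the pushout reduces to the $\delta$-algebra obtained by adjoining $\varphi(z)/p$ — which by \Cref{three envelopes} is exactly the RHS.

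First I would work Zariski-locally where $I = (d)$ with $d$ distinguished, so that $A\{I^{-1}\cdot x\}$ is $A\{y\}^\wedge$ equipped with the $A\{x\}$-algebra structure $x \mapsto dy$, as recorded in \Cref{rem-2.7}. Writing $P := A\{I^{-1}\cdot x\} \widehat{\otimes}_{A\{x\}, f} \dR_{A\{z\}}(I)^\wedge$, the tensor identification $(rm)\otimes n = m \otimes f(r)n$ applied to $r = x \in A\{x\}$ yields $dy \otimes 1 = 1 \otimes \varphi(z)$, whereas applied to $r = a \in A$ yields that the two $A$-algebra structures on $P$ (from the two tensor factors) are related by Frobenius. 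Combining these, $dy \otimes 1$ rewritten using the $A$-structure from the $\dR_{A\{z\}}(I)^\wedge$ factor becomes $\varphi(d)\cdot(y \otimes 1)$, so $P$ is the universal $(p,I)$-complete $\delta$-$\dR_{A\{z\}}(I)^\wedge$-algebra adjoining an element $y$ satisfying $\varphi(d)\cdot y = \varphi(z)$.

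Next I would verify that $\varphi(d)/p$ is a unit in $\dR_{A\{z\}}(I)^\wedge$. Using the PD identity $d^p = p!\,\gamma_p(d)$ combined with $\varphi(d) = d^p + p\delta(d)$, one finds $\varphi(d) = p\bigl(\delta(d) + (p-1)!\,\gamma_p(d)\bigr)$, and the bracketed factor reduces modulo $(p)$ and the PD ideal to $\delta(d)$, which is a unit by distinguishedness of $d$. Derived $(p,I)$-completeness then lifts this to a genuine unit $u$ of the envelope. Hence $\varphi(d) = pu$, so adjoining $y$ with $\varphi(d) y = \varphi(z)$ is the same as adjoining $y' := uy$ with $py' = \varphi(z)$; that is, $P \cong \dR_{A\{z\}}(I)^\wedge\{\varphi(z)/p\}^\wedge$. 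Part (2) of \Cref{three envelopes} identifies this with $\dR_{A\{z\}}(I)^\wedge\llangle z\rrangle^\wedge$, and combining with part (1) of \Cref{three envelopes} together with the compatibility of successive PD-adjoining yields $A\{z\}\llangle I, z\rrangle^\wedge = \dR_{A\{z\}}(I, z)^\wedge$.

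The main obstacle will be carefully tracking the two $A$-algebra structures on $P$ induced by the Frobenius-semilinearity of $f$ — particularly the identification $a_M = \varphi_A(a)_N$ that upgrades the naive relation $dy = \varphi(z)$ into $\varphi(d)\cdot y = \varphi(z)$ — and then globalizing the oriented-local argument to general bounded prisms via the glueing description of $A\{I^{-1}\cdot x\}$ given in \Cref{definition of envelopes}.
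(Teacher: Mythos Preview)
Your proof is correct and follows essentially the same approach as the paper: both reduce Zariski-locally to the oriented case, identify the pushout using the $\delta$-algebra envelopes of \Cref{three envelopes}, and hinge on the observation that $\varphi(d)/p$ is a unit in $\dR_{A\{z\}}(I)^\wedge$. The organizational difference is minor: the paper factors the computation into two pushout squares (first identifying $A\{I^{-1}\cdot x\}\widehat\otimes_{A\{x\}}A\{z\}$ with $A\{z\}\{\varphi(z)/\varphi(I)\}$, then pushing out along $A\{z\}\to A\{z\}\{\varphi(I)/p\}$), whereas you read off the universal property of $P$ as an $N$-algebra directly; both routes land on the same unit computation.

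One small caution: your ``tensor identification $(rm)\otimes n = m\otimes f(r)n$'' phrasing is only literally valid for underived tensor products of discrete rings, whereas the objects here are a priori derived. You should either recast that paragraph as a statement about the universal property of the derived pushout (which is what you actually use: maps out of $P$ over $N$ are elements $y$ with $\varphi_A(d)\cdot y = \varphi(z)$), or, as the paper does, first base-change to the initial oriented prism where every ring in sight is discrete and the naive tensor manipulations are justified.
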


Here the completion on the left hand side is derived $p$-completion. 
As $\varphi(I) = (p)$ inside $\pi_0(\dR_{A\{z\}}(I)^\wedge)$, it is the same
as derived $(p,I)$-completion when viewed as an $A$-complex
via $\varphi_A \colon A \to \dR_A(I)^\wedge$.

\begin{proof}
Note that by \Cref{three envelopes} we have identifications
$\dR_{A\{z\}}(I)^\wedge \cong A\{z\}\{\frac{\varphi(I)}{p}\}^\wedge$
as $p$-complete simplicial $A\{z\}$-algebras.
%The latter can be identified with freely adjoining $\varphi(I)/p$ to $A\{z\}$ as
%$p$-complete simplicial $\delta$-$A\{z\}$-algebras, denoted by $A\{z\}\{\frac{\varphi(I)}{p}\}^\wedge$,
%see \cite[Lemma 2.36 and Remark 2.37]{BS19}.
Similarly we can identify $\dR_{A\{z\}}(I,z)^\wedge$ with 
$A\{z\}\{\frac{\varphi(z)}{p}, \frac{\varphi(I)}{p}\}^\wedge$.

Now we look at the following diagram
\[
\xymatrix{
A\{x\} \ar[r] \ar[d] & A\{z\} \ar[r] \ar[d] & A\{z\}\{\frac{\varphi(I)}{p}\} \ar[d] \\
A\{I^{-1} \cdot x\} \ar[r] & A\{z\}\{\frac{\varphi(z)}{\varphi(I)}\} \ar[r] & A\{z\}\{\frac{\varphi(z)}{p}, \frac{\varphi(I)}{p}\}.
}
\]
The left square is a pushout diagram by definition.
Hence it suffices to show the right square, after derived $p$-completion,
is also a pushout diagram of $p$-complete simplicial $A\{z\}$-algebras.

To that end, we may work Zariski locally on $A$, so we can assume $I = (d)$
is generated by one element.
This square is the base change of the same diagram when $A$ is the
initial oriented prism, so we have reduced ourselves to that case.
Now every ring in sight is discrete, and the $p$-completed square is a pushout diagram
because $\varphi(d)$ and $p$ differ by a unit inside 
$A\{\frac{\varphi(d)}{p}\}^\wedge \cong D_A(d)^\wedge$.
\end{proof}

In \cite[Proposition 3.25]{Bha12}, for any $p$-complete $A$-algebra $B$,
Bhatt constructed a natural map
\[
\mathcal{C}\mathrm{omp}_{B/A} \colon \dR_{B/A}^\wedge \to \mathrm{R\Gamma_{crys}}(B/A).
\]
Here the right hand side denotes the $p$-complete crystalline cohomology
defined using PD thickenings of $B$ relative to $(A, (p), \gamma)$ 
with $\gamma_i(p) = p^i/i!$.
This natural map is functorial in $A \to B$ and agrees with Berthelot's 
de Rham--crystalline comparison \cite[Th\'{e}or\`{e}me IV.2.3.2]{crystal2}
when it is formally smooth (viewed as $p$-adic algebras).
It is shown that when both $A$ and $B$ are flat over $\mathbb{Z}_p$ and 
$A \to B$ is $p$-completely locally complete intersection, then the natural map
above is an isomorphism \cite[Theorem 3.27]{Bha12}.

For our purpose we shall be interested in the situation where $B$ is formally smooth
over $A/I$, we cannot summon the above Theorem in loc.~cit.~to say that the natural
map in this situation is an isomorphism.
In fact, when $B = A/I$ the left hand side is $\dR_A(I)^\wedge$ and the right hand side
is the classical $p$-adic completion of the PD envelope of $A$ along $I$ (compatible with the natural PD structure on $(A, (p))$),
denoted as $\mathcal{A}$  \footnote{This notation agrees with the previous subsection as we assumed $(A,I)$ to be a transversal prism there.}.
These two need not be the same, e.g.~take $A = \mathbb{Z}_p$ and $I = (p)$,
then $\dR_A(I)^\wedge = \mathbb{Z}_p[T^i/i!]^\wedge/(T-p)$ but $\mathcal{A} = \mathbb{Z}_p$.
However this turns out to be the only problem.

\begin{proposition}
\label{dR and crys}
Let $B$ be an $A/I$-algebra.

(1) If $B$ is formally smooth over $A/I$, then we have a natural identification
\[
\mathrm{R\Gamma_{crys}}(B/A) \cong \mathrm{R\Gamma_{crys}}(B/\mathcal{A}),
\]
where the right hand side is the usual crystalline cohomology of $\Spf(B)$ over
the PD base $\mathcal{A}$.

(2) There is a natural map
\[
\mathrm{Comp}_{B/A} \colon \dR_{B/A}^\wedge \widehat\otimes_{\dR_A(I)^\wedge} \mathcal{A}
\to \mathrm{R\Gamma_{crys}}(B/\mathcal{A}),
\]
which is functorial in $A/I \to B$.

(3) If $B$ is formally smooth over $A/I$, then the above is an isomorphism.
\end{proposition}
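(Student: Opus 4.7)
For part (1), I would directly establish an equivalence between the crystalline sites $(B/A)_{\cris}$ and $(B/\mathcal{A})_{\cris}$. Given any PD thickening $(U \hookrightarrow T, \gamma)$ of an affine open $U \subset \Spec(B)$ over the PD base $(A, (p))$, the composite $A \to T \twoheadrightarrow \mathcal{O}_U$ factors through $A/I$, so the ideal $I$ lands inside the PD ideal $J_T \subset T$. The universal property of the classical PD envelope $\mathcal{A}$ together with its PD ideal $\mathcal{I}$ (compatible with $(p)$) then yields a unique PD extension $\mathcal{A} \to T$. Conversely, any PD thickening over $\mathcal{A}$ is automatically one over $A$ via $A \to \mathcal{A}$. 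The resulting equivalence of sites then produces the claimed isomorphism of cohomologies.

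For part (2), I would construct the comparison map by combining three natural ingredients: Bhatt's derived-to-crystalline map $\mathcal{C}\mathrm{omp}_{B/\mathcal{A}} \colon \dR_{B/\mathcal{A}}^\wedge \to \mathrm{R\Gamma}_{\cris}(B/\mathcal{A})$; the transitivity morphism $\dR_{B/A}^\wedge \to \dR_{B/\mathcal{A}}^\wedge$ induced by $A \to \mathcal{A}$; and the natural classicalization map $\dR_A(I)^\wedge \to \mathcal{A}$ from the derived to the classical PD envelope. By functoriality of the derived de Rham complex, these fit into a commutative square
\[
\xymatrix{
\dR_A(I)^\wedge \ar[r] \ar[d] & \dR_{B/A}^\wedge \ar[d] \\
\mathcal{A} \ar[r] & \dR_{B/\mathcal{A}}^\wedge,
}
\]
inducing a natural map $\dR_{B/A}^\wedge \widehat\otimes_{\dR_A(I)^\wedge} \mathcal{A} \to \dR_{B/\mathcal{A}}^\wedge$; composing with $\mathcal{C}\mathrm{omp}_{B/\mathcal{A}}$ yields the desired comparison, and functoriality in $A/I \to B$ is immediate.

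For part (3), when $B$ is formally smooth over $A/I$, I would show that both sides compute the de Rham complex $\Omega^\bullet_{\tilde{B}/\mathcal{A}}$ of a $p$-adic formally smooth lift $\tilde{B}$ of $B$ over $\mathcal{A}$, which exists Zariski locally because $A/I = \mathcal{A}/\mathcal{I}$. The right-hand side $\mathrm{R\Gamma}_{\cris}(B/\mathcal{A})$ equals $\Omega^\bullet_{\tilde{B}/\mathcal{A}}$ by the classical de Rham--crystalline comparison; meanwhile, Bhatt's theorem together with the identification $\dR_{(A/I)/\mathcal{A}}^\wedge \cong \mathcal{A}$ (valid because $\mathcal{I} \subset \mathcal{A}$ already carries a PD structure) yields $\dR_{B/\mathcal{A}}^\wedge \cong \Omega^\bullet_{\tilde{B}/\mathcal{A}}$. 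It thus suffices to verify that the map $\dR_{B/A}^\wedge \widehat\otimes_{\dR_A(I)^\wedge} \mathcal{A} \to \dR_{B/\mathcal{A}}^\wedge$ from part (2) is an isomorphism, which I would establish by resolving $B$ Zariski-locally as a $p$-completed polynomial algebra $(A/I)\{x_1, \ldots, x_n\}^\wedge$ and reducing to the prototype base change in \Cref{naive base change}.

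The principal obstacle is this last identification in part (3): showing that base change of $\dR_{B/A}^\wedge$ along the classicalization $\dR_A(I)^\wedge \to \mathcal{A}$ is compatible with restricting the base from $A$ to $\mathcal{A}$. Transitivity of derived de Rham across non-smooth base changes is delicate, and for a general bounded prism (where $\dR_A(I)^\wedge \to \mathcal{A}$ need not be an isomorphism) one must carefully exploit the identification $\dR_A(I)^\wedge \cong A\llangle I\rrangle^\wedge$ from \Cref{three envelopes}, together with the interaction between the conjugate filtration on derived de Rham and the classicalization of divided powers.
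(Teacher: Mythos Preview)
Your approach to (1) is fine and essentially agrees with the paper's (the paper phrases it via identifying PD envelopes in the \v{C}ech--Alexander complex rather than via an equivalence of sites, but the underlying observation is the same: $I$ maps into the PD ideal of any thickening of $B$, so $A \to T$ extends uniquely to $\mathcal{A} \to T$).

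However, your route through $\dR_{B/\mathcal{A}}^\wedge$ in (2)--(3) contains a genuine error. The claimed identification $\dR_{(A/I)/\mathcal{A}}^\wedge \cong \mathcal{A}$ is \emph{false} in general. Take $(A,I) = (\mathbb{Z}_p,(p))$: then $\mathcal{A} = \mathbb{Z}_p$ (since $(p)$ already carries a PD structure), while $\dR_{(A/I)/\mathcal{A}}^\wedge = \dR_{\mathbb{F}_p/\mathbb{Z}_p}^\wedge = D$ is the object computed in \Cref{example computing Frobenius on dR}(1), which is \emph{not} $\mathbb{Z}_p$. The same example shows that your intermediate map $\dR_{B/A}^\wedge \widehat\otimes_{\dR_A(I)^\wedge} \mathcal{A} \to \dR_{B/\mathcal{A}}^\wedge$ fails to be an isomorphism already for $B = A/I$: the source is $\mathcal{A}$ and the target is $D$. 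So neither of the two halves of your factorization in (3) is an isomorphism, even though (as the proposition asserts) their composite is. The point is precisely that $\mathcal{A} \to B$ is not $p$-completely lci, so Bhatt's comparison $\mathcal{C}\mathrm{omp}_{B/\mathcal{A}}$ does not apply.

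The paper avoids this detour entirely. For (2) it uses $\mathcal{C}\mathrm{omp}_{B/A}$ over $A$ directly: functoriality in $A \to B$ shows this map is linear over $\dR_A(I)^\wedge \to \mathrm{R\Gamma_{crys}}((A/I)/A) = \mathcal{A}$, which immediately gives the factorization through the tensor product. For (3) it lifts $B$ to a formally smooth $\tilde{B}$ over $A$ (not over $\mathcal{A}$!), and then the K\"unneth formula $\dR_{\tilde{B}/A}^\wedge \widehat\otimes_A \dR_A(I)^\wedge \cong \dR_{B/A}^\wedge$ together with Berthelot's de Rham--crystalline comparison for the smooth map $A \to \tilde{B}$ and base change for crystalline cohomology close the argument in a single commutative square. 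Lifting over $A$ rather than $\mathcal{A}$ is exactly what makes everything clean.
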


\begin{proof}
(1) is an easy consequence of the fact that $B$ is an $A/I$-algebra.
In fact we only need $A/I \to B$ to be a local complete intersection.
Indeed we use \v{C}ech--Alexander complex to compute both crystalline cohomology,
and one reduces to the following:
Let $P$ be a polynomial $A$-algebra with a surjection $P \twoheadrightarrow B$ of $A$-algebras, 
then there is a naturally induced surjection $P \otimes_A \mathcal{A} \twoheadrightarrow B$ of $\mathcal{A}$-algebras,
and we have an identification of PD envelopes
\[
D_{(A,(p), \gamma)}(P \twoheadrightarrow B) = D_{(\mathcal{A},\mathcal{I}, \gamma)}(P \otimes_A \mathcal{A} \twoheadrightarrow B).
\]

(2) The functoriality of Bhatt's $\mathcal{C}\mathrm{omp}_{B/A}$ asserts that the map
is compatible with the natural map $\dR_A(I)^\wedge \to \mathcal{A}$, 
hence we get our natural map $\mathrm{Comp}_{B/A}$.

(3) Choose a formal lift $\tilde{B}$ over $A$ (note that $A$ is $(p,I)$-complete).
By the functoriality of Bhatt's
$\mathcal{C}\mathrm{omp}_{B/A}$, we get the following commutative diagram:
\[
\xymatrix{
\dR_{\tilde{B}/A}^\wedge \widehat\otimes_A \mathcal{A} \ar[r] \ar[d] &
\mathrm{R\Gamma_{crys}}(\tilde{B}/A) \widehat\otimes_A \mathcal{A} \ar[d] \\
\dR_{B/A}^\wedge \widehat\otimes_{\dR_A(I)^\wedge} \mathcal{A} \ar[r] &
\mathrm{R\Gamma_{crys}}(B/\mathcal{A}).
}
\]
The top horizontal arrow is an isomorphism by Berthelot's 
de Rham-crystalline comparison.
The left vertical arrow is an isomorphism by the K\"{u}nneth formula of derived de Rham
complex: $\dR_{\tilde{B}/A}^\wedge \widehat\otimes_A \dR_A(I)^\wedge \cong \dR_{B/A}^\wedge$.
The right vertical arrow is an isomorphism by base change formula of crystalline cohomology.
Therefore we conclude that the bottom horizontal arrow, which is our $\mathrm{Comp}_{B/A}$,
must also be an isomorphism.
\end{proof}

The above proposition and Bhatt's results discussed before suggest that derived de Rham
complex is a substitute of crystalline cohomology.
Inspired by this philosophy, below let us show that derived de Rham complex only
``depends on the reduction mod $p$ of the input algebra''.
We need to introduce some notations first: denote the $p$-adic derived de Rham complex
$\dR_{\mathbb{F}_p/\mathbb{Z}_p}^\wedge$ by $D$.
Bhatt's result implies that the natural map $\mathbb{Z}_p \to D$ admits a retraction $D \to \mathbb{Z}_p$.
In \Cref{example computing Frobenius on dR} (1) below,
one finds a detailed description of $D$.

\begin{remark}
In fact, one can show that $D$ is the $p$-complete PD envelope of $\mathbb{Z}_p$ along the ideal $(p)$ by $D$.
Moreover under this identification one can easily see that the retraction above is unique, 
and is given by the fact that there is a unique PD structure on $(\mathbb{Z}_p, (p))$ (as $\mathbb{Z}_p$ has no $p$-torsion).
Notice that when taking PD envelope, one has to fix a PD base ring,
and we always take it to be the trivial PD ring $(\mathbb{Z}_p, (0), \gamma_{\mathrm{triv}})$
when we say PD envelope without mentioning a PD base ring.
\end{remark}

\begin{proposition}
\label{dependence on special fiber}
Let $R$ be a ring with its derived $p$-completion $R^\wedge$, 
let $B$ be a simplicial $R$-algebra. Then there is a natural isomorphism:
\[
\dR_{\mathrm{Kos}(B; p)/R}^\wedge \widehat{\otimes}_{D} \mathbb{Z}_p
\cong \dR_{B/R}^\wedge
\]
which is functorial in $R \to B$.
\end{proposition}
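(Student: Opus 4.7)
The plan is to apply the K\"{u}nneth and base-change properties of derived de Rham cohomology to produce a decomposition
$\dR_{\mathrm{Kos}(B;p)/R}^\wedge \cong \dR_{B/R}^\wedge \widehat\otimes^\BL_{\Z_p} D$,
in which the $D$-factor precisely records the "extra information" that is collapsed when we tensor along the retraction $D \to \Z_p$.

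The first step is to rewrite the Koszul algebra over the base. Since $\mathrm{Kos}(-;p) = (-)\otimes^\BL_{\Z} \FF_p$ and tensor products compose, we have a natural isomorphism of simplicial $R$-algebras
\[
\mathrm{Kos}(B;p) \;\cong\; B\otimes^\BL_R \mathrm{Kos}(R;p),
\]
and similarly $\mathrm{Kos}(R;p)\cong R\otimes^\BL_{\Z}\FF_p$. Now I would apply the K\"{u}nneth formula for derived de Rham cohomology (i.e.\ that $\dR_{-/A}$ is symmetric monoidal, proved by left Kan extension from polynomial algebras where it is classical), specialized to the two $R$-algebras $B$ and $\mathrm{Kos}(R;p)$. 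After derived $p$-completion this yields
\[
\dR_{\mathrm{Kos}(B;p)/R}^\wedge \;\cong\; \dR_{B/R}^\wedge \,\widehat\otimes^\BL_R\, \dR_{\mathrm{Kos}(R;p)/R}^\wedge .
\]

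The second step is to compute the second factor. Base change of derived de Rham cohomology along $\Z\to R$, applied to the $\Z$-algebra $\FF_p$, gives
\[
\dR_{\mathrm{Kos}(R;p)/R} \;\cong\; R\otimes^\BL_{\Z} \dR_{\FF_p/\Z}.
\]
After derived $p$-completion, and using that the cotangent complex $\LL_{\FF_p/\Z}\cong \LL_{\FF_p/\Z_p}$ (so that $\dR_{\FF_p/\Z}^\wedge \cong \dR_{\FF_p/\Z_p}^\wedge = D$), this identifies $\dR_{\mathrm{Kos}(R;p)/R}^\wedge$ with $R^\wedge \widehat\otimes^\BL_{\Z_p} D$. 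Plugging this into the K\"{u}nneth decomposition and using that $\dR_{B/R}^\wedge$ is already a module over $R^\wedge$, one gets
\[
\dR_{\mathrm{Kos}(B;p)/R}^\wedge \;\cong\; \dR_{B/R}^\wedge \,\widehat\otimes^\BL_{\Z_p}\, D .
\]
The final step is then immediate: tensoring along the retraction $D\to \Z_p$ gives
\[
\dR_{\mathrm{Kos}(B;p)/R}^\wedge \,\widehat\otimes^\BL_{D}\, \Z_p \;\cong\; \dR_{B/R}^\wedge \,\widehat\otimes^\BL_{\Z_p}\, \Z_p \;\cong\; \dR_{B/R}^\wedge .
\]
Functoriality in $R\to B$ is automatic since each ingredient (K\"{u}nneth, base change of $\dR$, derived $p$-completion) is functorial.

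The main thing one has to be careful about is the interaction of the derived $p$-completion with the tensor products: one needs that the $p$-completion of a derived tensor product agrees with the derived $p$-complete tensor product of the $p$-completed factors, and one should verify that the $D$-algebra structure on $\dR_{\mathrm{Kos}(B;p)/R}^\wedge$ arising from the K\"{u}nneth decomposition coincides with the one coming from functoriality applied to $(\Z,\FF_p)\to(R,\mathrm{Kos}(B;p))$, so that tensoring along the distinguished retraction $D\to\Z_p$ produces the asserted natural isomorphism. Both points are routine, but they are the only real content beyond formal manipulation.
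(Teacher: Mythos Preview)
Your argument is correct and is essentially identical to the paper's proof: the paper also invokes the K\"unneth formula to get $\dR_{\mathrm{Kos}(B;p)/R}^\wedge \cong \dR_{B/R}^\wedge \widehat{\otimes}_R \dR_R(p)^\wedge$ and then the base change identification $\dR_R(p)^\wedge \cong D \widehat{\otimes}_{\Z_p} R^\wedge$, after which the retraction $D \to \Z_p$ does the rest. Your write-up is in fact more detailed than the paper's, which leaves the compatibility of the $D$-algebra structure and the final base change step implicit.
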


Here the map $D \to \dR_{\mathrm{Kos}(B; p)/R}^\wedge$ is induced by the following natural diagram
\[
\xymatrix{
\mathbb{F}_p \ar[r] & \mathrm{Kos}(B;p) = B \otimes_{\mathbb{Z}} \mathbb{F}_p \\
\mathbb{Z} \ar[r] \ar[u] & R. \ar[u] 
}
\]

\begin{proof}
This follows from the K\"{u}nneth formula of derived de Rham complex:
\[
\dR_{\mathrm{Kos}(B; p)/R}^\wedge \cong \dR_{B/R}^\wedge \widehat{\otimes}_R \dR_R(p)^\wedge,
\]
and the base change formula $\dR_R(p)^\wedge \cong D \widehat{\otimes}_{\mathbb{Z}_p} R^\wedge$ 
as $\mathrm{Kos}(R; p) = R \otimes_\mathbb{Z} \mathbb{F}_p$.
\end{proof}

\subsection{Frobenii}
\label{Frobenii}

Let $A$ be a $p$-torsionfree $\delta$-ring.
Using \Cref{dependence on special fiber} we can define a Frobenius action on $\dR_{B/A}^\wedge$
which is functorial in $(A, \varphi_A)$ and the $A$-algebra $B$.

\begin{construction}
\label{Frobenius construction}
Let $A$ be a $p$-torsionfree $\delta$-ring and $B$ a simplicial $A$-algebra.
Recall there is a functorial endomorphism on simplicial $\mathbb{F}_p$-algebras given by left Kan extending the usual Frobenius
on polynomial $\mathbb{F}_p$-algebras, see \cite[Construction 2.2.6]{DAG13}. 
For discrete $\mathbb{F}_p$-algebras, it is just the usual Frobenius.
We may view $B/p = B \otimes_{A} A/p$, using the fact that $\varphi_A$ on $A$ is a lift of Frobenius on $A/p$ we get the following commutative diagram:
\[
\xymatrix{
B/p \ar[r]^-{\varphi_{B/p}} & B/p \\
A \ar[r]^-{\varphi_A} \ar[u] & A \ar[u],
}
\]
it induces a Frobenius map $\tilde{\varphi} \colon \dR_{\mathrm{Kos}(B;p)/A}^\wedge \to \dR_{\mathrm{Kos}(B;p)/A}^\wedge$ which is functorial in $(A \to B, \varphi_A)$.

Similar diagram for $\mathbb{Z} \to \mathbb{F}_p$ (where $A = B = \mathbb{Z}_p$) induces identity on $D$, hence we have a commutative diagram:
\[
\xymatrix{
\dR_{\mathrm{Kos}(B;p)/A}^\wedge \ar[rr]^-{\tilde{\varphi}} & & \dR_{\mathrm{Kos}(B;p)/A}^\wedge \\
& D. \ar[lu] \ar[ru] &
}
\]
Finally we define a Frobenius map $\varphi_{B/A} \colon \dR_{\mathrm{Kos}(B; p)/A}^\wedge \widehat{\otimes}_{D} \mathbb{Z}_p \cong \dR_{B/A}^\wedge 
\xrightarrow{\tilde{\varphi} \widehat{\otimes}_{\mathrm{id}_D} {\mathrm{id}_{\mathbb{Z}_p}}} \dR_{B/A}^\wedge$
which is functorial in $(A \to B, \varphi_A)$.
\end{construction}

\begin{remark}

(1)
It is conceivable that the above works for general $\delta$-rings.
In private communication we learned from Bhatt that a $\delta$-structure on a ring $A$ is equivalent to specifying a commutative diagram as follows:
\[
\xymatrix{
A/p \ar[r]^-{\varphi_{A/p}} & A/p \\
A \ar[r]^-{\varphi_A} \ar[u] & A \ar[u],
}
\]
note that here $A/p$ is a simplicial $\mathbb{F}_p$-algebra that has nontrivial $\pi_1$ when $A$ is not $p$-torsionfree.
Hence for any simplicial $A$-algebra $B$, 
one can also define a Frobenius on $\dR_{B/A}^\wedge$ as above.
However we do not work out the full story here as we do not need this great generality
for our intended applications later.

(2) By letting $n \to \infty$ in
\cite[Proposition 3.47]{Bha12}, one gets another construction of Frobenius on 
$\dR_{A/\mathbb{Z}_p}^\wedge$ for any $\mathbb{Z}_p$-algebra $A$.
However later on we shall see in \Cref{functorial endo remark} that there is only one 
Frobenius that is functorial enough (in a suitable sense) on $p$-completed 
derived de Rham complexes when the base algebra is a 
$p$-torsionfree $\delta$-algebra.
In particular, our construction above agrees with Bhatt's whenever both are defined
(i.e.~when the base is $\mathbb{Z}_p$).
\end{remark}

Let us work out some examples.

\begin{example}
\label{example computing Frobenius on dR}
(1) As an illustrative example, let us contemplate with $A = \mathbb{Z}_p$ and $B = \mathbb{F}_p$.
We have a derived pushout square of rings:
\[
\xymatrix{
\mathbb{Z}_p \ar[r] & B \\
\mathbb{Z}_p[T] \ar[u]^-{T \mapsto p} \ar[r]^-{T \mapsto 0} & A \ar[u],
}
\]
moreover the bottom map is a map of $\delta$-rings if we give
$\mathbb{Z}_p[T]$ a $\delta$-structure with $\varphi(T) = T^p$.
Then we get a pushout diagram of derived de Rham complex which says
$D \cong \dR_{\mathbb{Z}_p/\mathbb{Z}_p[T]}^\wedge \widehat{\otimes}_{\mathbb{Z}_p[T]} \mathbb{Z}_p$.
The latter is the same as 
$\mathbb{Z}_p\llangle T \rrangle^\wedge/(T)$ where we have used the fact that
$p$ has divided powers in $\mathbb{Z}_p$ (hence adjoining divided powers of
$T - p$ is the same as adjoining divided powers of $T$).
It is easy to see that the Frobenius defined on 
$\dR_{\mathbb{Z}_p/\mathbb{Z}_p[T]}^\wedge \cong 
\mathbb{Z}_p\llangle T \rrangle$ is induced by $T \mapsto T^p$
because it has to be compatible with the Frobenius on $\mathbb{Z}_p[T]$.
Therefore the induced Frobenius on $\dR_{B/A}^\wedge$
is \emph{not} the identity.
This might be surprising as one would na\"{i}vely think that the Frobenius
on the pair $(\mathbb{Z}_p, \mathbb{F}_p)$ is identity, hence must induce
identity on the derived de Rham complex.
However the Frobenius on
$\mathbb{F}_p \otimes_{\mathbb{Z}_p} \mathbb{F}_p$ is \emph{not}
the identity (as Frobenius always kills cohomology classes in negative degrees, 
see \cite[Remark 2.2.7]{DAG13}), 
and it is this Frobenius that induces a map on the derived de Rham complex.
On a related note, Bhatt has pointed out to us that the identity map
is also \emph{not} a lift of Frobenius on
$D \cong  \mathbb{Z}_p\llangle T \rrangle^\wedge/(T)$.

(2) Let $J \subset A$ be an ideal which is Zariski locally on $\Spec(A)$ a
colimit of ideals generated by a $p$-completely regular sequence.
Then by \Cref{three envelopes} (1), we have an identification:
$\dR_{A}(J)^\wedge \cong D_A(J)^\wedge$.
Since the Frobenius map obtained is compatible with $\varphi_A$ and $D_A(J)^\wedge$
is $p$-torsionfree, we see that this pins down the Frobenius on $\dR_A(J)^\wedge$:
any $\gamma_n(f)$ with $f \in J$ must be sent to $\frac{\varphi_A(f)^n}{n!}$.
Note that $f^p$ is divisible by $p$ in $D_A(J)^\wedge$, hence
$\varphi_A(f)$ is divisible by $p$ in $D_A(J)^\wedge$.

(3) Let $A$ be $p$-complete, and let $B = A \langle X^{1/p^{\infty}} \rangle$.
Since $A \to B$ is relatively perfect
modulo $p$, there is a unique lift of Frobenius $\varphi_B$ on $B$ covering the Frobenius
on $A$ and it is given by $\varphi_B(X^i) = X^{i \cdot p}$.
By \cite[Proposition 3.4.(1)]{GL20}, we see the natural map to $0$-th
graded piece of Hodge filtration induces an isomorphism
$\dR_{B/A}^\wedge \cong B$.
Applying the functoriality of the \Cref{Frobenius construction} to the map of
triples: $(A \to B, \varphi_A) \to (B \to B, \varphi_B)$, we see that the Frobenius
on $\dR_{B/A}^\wedge \cong B$ must be $\varphi_B$.
\end{example}

When the map $A \to B$ is a surjection with good regularity properties, 
we see in \Cref{three envelopes} that one can express $\dR_{B/A}^\wedge$
in terms of prismatic envelopes. Since prismatic envelopes are $\delta$-rings, they possess a Frobenius map by design. 
We can use this to give an alternative construction of the Frobenius for derived de Rham cohomology of certain regular $A$-algebras relative to $A$.
To that end, we need to first establish a sheaf property for derived de Rham cohomology.

\begin{proposition}
\label{dR sheaf}
Let $S$ be an $R$-algebra.
Assume:
\begin{itemize}
\item the cotangent complex
$\mathbb{L}_{S/R} \in D(S)$ has $p$-completely Tor amplitude in $[-1,0]$;
\item the (relative to $R/p$) Frobenius twist of $S/p$ is in $D^{\geq -m}(\mathbb{F}_p)$.
\end{itemize}
Consider the category $\mathcal{C}$ consisting of triangles
$R \to P \to S$ with $P$ being an ind-polynomial $R$-algebra,
equipped with indiscrete topology.
Let $\dR_{S/-}^\wedge$ be the sheaf that associates any triangle
$R \to P \to S$ with $\dR_{S/P}^\wedge$.
Then we have:
\begin{enumerate}
\item For any $R \to P \to S$, the $\dR_{S/P}^\wedge$ is in $D^{\geq -m}(R)$.
\item The natural map $\dR_{S/R}^\wedge \to \lim_{\mathcal{C}} \dR_{S/P}^\wedge$
is an isomorphism.
\item For any $R \to P \to S$ with $P \twoheadrightarrow S$ surjective, the natural
map
$\dR_{S/R}^\wedge \to \lim_{\Delta} \dR_{S/P_{\bullet}}^\wedge$
is an isomorphism.
Here $P_n \coloneqq P^{\otimes_R {(n+1)}}$ for any $[n] \in \Delta$,
with induced maps $P_n \twoheadrightarrow S$.
\end{enumerate}
\end{proposition}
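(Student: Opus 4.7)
The plan is to treat the three assertions by rather different techniques.

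\emph{Part (1).} Since $\dR_{S/P}^\wedge$ is derived $p$-complete, by derived Nakayama it suffices to bound it modulo $p$. On $\dR_{S/P}/p$ we use the conjugate filtration, whose $i$-th associated graded is (the derived Frobenius twist of) $\bigwedge^i \mathbb{L}_{S/P}[-i]$, viewed as a module over the Frobenius twist $S^{(1)}$ of $S/p$ relative to $R/p$. The transitivity triangle for $R \to P \to S$, combined with the fact that $\mathbb{L}_{P/R}$ is $p$-completely flat over $P$ (as $P$ is ind-polynomial), shows that $\mathbb{L}_{S/P}$ inherits $p$-complete Tor amplitude in $[-1,0]$ from $\mathbb{L}_{S/R}$; this is preserved under Frobenius twist. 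Derived wedge powers therefore lie in $D^{[-i,0]}$, so after shift by $[-i]$ each higher graded piece lies in $D^{\geq 0}(\mathbb{F}_p)$. Combined with the hypothesis $S^{(1)} \in D^{\geq -m}(\mathbb{F}_p)$ on the $0$-th piece, a short induction up the conjugate filtration followed by the filtered colimit (which preserves connectivity) gives $\dR_{S/P}^\wedge/p \in D^{\geq -m}(\mathbb{F}_p)$, hence the desired bound.

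\emph{Part (2).} The triple $(R \to R \to S)$, with $R$ viewed as the polynomial $R$-algebra on the empty set of variables, is an object of $\mathcal{C}$, and it is initial: any morphism from it to $(R \to P \to S)$ is uniquely given by the $R$-algebra structure map $R \to P$, which automatically commutes with the augmentations to $S$. Therefore $\lim_{\mathcal{C}} \dR_{S/P}^\wedge$ is simply the value of the functor at this initial object, namely $\dR_{S/R}^\wedge$, and the canonical map is an isomorphism (in fact the identity).

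\emph{Part (3).} By part (1) applied to each $P_n = P^{\otimes_R(n+1)}$ (which is still ind-polynomial over $R$ and surjects onto $S$), the cosimplicial object $\dR_{S/P_\bullet}^\wedge$ is uniformly in $D^{\geq -m}$, so its totalization is well behaved and commutes with $p$-reduction. It thus suffices to prove the isomorphism modulo $p$. Applying the conjugate filtration termwise—which is cosimplicially functorial in the base—the question reduces to showing that for each $i \geq 0$ the natural map
\[
\bigwedge^i \mathbb{L}_{S^{(1)}/R^{(1)}}[-i] \ \xrightarrow{\ \sim\ }\ \mathrm{Tot}\!\Bigl(\bigwedge^i \mathbb{L}_{S^{(1)}/P_\bullet^{(1)}}[-i]\Bigr)
\]
is an isomorphism. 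Using the transitivity cofiber sequence $\mathbb{L}_{P_n/R} \otimes^L_{P_n} S \to \mathbb{L}_{S/R} \to \mathbb{L}_{S/P_n}$ and the K\"unneth splitting $\Omega^1_{P_n/R} \cong \bigoplus_{k=0}^{n} \Omega^1_{P/R}\otimes_P P_n$, the cosimplicial module $\mathbb{L}_{P_\bullet/R}\otimes^L_{P_\bullet} S$ carries the ``insertion of $1$ kills $d$'' shift structure in each coordinate; a direct chain-level computation shows its totalization is acyclic, which handles the case $i=1$. The passage to higher $i$ uses derived wedge powers of a cofiber sequence together with the uniform boundedness supplied by part (1). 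Alternatively, one may identify each $\dR_{S/P_n}^\wedge$ with the $p$-completed derived PD envelope of $P_n \twoheadrightarrow S$ via \Cref{three envelopes}(1) and invoke the derived \v{C}ech--Alexander description of crystalline cohomology.

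\emph{Main obstacle.} The crux is part (3): verifying the acyclicity of the auxiliary cosimplicial cotangent complex $\mathbb{L}_{P_\bullet/R} \otimes^L_{P_\bullet} S$ and transporting this vanishing through derived wedge powers while respecting the conjugate-filtration bookkeeping. Extra care is needed because $R$ is not assumed $p$-torsionfree, so one cannot straightforwardly reduce to Bhatt's crystalline comparison theorem.
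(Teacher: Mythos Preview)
Your arguments for (1) and (3) are essentially the paper's: reduce modulo $p$, invoke the conjugate filtration, and control graded pieces via the transitivity triangle together with the vanishing of $\lim_\Delta \mathbb{L}_{P_\bullet/R}$. One small slip in (1): the higher conjugate graded pieces live in $D^{\geq -m}$ rather than $D^{\geq 0}$, since they are modules of non-negative Tor amplitude over $S^{(1,P)}$, and the latter (being a flat base change of $S^{(1)}$) is itself only in $D^{\geq -m}$. This does not affect your conclusion.

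Your treatment of (2) is genuinely different and more elementary. You observe that $(R \to R \to S)$, with $R$ regarded as the polynomial $R$-algebra on the empty set of variables, is an initial object of $\mathcal{C}$, so the limit collapses to $\dR_{S/R}^\wedge$ tautologically. The paper instead records that any triple with $P \twoheadrightarrow S$ surjective is \emph{weakly final} in $\mathcal{C}$ and, using the uniform cohomological bound from (1), invokes a general \v{C}ech-nerve principle (Stacks Project Tag~07JM) to reduce (2) to (3). Your argument is shorter and perfectly valid for the literal statement, but it is the paper's argument that transports: in the subsequent applications (\Cref{another Frobenius}(4) and \Cref{lim dR}(3)) one works over categories consisting only of \emph{surjective} triples, which have no initial object, and there only the weak-finality route survives.
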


\begin{proof}
We shall prove this by reduction modulo $p$.
Hence we may assume $R$ and $S$ are simplicial $\mathbb{F}_p$-algebras.

For (1) we use the conjugate filtrations on the derived de Rham complex.
Since $\mathbb{L}_{S/R}$ has Tor amplitude in $[-1,0]$, so is $\mathbb{L}_{S^{(1,P)}/P}$
where $S^{(1,P)}$ is the (relative to $P$) Frobenius twist of $S$.
The above estimate shows that the graded pieces of the conjugate filtration
has Tor amplitude at least $0$ over $S^{(1,P)}$. 
Since $S^{(1)}$ is assumed to be in $D^{\geq -m}(\mathbb{F}_p)$ and the relative Frobenius
for $P$ is flat,
we see that all the graded pieces of the conjugate filtration lives in
$D^{\geq -m}(R)$.

Note that $P \twoheadrightarrow S$ is surjective if and only if
$R \to P \to S$ is weakly final in $\mathcal{C}$.
Since these $\dR_{S/P}^\wedge$ are cohomologically uniformly bounded below,
\cite[Lecture V, Lemma 4.3]{BhaNotes18}
(see also~\cite[\href{https://stacks.math.columbia.edu/tag/07JM}{Tag 07JM}]{stacks-project}) reduces (2) to (3).

Lastly to show (3) we appeal to the conjugate filtration again.
Since the graded pieces of the conjugate filtration is 
cohomologically uniformly bounded below by our proof of (1) above,
it suffices to show 
$\mathbb{L}_{S^{(1)}/R} \to \lim_{\Delta} \mathbb{L}_{S^{(1,\bullet)}/P_{\bullet}}$
is an isomorphism, where $S^{(1,n)}$ is the (relative to $P_n$)
Frobenius twist of $S$.
This follows easily from the fact that $\lim_{\Delta} \mathbb{L}_{P_{\bullet}/R} \cong 0$.
\end{proof}

The above Proposition gives us a way to describe the Frobenius action of the $p$-completed
derived de Rham complex in more cases than those listed in 
\Cref{example computing Frobenius on dR}.

\begin{proposition}
\label{another Frobenius}
Let $A$ be a $p$-torsionfree $p$-complete $\delta$-algebra,
and let $I \subset A$ be an ideal which is Zariski locally on $\Spec(A)$ generated by
a $p$-completely regular element.
Let $B$ be a $p$-completely smooth $A/I$-algebra.
Then we have:
\begin{enumerate}
\item For any $(p, I)$-completely 
ind-polynomial $A$-algebra $P$ with a surjection $P \twoheadrightarrow B$,
the kernel $J$ is Zariski locally on $\Spf(P)$ colimit of ideals generated by a $p$-completely
regular sequence.
\item For any such $A \to P \to B$ as in (1), the $\dR_{B/P}^\wedge$ is an
ordinary algebra.
\item For any $(p, I)$-completely 
free $\delta$-$A$-algebra $F$ with a surjection $F \twoheadrightarrow B$,
there is a unique $\delta$-algebra structure on $\dR_{B/F}^\wedge$
compatible with that on $F$.
With this $\delta$-structure, we have an identification:
\[
\dR_{B/F}^\wedge \cong F\{\frac{\varphi_F(J)}{p}\}^\wedge.
\]
\item Consider the category $\mathcal{C}$ of all triples
$A \to F \twoheadrightarrow B$ as in (3), we have
\[
\dR_{B/A}^\wedge \cong \lim_{\mathcal{C}} \dR_{B/F}^\wedge.
\]
In fact, it suffices to take limit over the Cech nerve of one such $F \twoheadrightarrow B$.
Together with (3) we get a natural Frobenius action on $\dR_{B/A}^\wedge$.
\item The Frobenius on $\dR_{B/A}^\wedge$ obtained in (4) 
agrees with the one in \Cref{Frobenius construction}.
\end{enumerate}
\end{proposition}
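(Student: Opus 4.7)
The plan is to prove the five parts in turn, each building upon the previous ones. For (1), note that $(p,I)$-completely, $P$ is formally smooth over $A$, so after reducing modulo $(p,I)$, the surjection $P \twoheadrightarrow B$ becomes a surjection from an ind-polynomial $A/(p,I)$-algebra onto the smooth $A/(p,I)$-algebra $B/(p,I)$. Standard commutative algebra shows the kernel of such a surjection is Zariski locally generated by a regular sequence; lifting generators back to $P$ produces a $p$-completely regular sequence generating $J$ Zariski locally (using derived Nakayama to upgrade from mod-$(p,I)$ regularity). For (2), combining (1) with \Cref{three envelopes}(1) identifies $\dR_{B/P}^\wedge$ Zariski locally with the $p$-complete PD envelope $P\llangle J \rrangle^\wedge$; since $J$ is locally generated by a regular sequence, this PD envelope is discrete, so $\dR_{B/P}^\wedge$ is an ordinary algebra.

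For (3), since a $(p,I)$-completely free $\delta$-$A$-algebra $F$ is in particular $(p,I)$-completely ind-polynomial over $A$, part (2) applies. Now \Cref{three envelopes}(2) provides the identification $\dR_{B/F}^\wedge \cong F\llangle J \rrangle^\wedge \cong F\{\varphi_F(J)/p\}^\wedge$ as $p$-complete $F$-algebras, and the right-hand side is a $\delta$-$F$-algebra by construction. For uniqueness, note that $\dR_{B/F}^\wedge$ is $p$-torsionfree, so a $\delta$-structure compatible with that on $F$ amounts to a ring endomorphism lifting $\varphi_F$ and Frobenius modulo $p$; since $F\{\varphi_F(J)/p\}^\wedge$ is generated as a $p$-complete $\delta$-$F$-algebra by $J$ together with elements $\varphi_F(f)/p$ for $f \in J$, such a lift is pinned down once $\varphi$ on generators is fixed by the universal property. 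For (4), we verify the hypotheses of \Cref{dR sheaf}: $\mathbb{L}_{B/A}$ has $p$-completely Tor amplitude in $[-1,0]$ by smoothness of $B$ over $A/I$ and the conormal sequence for $A \to A/I \to B$ (using that $A \to A/I$ has cotangent complex $I/I^2[1]$ locally free after $p$-completion), and $B/p$ is concentrated in degree $0$. Pick any surjection $F \twoheadrightarrow B$ from a $(p,I)$-completely free $\delta$-$A$-algebra; its Cech nerve $F_\bullet$ consists of such algebras (free $\delta$-algebras are closed under tensor product), and \Cref{dR sheaf}(3) yields $\dR_{B/A}^\wedge \cong \lim_\Delta \dR_{B/F_\bullet}^\wedge$. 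The $\delta$-Frobenii of (3) are functorial in $F \to F'$ surjecting onto $B$, so they assemble into a Frobenius on the limit.

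For (5), we compare this Frobenius with the one from \Cref{Frobenius construction} using functoriality. Both are functorial in the triple $(A \to B, \varphi_A)$, both are compatible with $\varphi_F$ on $F$, and both induce the simplicial Frobenius on $B/p$ modulo $p$. Via the identification in (3), the Frobenius from Construction 2.10 restricted to $F\{\varphi_F(J)/p\}^\wedge$ must be an $F$-linear (in the $\varphi_F$-semilinear sense) ring endomorphism reducing to simplicial Frobenius modulo $p$; but $\dR_{B/F}^\wedge$ is $p$-torsionfree, so any such endomorphism is uniquely determined by compatibility with $\varphi_F$ and the structure on $J$, forcing agreement with the $\delta$-Frobenius. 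Passing to the limit in (4), this identifies the two Frobenii on $\dR_{B/A}^\wedge$. The main obstacle in this proof is the uniqueness assertion in (3) and the corresponding comparison step in (5); the rest is mostly a matter of assembling \Cref{three envelopes} and \Cref{dR sheaf} with the smoothness hypothesis. In particular, one needs to be careful that the $\delta$-structure inherited from $F$ is genuinely canonical on $\dR_{B/F}^\wedge$, which relies essentially on the $p$-torsionfreeness established via the regular sequence presentation in (1).
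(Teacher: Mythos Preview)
Your proof is correct and follows essentially the same architecture as the paper's: parts (1), (3), (4), and (5) match the paper's arguments closely (the paper cites \Cref{example computing Frobenius on dR}(2) for the uniqueness in (3), which is exactly your $p$-torsionfreeness argument, and invokes \Cref{dR sheaf}(2)--(3) for (4)). The one substantive difference is in (2): you deduce discreteness by identifying $\dR_{B/P}^\wedge$ with the PD envelope via \Cref{three envelopes}(1) and asserting the latter is discrete, whereas the paper instead applies the cohomological bound of \Cref{dR sheaf}(1) via the conjugate filtration (checking that the Frobenius twist $(B/p)^{(1)}$---not just $B/p$ as you wrote in (4)---is discrete because it is smooth over $A/(I^p,p)$), together with the observation that the derived de Rham complex of a surjection always sits in $D^{\le 0}$. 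Your route is more direct but tacitly uses that the \emph{derived} object $P\llangle J\rrangle^\wedge$ agrees with the classical PD envelope when $J$ is $p$-completely regular; this is true (and the paper uses it in (3)) but deserves a word of justification.
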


The notation $F\{\frac{\varphi_F(J)}{p}\}^\wedge$ is defined analogously as in 
\cite[Corollary 3.14]{BS19}.
Using $J$ is Zariski locally given by an ind-$p$-completely regular ideal,
we may define $F\{\frac{\varphi_F(J)}{p}\}^\wedge$ as the glueing of the colimit
of $F\{\frac{\varphi_F(f_i)}{p}\}^\wedge$, where $(f_i)$ is the ind-regular sequence
generating $J$ on a Zariski open.

\begin{proof}
(1) follows easily from the fact that $B$ is formally smooth over $A/I$ and $I$ is
Zariski locally generated by a $p$-completely regular element.

(2) follows from the argument of \Cref{dR sheaf} (1).
Indeed we set $R = A$ and $S = B$. The Frobenius twist of $B/p$ is smooth over
$A/(\varphi_A(I),p) = A/(I^p,p)$, and the latter is an ordinary algebra.
Hence in our situation, we have $m=0$ in the condition of \Cref{dR sheaf}.
This shows that the $\dR_{B/P}^\wedge$ is in $D^{\geq 0}$.
Using conjugate filtration again, it is easy to see that the $p$-completed derived de Rham
complex of any surjection must be in $D^{\leq 0}$.
Hence our $\dR_{B/P}^\wedge$ must in fact be an ordinary algebra.

(3) essentially follows from (1)  and \Cref{three envelopes}.
Indeed by description of $J$, we see that $\dR_{B/F}^\wedge \cong D_F(J)^\wedge$.
Since $J$ is Zariski locally an ind-$p$-completely regular ideal, we see that
$D_F(J)^\wedge$ is $p$-torsionfree, hence having a $\delta$-structure is equivalent
to having a lift of Frobenius.
The argument in \Cref{example computing Frobenius on dR} (2) tells us
that there is at most one Frobenius structure on it compatible with that on $F$.
Lastly \Cref{three envelopes} shows that we can put a $\delta$-structure on
it by identifying
\[
\dR_{B/F}^\wedge \cong D_F(J)^\wedge \cong F\{\frac{\varphi_F(J)}{p}\}.
\]

(4) follows from \Cref{dR sheaf} (2)-(3).

As for (5), it suffices to notice that for any of these $A \to F \twoheadrightarrow B$
the two Frobenii defined on $\dR_{B/F}^\wedge$ agree and they are
both functorial in $A \to F \twoheadrightarrow B$.
\end{proof}

The following is similar to \Cref{dR sheaf}, and will be used later in the next section.
\begin{proposition}
\label{lim dR}
Let $(A,I)$ be a bounded prism. Let $R$ be a formally smooth $A/I$-algebra.
Consider $\mathcal{C}$ the category of all triples $A \to P \twoheadrightarrow R$
where $P$ is a $p$-completed polynomial algebra over $A$.
Associated with such a triple is the following diagram:
\[
\xymatrix{
A \ar[r] \ar[d] & P \ar[r] \ar[d] & F \ar[d] \\
A/I \ar[r] & R \ar[r] & S,
}
\]
where $F$ is the $p$-completed free $\delta$-$A$-algebra associated with $P$, and
$S$ is the $p$-completed tensor product $S \coloneqq R \widehat{\otimes}_{P} F$.
Then we have:
\begin{enumerate}
    \item Choose an object $A \to P \twoheadrightarrow R$, consider the $n$-th self-fiber product
    $A \to P^n \coloneqq P^{\hat{\otimes}_A n} \twoheadrightarrow R$ for any positive integer $n$.
    Then the associated $p$-completed free $\delta$-$A$-algebra is $F^n \coloneqq F^{\hat{\otimes}_A n}$,
    and we have
    \[
    R \widehat{\otimes}_{P^n} F^n \cong S^{\hat{\otimes}_R n},
    \]
    which we shall denote by $S^n$ below.
    \item Choose an object $A \to P \twoheadrightarrow R$, then the natural map
    \[
    \dR_{R/A}^\wedge \rightarrow \lim_{[n] \in \Delta} \dR_{S^n/F^n}^\wedge
    \]
    is an isomorphism.
    \item The natural map 
    \[
    \dR_{R/A}^\wedge \rightarrow \lim_{\mathcal{C}} \dR_{S/F}^\wedge
    \]
    is an isomorphism.
\end{enumerate}
\end{proposition}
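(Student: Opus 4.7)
The three parts build on each other. For (1), the identification $F^n \cong F^{\widehat{\otimes}_A n}$ follows from the fact that the $p$-completed free $\delta$-$A$-algebra functor is left adjoint to the forgetful functor, so it commutes with coproducts, which are $p$-completed tensor products over $A$. The identification $R \widehat{\otimes}_{P^n} F^n \cong S^{\widehat{\otimes}_R n}$ then follows by expanding $P^n$ and $F^n$ as $n$-fold $p$-completed tensor products over $A$ and re-associating to obtain $n$ copies of $S = R \widehat{\otimes}_P F$ tensored together over $R$.

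For (2), the plan is to combine \Cref{dR sheaf} (3) with the K\"{u}nneth formula for $p$-completed derived de Rham cohomology. First I would verify the hypotheses of \Cref{dR sheaf}: the cotangent complex $\mathbb{L}_{R/A}$ has $p$-completely Tor amplitude in $[-1, 0]$, since the transitivity triangle $\mathbb{L}_{(A/I)/A} \widehat{\otimes}_{A/I} R \to \mathbb{L}_{R/A} \to \mathbb{L}_{R/(A/I)}$ has outer terms with the required amplitude (using that $R$ is formally smooth over $A/I$ and that $I$ is locally generated by a $p$-completely regular element); and the relative Frobenius twist of $R/p$ is discrete because $R/p$ is formally smooth over $A/(I, p)$. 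Applying \Cref{dR sheaf} (3) to the surjection $P \twoheadrightarrow R$ over $A$ gives
\[
\dR_{R/A}^\wedge \;\cong\; \lim_{[n] \in \Delta} \dR_{R/P^n}^\wedge.
\]
By the K\"{u}nneth formula together with $S^n \cong R \widehat{\otimes}_{P^n} F^n$ from (1), we also have
\[
\dR_{S^n/F^n}^\wedge \;\cong\; \dR_{R/P^n}^\wedge \;\widehat{\otimes}_{P^n}^L\; F^n.
\]
Part (2) therefore reduces to showing that the natural cosimplicial map from $\dR_{R/P^\bullet}^\wedge$ to $\dR_{R/P^\bullet}^\wedge \widehat{\otimes}_{P^\bullet}^L F^\bullet$ induces an equivalence on totalizations. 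This is the main obstacle, since base change along $P^n \to F^n$ does not a priori commute with totalization. My approach is to filter both sides by the Hodge (or conjugate) filtration and reduce to a cotangent-complex statement on each graded piece; the crucial input is that $P \to F$ is $p$-completely faithfully flat (as $F$ is obtained from $P$ by freely adjoining the $\delta^j(x_i)$ for $j \geq 1$), so that the Cech nerve of $A \to F$ resolves $A$ after derived $p$-completion, which should allow the required commutation of limits and base change on each graded piece.

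For (3), I would deduce this formally from (2). The category $\mathcal{C}$ admits finite products via $p$-completed tensor products over $A$, and (2) computes the Cech-nerve limit for any chosen object of $\mathcal{C}$ as $\dR_{R/A}^\wedge$. A standard argument parallel to the proof of \Cref{dR sheaf} (2)--(3) then shows that $\dR_{S/F}^\wedge$ forms a ``sheaf'' on $\mathcal{C}$ with the indiscrete topology and that its limit over all of $\mathcal{C}$ agrees with the Cech computation of (2), hence equals $\dR_{R/A}^\wedge$.
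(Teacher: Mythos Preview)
Your treatment of (1) and (3) matches the paper's. For (2), your route differs from the paper's and the last step is only sketched.

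You factor the comparison through $\dR_{R/P^\bullet}^\wedge$: first invoke \Cref{dR sheaf}~(3) to get $\dR_{R/A}^\wedge \simeq \lim_\Delta \dR_{R/P^\bullet}^\wedge$, then use base change to identify $\dR_{S^n/F^n}^\wedge \simeq \dR_{R/P^n}^\wedge \widehat{\otimes}_{P^n} F^n$, leaving you to show that tensoring with the cosimplicial extension $P^\bullet \to F^\bullet$ does not change the totalization. This last step can indeed be completed, but your justification (``the \v{C}ech nerve of $A \to F$ resolves $A$'') is not quite the right handle. What you actually need is the decomposition $F^n \cong P^n \widehat{\otimes}_A G^n$ with $G$ the $p$-completed polynomial $A$-algebra on the variables $\delta^j(x_i)$, $j \geq 1$; then $\dR_{R/P^n}^\wedge \widehat{\otimes}_{P^n} F^n \cong \dR_{R/P^n}^\wedge \widehat{\otimes}_A G^n$ is the diagonal of a bicosimplicial object, and one finishes by the Eilenberg--Zilber identification of the diagonal totalization with the double totalization together with fpqc descent along the ind-polynomial map $A \to G$ in the $G$-direction. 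After conjugate-filtering and checking uniform coconnectivity (your verification of the hypotheses of \Cref{dR sheaf} is fine here), this goes through.

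The paper instead argues directly on $\dR_{R/A}^\wedge \to \lim_\Delta \dR_{S^\bullet/F^\bullet}^\wedge$, without passing through $\dR_{R/P^\bullet}^\wedge$. After reducing mod $p$ and conjugate-filtering, it uses the transitivity triangle for $A \to F^n \to (S^n)^{(1)}$ together with fpqc descent of the cotangent complex \cite[Theorem~3.1]{BMS2} along the faithfully flat map $R \to S$. This isolates the two inputs (transitivity triangle, fpqc descent) more cleanly and avoids the bicosimplicial bookkeeping your route requires. Conversely, your approach has the advantage of reusing \Cref{dR sheaf}~(3) as a black box and localizing the new work to a purely module-theoretic commutation statement.
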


Notice that we do not need to assume $A$ to be $p$-torsionfree here.

\begin{proof}
For (1): if $P$ is $p$-completely adjoin a set $T$ of variables, then $F$ is $p$-completely adjoin
the set $\coprod_{\mathbb{N}} T$ of variables, where $t$ in the $i$-th component represents $\delta^i(x_t)$.
The statement on fiber product and the associated $F^n$ is clear.
As for the statement about $S^n$, just notice that we have the following pushout diagrams:
\[
\xymatrix{
P^n \ar[r] \ar[d] & R^n \coloneqq R^{\hat{\otimes}_A n} \ar[d] \ar[r] & R \ar[d] \\
F^n \ar[r] & S^{\hat{\otimes}_A n} \ar[r] & S^n \coloneqq S^{\hat{\otimes}_R n}.
}
\]

To prove (2), we may reduce modulo $p$. 
Note that $A \to F$ and $R \to S$ are $p$-completely faithfully flat.
In a similar manner to the proof of \Cref{dR sheaf} (3),
using conjugate filtration, plus the distinguished triangle of cotangent complex,
and fpqc descent of cotangent complex (see \cite[Theorem 3.1]{BMS2}), one can show this natural map
is an isomorphism.

(3) follows from (2) in the same way as how \Cref{dR sheaf} (2) follows from \Cref{dR sheaf} (3).
\end{proof}

\begin{remark}
\label{another Frobenius on limit dR}
Similar to \Cref{another Frobenius}, assume $A$ to be $p$-torsionfree, 
then these $\dR_{S/F}^\wedge$ appeared above are discrete rings, and we can equip
them a natural $\delta$-structure.
By the same proof of \Cref{another Frobenius} the induced Frobenius on $\dR_{R/A}^\wedge$
agrees with the one provided by \Cref{Frobenius construction}.
\end{remark}

Later on we shall see in \Cref{functorial endo remark} (1) that 
if $(A,I)$ is a transversal prism, then there is only one
Frobenius in a strong sense. So all these different constructions must give rise to the same map.

\subsection{Na\"{i}ve comparison}

Consider the composition $f \colon A \xrightarrow{\varphi_A} A \to \mathcal{A}$, it induces a morphism of prisms which we still denote by
$f \colon (A,I) \to (\mathcal{A},(p))$.
Let $\mathcal{X}$ be a $p$-completely smooth affine formal scheme over $\Spf(A/I)$.
Now by base change formula of prismatic cohomology~\cite[Theorem 1.8.(5)]{BS19}, we have
\[
\mathrm{R\Gamma}_{\Prism}(\mathcal{X}/A) \widehat{\otimes}_{A,f} \mathcal{A} \cong
\mathrm{R\Gamma}_{\Prism}(\mathcal{Y}/\mathcal{A}),
\]
where $\mathcal{Y} = \mathcal{X} \times_{\mathrm{Spf}(A/I), f} \mathrm{Spec}(\mathcal{A}/p)$.

Then the crystalline comparison of prismatic cohomology~\cite[Theorem 1.8.(1)]{BS19} gives us
\[
\label{naive comparison}
\tag{\epsdice{1}}
\varphi_{\mathcal{A}}^*(\mathrm{R\Gamma}_{\Prism}(\mathcal{X}/A) 
\widehat{\otimes}_{A,f} \mathcal{A}) \cong
\varphi_{\mathcal{A}}^*(\mathrm{R\Gamma}_{\Prism}(\mathcal{Y}/\mathcal{A})) \cong 
\mathrm{R\Gamma_{crys}}(\mathcal{Y}/\mathcal{A}) \cong
\varphi_{\mathcal{A}}^*(\mathrm{R\Gamma_{crys}}(\mathcal{X}/\mathcal{A})).
\]
Here the last isomorphism comes from the following commutative diagram
\[
\xymatrix{
\mathcal{A} \ar[d]_{\varphi_{\mathcal{A}}} \ar@{->>}[r] & A/(I,p) \ar[d]^{f} \\
\mathcal{A} \ar@{->>}[r] & \mathcal{A}/p.
}
\]
In the following, we aim at getting a Frobenius descent of the isomorphism obtained in~\epsdice{1}, see~\Cref{Frobenius pullback is naive}.

%\begin{lemma}
%Let $B$ be a free $\delta$-$A$-algebra.
%Assume that $\varphi_A$ is $(p,I)$-completely flat.
%Then $\varphi_B$ is also $(p,I)$-completely flat.
%\end{lemma}

%\begin{proof}
%Just observe that $\varphi_B$ factors as relative Frobenius composing with
%the base change along $\varphi_A$.
%Both are $(p,I)$-completely flat due to~\cite[Lemma 2.11]{BS19}
%and our assumption respectively.
%\end{proof}

%\begin{remark}
%In~\cite[Remark 3.28]{Bha12}, Bhatt suspected that this comparison
%between derived de Rham and crystalline cohomology always holds.
%If that is the case, then the natural arrow constructed in~\Cref{comparing pris and crys}
%also establishes a natural isomorphism to the crystalline cohomology of $R$ relative to $A$.
%\end{remark}

\section{Comparing prismatic and derived de Rham cohomology}

Let $(A,I)$ be a bounded prism. 
Let $X$ be a $p$-adic formal scheme which is formally smooth over $\Spf(A/I)$.
In this section we shall establish a functorial comparison between the prismatic cohomology
$\mathrm{R\Gamma}_{\Prism}(X/A)$ with the derived de Rham cohomology
$\dR_{X/A}^{\wedge}$.

\subsection{The comparison}

In the beginning of this subsection we need to comment on an error in the construction of
\v{C}ech--Alexander complex in \cite[Construction 4.16]{BS19}.
We learned this subtlety from Bhatt who was informed by Koshikawa.
The issue is as follows, with notation as in loc.~cit.: suppose $D \rightarrow D/ID \leftarrow R$
is an object in $(R/A)_{\Prism}$, then one needs to exhibit a morphism $(B\{\frac{J}{I}\}^\wedge \to D)$
in $(R/A)_{\Prism}$.
The argument was along the following line, by universal property it suffices to exhibit a map
$B \to D$ sending $J$ into $ID$, which is amount to filling in the following dotted arrow
(of $\delta$-rings)
\[
\xymatrix{
R \ar[r] & D/ID \\
B \ar[u] \ar@{-->}[r] & D \ar[u]
}
\]
that makes the diagram commutative.
At first sight this seems easy, as $B$ is a free $\delta$-ring in a set of variables, we just lift
images of those variables under $B \to R \to D/ID$ to $D$ to get a map of $\delta$-rings.
But there is no way a general lift will make the above diagram commutative for the $\delta$'s of those variables.

Below we describe a fix that we learned from Bhatt.
Recall that the forgetful functor from $\delta$-$A$-algebras to $A$-algebras admits a left adjoint,
see \cite[Remark 2.7]{BS19}.
One checks the following easily:
\begin{itemize}
\item given a derived $(p,I)$-completed polynomial $A$-algebra $P$ which is freely
generated by a set of variables, then apply this left adjoint we will get a
derived $(p,I)$-completed free $\delta$-$A$-algebra
$F$ generated by the same set of variables.
\item this left adjoint commutes with completed tensor product.
\end{itemize}
In particular the natural map $P \to F$ is $(p,I)$-completely ind-smooth.

\begin{construction}[{\v{C}ech--Alexander complex for prismatic cohomology}]
\label{Cech--Alexander complex}
Let $R$ be a $p$-completely smooth $A/I$-algebra.
Let $P$ be a derived $(p,I)$-completed polynomial $A$-algebra along with a surjection $P \twoheadrightarrow R$,
and let $J$ be the kernel.
Associated with the triple $A \to P \twoheadrightarrow R$ is a $\delta$-$A$-algebra 
$F\{\frac{JF}{I}\}^\wedge$, obtained by applying \cite[Corollary 3.14]{BS19}.
We make three claims about this construction.

\begin{claim}
\label{claim about Cech--Alexander construction}
\leavevmode
\begin{enumerate}
    \item The $\delta$-$A$-algebra $F\{\frac{JF}{I}\}^\wedge$ is naturally an object in $(R/A)_{\Prism}$;
    \item as such, it is weakly initial in $(R/A)_{\Prism}$; and
    \item if there is a set of triples $A \to P_i \twoheadrightarrow R$, then the coproduct of 
    associated $F_i\{\frac{J_iF_i}{I}\}^\wedge$ in $(R/A)_{\Prism}$ is
    given by the $\delta$-$A$-algebra associated with the triple
    $A \to \widehat{\otimes}_A P_i \twoheadrightarrow R$ where the second map is given by the completed
    tensor of those $P_i \twoheadrightarrow R$ maps.
\end{enumerate}
\end{claim}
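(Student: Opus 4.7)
The plan is to verify the three items in sequence, with the main content lying in (2), where the introduction of the free $\delta$-$A$-algebra $F$ is essential for fixing the gap in the original argument of \cite{BS19}. I will repeatedly use the universal property of the left adjoint (from $\delta$-$A$-algebras to $A$-algebras) together with that of the prismatic envelope.

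For (1), the $\delta$-$A$-algebra $F\{\frac{JF}{I}\}^\wedge$ together with the ideal $I$ is a bounded prism by \cite[Corollary 3.14]{BS19}. To see that it lives in $(R/A)_{\Prism}$, it suffices to exhibit a map $R \to F\{\frac{JF}{I}\}^\wedge/I$ of $A/I$-algebras. The composition $P \to F \to F\{\frac{JF}{I}\}^\wedge$ sends $J$ into $JF$, which by the construction of the envelope lies in $I \cdot F\{\frac{JF}{I}\}^\wedge$. Hence the reduction modulo $I$ vanishes on $J$ and factors through $R = P/J$.

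For (2), given $(D, ID) \in (R/A)_{\Prism}$, I would construct a morphism $F\{\frac{JF}{I}\}^\wedge \to D$ in three steps. First, the structure map $R \to D/ID$ precomposed with $P \twoheadrightarrow R$ gives an $A$-algebra map $P \to D/ID$; since $P$ is $(p,I)$-completely a polynomial $A$-algebra on some set of variables and $D \twoheadrightarrow D/ID$ is surjective, I can lift this to an $A$-algebra map $P \to D$ by choosing lifts of the generators (using $(p,I)$-completeness of $D$). Second, by the universal property of the free $\delta$-$A$-algebra $F$ over $P$, this extends uniquely to a $\delta$-$A$-algebra map $F \to D$. Because $P \to D/ID$ kills $J$, we have that $J$ maps into $ID$, and hence the ideal $JF$ also maps into $ID$. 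Third, the universal property of $F\{\frac{JF}{I}\}^\wedge$ from \cite[Corollary 3.14]{BS19} produces the desired $\delta$-$A$-algebra map $F\{\frac{JF}{I}\}^\wedge \to D$; compatibility with the structure maps from $R$ is verified by reducing modulo $I$ and comparing with the lift chosen in the first step.

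For (3), the free $\delta$-$A$-algebra functor is a left adjoint and so commutes with $(p,I)$-completed coproducts. Therefore the free $\delta$-$A$-algebra associated with $\widehat{\otimes}_A P_i$ is $\widehat{\otimes}_A F_i$, and the kernel of $\widehat{\otimes}_A P_i \twoheadrightarrow R$ is generated by the images of the $J_i$. The formation of the prismatic envelope is likewise a left adjoint operation, so it commutes with coproducts among $\delta$-$A$-algebras. Together these identify the $\delta$-$A$-algebra associated with the combined triple $A \to \widehat{\otimes}_A P_i \twoheadrightarrow R$ with the coproduct of the $F_i\{J_iF_i/I\}^\wedge$ computed in $(R/A)_{\Prism}$; alternatively, one can verify this directly using (2) and the universal property of coproducts by mapping to an arbitrary test prism $D$.

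The main obstacle, and the reason for introducing $F$ at all, is the second step in (2): a naive lift $P \to D$ does not itself determine a $\delta$-algebra map out of any $\delta$-envelope of $P$, because generic lifts of the generators of $P$ are not compatible with any prescribed $\delta$-powers modulo $I$. Replacing $P$ by the free $\delta$-$A$-algebra $F$ dissolves this issue: the universal property of $F$ guarantees that any $A$-algebra lift $P \to D$ determines a $\delta$-$A$-algebra map $F \to D$ with no further compatibility constraints to worry about, after which the envelope step is formal.
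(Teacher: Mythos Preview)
Your proposal is correct and follows essentially the same route as the paper: both arguments hinge on the chain of universal properties showing that a morphism $F\{\frac{JF}{I}\}^\wedge \to D$ in $(R/A)_{\Prism}$ amounts to an $A$-algebra lift $P \to D$ of $P \to R \to D/ID$, with no $\delta$-compatibility constraint. The paper packages (2) and (3) together via this bijection on Hom-sets, whereas you treat (3) separately via commutation of left adjoints with coproducts (and then note the alternative), but this is only a cosmetic difference.
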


Let us postpone the verification of these claims and continue with the construction.
At this point we may simply follow the rest of \cite[Construction 4.16]{BS19}.
Form the derived $(p,I)$-completed \v{C}ech nerve $P^{\bullet}$ of $A \to P$,
and let $J^{\bullet} \subset P^{\bullet}$ be the kernel of the augmentation map $P^{\bullet} \to P \to R$.
By the first claim above,
we get a cosimplicial object $\left(F^{\bullet}\{\frac{J^\bullet F^\bullet}{I}\}^\wedge\right)$ in $(R/A)_{\Prism}$.
The third claim above shows that this is the \v{C}ech nerve of $F\{\frac{JF}{I}\}^\wedge$ in $(R/A)_{\Prism}$,
and according to the second claim the object $F\{\frac{JF}{I}\}^\wedge$ covers the final object of the topos
$\mathrm{Shv}((R/A)_{\Prism})$.
Therefore $\Prism_{R/A}$ is computed by $F^{\bullet}\{\frac{J^\bullet F^\bullet}{I}\}^\wedge$.

This construction commutes with base change of the prism $(A,I)$.
When $(A,I)$ is fixed, this construction can be carried out in a way which is strictly functorial in $R$,
by setting $P$ to be the completed polynomial $A$-algebra generated by the underlying set of $R$.
\end{construction}

\begin{proof}[Proof of {\Cref{claim about Cech--Alexander construction}}]
Proof of (1): form the following pushout diagram:
\[
\xymatrix{
R \ar[r] & S \\
P \ar[u] \ar[r] & F \ar[u].
}
\]
Denote $F\{\frac{JF}{I}\}^\wedge$ by $C^0$, by its defining property there is a natural map 
$S \cong F/JF \to C^0/IC^0$. Hence $C^0$ gives rise to a diagram $(C^0 \to C^0/IC^0 \leftarrow S \leftarrow R)$
which is an object in $(R/A)_{\Prism}$.

Proof of (2) and (3): this follows from chasing through universal properties.
Let $(D \to D/ID \leftarrow R)$ be an object in $(R/A)_{\Prism}$, we have the following chain of equivalences:
\begin{align*}
F\{\frac{JF}{I}\}^\wedge \to D \text{ in } (R/A)_{\Prism} \iff 
\text{ a map of $\delta$-$A$-algebras }  F \to D \text{ such that } JF \text{ is mapped into } ID \\
\iff \text{ a map of $A$-algebras } P \to D \text{ such that } J \text{ is mapped into } ID.
\end{align*}
It is easy to see that the last statement is equivalent to filling in the following dotted arrow
below
\[
\xymatrix{
R \ar[r] & D/ID \\
P \ar[u] \ar@{-->}[r] & D \ar[u]
}
\]
as $A$-algebras, making the diagram commutative.
Note that there is no requirement from $\delta$-ring consideration here.
Now one checks the claims (2) and (3) easily.
\end{proof}

With the above preparatory discussion, we are ready to compare prismatic cohomology
and derived de Rham cohomology.
The key computation we need is the following.

\begin{lemma}[Comparing prismatic and PD envelopes for regular sequences]
\label{envelopes for regular sequence}
Let $B$ be a $(p,I)$-completely flat $\delta$-$A$-algebra, let $f_1, \ldots, f_r \in B$
be a $(p,I)$-completely regular sequence.
Write $J = (I, f_1, \ldots, f_r) \subset B$.
Then we have a natural identification of $p$-completely flat $\dR_{A}(I)^\wedge$-algebras:
\[
B\{\frac{J}{I}\}^{\wedge} \widehat{\otimes}_{B,\varphi_B} B \widehat{\otimes}_{A} \dR_{A}(I)^\wedge
\cong \dR_{B}(J)^\wedge.
\]
\end{lemma}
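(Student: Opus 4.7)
The plan is to reduce the statement to~\Cref{naive base change} by a sequence of standard base-change reductions. Both sides are functorial in $(A, I)$, in $B$, and in the regular sequence $(f_i)$, and they respect derived $p$-completed base change in all these data: on the LHS, the prismatic envelope commutes with base change by~\cite[Proposition 3.13]{BS19}, Frobenius pullback is monoidal, and $\widehat{\otimes}_A \dR_A(I)^\wedge$ commutes with base change in $(A,I)$; on the RHS, this is standard for derived de Rham cohomology. These reductions let me assume $(A, I)$ is the initial oriented prism, $B = A\{z_1, \ldots, z_r\}^\wedge$, and $f_i = z_i$.

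In this universal setting and with $r = 1$, a direct pushout computation identifies $B\{J/I\}^\wedge$ with $A\{w\}^\wedge$ as a $\delta$-$A$-algebra (with $B$-structure $z \mapsto dw$), so the Frobenius pullback becomes $B\{J/I\}^\wedge \widehat{\otimes}_{B, \varphi_B} B \cong A\{w, z\}^\wedge / (\varphi(z) - dw)$. After further tensoring with $\dR_A(I)^\wedge$ over $A$, this is precisely the left-hand side appearing in~\Cref{naive base change}, whose content then identifies it with $\dR_{A\{z\}}(I, z)^\wedge = \dR_B(J)^\wedge$. This settles the $r = 1$ universal case.

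For general $r$ in the universal setting I tensor-decompose both sides over $A$. The prismatic envelope decomposes as $B\{J/I\}^\wedge \cong \widehat{\bigotimes}_A^r A\{w_i\}$, so its Frobenius pullback is $\widehat{\bigotimes}_A^r X_i$ with $X_i := A\{w_i, z_i\}^\wedge / (\varphi(z_i) - dw_i)$; on the other hand, Kunneth for the $p$-completed derived de Rham complex applied to the regular sequence $(d, z_1, \ldots, z_r)$, combined with the base-change identifications $\dR_B(d)^\wedge \cong B \widehat{\otimes}_A \dR_A(I)^\wedge$ and $\dR_B(z_i)^\wedge \cong B \widehat{\otimes}_{A\{z_i\}} \dR_{A\{z_i\}}(z_i)^\wedge$, yields $\dR_B(J)^\wedge \cong \dR_A(I)^\wedge \widehat{\otimes}_A \widehat{\bigotimes}_A^r \dR_{A\{z_i\}}(z_i)^\wedge$. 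Setting $R := \dR_A(I)^\wedge$ and using associativity of the completed tensor product, the LHS becomes $\widehat{\bigotimes}_R^r (R \widehat{\otimes}_A X_i)$; the $r = 1$ case applied factorwise gives $R \widehat{\otimes}_A X_i \cong R \widehat{\otimes}_A \dR_{A\{z_i\}}(z_i)^\wedge$, and reassembling produces the desired isomorphism. The $p$-complete flatness over $\dR_A(I)^\wedge \cong \mathcal{A}$ then falls out of~\Cref{three envelopes}, which realises $\dR_B(J)^\wedge$ as the $p$-completed PD envelope of $B$ along $J$.

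The main subtlety is the identification of $B\{J/I\}^\wedge \widehat{\otimes}_{B, \varphi_B} B$: one has to carefully distinguish left and right $B$-actions on the pullback and verify the defining relation $\varphi(d) w_i = \varphi(f_i)$ that emerges from the twisted tensor product. Once this is in place, everything else is formal manipulation of completed tensor products using associativity, Kunneth, and~\Cref{naive base change}.
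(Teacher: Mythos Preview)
Your proposal is correct and follows essentially the same route as the paper. The paper's proof works directly with general $B$ and $f_i$: it recalls the pushout description of $B\{\frac{J}{I}\}^\wedge$ from \cite[Proposition 3.13]{BS19}, pushes it further along $B \xrightarrow{\varphi_B} B \to B\widehat{\otimes}_A \dR_A(I)^\wedge$, factors the composite $A\{x_i\} \to B\widehat{\otimes}_A \dR_A(I)^\wedge$ through $A\{z_i\}$ exactly as in the setup of \Cref{naive base change}, and then applies a multi-variable version of \Cref{naive base change} plus base change of derived de~Rham in one stroke. Your reduction to the universal oriented prism with $B=A\{z_1,\ldots,z_r\}$ and $f_i=z_i$, followed by the single-variable \Cref{naive base change} plus a K\"unneth reassembly for $r>1$, is just a repackaging of the same mechanism---the paper's ``multi-variable version'' of \Cref{naive base change} is precisely what your K\"unneth argument supplies. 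One small caution: your intermediate formula $B\{J/I\}^\wedge\widehat{\otimes}_{B,\varphi_B}B\cong A\{w,z\}^\wedge/(\varphi(z)-dw)$ is imprecise as written, since the two $A$-structures on the pushout differ by $\varphi_A$; but you correctly flag this subtlety yourself, and the identification with the left-hand side of \Cref{naive base change} (which is what actually gets used) goes through regardless.
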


Here the $B\{\frac{J}{I}\}^{\wedge}$ is as in~\cite[Proposition 3.13]{BS19},
which is $(p,I)$-completely flat over $A$.
Let us clarify the various completions involved on the left hand side. 
First we perform derived $(p,I)$-complete tensor,
then we perform derived $(p,\varphi(I))$-complete tensor
which is the same as derived $p$-complete tensor as $\varphi(I) = (p)$ in
$\pi_0(\dR_A(I)^\wedge)$.

\begin{proof}
Recall that in the proof of \cite[Proposition 3.13]{BS19} and also explained in Remark \ref{rem-2.7}, the $B\{\frac{J}{I}\}^{\wedge}$
is constructed as $p$-completely pushout the following diagram:
\[
\xymatrix{
A\{x_1, \ldots, x_r\} \ar[r]^-{x_i \mapsto f_i} \ar[d] & B \\
A\{x_1, \ldots, x_r\}\{\frac{x_i}{I}\}. & 
}
\]
The left hand side in this Lemma is therefore given by pushing-out the above diagram
further along $f_B \colon B \xrightarrow{\varphi_B} B \to B \widehat{\otimes}_{A} \dR_{A}(I)^\wedge$.
The composition
$A\{x_1, \ldots, x_r\} \xrightarrow{x_i \mapsto f_i}  B \xrightarrow{f_B} 
B \widehat{\otimes}_{A} \dR_{A}(I)^\wedge$ can now be factored as
$A\{x_1, \ldots, x_r\} \xrightarrow{\varphi_A, x_i \mapsto \varphi(z_i)} A\{z_1, \ldots, z_r\}
\to \dR_{A\{z_1, \ldots, z_r\}}(I)^\wedge \to B \widehat{\otimes}_{A} \dR_{A}(I)^\wedge$,
where in the last map $z_i$ is sent to $f_i \otimes 1$.
Hence the left hand side becomes the $p$-completely 
outer pushout of the following diagram with solid arrows:
\[
\xymatrix{
A\{x_1, \ldots, x_r\} \ar[r] \ar[d] & \dR_{A\{z_1, \ldots, z_r\}}(I)^\wedge 
\ar[r] \ar@{.>}[d] & B \widehat{\otimes}_{A} \dR_{A}(I)^\wedge \ar@{.>}[d] \\
A\{x_1, \ldots, x_r\}\{\frac{x_i}{I}\} \ar@{.>}[r] & \dR_{A\{z_1, \ldots, z_r\}}(I, z_1, \ldots z_r) ^\wedge
\ar@{.>}[r] & \dR_{B \widehat{\otimes}_{A} \dR_{A}(I)^\wedge}(f_i \otimes 1)^\wedge
\cong \dR_{B}(J)^\wedge.
}
\]
Using (multi-variable version of)~\Cref{naive base change} we see the left square of the above
is a $p$-completely pushout.
Base change property of derived de Rham complex now shows the right square of the above
to be a $p$-completely pushout.
Here the isomorphism of the right bottom corner follows from the fact that
$(I, f_1, \ldots, f_r)$ is a Koszul regular sequence in $B$.
\end{proof}

Just like how~\cite[Proposition 3.13]{BS19} implies~\cite[Corollary 3.14]{BS19},
our~\Cref{envelopes for regular sequence} above gives us the following.

\begin{lemma}
\label{weakly initial comparison}
Let $R$ be a $p$-completely smooth $A/I$-algebra.
Let $P$ be a $p$-completed polynomial algebra over $A$,
and let $P \twoheadrightarrow R$ be a surjection of $A$-algebras
with kernel $J$.
Consider the following diagram:
\[
\xymatrix{
A/I \ar[r] & R \ar[r] & S \\
A \ar[r] \ar[u] & P \ar[r] \ar[u] & F \ar[u] \\
}
\]
where $F$ is the $p$-completed free $\delta$-$A$-algebra associated with $P$, and
$S$ is the $p$-completed tensor product $S \coloneqq R \widehat{\otimes}_{P} F$.
Then we have a natural identification of $p$-completely flat $\dR_{A}(I)^\wedge$-algebras:
\[
F\{\frac{J \cdot F}{I}\} \widehat{\otimes}_{F,\varphi_F} F \widehat{\otimes}_{A} \dR_{A}(I)^\wedge
\cong \dR_{S/F}^\wedge.
\]
\end{lemma}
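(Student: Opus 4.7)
The plan is to reduce this statement to \Cref{envelopes for regular sequence} applied to $B = F$. First, observe that $F$ is $(p,I)$-completely flat over $A$: being a $p$-completed free $\delta$-$A$-algebra on the same generators as $P$, the natural map $P \to F$ is $(p,I)$-completely ind-smooth, as remarked just before \Cref{Cech--Alexander complex}, and $P$ itself is $(p,I)$-completely flat over $A$.

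Next, one identifies a regular sequence generating $J \cdot F$. Since $R$ is an $A/I$-algebra, the surjection $P \twoheadrightarrow R$ factors as $P \twoheadrightarrow P/IP \twoheadrightarrow R$, hence $IP \subseteq J$. As $R$ is $p$-completely smooth over $A/I$ and $P/IP$ is a $p$-completed polynomial $A/I$-algebra, Zariski-locally on $\Spf(P/IP)$ the surjection $P/IP \twoheadrightarrow R$ has kernel generated by a $p$-completely regular sequence $\bar{f}_1, \dots, \bar{f}_r$. Lifting these to $f_1, \dots, f_r \in J$, the sequence $(f_1, \dots, f_r)$ is $(p,I)$-completely regular relative to $A$ in both $P$ and $F$, and $J \cdot F = IF + (f_1, \dots, f_r)F$ Zariski-locally.

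Applying \Cref{envelopes for regular sequence} to $B = F$ with the sequence $(f_1, \dots, f_r)$ (so that the total ideal $(I, f_1, \dots, f_r)F$ agrees with $J\cdot F$) yields a natural isomorphism
\[
F\{\tfrac{J \cdot F}{I}\}^\wedge \widehat{\otimes}_{F, \varphi_F} F \widehat{\otimes}_A \dR_A(I)^\wedge \;\cong\; \dR_F(J \cdot F)^\wedge,
\]
in which the left hand side matches the one in our statement (the generators coming from $IF$ contribute nothing to the $\{-\}$-construction since $I/I = 1$ is already present). The right hand side equals $\dR_{S/F}^\wedge$ by the notational convention in \Cref{definition of envelopes}(3), once one checks $S \cong F/J\cdot F$; indeed $R = P/J$ and the $p$-completed tensor $R \widehat{\otimes}_P F$ reduces to $F/J\cdot F$ because $(f_1, \dots, f_r)$ is Koszul-regular in $P$ after base change to $F$.

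The main subtlety will be handling the possible non-finite generation of $J$: in the \v{C}ech--Alexander setup of \Cref{Cech--Alexander complex} one is allowed (and later will want) to take $P$ to be the $p$-completed free polynomial $A$-algebra on the underlying set of $R$, in which case $J$ is enormous. This is resolved by working Zariski-locally on $\Spf(P)$, where $J$ admits the finite regular-sequence description above, and then glueing the resulting isomorphisms; equivalently, one writes $J$ as a filtered colimit of finitely generated subideals satisfying the relative regularity, and invokes the compatibility of both $F\{\tfrac{-}{I}\}^\wedge$ and $\dR_{-/F}^\wedge$ with such colimits. No new ingredient beyond the regular-sequence case of \Cref{envelopes for regular sequence} is needed.
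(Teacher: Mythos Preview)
Your proposal is correct and follows essentially the same approach as the paper: reduce to \Cref{envelopes for regular sequence} applied to $B=F$, observe that $F/J\cdot F \cong S$ so that $\dR_F(J\cdot F)^\wedge \cong \dR_{S/F}^\wedge$, and handle the possible infinite generation of $J$ by working Zariski-locally on $\Spf(P)$ (or $\Spf(F)$) and writing $J$ as a filtered colimit of ideals generated by $p$-completely regular sequences, then glueing/taking colimits. The paper's proof is more terse but has the same content; your additional remarks on flatness of $F$ and the explicit description of the regular sequence are correct elaborations.
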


\begin{proof}
Zariski locally on $\Spf(P)$ and $\Spf(F)$, the kernel $J$ and $J \cdot F$
is a colimit of the form considered in \Cref{envelopes for regular sequence}.
Also note that $F/J \cdot F \cong S$, by definition we have $\dR_F(J \cdot F)^\wedge \cong \dR_{S/F}^\wedge$.

Since formation of $p$-complete derived de Rham complex commutes with taking
$p$-complete colimit (of the algebra over $A$) and descends from $p$-completely flat covers,
%Since $p$-complete derived de Rham commutes with $p$-complete colimit of the inputing algebra
%and is a Zariski sheaf, 
we may glue the local isomorphisms obtained in 
\Cref{envelopes for regular sequence} and take colimit to get our identification here.
\end{proof}

Using this comparison of prismatic envelope and derived de Rham complex, we get a comparison
between prismatic and derived de Rham cohomology as follows.

\begin{theorem}
\label{comparing pris and crys}
%\textcolor{red}{Assume that $\varphi_A$ has $(p,I)$-completely
%finite Tor amplitude.}
Let $(A,I)$ be a bounded prism.
For any $p$-completely smooth $A/I$-algebra $R$, there is a natural isomorphism
in $\mathrm{CAlg}(A)$:
\[
\mathrm{R\Gamma_{\Prism}}(R/A) \widehat{\otimes}_{A,\varphi_A} A 
\widehat{\otimes}_A \dR_A(I)^\wedge \cong
\dR_{R/A}^\wedge,
\]
which is functorial in $A/I \to R$ and satisfies base change in $(A,I)$.
\end{theorem}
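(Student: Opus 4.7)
The plan is to combine the \v{C}ech--Alexander presentation of prismatic cohomology (\Cref{Cech--Alexander complex}) with the envelope comparison (\Cref{weakly initial comparison}) and the sheaf-like presentation of derived de Rham cohomology (\Cref{lim dR}). Since the statement asserts a natural isomorphism in $\mathrm{CAlg}(A)$ and both sides satisfy Zariski descent along formally \'etale maps, I would first reduce to the case where $R$ admits a surjection $P \twoheadrightarrow R$ from a derived $(p,I)$-completed polynomial $A$-algebra $P$ with kernel $J$; this can even be done in a strictly functorial fashion by letting $P$ be the free polynomial algebra on the underlying set of $R$.

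With such a $P$ fixed, I would take $F$ to be the derived $(p,I)$-completed free $\delta$-$A$-algebra associated with $P$ and form the \v{C}ech nerves $P^{\bullet}$, $F^{\bullet}$ with kernels $J^{\bullet}$. By \Cref{Cech--Alexander complex}, the cosimplicial $\delta$-$A$-algebra $F^{\bullet}\{\frac{J^{\bullet}F^{\bullet}}{I}\}^\wedge$ computes $\mathrm{R\Gamma_{\Prism}}(R/A)$. Applying the functor $(-) \widehat{\otimes}_{A,\varphi_A} A \widehat{\otimes}_A \dR_A(I)^\wedge$ termwise and invoking \Cref{weakly initial comparison} with its compatibility with the given Frobenius (using that $\varphi_{F^n}$ extends $\varphi_A$, so the two flavours of Frobenius twist agree after the further base change along $A \to \dR_A(I)^\wedge$), each cosimplicial level is identified with $\dR_{S^n/F^n}^\wedge$, where $S^n = R \widehat{\otimes}_{P^n} F^n$. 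Finally \Cref{lim dR}(2) gives
\[
\lim_{[n] \in \Delta} \dR_{S^n/F^n}^\wedge \cong \dR_{R/A}^\wedge,
\]
which combined with the previous steps yields the desired comparison.

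The main technical point is justifying that the base change $(-) \widehat{\otimes}_{A,\varphi_A} A \widehat{\otimes}_A \dR_A(I)^\wedge$ commutes with the totalization appearing in the \v{C}ech--Alexander complex. For this I would use \Cref{dR Tor dim 1}, which says $A \to \dR_A(I)^\wedge$ has $p$-complete Tor amplitude in $[-1,0]$, together with the fact that each term $F^n\{\frac{J^n F^n}{I}\}^\wedge$ is $(p,I)$-completely flat over $A$ by \cite[Proposition 3.13]{BS19} and hence lies in $D^{\geq 0}(A)$. Combining these two inputs with \cite[Lemma 4.20]{BS19} lets the base change pass through the totalization.

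Functoriality in $R$ follows from the strict functoriality of the \v{C}ech--Alexander construction described in \Cref{Cech--Alexander complex} (taking $P$ to be the polynomial algebra on the underlying set of $R$). Base change in the prism $(A,I)$ follows from the base change statements in \Cref{Cech--Alexander complex}, \Cref{weakly initial comparison}, and \Cref{lim dR}, each of which is inherited from the base change properties of the underlying \v{C}ech nerves, prismatic envelopes, and derived de Rham complexes. The main obstacle to watch out for is the Frobenius bookkeeping in the third step; every other ingredient is a direct application of results established earlier in the section.
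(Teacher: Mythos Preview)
Your overall strategy is the same as the paper's, but there is a genuine gap in the step where you commute the base change $(-)\widehat{\otimes}_{A,\varphi_A}A\widehat{\otimes}_A\dR_A(I)^\wedge$ past the totalization of the \v{C}ech--Alexander complex. You invoke \Cref{dR Tor dim 1}, but that lemma only controls the Tor amplitude of the natural map $A\to\dR_A(I)^\wedge$; it says nothing about the Frobenius $\varphi_A\colon A\to A$. For a general bounded prism $\varphi_A$ need not be flat (nor have bounded $p$-complete Tor amplitude), so the step $\varphi_A^*\bigl(\lim_\Delta F^\bullet\{J^\bullet F^\bullet/I\}^\wedge\bigr)\to\lim_\Delta\varphi_A^*\bigl(F^\bullet\{J^\bullet F^\bullet/I\}^\wedge\bigr)$ is not justified by \cite[Lemma 4.20]{BS19} and the flatness of the individual terms alone.

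The paper circumvents this by decoupling the construction of the morphism from the verification that it is an isomorphism. To build the map, one only needs a compatible system of maps from the fixed object $\mathrm{R\Gamma_{\Prism}}(R/A)\widehat{\otimes}_{A,\varphi_A}A\widehat{\otimes}_A\dR_A(I)^\wedge$ into each $\dR_{S/F}^\wedge$ and then applies \Cref{lim dR}(3); no limit--tensor commutation is required for this direction. To check that the resulting map is an isomorphism, the paper then reduces (via Zariski localization, left Kan extension to polynomial algebras, and base change) to the universal oriented prism, where $(A,I)$ is transversal and $\varphi_A$ is flat; only under these extra hypotheses does the chain of identifications you describe go through. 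If you reorganize your argument along these lines---construct the morphism first, then reduce before invoking the commutation---your proof will match the paper's.
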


Let us emphasize again that when $(A, I)$ is transversal, this follows from \cite[Theorem 5.2]{BS19}.

%Here $\mathrm{R\Gamma_{crys}}(R/A)$ denotes the crystalline cohomology with respect to
%the absolute crystalline site, since $R$ is $p$-completely smooth over $A/I$,
%it is also the same as the crystalline cohomology 
%$\mathrm{R\Gamma_{crys}}(R/\mathcal{A})$ relative to the PD base $\mathcal{A}$.
%Since $(A,I)$ is assumed to be a transversal prism, both $A$ and $R$ are $p$-torsionfree, 
%and $\mathrm{R\Gamma_{crys}}(R/A)$ can be identified with
%$\dR_{R/A}^{\wedge}$, the $p$-adic derived de Rham complex
%\`{a} la Illusie~\cite[Section VIII.2]{Ill72} and Bhatt~\cite{Bha12},
%by~\cite[Theorem 3.27]{Bha12}.

%\textcolor{red}{Maybe we can prove this in full generality, as everything
%is compatible with base change of $A$, hence reducing to the universal prism situation,
%hence the Frobenius is flat.}

\begin{proof}
%In this case, by base changing along $\mathbb{Z}_p \to A/I$, 
%we have natural isomorphism
%\[
%\dR_{R/A}^{\wedge} \xrightarrow{\cong} \mathrm{R\Gamma_{crys}}(R/A),
%\]
%due to~\cite[Proposition 3.25 and Theorem 3.27]{Bha12}.

Let us first construct the desired natural morphism
\[
\mathrm{R\Gamma_{\Prism}}(R/A) \widehat{\otimes}_{A,\varphi_A} A 
\widehat{\otimes}_A \dR_A(I)^\wedge \rightarrow
\dR_{R/A}^\wedge.
\]
%For any $(p,I)$-completely free $\delta$-$A$ algebra $B$ with a surjection of $A$-algebras
%$B \twoheadrightarrow R$, by \Cref{envelopes for smooth} we have a natural
%morphism
Given any triple $A \to P \twoheadrightarrow R$ as in the setting of \Cref{weakly initial comparison},
we have a natural morphism
\[
\mathrm{R\Gamma_{\Prism}}(R/A) \widehat{\otimes}_{A,\varphi_A} A 
\widehat{\otimes}_A \dR_A(I)^\wedge \rightarrow
F\{\frac{J \cdot F}{I}\}^{\wedge} \widehat{\otimes}_{F,\varphi_F} F \widehat{\otimes}_{A}
\dR_A(I)^\wedge \cong \dR_{S/F}^\wedge,
\]
which is functorial in $A \to P \twoheadrightarrow R$.
By \Cref{lim dR} (3), the limit of right hand side over
all the triples $A \to P \twoheadrightarrow R$ is just $\dR_{R/A}^\wedge$,
hence we get the desired natural morphism.
It is functorial in $A/I \to R$ and satisfies base change in $(A,I)$.

%Let us check the condition of \Cref{dR sheaf} for $A \to R$:
%the statement of cotangent complex $\mathbb{L}_{R/A}$ is clear;
%the Frobenius twist of $(A/I)/p$ relative to $A/p$ is simply
%$(A/I^p)/p$ which lives in $D^{\geq -1}(\mathbb{F}_p)$.
%By \Cref{dR sheaf}, the limit over all such triangles of these $\dR_{R/B}^\wedge$'s is 
%$\dR_{R/A}^\wedge$, hence we get the desired natural morphism.
%The functoriality in $A \to R$ is clear.

Now we need to show the natural arrow constructed above is a natural isomorphism.
Let us make more reductions.
It suffices to check this is an isomorphism
after a faithfully flat cover, and since both sides commute
with base change in $A$, we may Zariski localize on $A$,
hence we may first reduce to the case where $A$ is oriented, i.e.~$I = (d)$.
Observe that both sides are the left Kan extension of their restriction
to the category of polynomial $A$-algebras,
so it suffices to show that 
the above arrow is a natural isomorphism for algebras of the form
$R = A/I[X_1, \ldots, X_n]^{\wedge}$,
which is the base change of $p$-complete polynomial algebras 
over the universal oriented prism.
Hence we can reduce further to the case that $A$ is the universal oriented prism.
In particular we may assume that $(A,I)$ is transversal and that $\varphi_A$ is flat.

Lastly, we shall prove the statement under the assumption that 
$(A,I)$ is transversal and that $\varphi_A$ is flat.
Choose a $(p,I)$-completely polynomial $A$ algebra $P$ 
with a surjection of $A$-algebras
$P \twoheadrightarrow R$, form the cosimplicial object 
$\left(F^{\bullet}\{\frac{J^\bullet F^\bullet}{I}\}^\wedge\right)$ in $(R/A)_{\Prism}$
computing $\Prism_{R/A}$ as in \Cref{Cech--Alexander complex}.
Notice that we have an identification of cosimplicial $(p,I)$-complete algebras 
$A \xrightarrow{\simeq} F^{\bullet}$.

%\textcolor{red}{This is almost correct, except maybe we should write with the A -> P -> R's.}
%We naturally get surjections $B_{n} \twoheadrightarrow R$, denote the kernel by $J_{n}$.
%By~\cite[Corollary 3.14]{BS19}, we may form $B_{n}\{ \frac{J_n}{I} \}^{\wedge}$,
%and we have an identification of $\mathbb{E}_{\infty}$-algebras
%(see~\cite[Construction 4.16]{BS19}):
%$$
%\mathrm{R\Gamma_{\Prism}}(R/A) \cong \lim(B_{\bullet}\{ \frac{J_{\bullet}}{I} \}^{\wedge}).
%$$
Since we have reduced ourselves to the case where $(A,I)$ is transversal and that $\varphi_A$ is flat, 
using~\Cref{calA Tor dim 1}, the natural morphism considered above gives rise to the following identification
\begin{align*}
\label{long chain of identifications comparing prism and dR}
\tag{\epsdice{2}}
\mathrm{R\Gamma_{\Prism}}(R/A) \widehat{\otimes}_{A,\varphi_A} A 
\widehat{\otimes}_A \dR_A(I)^\wedge \cong
\lim_{\Delta} \left( (F^{\bullet}\{ \frac{F^{\bullet} \cdot J^{\bullet}}{I} \}^{\wedge}) 
\widehat{\otimes}_{A,\varphi_A} A  \widehat{\otimes}_A \dR_A(I)^\wedge
\right)
\\
\cong \lim_{\Delta} \left((F^{\bullet}\{ \frac{F^{\bullet} \cdot J^{\bullet}}{I} \}^{\wedge})
\widehat{\otimes}_{F^{\bullet}, \varphi_{F^{\bullet}}} 
F^{\bullet} \widehat{\otimes}_A \dR_A(I)^\wedge \right)
\cong \lim_{[n] \in \Delta} \dR_{S^{\hat{\otimes}_R n}/F^n}^\wedge \cong
\dR_{R/A}^\wedge
\end{align*}
as desired.
Let us comment on the identifications above.
Here we have used the cosimplicial replacement 
$(A, \varphi_A) \xrightarrow{\simeq} (F^{\bullet}, \varphi_{F^{\bullet}})$ in the second identification.
The second-to-last identification is provided by \Cref{weakly initial comparison},
and the last identification is because of \Cref{lim dR}.
%Now we apply~\Cref{envelopes for smooth} to see that the above is the same as
%$\lim \left( \dR_{R/{B_{\bullet}}} \right)^{\wedge}$,
%which is isomorphic to $\dR_{R/A}^\wedge$ (using \Cref{dR sheaf} again).
%Thanks to \cite[Lecture V, Lemma 4.3]{BhaNotes18}
%(see also~\cite[\href{https://stacks.math.columbia.edu/tag/07JM}{Tag 07JM}]{stacks-project}), 
%our isomorphism here is the same as the arrow defined in the beginning
%of this proof, hence finishing the proof.
\end{proof}

%In the end of the above proof, instead of working with the cosimplicial limit we could
%just work with the limit over all such triples $A \to B \twoheadrightarrow R$
%and the argument will go through verbatim.

\begin{remark}
\label{compatible with Frobenii}
In this paper we have only defined Frobenius action on $\dR_{-/A}^\wedge$ under the assumption
of $A$ being a $p$-torsionfree $\delta$-ring.
Now suppose $(A,I)$ is a $p$-torsionfree prism, 
by \Cref{another Frobenius on limit dR}, we see that
the chain of identifications in (\epsdice{2})
is compatible with Frobenius.
Consequently, the identification in \Cref{comparing pris and crys} is compatible with Frobenius
in a functorial manner.

We expect however that one can remove the $p$-torsionfree condition with additional work
in developing the framework of ``derived $\delta$-rings''.
However since the primary interest of this paper is in the case of $p$-torsionfree prisms,
we choose to not pursue that level of generality here.
\end{remark}

Below let us conclude two consequences from \Cref{comparing pris and crys}.

\begin{corollary}
\label{comparying pris and crys for transversal prisms}
Let $(A,I)$ be a bounded prism.
For any $p$-completely smooth $A/I$-algebra $R$, there is a natural isomorphism
in $\mathrm{CAlg}(A)$:
\[
\mathrm{R\Gamma_{\Prism}}(R/A) \widehat{\otimes}_{A,\varphi_A} A 
\widehat{\otimes}_A \mathcal{A} \cong
\mathrm{R\Gamma_{crys}}(R/\mathcal{A}),
\]
which is functorial in $A/I \to R$ and satisfies base change in $(A,I)$.
\end{corollary}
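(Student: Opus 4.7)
The plan is to deduce this corollary directly by combining \Cref{comparing pris and crys} with \Cref{dR and crys} (3) via a further base change along the natural map $\dR_A(I)^\wedge \to \mathcal{A}$.

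First I would invoke \Cref{comparing pris and crys} to obtain the functorial isomorphism
\[
\mathrm{R\Gamma_{\Prism}}(R/A) \widehat{\otimes}_{A,\varphi_A} A
\widehat{\otimes}_A \dR_A(I)^\wedge \cong
\dR_{R/A}^\wedge
\]
in $\mathrm{CAlg}(\dR_A(I)^\wedge)$. Next, observe that $\mathcal{A}$, being the $p$-completed PD envelope of $A \twoheadrightarrow A/I$, receives a canonical map from $\dR_A(I)^\wedge$: indeed this is precisely Bhatt's natural comparison $\mathcal{C}\mathrm{omp}_{(A/I)/A}$ recalled before \Cref{dR and crys}. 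I would then derived $p$-completely base change the above isomorphism along this map $\dR_A(I)^\wedge \to \mathcal{A}$. The left hand side becomes
\[
\mathrm{R\Gamma_{\Prism}}(R/A) \widehat{\otimes}_{A,\varphi_A} A \widehat{\otimes}_A \mathcal{A},
\]
while the right hand side becomes $\dR_{R/A}^\wedge \widehat{\otimes}_{\dR_A(I)^\wedge} \mathcal{A}$. Since $R$ is formally smooth over $A/I$, \Cref{dR and crys} (3) identifies this last term functorially with $\mathrm{R\Gamma_{crys}}(R/\mathcal{A})$, yielding the desired isomorphism.

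For functoriality in $A/I \to R$: both the isomorphism from \Cref{comparing pris and crys} and Bhatt's $\mathrm{Comp}_{R/A}$ in \Cref{dR and crys} are stated to be functorial in the algebra $R$, so the composition inherits this property. For base change in $(A,I)$: the prismatic isomorphism is explicitly compatible with base change, and the map $\dR_A(I)^\wedge \to \mathcal{A}$ along with crystalline cohomology of $\Spf(R \otimes_{A/I} A'/I')$ over $\mathcal{A}'$ behaves well under base change of prisms, so the combined identification also satisfies base change.

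I do not expect any serious obstacle here since the main theorem has been established and $\mathcal{A}$ is defined intrinsically from $(A,I)$ without any transversality hypothesis. The only minor point to verify is that the derived $p$-completed tensor $-\widehat{\otimes}_{\dR_A(I)^\wedge} \mathcal{A}$ commutes with the totalization implicit in $\mathrm{R\Gamma_{\Prism}}(R/A) \widehat{\otimes}_{A,\varphi_A} A \widehat{\otimes}_A \dR_A(I)^\wedge$; but since we are only asked for an isomorphism in $\mathrm{CAlg}(A)$ at the level of objects and this base change is applied to the whole comparison isomorphism as a single map of algebras, no such commutation issue arises.
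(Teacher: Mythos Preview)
Your proposal is correct and follows essentially the same approach as the paper: base change the isomorphism of \Cref{comparing pris and crys} along the natural map $\dR_A(I)^\wedge \to \mathcal{A}$, and then invoke \Cref{dR and crys} to identify $\dR_{R/A}^\wedge \widehat{\otimes}_{\dR_A(I)^\wedge} \mathcal{A}$ with $\mathrm{R\Gamma_{crys}}(R/\mathcal{A})$. The paper's proof is in fact a one-line version of exactly what you wrote.
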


\begin{proof}
This follows from \Cref{comparing pris and crys}: simply base change along the morphism
$\dR_A(I)^\wedge \to \mathcal{A}$, and by \Cref{dR and crys} we have
\[
\dR_{R/A}^\wedge \widehat{\otimes}_{\dR_A(I)^\wedge} \mathcal{A} \cong \mathrm{R\Gamma_{crys}}(R/\mathcal{A}).
\]
\end{proof}

\begin{remark}
\label{Frobenius pullback is naive}
By chasing diagram, one verifies that
the following diagram of isomorphisms:
\[
\xymatrix{
\varphi_{\mathcal{A}}^*\left(\mathrm{R\Gamma_{\Prism}}(R/A) \widehat{\otimes}_{A,\varphi_A} A 
\widehat{\otimes}_A \mathcal{A}\right) \ar[r]^-{\alpha}    \ar[d]^-{\beta} & 
\varphi_{\mathcal{A}}^*\left(\mathrm{R\Gamma_{crys}}(R/A)\right) \ar[d]^-{\epsilon} \\
\varphi_{\mathcal{A}}^*\left(\mathrm{R\Gamma_{\Prism}}((R \widehat{\otimes}_{A, \varphi_A} \mathcal{A})/\mathcal{A}) \right) \ar[r]^-{\gamma} & 
\mathrm{R\Gamma_{crys}}(\left(R/p \otimes_{A/(p, I), \varphi_A} A/(p, I)\right)/\mathcal{A})
}
\]
is commutative, since all comparisons here are expressed in terms of various explicit
envelopes.
Here these arrows are given by:
\begin{enumerate}
\item $\alpha$ is Frobenius pullback of the arrow in~\Cref{comparying pris and crys for transversal prisms};
\item $\beta$ is the base change of prisms $\varphi_A \colon (A,I) \to (\mathcal{A},p)$;
\item $\gamma$ is the crystalline comparison for crystalline prisms~\cite[Theorem 5.2]{BS19};
\item $\epsilon$ is the base change of crystalline cohomology.
\end{enumerate}
\end{remark}

A bounded prism $(A,I)$ is called a \emph{PD prism}, if there is a PD structure $\gamma$ on $I$, compatible with the canonical one on $(p)$.

\begin{corollary}
\label{comparying pris and crys for PD prisms}
Let $(A,I,\gamma)$ be a bounded PD prism.
Then for any $p$-completely smooth $A/I$-algebra $R$, there is a natural isomorphism
in $\mathrm{CAlg}(A)$:
\[
\mathrm{R\Gamma_{\Prism}}(R/A) \widehat{\otimes}_{A,\varphi_A} A 
\cong \mathrm{R\Gamma_{crys}}(R/(A,I,\gamma)),
\]
which is functorial in $A/I \to R$ and satisfies base change in $(A,I)$.
\end{corollary}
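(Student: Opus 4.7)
The plan is to deduce this directly from \Cref{comparying pris and crys for transversal prisms}, which produces a functorial isomorphism
\[
\mathrm{R\Gamma_{\Prism}}(R/A) \widehat{\otimes}_{A,\varphi_A} A \widehat{\otimes}_A \mathcal{A} \cong \mathrm{R\Gamma_{crys}}(R/\mathcal{A})
\]
compatible with base change in the prism $(A,I)$, where $\mathcal{A}$ denotes the $p$-completed PD envelope of $A \twoheadrightarrow A/I$ (compatible with the canonical PD structure on $(p)$).

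The key observation is that when $(A, I, \gamma)$ is a PD prism, the natural map $A \to \mathcal{A}$ is already an isomorphism: by definition of PD prism, the ideal $I$ already carries a PD structure $\gamma$ compatible with the one on $(p)$, so $A$ itself is a $p$-complete PD thickening of $A/I$ over the base PD ring, and the universal property of $\mathcal{A}$ forces $\mathcal{A} \cong A$. Plugging this identification into the displayed isomorphism above gives
\[
\mathrm{R\Gamma_{\Prism}}(R/A) \widehat{\otimes}_{A,\varphi_A} A \cong \mathrm{R\Gamma_{crys}}(R/(A,I,\gamma)),
\]
as desired.

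The functoriality in $A/I \to R$ and base change in $(A,I)$ are inherited from the corresponding properties in \Cref{comparying pris and crys for transversal prisms}, since the identification $\mathcal{A} \cong A$ is tautologically compatible with morphisms of PD prisms. The only thing that requires a moment's thought is that the structure map $A \to \mathcal{A}$ used in that corollary is indeed the identity under our identification; but this is clear from the construction, as the target of the comparison is computed via PD envelopes of surjections onto $R$, and in the PD prism case these envelopes agree with the ones taken over $(A,I,\gamma)$ directly. There is no serious obstacle here; everything reduces to unwinding the universal property of $\mathcal{A}$ in the presence of an existing compatible PD structure on $I$.
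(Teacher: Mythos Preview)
Your overall strategy—start from \Cref{comparying pris and crys for transversal prisms} and then pass from $\mathcal{A}$ back to $A$—is exactly what the paper does. The gap is in your ``key observation'': the universal property of the PD envelope does \emph{not} force $A \to \mathcal{A}$ to be an isomorphism. Knowing that $(A,I,\gamma)$ is a PD thickening of $A/I$ over the base $(A,(p),\gamma_{\mathrm{can}})$ only tells you that $(A,I,\gamma)$ is \emph{one} object in the category in which $\mathcal{A}$ is initial; by the universal property this yields a canonical PD morphism $\mathcal{A}\to A$, i.e.\ a \emph{section} of $A\to\mathcal{A}$, not an inverse. Indeed, if two distinct PD structures $\gamma_1,\gamma_2$ on $I$ exist (which can happen once $A$ has $p$-torsion), they produce two distinct sections, so $A\to\mathcal{A}$ cannot be an isomorphism. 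This is precisely why the paper's Remark after the corollary has content: the right-hand side a priori depends on $\gamma$.

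The paper's argument is the correct repair of your idea. The PD structure $\gamma$ furnishes a section $\mathcal{A}\to A$ with $A\to\dR_A(I)^\wedge\to\mathcal{A}\to A$ equal to the identity. Base changing the isomorphism of \Cref{comparying pris and crys for transversal prisms} along this section gives
\[
\mathrm{R\Gamma_{\Prism}}(R/A)\,\widehat{\otimes}_{A,\varphi_A}A \;\cong\; \mathrm{R\Gamma_{crys}}(R/\mathcal{A})\,\widehat{\otimes}_{\mathcal{A}}A,
\]
and then base change for crystalline cohomology along the PD morphism $(\mathcal{A},\mathcal{I})\to(A,I,\gamma)$ identifies the right side with $\mathrm{R\Gamma_{crys}}(R/(A,I,\gamma))$. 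So your last paragraph is morally right, but the mechanism is a section-plus-base-change, not an identification $\mathcal{A}\cong A$.
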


Here $\mathrm{R\Gamma_{crys}}(-/(A,I,\gamma))$ denotes the crystalline cohomology
with respect to the $p$-adic PD base $(A,I,\gamma)$.

\begin{proof}
The additional PD structure gives us a section $\mathcal{A} \to A$, which makes the composition of
\[
A \to \dR_A(I)^\wedge \to \mathcal{A} \to A
\]
being identity.
Take the functorial isomorphism in \Cref{comparying pris and crys for transversal prisms}, 
and base change further along $\mathcal{A} \to A$ gives us
\[
\mathrm{R\Gamma_{\Prism}}(R/A) \widehat{\otimes}_{A,\varphi_A} A  \cong 
\mathrm{R\Gamma_{crys}}(R/\mathcal{A}) \widehat{\otimes}_{\mathcal{A}} A,
\]
and the latter is naturally isomorphic to $\mathrm{R\Gamma_{crys}}(R/(A,I,\gamma))$ 
due to base change in crystalline cohomology theory.
\end{proof}

\begin{remark}
(1) Any derived $p$-complete $\delta$-ring $A$ with bounded $p$-torsion together with the ideal $(p)$
is a PD prism. In this situation, our \Cref{comparying pris and crys for PD prisms}
is simply the crystalline comparison in \cite[Theorem 1.8.(1)]{BS19}.

(2) The left hand side of this comparison does not depend on the PD structure $\gamma$ on $I$,
whereas the right hand side \emph{a priori} does.
Therefore this comparison tells us that the right hand side also does not depend on the
PD structure $\gamma$.
\end{remark}

We can ``globalize'' these comparisons to general quasi-compact quasi-separated smooth formal schemes
over $\Spf(A/I)$.

\begin{theorem}
\label{global comparison}
Let $(A,I)$ be a bounded prism.
Let $X \to \Spf(A/I)$ be a quasi-compact quasi-separated smooth morphism of formal schemes.
Then we have natural isomorphisms in $\mathrm{CAlg}(A)$:
\[
\mathrm{R\Gamma_{\Prism}}(X/A) \widehat{\otimes}_{A,\varphi_A} A 
\widehat{\otimes}_A \dR_A(I)^\wedge \cong
\mathrm{R\Gamma}(X, \dR_{-/A}^\wedge);
\]
and
\[
\mathrm{R\Gamma_{\Prism}}(X/A) \widehat{\otimes}_{A,\varphi_A} A 
\widehat{\otimes}_A \mathcal{A} \cong
\mathrm{R\Gamma_{crys}}(X/\mathcal{A}).
\]

If $(A,I, \gamma)$ is a PD prism, then we have a natural isomorphism in $\mathrm{CAlg}(A)$:
\[
\mathrm{R\Gamma_{\Prism}}(X/A) \widehat{\otimes}_{A,\varphi_A} A 
\cong \mathrm{R\Gamma_{crys}}(X/(A,I,\gamma)).
\]

All the isomorphisms above satisfy base change in $(A,I)$.
Moreover, if $X$ is also proper over $\Spf(A/I)$, then all the completed tensor products above
may be replaced by tensor products.
\end{theorem}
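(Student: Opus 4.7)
The plan is to reduce each global comparison to its affine counterpart
(\Cref{comparing pris and crys}, \Cref{comparying pris and crys for transversal prisms},
\Cref{comparying pris and crys for PD prisms}) by \v{C}ech descent, and then
address the proper case using the perfectness of prismatic cohomology.
Fix a finite affine cover $\{U_\alpha\}$ of the qcqs formal scheme $X$ (which exists
by qcqs-ness) and form its \v{C}ech nerve $U_\bullet$. Each side of each asserted
comparison satisfies Zariski (hence \v{C}ech) descent: prismatic cohomology is a
sheaf for the flat topology (\cite[Theorem 9.1]{BS19}),
$\mathrm{R\Gamma}(-,\dR^{\wedge}_{-/A})$ is a Zariski sheaf by construction, and
crystalline cohomology is a Zariski sheaf. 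Thus each global cohomology is the
totalization of the corresponding affine invariants along $U_\bullet$, and applying
the affine comparison levelwise produces a cosimplicial isomorphism whose totalization
is the sought-for global isomorphism \emph{provided} the relevant $p$-completed base
changes commute with $\lim_{\Delta}$.

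This commutation is the crux. The map
$A\to \dR_A(I)^\wedge$ has $p$-complete Tor amplitude in $[-1,0]$ by
\Cref{dR Tor dim 1}, so $(-)\,\widehat{\otimes}_A \dR_A(I)^\wedge$ commutes with
totalizations of cosimplicial objects that are uniformly bounded below. The
affine prismatic cohomologies lie in $D^{\geq 0}(A)$ and the cover is finite, giving
a uniform bound, so the commutation applies; one similarly handles the Frobenius
pullback $\widehat{\otimes}_{A,\varphi_A} A$, which is just base change along a ring
endomorphism. For the $\mathcal{A}$-version, one factors through
$\dR_A(I)^\wedge\to \mathcal{A}$ and uses \Cref{dR and crys} together with
\Cref{calA Tor dim 1} in the transversal case; the general case reduces to the
transversal case by taking a $(p,I)$-completely faithfully flat cover of $(A,I)$ by
an (oriented) transversal prism and invoking the base-change compatibility of the
affine comparison. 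In the PD case, the extra tensor $\mathcal{A}\to A$ provided by the
PD structure makes the total base change a finite composition of such commutations,
and functoriality (in $X$ and in $(A,I)$) is inherited from the affine comparisons.

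Finally, when $X$ is proper over $\Spf(A/I)$, the complex
$\mathrm{R\Gamma}_{\Prism}(X/A)$ is perfect over $A$ by \cite[Theorem 1.8.(4)]{BS19},
and $A$ is $(p,I)$-complete. Since a perfect complex over $A$ tensored with any
$p$-complete $A$-complex (such as $\dR_A(I)^\wedge$, $\mathcal{A}$, or $A$ itself) is
already $p$-complete, the derived $p$-complete tensor products on the left-hand sides
coincide with ordinary derived tensor products, which is the last clause of the
theorem.

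The main obstacle I anticipate is the clean handling of the $p$-completed tensor with
\v{C}ech totalizations in full generality, especially the $\mathcal{A}$-version over a
non-transversal prism; this should dissolve upon reduction to the transversal case via
faithfully flat descent in $(A,I)$ together with careful bookkeeping of Tor amplitude
bounds and uniform cohomological boundedness of the terms of the \v{C}ech nerve.
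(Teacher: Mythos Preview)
Your proof is correct, but it is more elaborate than necessary, and the paper's argument is substantially simpler. The paper observes that for a qcqs formal scheme $X$, each of the cohomologies in question is computed as a \emph{finite} limit of the affine values (e.g.\ via a finite alternating \v{C}ech complex). Since derived completed tensor product is an exact functor on the stable $\infty$-category $D(A)$, it automatically commutes with finite limits; no Tor-amplitude bound or cohomological estimate is needed. This bypasses entirely your discussion of commuting $(-)\,\widehat{\otimes}_A \dR_A(I)^\wedge$ with $\lim_\Delta$ via \Cref{dR Tor dim 1}.

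Two further simplifications are available. First, your proposed reduction of the $\mathcal{A}$-version to the transversal case by faithfully flat descent is unnecessary: \Cref{comparying pris and crys for transversal prisms} is already stated and proved for an arbitrary bounded prism (despite its label), so the affine input is available in full generality. Second, the ``main obstacle'' you anticipate simply does not arise once one works with finite limits rather than full simplicial totalizations. Your argument for the proper case via perfectness of $\mathrm{R\Gamma}_{\Prism}(X/A)$ matches the paper's.
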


\begin{proof}
Since $X$ is assumed to be quasi-compact and quasi-separated. These cohomologies
are computed by a finite limit of the corresponding cohomologies of affine opens of $X$.
Because completed tensor commutes with finite limit, the comparisons here follow from
\Cref{comparing pris and crys}, \Cref{comparying pris and crys for transversal prisms}, 
and \Cref{comparying pris and crys for PD prisms}.

To justify the replacement of completed tensor with tensor, just note that $\mathrm{R\Gamma_{\Prism}}(X/A)$
is a perfect complex of $A$-modules for smooth proper $X \to \Spf(A/I)$, 
see the last sentence of \cite[Theorem 1.8]{BS19}.
\end{proof}

\subsection{Functorial endomorphisms of derived de Rham complex}
Throughout this subsection, we assume $(A,I)$ to be a transversal prism,
in particular we have $\dR_A(I)^\wedge \cong \mathcal{A}$
and $\dR_{R/A}^\wedge \cong \mathrm{R\Gamma_{crys}}(R/\mathcal{A})$
where $R$ is any $p$-adic formally smooth $A/I$-algebra, see \Cref{dR and crys}.

In this subsection, we aim at understanding all
functorial endomorphisms of the derived de Rham complex functor,
under this transversality assumption.
In particular we shall see that the functorial isomorphism
\[
\mathrm{R\Gamma_{\Prism}}(R/A) \widehat{\otimes}_{A,\varphi_A} A 
\widehat{\otimes}_A \dR_A(I)^\wedge \rightarrow
\dR_{R/A}^\wedge,
\]
appearing in~\Cref{comparing pris and crys} is unique
if we assume $(A,I)$ to be a transversal prism.
In order to show this, we need to first extend the natural isomorphism
to a larger class of $A/I$-algebras.

\begin{construction}[{c.f.~\cite[Construction 7.6]{BS19} and~\cite[Example 5.12]{BMS2}}]
\label{derived prismatic construction}
Fix a bounded prism $(A,I)$, consider the functor 
$R \mapsto \dR_{R/A}^{\wedge}$
on $p$-completely smooth $A/I$-algebras $R$ valued in the category of commutative
algebras in the $\infty$-category of $p$-complete objects in $\mathcal{D}(A)$.
Left Kan extend it to all derived $p$-complete simplicial $A/I$-algebras,
which is nothing but the $p$-adic derived de Rham complex relative to $A$,
still denoted by $\dR_{R/A}^{\wedge}$.
Let us record some properties of this construction:
\begin{enumerate}
\item Since $R$ is an $A/I$-algebra, the $\dR_{R/A}^{\wedge}$ is naturally a
$\dR_{(A/I)/A}^{\wedge}$-algebra.
Hence we may actually view the functor as taking values in the category
$\mathrm{CAlg}(\dR_A(I)^\wedge)$.
\item The formation of $\dR_{R/A}^{\wedge}$ commutes with base change in $A$.
\item Below we shall see that, following the reasoning of \cite[Theorem 3.1 and Example 5.12]{BMS2},
the association $R \mapsto \dR_{R/A}^{\wedge}$ defines
a sheaf on the relative quasisyntomic site $\mathrm{qSyn}_{A/I}$.
\item By left Kan extending the natural isomorphism obtained in \Cref{comparing pris and crys},
we get an isomorphism of sheaves:
\[
\Prism^{(1)}_{R/A} \widehat{\otimes}_A \dR_A(I)^\wedge
\cong \dR_{R/A}^{\wedge},
\]
which is compatible with base change in $A$.
Here $\Prism^{(1)}_{R/A} \coloneqq \Prism_{R/A} \widehat{\otimes}_{A,\varphi_A} A$
is the Frobenius pullback of the derived prismatic cohomology.

\item Moreover if we assume that $(A,I)$ is a transversal prism, 
then for any $R$ which is large quasisyntomic over $A/I$,
the value $\dR_{R/A}^{\wedge}$ is $p$-completely flat over $\mathcal{A}$ 
and lives in cohomological degree $0$.
%\item The association $R \mapsto \dR_{R/A}^{\wedge}$ defines a sheaf on $\qsyn_{A/I}$
%for the quasisyntomic topology on
%the category of algebras that are quasisyntomic over $A/I$.
%\footnote{Note
%that $\qsyn_{A/I}$ is the slice category $(\qsyn_A)_{/(A/I)}$ as the quotient $A \to A/I$
%is quasisyntomic by transversality of $(A,I)$. Except this quotient is NOT flat, oops.}
\end{enumerate}

Let us justify the claim (3) above.
\begin{proposition}
\label{sheaf property on relative to A dR}
The association $R \mapsto \dR_{R/A}^{\wedge}$ defines
a sheaf on the relative quasisyntomic site $\mathrm{qSyn}_{A/I}$.
\end{proposition}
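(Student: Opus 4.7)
The strategy is to adapt the argument of \cite[Example 5.12]{BMS2}, which establishes the analogous sheaf property for the absolute derived de Rham complex, to our relative setting over the transversal prism $(A,I)$.

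First, since the functor $R \mapsto \dR_{R/A}^\wedge$ takes values in derived $p$-complete objects, it suffices to check that the mod-$p$ functor $R \mapsto \dR_{R/A}/p$ satisfies descent along a quasisyntomic cover $R \to R'$ in $\qsyn_{A/I}$; a morphism of derived $p$-complete objects is an isomorphism iff its reduction mod $p$ is.

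Next, by left Kan extending the classical conjugate filtration from smooth $A/I$-algebras (see \cite[Proposition 3.5]{Bha12}), the complex $\dR_{R/A}/p$ acquires a natural exhaustive increasing filtration whose $i$-th graded piece is functorially identified, up to Frobenius twist, with $(\wedge^i \mathbb{L}_{R/A})[-i]$. Since wedge powers of the cotangent complex satisfy $p$-completely faithfully flat descent by \cite[Theorem 3.1]{BMS2}, and quasisyntomic covers are in particular $p$-completely faithfully flat, each graded piece of the conjugate filtration is a sheaf on $\qsyn_{A/I}$.

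Finally, to upgrade descent of graded pieces to descent of the whole object, I would pass to the basis of $\qsyn_{A/I}$ consisting of quasi-regular semiperfectoid $A/I$-algebras $S$. On such $S$ the cotangent complex $\mathbb{L}_{S/(A/I)}$ has $p$-complete Tor-amplitude in $[-1,0]$; combined with the distinguished triangle $\mathbb{L}_{(A/I)/A}\otimes_{A/I}^{\BL} S \to \mathbb{L}_{S/A} \to \mathbb{L}_{S/(A/I)}$ and the transversality of $(A,I)$ (which forces $\mathbb{L}_{(A/I)/A}/p$ to be a shifted flat $A/I$-module), one sees that $\wedge^i \mathbb{L}_{S/A}/p$ has Tor-amplitude in a range shifting down linearly with $i$. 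Hence the $i$-th graded piece of the conjugate filtration sits in cohomological degrees $\leq -i$, giving Postnikov-completeness of the conjugate filtration on this basis, and the sheaf property for $\dR_{R/A}/p$ follows as a limit of sheaves. The main technical obstacle is precisely this completeness statement in the relative transversal setting, but the argument is entirely parallel to the one carried out in \cite[Section 5]{BMS2}.
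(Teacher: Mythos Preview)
Your overall strategy---reduce mod $p$, use the conjugate filtration, and appeal to flat descent for wedge powers of the cotangent complex---matches the paper's. But the final step, where you pass from descent of graded pieces to descent of the whole object, contains a genuine error.

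You claim that $\wedge^i\mathbb{L}_{S/A}/p$ has Tor-amplitude shifting down with $i$, so the $i$-th conjugate graded piece lies in degrees $\leq -i$, and invoke ``Postnikov-completeness''. This is backwards. The $i$-th conjugate graded piece is (after Frobenius twist) $(\wedge^i\mathbb{L})[-i]$; since $\mathbb{L}$ has $p$-complete amplitude in $[-1,0]$, the wedge power sits in $[-i,0]$, and after the shift $[-i]$ the graded piece lands in $[0,i]$---bounded \emph{below}, not drifting to $-\infty$. The conjugate filtration is increasing and exhaustive, so there is no completeness issue to check; the actual obstruction is commuting the filtered colimit $\mathrm{colim}_i\,\Fil_i^{\mathrm{conj}}$ with the cosimplicial totalization coming from the \v{C}ech nerve of a cover. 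That commutation is justified precisely by a \emph{uniform lower bound}: the paper shows all graded pieces (hence all $\Fil_i^{\mathrm{conj}}$ and $\dR$ itself) live in $D^{\geq -1}$, using that the Frobenius twist $(R/p)^{(1)}$ is flat over $(A/I^p)/p \in D^{[-1,0]}$. On $D^{\geq c}$, totalization is computed in each degree by a finite piece of the cosimplicial diagram, so it commutes with filtered colimits.

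A secondary point: the graded pieces involve the relative Frobenius twist, so they are $\wedge^i\mathbb{L}_{(R/p)^{(1)}/(A/p)}[-i]$, not literally $\wedge^i\mathbb{L}_{R/A}[-i]$. The paper unwinds this via two base-change identifications and then cites a mild generalization of \cite[Theorem 3.1]{BMS2} (namely \cite[Proposition 3.2]{LM21}) for descent of wedge powers tensored with a fixed module. Your phrase ``up to Frobenius twist'' glosses over this; it is not fatal, but worth making precise.
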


\begin{proof}
Let $R \to S$ be a quasisyntomic cover of objects in $\mathrm{qSyn}_{A/I}$,
with C\v{e}ch nerve $S^{\bullet}$.
Our task is to show $\dR_{R/A}^{\wedge} = \lim_{\Delta^{\mathrm{op}}} \dR_{S^{\bullet}/A}^{\wedge}$.
Since both sides are $p$-complete, we may check this after derived modulo $p$.
Below we shall always use $-/p$ to denote derived modulo $p$ .
%and use $\pi_0(-/p)$ to denote the
%classical modulo $p$ object.

Now we follow closely the argument in \cite[Example 5.12]{BMS2}, correcting a typo thereof.
First there is a functorial exhaustive increasing $\mathbb{N}$-index filtration, i.e.~the conjugate filtration,
on $\dR_{R/A}/p \cong \dR_{(R/p)/(A/p)}$ 
with graded pieces given by $\big(\wedge^i_{(R/p)^{(1)}}\mathbb{L}_{(R/p)^{(1)}/(A/p)}\big)[-i]$
(and similarly for $\dR_{(S^{\bullet}/p)/(A/p)}^{\wedge}$).
Here $(-/p)^{(1)}$ denotes the base change along the Frobenius on $A/p$,
and the loc.~cit.~has a typo of not adding this Frobenius twist.
%Note that $(R/p)^{(1)} \to (S^{\bullet}/p)^{(1)}$ is again the C\v{e}ch nerve of
%a quasisyntomic cover of $\mathrm{qSyn}_{(A/I)/p}$.
For a discussion of $p$-complete derived de Rham complex and conjugate filtration 
in the realm of animated rings, we refer readers to \cite[p.~33-35]{KP21}.

We stare at the following diagram (with its $S^{\bullet}$ analogs in mind):
\[
\xymatrix{
R \ar[r] & R/p \ar[r]^-{\varphi_{A/p}} & (R/p)^{(1)} \\
A/I \ar[u] \ar[r] & (A/I)/p \ar[u] \ar[r]^-{\varphi_{A/p}} & ((A/I)/p)^{(1)} \cong (A/I^p)/p \ar[u] \\
A \ar[u] \ar[r] & A/p \ar[u] \ar[r] ^-{\varphi_{A/p}} & A/p. \ar[u]
}
\]
Note that every square above is Cartesian.
Base change property of cotangent complex implies that
these graded pieces $\big(\wedge^i_{(R/p)^{(1)}}\mathbb{L}_{(R/p)^{(1)}/(A/p)}\big)[-i]$
(and their $S^{\bullet}$ analogs) can be identified with:
\begin{enumerate}
    \item either $\wedge^i_R \mathbb{L}_{R/A}[-i] \otimes_{R} \varphi_{A/p,*}(R/p)^{(1)}$;
    \item or $\wedge^i_R \mathbb{L}_{R/A} \otimes_{A} \varphi_{A/p,*}(A/p)$,
\end{enumerate}
where the $A$-module (resp.~$R$-module)
structure on $\varphi_{A/p,*}(A/p)$ (resp.~$\varphi_{A/p,*}(R/p)^{(1)}$) is given by 
%$A \to A/p \xrightarrow{\varphi_{A/p}} A/p$
%(resp.~$R \to R/p \xrightarrow{\varphi_{A/p}} R/p^{(1)}$).
the top and bottom row of the above diagram.

The identification (1) above implies that all these graded pieces live in $D^{\geq -1}$.
Indeed, $\wedge^i_R \mathbb{L}_{R/A}[-i]$ as an $R$-module has Tor-amplitude in $[0,i]$,
and $(R/p)^{(1)}$ is flat over $((A/I)/p)^{(1)} \cong (A/I^p)/p$ which lives in $[-1,0]$,
so $(R/p)^{(1)}$ lives in $D^{\geq -1}$.
Similar statements for $S^{\bullet}$ hold as well.
Hence we are reduced to checking these graded pieces satisfy the descent property,
here we are using the reasoning of \cite[last sentence of Example 5.12]{BMS2}.
Now using the identification (2) above, we are reduced to flat descent for
 ``tensored'' wedge powers of cotangent complex, see \cite[Proposition 3.2]{LM21}
 (which is itself a generalization of \cite[Theorem 3.1]{BMS2}).
\end{proof}

Recall that an $A/I$-algebra is called \emph{large quasisyntomic over $A/I$} (see~\cite[Definition 15.1]{BS19})
if 
\begin{itemize}
    \item $A/I \to R$ is quasisyntomic; and
    \item there is a surjection
$A/I\langle X_j^{1/p^\infty} \mid j \in J \rangle \twoheadrightarrow R$
where $J$ is a set.
\end{itemize}

\end{construction}

The following is inspired by~\cite[Sections 10.3 and 10.4]{BLM18},
and our proof is a modification of the proof thereof.

\begin{theorem}
\label{functorial endomorphism theorem}
%This was called no automorphism lemma, but it's quite long now.
Let $(A,I)$ be a transversal prism, and assume that $\Spf(A/I)$ is connected.
Then
\begin{enumerate}
\item The mapping space
\[
{\rm End}_{{\rm Shv}({\qsyn}_{A/I},{\rm CAlg}(\mathcal{A}))}\left(\dR_{-/A}^\wedge, \dR_{-/A}^\wedge \right)
\]
has contractible components given by a submonoid in $\mathbb{N}$.
In particular, 
the automorphism space has only one contractible component given by identity.
\item The automorphism space
\[
\mathrm{Aut}_{{\rm Shv}({\qsyn}_{A/I}, {\rm CAlg}(A/I))}\left(\dR_{-/(A/I)}^\wedge, \dR_{-/(A/I)}^\wedge \right)
\]
has only one contractible component given by identity.
\end{enumerate}
\end{theorem}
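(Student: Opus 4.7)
The plan is to prove part (1) by analyzing functorial endomorphisms on a generating family of test objects in $\qsyn_{A/I}$, then to deduce part (2) by an exactly parallel argument. First I would reduce to testing on large quasisyntomic $A/I$-algebras in the sense of \cite[Definition 15.1]{BS19}: since $\dR_{-/A}^\wedge$ is a sheaf on $\qsyn_{A/I}$ by \Cref{sheaf property on relative to A dR}, and large quasisyntomic objects form a basis of the site, any sheaf endomorphism is determined by its restriction to this basis. Crucially, item (5) of \Cref{derived prismatic construction} asserts that on such $R$ the object $\dR_{R/A}^\wedge$ is a discrete $p$-completely flat $\mathcal{A}$-algebra, so the mapping space components are automatically discrete, hence contractible. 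The remaining content of (1) is therefore to classify the set of connected components of the endomorphism space as a submonoid of $(\mathbb{N}, \cdot)$.

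Next I would test on the universal object $R_0 \coloneqq (A/I)\langle X^{1/p^\infty}\rangle$. Base change of derived de Rham cohomology together with relative perfectness of $A \to A\langle X^{1/p^\infty}\rangle$ modulo $p$ yields $\dR_{R_0/A}^\wedge \cong \mathcal{A}\langle X^{1/p^\infty}\rangle$, so a functorial endomorphism $T$ evaluated at $R_0$ is an $\mathcal{A}$-algebra endomorphism, hence determined by the single element $g \coloneqq T_{R_0}(X) \in \mathcal{A}\langle X^{1/p^\infty}\rangle$. Naturality of $T$ with respect to the $A/I$-algebra endomorphism $R_0 \to R_0$, $X \mapsto X^m$, yields the identity $g(X^m) = g(X)^m$ for every $m \geq 1$. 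Using that $\Spf(A/I)$, and hence also $\Spf(\mathcal{A})$, is connected and thus has no nontrivial idempotents, a monomial expansion argument forces $g = X^\alpha$ for some $\alpha \in \mathbb{Z}[1/p]_{\geq 0}$, with $\alpha = 0$ corresponding to the constant endomorphism factoring through $\mathcal{A}$. Finally, naturality with respect to the inclusion of the smooth subring $(A/I)\langle X\rangle \hookrightarrow R_0$ forces $g$ to lie in the $p$-completed polynomial ring $\mathcal{A}\langle X\rangle$, pinning down $\alpha$ to a non-negative integer $n$.

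Having determined $T|_{R_0}$ by a single integer $n \in \mathbb{N}$, I would extend globally: any large quasisyntomic $R$ admits by definition a surjection from a multi-variable perfectoid $(A/I)\langle X_j^{1/p^\infty} : j \in J \rangle$, so multi-variable versions of the above argument combined with sheaf descent pin down $T$ on all of $\qsyn_{A/I}$. Composition gives $T_n \circ T_m = T_{nm}$, yielding the asserted submonoid structure in $(\mathbb{N}, \cdot)$. For the automorphism statement in part (1), invertibility in $(\mathbb{N}, \cdot)$ forces $n = 1$, so only the identity is an automorphism. For part (2), an analogous computation with $\dR_{-/(A/I)}^\wedge$ (where $\dR_{R_0/(A/I)}^\wedge \cong R_0$, as $A/I \to R_0$ is relatively perfect mod $p$) again reduces the analysis to $X \mapsto X^n$ as an $A/I$-algebra map, and the automorphism condition likewise forces $n = 1$.

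The most delicate step will be pinning down $g = T_{R_0}(X) \in \mathcal{A}\langle X^{1/p^\infty}\rangle$: one must carefully rule out exotic elements arising from divided powers of $\mathcal{I} \subset \mathcal{A}$ and from non-integer fractional exponents in $X$. The combination of the multiplicativity constraint (from power maps on $R_0$) with the integrality constraint (from the smooth subring $(A/I)\langle X\rangle$) is what confines $g$ to a genuine monomial $X^n$ with $n \in \mathbb{N}$ and unit coefficient; the connectedness hypothesis on $\Spf(A/I)$ enters precisely at the idempotent step to guarantee a globally constant exponent rather than a locally constant one on $\Spf(\mathcal{A})$.
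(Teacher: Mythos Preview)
Your overall plan is sound, and the reduction to large quasisyntomic algebras (where the values are discrete) correctly handles contractibility. But there is a genuine gap in your integrality step, and a related omission that changes the answer for the endomorphism monoid.

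The claim that naturality with respect to the inclusion $(A/I)\langle X\rangle \hookrightarrow R_0$ forces $g = T_{R_0}(X)$ to lie in $\mathcal{A}\langle X\rangle$ is false. The object $\dR_{(A/I)\langle X\rangle/A}^\wedge$ is not concentrated in degree $0$: since $(A/I)\langle X\rangle$ is smooth over $A/I$ with smooth lift $\mathcal{A}\langle X\rangle$ over $\mathcal{A}$, it is represented by the two-term de Rham complex $[\mathcal{A}\langle X\rangle \xrightarrow{d} \mathcal{A}\langle X\rangle\,dX]$, whose $H^0$ is just $\mathcal{A}$ (constants), because $\mathcal{A}$ is $p$-torsionfree. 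Hence on $H^0$ the vertical map is merely $\mathcal{A} \hookrightarrow \mathcal{A}\langle X^{1/p^\infty}\rangle$, and the functoriality square imposes no constraint whatsoever on $T_{R_0}(X)$. Without this step you have not excluded exponents in $\mathbb{Z}[1/p] \setminus \mathbb{Z}$; in particular you have not ruled out $X \mapsto X^{1/p}$, which would invert Frobenius and destroy the automorphism conclusion.

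The paper's argument proceeds differently, and two of its test maps have no analogue in your sketch. First, to force the exponent into $p^{\mathbb{Z}}$ (not merely $\mathbb{Z}[1/p]$), one tests against the map $R_0 \to (A/I)\langle Y^{1/p^\infty}, Z^{1/p^\infty}\rangle$ sending $X \mapsto \lim_n (Y^{1/p^n} + Z^{1/p^n})^{p^n}$; reducing mod $p$ this becomes additivity and forces the prime-to-$p$ part of the exponent to be $1$. Second, to exclude negative powers of $p$ one tests against $R_0 \to R_0/(X)$, whose derived de Rham complex is the PD envelope $D_{\mathcal{A}\langle X^{1/p^\infty}\rangle}(X)^\wedge$; there $X^p$ is divisible by $p$ while $X^{p^{1-j}}$ is not, obstructing any extension of $X \mapsto X^{1/p^j}$. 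Your power-map constraint $g(X^m)=g(X)^m$ and the broken smooth-subring argument see neither of these. Consequently the endomorphism monoid is a submonoid of $p^{\mathbb{N}}$ (additive in the exponent of $p$), not the multiplicative monoid $(\mathbb{N},\cdot)$ you describe; for odd $p$ the map $X \mapsto X^2$ is \emph{not} a functorial endomorphism. Finally, the passage from $R_0$ to arbitrary large quasisyntomic algebras is also too quick: a surjection from a perfectoid does not directly yield a surjection on $\dR_{-/A}^\wedge$; the paper passes through an auxiliary algebra $S'$ with regular presentation (so that its $\dR$ is an explicit $p$-torsionfree PD envelope) to make the determination-by-perfectoids step work.
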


Since $\mathcal{A}/p \to A/(I,p)$ is a locally nilpotent thickening, we get that
$\Spf(\mathcal{A})$ is also connected.
In particular, the only idempotents in $\mathcal{A}$ are $0$ and $1$.
It is easy to see that the statement concerning automorphism spaces for these functors
hold true without the connectedness assumption, as on each connected component
the automorphism must be identity.

\begin{proof}
The assertion that all components are contractible 
follows from the fact that on the basis of large quasisyntomic over $A/I$-algebras,
the sheaves $\dR_{-/A}^\wedge$ and $\dR_{-/(A/I)}^\wedge$ are discrete.

All we need to check is that there are not many functorial endomorphisms (resp.~automorphisms)
for these two sheaves.
Since (2) follows from the same proof as that of (1), let us only
present the proof of (1) here.
To simplify notation, let us denote the set of functorial endomorphisms
by $\mathrm{End}(\dR_{-/A}^\wedge)$.
By restriction, any functorial endomorphism induces a functorial endomorphism
of the functor restricted to the subcategory of $A/I$-algebras of the form
$A/I\langle X_h^{1/p^\infty} \mid h \in H \rangle$ for some set $H$.
We denote the latter monoidal space by $\mathrm{End}(\dR_{-/A}^\wedge |_{\mathrm{perf}})$,
all of whose components are also contractible by the same reasoning.
By definition there is a natural map
\[
\mathrm{res} \colon \mathrm{End}(\dR_{-/A}^\wedge) \to \mathrm{End}(\dR_{-/A}^\wedge |_{\mathrm{perf}})
\]
of monoids.

Now we make following three claims:
\begin{itemize}
\item the natural map $\mathrm{res}$ is injective;
\item the monoid $\mathrm{End}(\dR_{-/A}^\wedge |_{\mathrm{perf}})$
is a submonoid of $\mathbb{Z}$; and
\item the image of $\mathrm{res}$ is contained in $\mathbb{N}$.
\end{itemize}

Below let us show the map $\mathrm{res}$ is injective.
In other words, we need to show that any functorial endomorphism of $\dR_{-/A}^\wedge$
is determined by its restriction to the algebras of the form 
$A/I\langle X_h^{1/p^\infty} \mid h \in H \rangle$ for some set $H$.
To see this, notice that $\mathrm{qSyn}_{A/I}$ has a basis given by
large quasi-syntomic over $A/I$-algebras.
Any large quasi-syntomic over $A/I$-algebra $S$, by definition, admits a surjection
from an algebra of the form $A/I\langle X_l^{1/p^\infty} \mid l \in L \rangle$
for some set $L$.
By choosing a set of generators $\{f_j \mid j \in J\}$ of the kernel,
we may form a surjection (c.f.~\cite[proof of Proposition 7.10]{BS19})
\[
S' \coloneqq A/I\langle X_l^{1/p^\infty}, Y_j^{1/p^\infty} \mid l \in L, j \in J \rangle/(Y_j - f_j \mid j \in J)^\wedge \twoheadrightarrow S
\colon Y_j^m \mapsto 0.
\]
This induces a surjection of shifted cotangent complexes:
$\mathbb{L}_{S'/A}[-1] \twoheadrightarrow \mathbb{L}_{S/A}[-1]$, therefore it induces
a surjection of $p$-adic derived de Rham complexes:
$\dR_{S'/A}^\wedge \twoheadrightarrow \dR_{S/A}^\wedge$.
For any such $S'$, we have
\[
\dR_{S'/A}^\wedge \cong D_{\mathcal{A}\langle X_l^{1/p^\infty}, Y_j^{1/p^\infty} \mid l \in L, j \in J \rangle} (Y_j - f_j \mid j \in J)^\wedge,
\]
i.e.~$p$-completely adjoining divided powers of $Y_j - f_j$ for all $j \in J$
to $\mathcal{A}\langle X_l^{1/p^\infty}, Y_j^{1/p^\infty} \mid l \in L, j \in J \rangle$.
Since $\mathcal{A}$ is $p$-torsionfree, any endomorphism of $\dR_{S'/A}^\wedge$
is determined by its restriction to $\mathcal{A}\langle X_l^{1/p^\infty}, Y_j^{1/p^\infty} \mid l \in L, j \in J \rangle$.
Lastly, we know that applying $\dR_{-/A}^\wedge$ functor to the map
\[
A/I\langle X_l^{1/p^\infty}, Y_j^{1/p^\infty} \mid l \in L, j \in J \rangle \to S'
\]
exactly induces the natural map
\[
\mathcal{A}\langle X_l^{1/p^\infty}, Y_j^{1/p^\infty} \mid l \in L, j \in J \rangle \to \dR_{S'/A}.
\]
Therefore we know that any functorial endomorphism of $\dR_{-/A}^\wedge$ must be determined by its restriction
to algebras of the form $A/I\langle X_h^{1/p^\infty} \mid h \in H \rangle$.

Next, let us try to understand $\mathrm{End}(\dR_{-/A}^\wedge |_{\mathrm{perf}})$
and show it is a submonoid of integers.
Consider a functorial endomorphism $f$.
It is determined by its restriction to the one-variable ``perfect'' $A/I$-algebra
$R = A/I\langle X^{1/p^\infty} \rangle$.
We know $\dR_{R/A}^\wedge \cong \mathcal{A}\langle X^{1/p^\infty} \rangle$.
Suppose $f(x) = \sum_{i \in \mathbb{N}[1/p]} a_i X^i \in \mathcal{A}\langle X^{1/p^\infty} \rangle$.
Consider the map $R \to S \coloneqq A/I\langle Y^{1/p^\infty}, Z^{1/p^\infty} \rangle$ 
sending $X^i \mapsto Y^i Z^i$.
This map induces the corresponding map 
$\mathcal{A}\langle X^{1/p^\infty} \rangle \to \mathcal{A}\langle Y^{1/p^\infty}, Z^{1/p^\infty} \rangle$
which also sends $X^i \mapsto Y^i Z^i$.
Now the functoriality of the endomorphism tells us that $f(YZ) = f(Y) \cdot f(Z)$.
We immediately get
$a_i^2 = a_i$ and $a_i \cdot a_j = 0$ for any pair of distinct indices
$i, j \in \mathbb{N}[1/p]$.
By connectedness assumption of $\Spf(A/I)$, we see there is at most one index $i \in \mathbb{N}[1/p]$,
with nonzero $a_i = 1$.
To see there is at least one nonzero $a_i$, we use the map
$R \to A/I$ given by $X^i \mapsto 1$ for all $i \in \mathbb{N}[1/p]$.

We want to show the $i \in \mathbb{N}[1/p]$ got in the previous paragraph defining the functorial
endomorphism $f$ must in fact lie in $p^{\mathbb{Z}}$.
Assume $i = \frac{\ell}{p^N}$ where $\ell$ is an integer coprime to $p$.
Now we contemplate the map
$R \to S$ given by $X \mapsto \lim_n(Y^{1/p^n} + Z^{1/p^n})^{p^n}$,
it induces a map of $\dR_{-/A}^\wedge$ with the image of $X$
given by the same formula.
Functoriality of $f$ implies that we have
\[
(\lim_n(Y^{1/p^n} + Z^{1/p^n})^{p^{n-N}})^\ell = \lim_n(Y^{\ell/p^{n-N}} + Z^{\ell/p^{n-N}})^{p^n}.
\]
Reduction modulo $p$ tells us that
\[
(Y^{1/p^N} + Z^{1/p^N})^\ell = Y^{\ell/p^N} + Z^{\ell/p^N} \in 
\mathbb{F}_p[Y^{1/p^\infty},Z^{1/p^\infty}],
\]
forcing $\ell = 1$.
Therefore we see that $\mathrm{End}(\dR_{-/A}^\wedge |_{\mathrm{perf}}) \subset p^{\mathbb{Z}}$,
i.e.~it is a submonoid inside $\mathbb{Z}$.

Finally let us prove the image of $\mathrm{res}$ lands in $p^{\mathbb{N}}$.
We want to rule out negative powers of $p$.
To that end consider $R \to R/(X)$, which induces the map of $p$-adic derived de Rham complex:
\[
\widetilde{R} \coloneqq \mathcal{A}\langle X^{1/p^\infty} \rangle \rightarrow
\widetilde{S} \coloneqq D_{\mathcal{A}\langle X^{1/p^\infty}\rangle}(X)^\wedge.
\]
Here the latter denotes the $p$-complete PD envelope of the former along the ideal $X$,
and this is the natural map.
Take a positive integer $j$, and we need to argue that $X \mapsto X^{1/p^j}$
on $\widetilde{R}$ does not extend to an endomorphism of $\widetilde{S}$.
Suppose otherwise, then the extended endomorphism of $\widetilde{S}$
must send $X^p$ to $X^{p^{1-j}}$, but $X^p$ is divisible by $p$ in $\widetilde{S}$
whereas $X^{p^{1-j}}$ is not (here we use the fact that $j > 0$),
hence we get a contradiction.

The only invertible element in the additive monoid $\mathbb{N}$ is $0$,
corresponding to $X \mapsto X^{(p^0)} = X$,
hence the only functorial automorphism of $\dR_{-/A}^\wedge$ is identity.
\end{proof}

\begin{remark}
\label{functorial endo remark}
Let $(A,I)$ be a transversal prism. Then
\begin{enumerate}
\item By the same argument, there are not many functorial homomorphisms
from $\varphi_A^* \dR_{-/A}^\wedge$ to $\dR_{-/A}^\wedge$.
Similarly, these are determined by its restriction to $R = A/I\langle X^{1/p^\infty} \rangle$.
If we require the restriction sends $X$ to $X^p$, then there is a unique one
given by Frobenius constructed in \Cref{Frobenii}.
Therefore in a strong sense, there is a unique Frobenius.
\item Due to previous remark, we see the comparison in \Cref{comparing pris and crys}
must be compatible with Frobenius.
\item It is unclear which positive integer $i$, corresponding to $X \mapsto X^{p^i}$,
can occur as a functorial endomorphism.
When $A/(p,I)$ has transcendental (relative to $\mathbb{F}_p$) elements, then none of these
can occur.
This can be seen by considering the map $R \to R/(X - a)$ for some lift $a$ of the transcendental element $\bar{a} \in A/(p,I)$.
\end{enumerate}
\end{remark}

Consequently we get the following uniqueness of the functorial comparison established
in~\Cref{comparing pris and crys}, readers shall compare with~\cite[Section 18]{BS19}.

\begin{corollary}
\label{uniqueness of comparison}
Fix a transversal prism $(A,I)$.
There is a unique natural isomorphism of $p$-complete
commutative algebra objects in $\mathcal{D}(\mathcal{A})$:
\[
\mathrm{R\Gamma_{\Prism}}(R/A) \widehat{\otimes}_{A,\varphi_A} A 
\widehat{\otimes}_A \mathcal{A} \rightarrow
\mathrm{R\Gamma_{crys}}(R/A),
\]
which is functorial in the $p$-completely
smooth $A/I$-algebra $R$.
\end{corollary}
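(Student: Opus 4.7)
The plan is to deduce the corollary by combining the existence statement already available from \Cref{comparying pris and crys for transversal prisms} with the rigidity result \Cref{functorial endomorphism theorem}. Existence is immediate: \Cref{comparying pris and crys for transversal prisms} produces a natural isomorphism of the required type (which is even functorial under base change of $(A,I)$, although we do not need that here). The entire content of the corollary beyond that is uniqueness, and for this I would transport the problem from natural isomorphisms between the two cohomology theories to functorial automorphisms of a single functor.

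Concretely, given two natural isomorphisms $\phi_1, \phi_2$ as in the statement, consider the composition $\psi \coloneqq \phi_2 \circ \phi_1^{-1}$. This is a functorial automorphism of the functor $R \mapsto \mathrm{R\Gamma_{crys}}(R/\mathcal{A})$ on the category of $p$-completely smooth $A/I$-algebras, regarded as valued in $p$-complete commutative algebra objects in $\mathcal{D}(\mathcal{A})$. Since $(A,I)$ is transversal, \Cref{dR and crys} identifies $\mathrm{R\Gamma_{crys}}(R/\mathcal{A})$ with $\dR_{R/A}^\wedge$ compatibly with the $\mathcal{A}$-algebra structure, so $\psi$ is equivalently a functorial automorphism of $R \mapsto \dR_{R/A}^\wedge$ on $p$-completely smooth $A/I$-algebras.

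Next I would extend $\psi$ to an automorphism of the sheaf $\dR_{-/A}^\wedge$ on the relative quasisyntomic site $\qsyn_{A/I}$. By the left Kan extension construction in \Cref{derived prismatic construction}, the sheaf $\dR_{-/A}^\wedge$ on $\qsyn_{A/I}$ is determined by its restriction to $p$-completely smooth $A/I$-algebras: first left Kan extend along the inclusion into all derived $p$-complete simplicial $A/I$-algebras, and then restrict to the large quasisyntomic basis. So $\psi$ automatically induces an automorphism of $\dR_{-/A}^\wedge$ in ${\rm Shv}(\qsyn_{A/I}, {\rm CAlg}(\mathcal{A}))$. Now apply \Cref{functorial endomorphism theorem} (1) — together with the remark right after the theorem, which removes the connectedness hypothesis of $\Spf(A/I)$ at the cost of working on each connected component separately — to conclude that this extended automorphism must be the identity. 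Restricting back to $p$-completely smooth $A/I$-algebras gives $\psi = \mathrm{id}$, i.e.\ $\phi_1 = \phi_2$.

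The main subtlety to verify carefully is that the passage from ``natural isomorphism of functors on $p$-completely smooth $A/I$-algebras'' to ``automorphism of the sheaf on $\qsyn_{A/I}$'' is faithful at the level of mapping spaces, i.e.\ that the left Kan extension procedure used to define the sheaf is compatible with forming $\psi$. This is essentially formal once one uses that $p$-completely smooth algebras form a full subcategory that generates the relevant sheaf by the construction of \Cref{derived prismatic construction}, but it is the step where one must be careful to keep track of the $\mathrm{CAlg}(\mathcal{A})$-structures so that \Cref{functorial endomorphism theorem} applies as stated.
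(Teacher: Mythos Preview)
Your proof is correct and follows essentially the same approach as the paper: existence from the comparison theorem, then for uniqueness compose one isomorphism with the inverse of the other to get a functorial automorphism of $\dR_{-/A}^\wedge$, left Kan extend to the quasisyntomic site, and invoke \Cref{functorial endomorphism theorem}. Your treatment is in fact slightly more careful than the paper's, in that you explicitly address the connectedness hypothesis and the faithfulness of the Kan extension step.
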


\begin{proof}
The existence part is given by~\Cref{comparing pris and crys},
we need to show uniqueness.
Suppose there are two such functorial isomorphisms.
Then consider the composition of one with the inverse of the other,
we get a natural automorphism of the functor $\dR_{-/A} \cong \mathrm{R\Gamma_{crys}}(-/A)$
on smooth $A/I$-algebras.
By left Kan extension, this will induce a natural automorphism 
of the functor $\dR_{-/A}$ on quasisyntomic $A/I$-algebras.
We conclude by~\Cref{functorial endomorphism theorem} that this automorphism must be identity.
\end{proof}

\begin{corollary}
\label{match with crys comparison in BMS1}
Let $\C$ be an algebraically closed complete non-Archimedean field extension of $\mathbb{Q}_p$,
and let $(A,I)$ be the associated perfect prism (denoted as $(A_{\inf}, \ker(\theta))$
in literature).
Then the comparison in~\Cref{comparing pris and crys} is compatible
with the crystalline comparison over $\mathcal A = A_{\mathrm{crys}}$ 
of the $A\Omega$-theory obtained in~\cite{BMS1}.
Concretely, the following diagram of isomorphisms is commutative:
\[
\xymatrix{
\mathrm{R\Gamma_{\Prism}}(R/A) \widehat{\otimes}_{A,\varphi_A} A 
\widehat{\otimes}_A \mathcal{A} \ar[r] \ar[d] &
\mathrm{R\Gamma_{crys}}(R/A) \ar[d] \\
A\Omega(R) \widehat{\otimes}_A \mathcal{A} \ar[r] & \mathrm{R\Gamma_{crys}}((R/p)/A),
}
\]
where the left vertical arrow is given by~\cite[Theorem 17.2]{BS19}
and the bottom horizontal arrow is given by~\cite[Theorem 12.1]{BMS1} or \cite{Yao19}.
\end{corollary}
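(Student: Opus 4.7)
The plan is to reduce the claimed commutativity to the uniqueness statement in \Cref{uniqueness of comparison}. Observe first that in the diagram as drawn the target of the top arrow should be read as $\mathrm{R\Gamma_{crys}}(R/\mathcal{A})$ (via \Cref{comparying pris and crys for transversal prisms}), and the unnamed right vertical arrow is the canonical identification $\mathrm{R\Gamma_{crys}}(R/\mathcal{A}) \xrightarrow{\sim} \mathrm{R\Gamma_{crys}}((R/p)/\mathcal{A})$ arising from the fact that crystalline cohomology over a $p$-adic PD base is insensitive to derived reduction modulo $p$ of the input algebra. This identification is manifestly multiplicative and functorial in the smooth $\mathcal{O}_{\C}$-algebra $R$.

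The first step is then to verify that both composites in the diagram are natural isomorphisms of $p$-complete commutative algebra objects in $\mathcal{D}(\mathcal{A})$, with source $\mathrm{R\Gamma_{\Prism}}(R/A)\widehat{\otimes}_{A,\varphi_A}A\widehat{\otimes}_A\mathcal{A}$ and target $\mathrm{R\Gamma_{crys}}((R/p)/\mathcal{A})$, functorial in the $p$-completely smooth $\mathcal{O}_{\C}$-algebra $R$. For the ``top-then-right'' path this is immediate from \Cref{comparying pris and crys for transversal prisms} together with the identification above. For the ``left-then-bottom'' path, the left vertical arrow is \cite[Theorem 17.2]{BS19}, which identifies $\Prism^{(1)}_{R/A}$ with $A\Omega(R)$ as multiplicative, functorial objects; and the bottom horizontal arrow is the crystalline comparison of \cite[Theorem 12.1]{BMS1} (or \cite{Yao19}), which is likewise multiplicative and functorial in $R$.

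The second step is to invoke \Cref{uniqueness of comparison}: on the perfect prism $(A,I) = (A_{\mathrm{inf}},\ker\theta)$, which is transversal, any two such functorial isomorphisms in $\mathrm{CAlg}(\mathcal{A})$ coincide. Applying this to the two compositions above yields commutativity of the diagram.

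The only nontrivial input is the verification that \cite[Theorem 17.2]{BS19} and \cite[Theorem 12.1]{BMS1}/\cite{Yao19} both produce morphisms of commutative algebra objects that are functorial in $R$; these are built into the statements (and constructions) of those results. Thus the main content of the corollary is not a new calculation but the appeal to the rigidity result \Cref{functorial endomorphism theorem}, which forces any two a priori distinct comparison isomorphisms to agree. I anticipate no genuine obstacle beyond bookkeeping; in particular, unlike the identification with Frobenius-compatibility, no extra structure needs to be tracked, since the uniqueness statement concerns only commutative algebra objects.
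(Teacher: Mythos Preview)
Your proposal is correct and follows exactly the paper's approach: the paper's proof is the single sentence ``This follows from the uniqueness statement in \Cref{uniqueness of comparison}.'' Your write-up simply makes explicit the bookkeeping (identifying the right vertical arrow, checking both composites are functorial morphisms in $\mathrm{CAlg}(\mathcal{A})$) that the paper leaves implicit.
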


\begin{proof}
This follows from the uniqueness statement in~\Cref{uniqueness of comparison}.
\end{proof}

Both sides of the isomorphism obtained in~\Cref{comparing pris and crys}
after completely tensoring $\mathcal{A}/\mathcal{I} \cong A/I$ over $\mathcal{A}$,
are naturally isomorphic to $\dR_{R/(A/I)}^{\wedge}$.
For the left hand side this follows from the de Rham comparison
of (Frobenius pullback of) the prismatic cohomology:
\[
\Prism^{(1)}_{R/A} \widehat{\otimes}_A \mathcal{A} \widehat{\otimes}_{\mathcal{A}} A/I
\cong \Prism^{(1)}_{R/A} \widehat{\otimes}_A A/I
\cong \dR_{R/(A/I)}^{\wedge},
\]
where the last equality follows from~\cite[Theorem 6.4 or Corollary 15.4]{BS19}.
For the right hand side this is just the base change of the derived de Rham complex
(or base change of crystalline cohomology and
the comparison of de Rham and crystalline cohomology for smooth morphism).
We observe that similar argument as above forces these natural isomorphisms
to be compatible with each other.

\begin{corollary}
\label{compatibility when modulo I}
Let $(A,I)$ be a transversal prism.
The following triangle of natural isomorphisms
\[
\xymatrix{
& \dR_{R/(A/I)}^{\wedge} & \\
\Prism^{(1)}_{R/A} \widehat{\otimes}_A \mathcal{A} \widehat{\otimes}_{\mathcal{A}} A/I \ar[ru]^{\cong} \ar[rr]^{\cong} &  &
\dR_{R/A}^{\wedge}  \widehat{\otimes}_{\mathcal{A}} A/I \ar[lu]_{\cong}
}
\]
is a commutative diagram.
\end{corollary}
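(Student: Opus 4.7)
The plan is to adapt the uniqueness argument used in \Cref{uniqueness of comparison} to the functor $\dR_{-/(A/I)}^\wedge$, invoking part (2) of \Cref{functorial endomorphism theorem}. Concretely, both paths in the triangle produce natural isomorphisms from $\Prism^{(1)}_{R/A} \widehat{\otimes}_A A/I$ to $\dR_{R/(A/I)}^{\wedge}$ of commutative algebra objects, functorial in the $p$-completely smooth $A/I$-algebra $R$. Composing one with the inverse of the other yields a natural automorphism of $R \mapsto \dR_{R/(A/I)}^\wedge$ on smooth $A/I$-algebras.

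First I would left Kan extend this automorphism to the functor $\dR_{-/(A/I)}^\wedge$ on all (derived $p$-complete) simplicial $A/I$-algebras, just as in the construction of the derived prismatic/derived de Rham cohomology sheaves in \Cref{derived prismatic construction}. The analogue of \Cref{sheaf property on relative to A dR}, now in the absolute setting over $A/I$, shows that this extended natural transformation defines an automorphism of the sheaf $\dR_{-/(A/I)}^\wedge$ on $\qsyn_{A/I}$, viewed as taking values in $\mathrm{CAlg}(A/I)$. By \Cref{functorial endomorphism theorem}(2), the automorphism space of this sheaf has a single contractible component containing the identity, hence our automorphism must be (homotopic to) the identity. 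This gives the commutativity of the triangle.

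The only genuinely verifying step is to check that the two natural isomorphisms in the triangle are indeed both morphisms in the appropriate category of commutative algebras in $\mathcal{D}(A/I)$ and are functorial in $R$. For the path through $\dR_{R/A}^\wedge \widehat{\otimes}_{\mathcal{A}} A/I$, this is the base change of the isomorphism in \Cref{comparing pris and crys}, which is multiplicative and functorial by construction. For the direct path, this follows from the de Rham comparison for (Frobenius-twisted) prismatic cohomology cited just before the corollary, together with the fact that $\mathcal{A} \widehat{\otimes}_{\mathcal{A}} A/I \cong A/I$ is compatible with the factorization $A \to \mathcal{A} \to A/I$.

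The main (mild) obstacle is just ensuring that the category of coefficients and the notion of ``functorial automorphism'' used match exactly the hypotheses of \Cref{functorial endomorphism theorem}(2), so that its rigidity conclusion applies; once the set-up is aligned, the rigidity theorem does all the work, exactly as in the proof of \Cref{uniqueness of comparison}.
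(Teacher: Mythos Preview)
Your proposal is correct and follows essentially the same approach as the paper: the paper also observes that going around the triangle yields a functorial automorphism of $\dR_{R/(A/I)}^{\wedge}$, then invokes \Cref{functorial endomorphism theorem} (exactly as in the proof of \Cref{uniqueness of comparison}) to conclude this automorphism is the identity.
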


\begin{proof}
Observe that all three natural isomorphisms are functorial in $R$,
hence going around the circle produces a functorial automorphism of
$\dR_{R/(A/I)}^{\wedge}$.

Now we argue as in the proof of \Cref{uniqueness of comparison}: using \Cref{functorial endomorphism theorem} we may conclude that this functorial automorphism
must be identity. Hence the above diagram must commute functorially.
\end{proof}

\section{Filtrations}
\label{section filtrations}

Throughout this section, 
we assume $(A,I)$ to be a transversal prism and let $(\mathcal{A}, \mathcal{I})$ be the $p$-adic PD envelope of $A$ along $I$.
By~\Cref{comparing pris and crys}, for any $p$-completely
smooth $A/I$-algebra $R$ we have a functorial isomorphism:
\[
\varphi^*(\mathrm{R\Gamma_{\Prism}}(R/A))
\widehat{\otimes}_A \mathcal{A} \cong
\dR_{R/A}^{\wedge}.
\]
All objects involved here have interesting filtrations, they are:
Nygaard filtration on $\varphi^*(\mathrm{R\Gamma_{\Prism}}(R/A))$, $I$-adic filtration
on $A$, PD ideal filtration $\mathcal{I}^{[\bullet]}$ on $\mathcal{A}$,
and Hodge filtration on $\dR_{R/A}^{\wedge}$.
In this section, we discuss how these filtrations are related.

Unless otherwise specified,
we shall use $R$ to denote a general $A/I$-algebra, and $S$ will be used to denote
a large quasi-syntomic over $A/I$-algebra (see the discussion
right after \Cref{derived prismatic construction}).

Let us briefly remind readers how these filtrations are defined and their properties.

\subsection{Hodge filtration on $\dR_{R/A}^{\wedge}$} 
Recall that $\mathrm{R\Gamma_{crys}}(R/A)$ is the 
cohomology of the structure sheaf $\mathcal{O}_{\mathrm{crys}}$
on the (absolute) crystalline site $(R/A)_{\mathrm{crys}}$.
The crystalline structure sheaf admits a natural surjection to the Zariski structure sheaf,
whose kernel is an ideal sheaf $\mathcal{I}_{\mathrm{crys}}$ admitting divided powers.
Concretely, given a PD thickening $(U,T)$, with $U$ a $p$-adic formal $\Spf(A)$-scheme
with an $\Spf(A)$-map $U \to \mathrm{Spf}(R)$ and $U \hookrightarrow T$ a
$p$-completely nilpotent PD thickening, then
we have $\mathcal{O}_{\mathrm{crys}} \mid_{(U,T)} = \mathcal{O}_T$
and $\mathcal{I}_{\mathrm{crys}} \mid_{(U,T)} = 
\ker(\mathcal{O}_T \twoheadrightarrow \mathcal{O}_U)$ which is a PD ideal sheaf
inside $\mathcal{O}_{\mathrm{crys}}$.
For any integer $r \geq 0$, we get a natural filtration on $\mathrm{R\Gamma_{crys}}(R/A)$
given by $\mathrm{R\Gamma_{crys}}(R/A, \mathcal{I}^{[r]}_{\mathrm{crys}})$.
Results of Bhatt~\cite[Section 3.3]{Bha12} and Illusie~\cite[Section VIII.2]{Ill72}
help us to understand this natural filtration in terms of $p$-adic
derived de Rham complex and its Hodge filtrations.
\begin{theorem}[{see~\cite[Proposition 3.25 and Theorem 3.27]{Bha12} and~\cite[Corollaire VIII.2.2.8]{Ill72}}, {see also~\cite[Theorem 3.4.(4)]{GL20}}]
\label{Illusie--Bhatt}
Let $R$ be a $p$-completely locally complete intersection $A/I$-algebra.
Then there is a natural identification of filtered $\mathbb{E}_{\infty}$-$\mathcal{A}$-algebras:
\[
(\dR_{R/A}^{\wedge}, \Fil^r_{\mathrm{H}}) 
\xrightarrow{\cong} 
(\mathrm{R\Gamma_{crys}}(R/A),\mathrm{R\Gamma_{crys}}(R/A, \mathcal{I}^{[r]}_{\mathrm{crys}})).
\]
\end{theorem}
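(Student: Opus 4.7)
The plan is to combine Bhatt's comparison map between derived de Rham and crystalline cohomology with Illusie's classical identification of the Hodge filtration with the divided-power filtration on the PD envelope, then verify everything is compatible at the filtered level.

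First, I would revisit Bhatt's natural map $\dR_{R/A}^\wedge \to \mathrm{R\Gamma_{crys}}(R/A)$ from \cite[Proposition 3.25]{Bha12} and verify it is naturally filtered. Given any $(p, I)$-completed polynomial surjection $P \twoheadrightarrow R$ over $A$ with kernel $J$, both sides can be computed from the $p$-completed divided power envelope $D_P(J)^\wedge$: the derived de Rham complex via \Cref{three envelopes}(1) as the de Rham complex of this envelope, and the crystalline cohomology via the \v{C}ech--Alexander resolution. Under this common model, the Hodge filtration on the left corresponds to the total-degree filtration $\sum_{i+j \geq r} \mathcal{I}^{[i]}_{D_P(J)^\wedge} \cdot \Omega^j_{P/A}$ on the right, which is exactly the PD ideal filtration $\mathcal{I}_{\mathrm{crys}}^{[r]}$. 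Taking a simplicial resolution $P_\bullet \twoheadrightarrow R$ and totalizing gives the required natural filtered map, functorial in $R$.

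Second, I would observe that the underlying map is an isomorphism. Since $(A,I)$ is transversal, $I$ is locally generated by a $p$-completely regular element, so $A/I$ is $p$-completely lci over $A$; combined with the $p$-completely lci hypothesis on $R/(A/I)$, the composite $A \to R$ is $p$-completely lci. Bhatt's \cite[Theorem 3.27]{Bha12} then delivers the underlying isomorphism. (Alternatively, one can deduce it from \Cref{dR and crys} together with the identification $\dR_A(I)^\wedge \cong \mathcal{A}$ that holds in the transversal case.)

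Third, to upgrade to a filtered isomorphism it suffices to check the induced map on associated graded pieces is an isomorphism. On the Hodge side, $\gr^r_{\mathrm{H}} \dR_{R/A}^\wedge \cong L\wedge^r \mathbb{L}_{R/A}[-r]$ by the definition of the Hodge-completed derived de Rham complex. On the PD side, the lci hypothesis means $J$ is Zariski locally generated by a Koszul-regular sequence, and a direct computation identifies $\mathcal{I}_{\mathrm{crys}}^{[r]}/\mathcal{I}_{\mathrm{crys}}^{[r+1]}$ with a derived divided power $L\Gamma^r(\mathbb{L}_{R/A}[-1])$. These two graded objects match up via the standard transposition identification $L\wedge^r(M)[-r] \simeq L\Gamma^r(M[-1])$ which holds whenever $M$ has Tor-amplitude in $[-1,0]$; that is precisely the situation here by the lci assumption. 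A convenient way to globalize is to left Kan extend from the case of finite-type polynomial $A/I$-algebras (where the Hodge and PD filtration match at the level of honest complexes by Illusie's \cite[Corollaire VIII.2.2.8]{Ill72}), using that both filtered sheaves satisfy the appropriate quasi-syntomic descent.

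The main obstacle is the filtered comparison in the last step: derived divided and exterior powers are delicate, and the transposition isomorphism between them in general requires the cotangent complex to be cohomologically concentrated in the correct range. The lci hypothesis on $R/(A/I)$ is exactly what guarantees that $\mathbb{L}_{R/A}$ has Tor-amplitude in $[-1,0]$, so that the classical Illusie identification made in the smooth setting extends to our derived setting verbatim on associated graded pieces.
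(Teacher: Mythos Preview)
The paper does not supply its own proof of this statement: it is recorded as a black-box input, with attribution to Bhatt \cite[Proposition 3.25 and Theorem 3.27]{Bha12}, Illusie \cite[Corollaire VIII.2.2.8]{Ill72}, and \cite[Theorem 3.4.(4)]{GL20}. So there is no in-paper argument to compare against; your sketch is an attempt to reconstruct what lies behind those citations.

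As a reconstruction, your outline is on the right track but one point in Step 1 is miscited. \Cref{three envelopes}(1) only identifies $\dR_{R/P}^\wedge$ (derived de Rham relative to the \emph{presentation} $P$) with the derived PD envelope $D_P(J)^\wedge$; it does not compute $\dR_{R/A}^\wedge$ as the de Rham complex of $D_P(J)$ over $A$. The filtered identification of $\dR_{R/A}^\wedge$ with the PD de Rham complex of $D_P(J)$ (carrying the total-degree filtration you wrote) is precisely the content of Illusie's \cite[Corollaire VIII.2.2.8]{Ill72} in the regular-sequence case, not something one extracts from \Cref{three envelopes}. Once you invoke Illusie for that step, the rest of your plan (Bhatt for the underlying isomorphism under the transversal/lci hypothesis, then matching graded pieces via the d\'ecalage $L\wedge^r(M)[-r]\simeq L\Gamma^r(M[-1])$ for $M=\mathbb{L}_{R/A}$ of Tor-amplitude $[-1,0]$) is a faithful summary of how the cited results assemble.
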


Here $\Fil^{\bullet}_{\mathrm{H}}$ denotes the (derived $p$-completed)
Hodge filtration on $\dR_{R/A}^{\wedge}$,
whose graded pieces are given by 
\[
\gr^*_{\mathrm{H}}(\dR_{R/A}^{\wedge}) \cong \Gamma^*_{R}(\LL_{R/A}^{\wedge}[-1]),
\]
where $\Gamma^*$ denotes the derived divided power algebra construction and
$\LL_{R/A}^{\wedge}$ denotes the derived $p$-completed cotangent complex
of $R$ over $A$.
The triangle $A \to A/I \to R$ now gives us a triangle relating various $p$-completed
cotangent complexes:
\[
R \widehat{\otimes}_{A/I} {I/I^2}[1] \cong R \widehat{\otimes}_{A/I} \LL_{(A/I)/A}^{\wedge}
\to \LL_{R/A}^{\wedge} \to \LL_{R/(A/I)}^{\wedge},
\]
where the (shifted) map $R \widehat{\otimes}_{A/I} {I/I^2} \to \LL_{R/A}^{\wedge}[-1]$
comes from the $\mathcal{A}$-algebra structure on $\dR_{R/A}^{\wedge}$.
Indeed the multiplicativity of Hodge filtrations and the fact that
$I/I^2 \cong \mathcal{I}/\mathcal{I}^{[2]} \cong \gr^1_{\mathrm{H}}(\dR_{(A/I)/A}^{\wedge})$
naturally sits inside $\gr^1_{\mathrm{H}}(\dR_{R/A}^{\wedge})$
giving rise to
\[
\gr^0_{\mathrm{H}}(\dR_{R/A}^{\wedge}) \widehat{\otimes}_{\gr^0_{\mathrm{H}}(\dR_{(A/I)/A}^{\wedge})} \gr^1_{\mathrm{H}}(\dR_{(A/I)/A}^{\wedge})
\to \gr^1_{\mathrm{H}}(\dR_{R/A}^{\wedge}),
\]
which is identified with the shifted map $R \widehat{\otimes}_{A/I} {I/I^2} \to \LL_{R/A}^{\wedge}[-1]$.

The above discussion naturally extends to all $A/I$-algebras via left Kan extension.
We restrict ourselves to those algebras that are quasisyntomic over $A/I$ so that everything in sight
is a sheaf with respect to the quasisyntomic topology.
Recall that a basis of the quasisyntomic site is given by algebras that are large quasisyntomic 
over $A/I$ (see~\cite[Definition 15.1]{BS19}).
Below we shall show that, on this basis, 
all these sheaves have values living in cohomological degree zero.
The proof is inspired by~\cite[Subsection 12.5]{BS19}.

\begin{lemma}
\label{lives in cohdeg 0 in char p}
Let $B$ be an $\mathbb{F}_p$-algebra and let $S$ be a $B$-algebra which is
relatively semiperfect with $\mathbb{L}_{S/B}[-1]$ given by a flat $S$-module.
Then $\dR_{S/B}$ and its Hodge filtrations all live in cohomological degree $0$.
\end{lemma}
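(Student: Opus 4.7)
The plan is to establish discreteness by exploiting the conjugate filtration on $\dR_{S/B}$ in characteristic $p$, then deducing the analogous statement for the Hodge pieces by comparing to the total object.

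First I would invoke the conjugate filtration: since $B$ is an $\mathbb{F}_p$-algebra, $\dR_{S/B}$ carries a functorial exhaustive increasing $\mathbb{N}$-indexed filtration $\Fil^{\mathrm{conj}}_{\bullet}$ whose graded pieces are identified with $\bigl(\wedge^i_{S^{(1)}} \mathbb{L}_{S^{(1)}/B}\bigr)[-i]$, where $S^{(1)} \coloneqq S \otimes_{B,\varphi_B} B$ is the relative Frobenius twist. Base change for the cotangent complex yields $\mathbb{L}_{S^{(1)}/B}[-1] \cong \mathbb{L}_{S/B}[-1] \otimes_S S^{(1)}$, which is a flat $S^{(1)}$-module by our hypothesis. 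Invoking the d\'ecalage identity $\wedge^i(N[1])[-i] \cong \Gamma^i N$ in the derived sense, each conjugate graded piece becomes $\Gamma^i_{S^{(1)}}(\mathbb{L}_{S^{(1)}/B}[-1])$, which is flat over $S^{(1)}$ and in particular concentrated in cohomological degree $0$.

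Since an extension of two discrete modules is again discrete (by the long exact cohomology sequence applied to a fiber sequence), an easy induction shows that each $\Fil^{\mathrm{conj}}_i$ is discrete; since filtered colimits preserve discreteness, $\dR_{S/B} = \mathrm{colim}_i \Fil^{\mathrm{conj}}_i$ is itself discrete. For the Hodge filtration, the same d\'ecalage identity gives $\gr^r_{\mathrm{H}}(\dR_{S/B}) \cong \wedge^r_S \mathbb{L}_{S/B}[-r] \cong \Gamma^r_S(\mathbb{L}_{S/B}[-1])$, a flat $S$-module in degree $0$. Consequently the finite iterated extension $\dR_{S/B}/\Fil^r_{\mathrm{H}}$ is discrete, and from the fiber sequence $\Fil^r_{\mathrm{H}} \to \dR_{S/B} \to \dR_{S/B}/\Fil^r_{\mathrm{H}}$ with both $\dR_{S/B}$ and the cofiber discrete, one concludes by the long exact cohomology sequence (noting surjectivity on $H^0$) that $\Fil^r_{\mathrm{H}}$ is discrete as well.

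No serious obstacle is anticipated: the argument is formal once one has the existence of the conjugate filtration (with the stated graded pieces) in characteristic $p$ and the standard base change of the cotangent complex. The relative semiperfectness hypothesis does not appear to enter the discreteness argument directly, though it is natural in the setting where this lemma will be applied.
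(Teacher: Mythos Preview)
Your argument has a genuine gap at the final step. You correctly establish that $\dR_{S/B}$ and each $\dR_{S/B}/\Fil^r_{\mathrm{H}}$ are discrete, but then you assert ``noting surjectivity on $H^0$'' without justification. From the fiber sequence $\Fil^r_{\mathrm{H}} \to \dR_{S/B} \to \dR_{S/B}/\Fil^r_{\mathrm{H}}$ with the middle and right terms discrete, the long exact sequence only tells you that $\Fil^r_{\mathrm{H}}$ lives in degrees $[0,1]$, with $H^1(\Fil^r_{\mathrm{H}}) = \coker\bigl(H^0(\dR_{S/B}) \to H^0(\dR_{S/B}/\Fil^r_{\mathrm{H}})\bigr)$. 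Showing this cokernel vanishes is the substantive part of the lemma. The Hodge filtration is defined by left Kan extension from polynomial algebras, and geometric realization of a levelwise surjection of simplicial abelian groups need not be surjective on $\pi_0$; so there is no formal reason for surjectivity here.

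This is precisely where the relative semiperfectness enters, contrary to your closing remark. The paper's proof first replaces $B$ by the relative perfection of $S$ over $B$ (legitimate because $\mathbb{L}_{S/B}$ is unchanged), so that $B \to S$ becomes a surjection; then base-changes further so that $B$ is semiperfect, ensuring every element of $\ker(B \to S)$ has compatible $p$-power roots. In the special case where this kernel is generated by a regular sequence, $\dR_{S/B}$ is the PD envelope $D_B(S)$ with $\Fil^r_{\mathrm{H}}$ the $r$-th PD ideal, and surjectivity of $D_B(S) \to D_B(S)/J^{[r]}$ is then tautological. For the general case, one constructs an auxiliary $\widetilde{S} \twoheadrightarrow S$ of this regular-sequence type such that $\mathbb{L}_{\widetilde{S}/B}[-1] \to \mathbb{L}_{S/B}[-1]$ is surjective (using the $p$-power roots), hence $\gr^*_{\mathrm{H}}(\dR_{\widetilde{S}/B}) \to \gr^*_{\mathrm{H}}(\dR_{S/B})$ and therefore $\dR_{\widetilde{S}/B}/\Fil^r_{\mathrm{H}} \to \dR_{S/B}/\Fil^r_{\mathrm{H}}$ are surjective; a diagram chase then transports the known surjectivity for $\widetilde{S}$ to $S$.
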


\begin{proof}
Using the conjugate filtration and Cartier isomorphism, we see that
$\dR_{S/B}$ (being its $0$-th Hodge filtration) lives in degree $0$.
On the other hand, we also know that the graded pieces of the Hodge
filtrations are given by divided powers $\Gamma^*_S(\LL_{S/B}[-1])$,
hence all the graded pieces live in degree $0$ as well.
In order to prove the statement about Hodge filtrations, we
need to show the natural map
$\dR_{S/B} \to \dR_{S/B}/\Fil^r_{\mathrm{H}}$ is surjective
(note that both sides live in degree $0$ by last sentence).

To this end, we proceed by mimicking~\cite[proof of Theorem 12.2]{BS19}.
First we may replace $B$ by the relative perfection of $S$, as the relevant
cotangent complexes $\LL_{S/B}$ and $\LL_{S^{(1)}/B}$ are unchanged.
Hence we may assume $B \to S$ is a surjection, as $S/B$ is assumed to be
relatively semiperfect.
Next, by choosing the surjection $\mathbb{F}_p[X_b \mid b \in B] \twoheadrightarrow B$
and base change along the fully faithful map
$\mathbb{F}_p[X_b \mid b \in B] \to \mathbb{F}_p[X_b^{1/p^\infty} \mid b \in B]$,
we may further assume that $B$ is semiperfect
(as surjectiveness of a map can be tested after fully faithful base change).
In particular, any element in the kernel of $B \to S$ admits compatible $p$-power
roots in $B$.

Now if the kernel is generated by a regular sequence, then the map
$\dR_{S/B} \to \dR_{S/B}/\Fil^r_{\mathrm{H}}$ is identified as
$D_B(S) \to D_B(S)/J^{[r]}$ where $D_B(S)$ denotes the PD envelope
and $J^{[r]}$ is the $r$-th divided power ideal of $J = \ker(D_B(S) \twoheadrightarrow S)$.
Therefore $\dR_{S/B} \to \dR_{S/B}/\Fil^r_{\mathrm{H}}$ 
is surjective by this concrete description.

Lastly given any such surjection $B \twoheadrightarrow S$,
call the underlying set of its kernel by $I$.
Then we look at the surjection of $B$-algebras
\[
\widetilde{S} \coloneqq B[X_i^{1/p^\infty} \mid i \in I]/(X_i \mid i \in I) \twoheadrightarrow S,
\]
where $X_i^{1/p^\infty}$ is sent to (the image of) a compatible $p$-power roots
of the corresponding element $f_i \in I$ in $S$.
We have that the induced map $\LL_{\widetilde{S}/B}[-1] \to \LL_{S/B}[-1]$ sends
$X_i$ to $f_i$, hence is a surjection.
Therefore we get that the map
$\gr^*_{\mathrm{H}}(\dR_{\widetilde{S}/B}) \to \gr^*_{\mathrm{H}}(\dR_{S/B})$
is a surjection.
Since $\widetilde{S}$ is a quotient of a relatively perfect algebra over $B$
by an ind-regular sequence.
Applying (filtered colimit of) what we proved in the previous paragraph, we get that
$\dR_{\widetilde{S}/B} \to \dR_{\widetilde{S}/B}/\Fil^r_{\mathrm{H}}$
is also a surjection.
Looking at the following commutative diagram
\[
\xymatrix{
\dR_{\widetilde{S}/B} \ar[r] \ar@{->>}[d] & \dR_{S/B} \ar[d]    \\
\dR_{\widetilde{S}/B}/\Fil^r_{\mathrm{H}} \ar@{->>}[r] & \dR_{S/B}/\Fil^r_{\mathrm{H}},
}
\]
we conclude that the right arrow must be surjective, which is what we need to show.
%The construction of \tilde{S} may be simplified if we use the strategy in \Cref{functorial endomorphism theorem}.
\end{proof}

\begin{lemma}
\label{Hodge filtrations on dR}
Let $S$ be a large quasisyntomic over $A/I$ algebra.
Then all of the Hodge filtrations on $\dR_{S/A}^{\wedge}$ and $\dR_{S/(A/I)}^{\wedge}$
are given by submodules, equivalently all the filtrations and their graded pieces
are cohomologically supported in degree $0$.
Moreover the Hodge filtrations of $\dR_{S/(A/I)}^{\wedge}$ are $p$-completely
flat over $A/I$.
\end{lemma}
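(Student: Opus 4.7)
The plan is to deduce the lemma from its characteristic-$p$ incarnation \Cref{lives in cohdeg 0 in char p} by reducing modulo $p$. For both derived de Rham complexes, the graded pieces of the Hodge filtration are divided powers of (shifted) cotangent complexes; once these are shown to be $p$-completely flat $S$-modules in degree $0$, the only remaining task is to verify that the filtration itself stays in degree $0$, which is exactly where \Cref{lives in cohdeg 0 in char p} enters.

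First I would treat $\dR_{S/(A/I)}^{\wedge}$. Since $S$ is large quasisyntomic over $A/I$, i.e.~a quasi-regular semiperfectoid $A/I$-algebra, the shifted cotangent complex $\LL_{S/(A/I)}^{\wedge}[-1]$ is a $p$-completely flat $S$-module in degree $0$. Consequently each graded piece $\Gamma^i_S(\LL_{S/(A/I)}^{\wedge}[-1])$ is $p$-completely flat over $S$, hence over $A/I$. By derived $p$-completeness, to show that each $\Fil^r_{\mathrm{H}}(\dR_{S/(A/I)}^{\wedge})$ is itself $p$-completely flat over $A/I$ and concentrated in cohomological degree $0$, it suffices to verify these statements after base change to $\mathbb{F}_p$. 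That reduction is identified with the Hodge-filtered $\dR_{(S/p)/((A/I)/p)}$, and one then applies \Cref{lives in cohdeg 0 in char p} with $B \coloneqq (A/I)/p$: the algebra $S/p$ is a quotient of the relatively perfect $B$-algebra $B\langle X_j^{1/p^\infty} \mid j \in J\rangle$, hence is relatively semiperfect, and the flatness hypothesis on $\LL_{(S/p)/B}[-1]$ transfers from the $p$-complete statement we have just verified.

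For $\dR_{S/A}^{\wedge}$, I would run essentially the same argument but now with $B = A/p$. The additional input needed is that $\LL_{S/A}^{\wedge}[-1]$ is also a $p$-completely flat $S$-module, which I would obtain from the transitivity triangle
\[
S \widehat{\otimes}_{A/I} I/I^2 \to \LL_{S/A}^{\wedge}[-1] \to \LL_{S/(A/I)}^{\wedge}[-1],
\]
combined with the fact that $I/I^2$ is a $p$-completely invertible $A/I$-module, which holds since $(A,I)$ is transversal. After reducing modulo $p$, the algebra $S/p$ remains relatively semiperfect over $A/p$ (now being a quotient of the relatively perfect $A/p\langle X_j^{1/p^\infty}\rangle$), so \Cref{lives in cohdeg 0 in char p} again gives that the Hodge filtrations are cohomologically concentrated in degree $0$.

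The step I expect to require the most care is the bookkeeping of the flatness hypothesis for $\LL_{(S/p)/B}[-1]$ in each of the two reductions, especially for $B = A/p$. Morally this follows from $p$-complete flatness of the corresponding lift, but one must track the base change of cotangent complexes along $A \to A/p$ (using the $p$-torsion freeness built into the transversal assumption) and along $A/I \to (A/I)/p$ in order to conclude that the reduction is a genuinely flat module in degree $0$, rather than merely a $p$-completely flat complex.
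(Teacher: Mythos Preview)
Your approach is essentially the same as the paper's: reduce modulo $p$ and invoke \Cref{lives in cohdeg 0 in char p} to get cohomological degree $0$, then use $p$-complete flatness of $\LL[-1]$ for the flatness assertions. You supply more detail than the paper does in verifying the hypotheses of that lemma for both base rings (the paper just says ``derived modulo $p$, we see that the first claim follows from \Cref{lives in cohdeg 0 in char p}''), and your use of the transitivity triangle to handle $\LL_{S/A}^\wedge[-1]$ is correct and worth spelling out.

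One small gap to flag: you reduce the $p$-complete flatness of $\Fil^r_{\mathrm{H}}(\dR_{S/(A/I)}^\wedge)$ to the mod-$p$ statement and then appeal to \Cref{lives in cohdeg 0 in char p}, but that lemma only gives degree $0$, not flatness. To finish you need flatness of $\dR_{S/(A/I)}^\wedge/p$ itself over $(A/I)/p$ (so that the short exact sequence $0\to\Fil^r\to\dR\to\dR/\Fil^r\to 0$, with $\dR/\Fil^r$ a finite extension of the flat graded pieces you already have, forces $\Fil^r$ flat). The paper handles this via the conjugate filtration and Cartier isomorphism, which give $\dR/p$ as an increasing exhaustive filtration with flat graded pieces $\wedge^i(\LL^{(1)}[-1])$; you should add that one line.
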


\begin{proof}
Derived modulo $p$, we see that the first claim follows from~\Cref{lives in cohdeg 0 in char p}.
Also we see that $\dR_{S/(A/I)}^{\wedge} \to \dR_{S/(A/I)}^{\wedge}/\Fil^r_{\mathrm{H}}$
 is surjective.
So the statement of $p$-completely flatness of $\dR_{S/(A/I)}^{\wedge}$
and its Hodge filtrations now follows from $p$-completeness
of $\dR_{S/(A/I)}^{\wedge}$ and the graded pieces of its Hodge filtrations.
Using conjugate filtration and Cartier isomorphism, both $p$-completely
flatness follow from the fact that $\LL_{S/(A/I)}^{\wedge}[-1]$ is $p$-completely
flat over $S$ and $S$ is $p$-completely flat over $A/I$ 
(as $S$ is large quasisyntomic over $A/I$).
\end{proof}

Since $\dR_{R/A}^{\wedge}$ is naturally an $\mathcal{A}$-algebra for any $A/I$-algebra $R$,
the filtration on $\mathcal{A}$ by the divided powers of $\mathcal{I}$ gives rise
to another functorial decreasing filtration on $\dR_{R/A}^{\wedge}$:
\[
\Fil^r_{\mathcal{I}}(\dR_{R/A}^{\wedge}) \coloneqq 
\dR_{R/A}^{\wedge} \widehat{\otimes}_{\mathcal{A}} \mathcal{I}^{[r]}.
\]
We caution readers that this is \emph{not} the $\mathcal{I}$-adic filtration,
as we are using divided powers of $\mathcal{I}$ instead of symmetric powers.
A basic understanding of these filtrations are given by the following:
\begin{lemma}
\label{calIfil in deg 0}
All of these $\Fil^r_{\mathcal{I}}(\dR_{R/A}^{\wedge})$ are quasisyntomic sheaves,
whose values on large quasisyntomic over $A/I$ algebras are supported in degree $0$.
The graded pieces are given by
\[
\gr^r_{\mathcal{I}} \cong 
\dR_{R/(A/I)}^{\wedge} \widehat{\otimes}_{A/I} \mathcal{I}^{[r]}/\mathcal{I}^{[r+1]}.
\]
\end{lemma}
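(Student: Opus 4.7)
The plan is to prove the three assertions in the following order: the cohomological concentration in degree zero for large quasisyntomic $S$, the identification of the graded pieces, and finally the quasisyntomic sheaf property. The filtration is by definition a derived $p$-completed tensor of $\dR_{R/A}^{\wedge}$ against the $\mathcal{A}$-module $\mathcal{I}^{[r]}$, so everything will ultimately reduce to controlling the Tor-interaction between these two factors.

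For the degree-zero claim, the key input is item (5) of \Cref{derived prismatic construction}: under our transversality hypothesis on $(A,I)$, the complex $\dR_{S/A}^{\wedge}$ is $p$-completely flat over $\mathcal{A}$ whenever $S$ is large quasisyntomic over $A/I$. Consequently $\dR_{S/A}^{\wedge} \widehat{\otimes}_{\mathcal{A}} \mathcal{I}^{[r]}$ is computed as the $p$-completed ordinary tensor product, and since $\mathcal{I}^{[r]} \subset \mathcal{A}$ is a discrete $\mathcal{A}$-module, the filtered object $\Fil^r_{\mathcal{I}}(\dR_{S/A}^{\wedge})$ itself lives in cohomological degree zero.

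The same flatness identifies $\gr^r_{\mathcal{I}}$ with $\dR_{S/A}^{\wedge} \widehat{\otimes}_{\mathcal{A}} (\mathcal{I}^{[r]}/\mathcal{I}^{[r+1]})$. Since $\mathcal{I}^{[r]}/\mathcal{I}^{[r+1]}$ is annihilated by $\mathcal{I}$, hence is naturally an $A/I$-module, we may rewrite this as
\[
\dR_{S/A}^{\wedge} \widehat{\otimes}_{\mathcal{A}} (A/I) \widehat{\otimes}_{A/I} (\mathcal{I}^{[r]}/\mathcal{I}^{[r+1]}).
\]
The remaining step is to recognize $\dR_{S/A}^{\wedge} \widehat{\otimes}_{\mathcal{A}} A/I \cong \dR_{S/(A/I)}^{\wedge}$, which follows from our comparison \Cref{comparing pris and crys} combined with the de Rham comparison $\Prism^{(1)}_{S/A} \widehat{\otimes}_A A/I \cong \dR_{S/(A/I)}^{\wedge}$ of \cite[Corollary 15.4]{BS19}; this yields the desired formula for the graded pieces.

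For the sheaf property, I proceed by induction on $r$. The base case $\Fil^0_{\mathcal{I}} = \dR_{-/A}^{\wedge}$ is a quasisyntomic sheaf by \Cref{sheaf property on relative to A dR}, and each $\gr^r_{\mathcal{I}}$ is a sheaf because it is obtained from the quasisyntomic sheaf $\dR_{-/(A/I)}^{\wedge}$ by tensoring (over a fixed ring) with a fixed $A/I$-module $\mathcal{I}^{[r]}/\mathcal{I}^{[r+1]}$. The fiber sequence $\Fil^{r+1}_{\mathcal{I}} \to \Fil^r_{\mathcal{I}} \to \gr^r_{\mathcal{I}}$ then propagates the sheaf property from $\Fil^r$ to $\Fil^{r+1}$. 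The main subtlety I anticipate is ensuring that derived tensor with $\mathcal{I}^{[r]}$ does not introduce higher $\Tor$ terms; but the $p$-completely flat statement from \Cref{derived prismatic construction}(5) is precisely what neutralizes this on the basis of large quasisyntomic algebras, reducing the rest to formal bookkeeping.
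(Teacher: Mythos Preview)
Your proof is correct and follows essentially the same shape as the paper's, but with two small differences worth noting.

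First, for the degree-zero claim on large quasisyntomic $S$, you invoke the $p$-complete flatness of $\dR_{S/A}^\wedge$ over $\mathcal{A}$ from \Cref{derived prismatic construction}(5) directly, which immediately gives concentration in degree $0$ for $\dR_{S/A}^\wedge \widehat{\otimes}_{\mathcal{A}} \mathcal{I}^{[r]}$. The paper instead argues inductively: it first identifies the graded pieces, observes they and $\dR_{S/A}^\wedge$ lie in degree $0$ via \Cref{Hodge filtrations on dR}, and then uses right-exactness of $p$-complete tensor to show each $\Fil^r_{\mathcal{I}} \to \gr^r_{\mathcal{I}}$ is surjective. Your shortcut is cleaner, though note that the flatness statement in \Cref{derived prismatic construction}(5) is only fully justified later in \Cref{flatness of dR}; there is no circularity, but the paper's ordering is more self-contained at this point.

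Second, for the identification $\dR_{R/A}^\wedge \widehat{\otimes}_{\mathcal{A}} A/I \cong \dR_{R/(A/I)}^\wedge$, the paper simply cites \cite[Proposition 3.11]{GL20}, whereas you route through \Cref{comparing pris and crys} and the de Rham comparison of \cite{BS19}; both are valid. One minor point: your phrasing ``the same flatness identifies $\gr^r_{\mathcal{I}}$ with $\dR_{S/A}^\wedge \widehat{\otimes}_{\mathcal{A}} (\mathcal{I}^{[r]}/\mathcal{I}^{[r+1]})$'' is slightly misleading, since this identification holds for arbitrary $R$ simply because derived tensor preserves cofiber sequences --- no flatness is needed there, and indeed the lemma asserts the graded-piece formula for general $R$, not just $S$.
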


\begin{proof}
The statement about graded pieces follows from the following chain of identifications
\[
\gr^r_{\mathcal{I}} \cong \dR_{R/A}^{\wedge} \widehat{\otimes}_{\mathcal{A}} \mathcal{I}^{[r]}/\mathcal{I}^{[r+1]}
\cong \dR_{R/A}^{\wedge} \widehat{\otimes}_{\mathcal{A}} \mathcal{A}/\mathcal{I}
\widehat{\otimes}_{\mathcal{A}/\mathcal{I}} \mathcal{I}^{[r]}/\mathcal{I}^{[r+1]}
\cong \dR_{R/(A/I)}^{\wedge} \widehat{\otimes}_{A/I} \mathcal{I}^{[r]}/\mathcal{I}^{[r+1]},
\]
where the last identification comes from
$\dR_{R/A}^{\wedge} \widehat{\otimes}_{\mathcal{A}} A/I \cong \dR_{R/(A/I)}^{\wedge}$
(c.f.~\cite[Proposition 3.11]{GL20})
and $\mathcal{A}/\mathcal{I} \cong A/I$.
In particular, these graded pieces are given by $\dR_{R/(A/I)}^{\wedge}$ twisted by
a rank $1$ locally free sheaf on $\Spf(A/I)$, hence are quasisyntomic sheaves themselves.

Since $\dR_{R/A}^{\wedge}$ and all these graded pieces are quasisyntomic sheaves,
each $\Fil^r_{\mathcal{I}}$ is also a quasisyntomic sheaf.

If $S$ is large quasisyntomic over $A/I$, then $\dR_{S/A}^{\wedge}$ and all these graded pieces
are supported in cohomological degree $0$ by~\Cref{Hodge filtrations on dR}.
By induction, in order to show the filtrations are in degree $0$, it suffices to show
$\dR_{S/A}^{\wedge} \widehat{\otimes}_{\mathcal{A}} \mathcal{I}^{[r]}
\to \dR_{S/A}^{\wedge} \widehat{\otimes}_{\mathcal{A}} \mathcal{I}^{[r]}/\mathcal{I}^{[r+1]}$
is surjective for any $r$, which follows from the right exactness of $p$-complete tensor.
\end{proof}

The filtration $\Fil^{\bullet}_{\mathcal{I}}(\dR_{R/A}^{\wedge})$ is a disguise of the Katz--Oda filtration 
$\Fil^{\bullet}_{\mathrm{KO}}(\dR_{C/A})$
discussed in~\cite{GL20}, applied to the triple $(A \to B \to C) = (A \to A/I \to R)$.
More precisely, we have
\[
\Fil^{i}_{\mathcal{I}}\dR_{R/A}^{\wedge} \cong \Fil^{\bullet}_{\mathrm{KO}}(\dR_{R/A})^\wedge.
\]
We refer readers to the Subsection 3.2 of loc.~cit.~for a general discussion of additional
structures on the derived de Rham complex of
$A \to C$ when it factorizes through $A \to B \to C$.

Let $R$ be an $A/I$-algebra.
By $p$-completing the double filtrations obtained in \cite[Construction 3.12]{GL20},
we see that $\dR_{R/A}^\wedge$ can be naturally equipped a decreasing filtration indexed by
$\mathbb{N} \times \mathbb{N}$:
\[
\Fil^{i,j}(\dR_{R/A}^\wedge) \coloneqq \bigg(\Fil^i_{\mathrm{KO}}\Fil^j_{\mathrm{H}}(\dR_{R/A})\bigg)^\wedge
\]
The following proposition will describe $\Fil^{i,j}(\dR_{R/A}^\wedge)$
and declare its relation with
the two systems of filtrations $\Fil^{\bullet}_{\mathrm{H}} \dR_{R/A}^{\wedge}$ and
$\Fil^{\bullet}_{\mathcal{I}} \dR_{R/A}^{\wedge}$.

\begin{proposition}
\label{general KO filtration properties}
Let $R$ be an $A/I$-algebra.
Then:
\begin{enumerate}
    \item For any $j$, we have an identification $\Fil^{0,j}(\dR_{R/A}^\wedge) \cong \Fil^j_{\mathrm{H}}(\dR_{R/A}^\wedge)$.
    \item For each pair $0 \leq j \leq i$, we have an identification 
    \[
    \Fil^{i,j}(\dR_{R/A}^\wedge) \cong \Fil^i_{\mathcal{I}}(\dR_{R/A}^\wedge).
    \]
    \item For each pair $0 \leq i \leq j$, we have a natural identification
    \[
    \mathrm{Cone}\bigg(\Fil^{i+1, j}(\dR_{R/A}^\wedge) \to \Fil^{i,j}(\dR_{R/A}^\wedge)\bigg) \cong 
    \Fil^{j-i}_{\mathrm{H}} \dR_{R/(A/I)}^\wedge \widehat{\otimes}_{A/I} \Gamma^i_{A/I}(I/I^2).
    \]
    Moreover this identification is compatible with 
    \[
    \xymatrix{
    \mathrm{Cone}\bigg(\Fil^{i+1, j}(\dR_{R/A}^\wedge) \to \Fil^{i,j}(\dR_{R/A}^\wedge)\bigg) \ar[d] \ar[r]^-{\cong} &
    \Fil^{j-i}_{\mathrm{H}} \dR_{R/(A/I)}^\wedge \widehat{\otimes}_{A/I} \Gamma^i_{A/I}(I/I^2) \ar[d] \\
    \mathrm{Cone}\bigg(\Fil^{i+1, 0}(\dR_{R/A}^\wedge) \to \Fil^{i,0}(\dR_{R/A}^\wedge)\bigg) \ar[r]^-\cong &
    \dR_{R/(A/I)}^\wedge \widehat{\otimes}_{A/I} \Gamma^i_{A/I}(I/I^2).
    }
    \]
    \item The association $R \mapsto \Fil^{i,j}(\dR_{R/A}^\wedge)$ defines a sheaf on the quasi-syntomic site of $A/I$
    for any $(i,j)$.
\end{enumerate}
\end{proposition}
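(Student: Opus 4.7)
The plan is to deduce each of the four assertions from its uncompleted analog established in \cite[Construction 3.12 and the surrounding discussion]{GL20}, transferring via two identifications already in hand: the $p$-complete Katz--Oda filtration matches $\Fil^{\bullet}_{\mathcal{I}}(\dR_{R/A}^\wedge)$ as discussed right before the proposition, and the filtered isomorphism $(\dR_{(A/I)/A}^\wedge, \Fil^{\bullet}_{\mathrm{H}}) \cong (\mathcal{A}, \mathcal{I}^{[\bullet]})$ provided by \Cref{Illusie--Bhatt}. Item (1) is tautological: $\Fil^0_{\mathrm{KO}}(\dR_{R/A}) = \dR_{R/A}$ by construction, so $\Fil^{0,j} = \Fil^j_{\mathrm{H}}$, both before and after $p$-completion. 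For (2), since $\Fil^{i,j}$ is defined as the intersection $\Fil^i_{\mathrm{KO}}\cap \Fil^j_{\mathrm{H}}$ (after $p$-completion), the claim reduces to the containment $\Fil^i_{\mathcal{I}}(\dR_{R/A}^\wedge) \subset \Fil^j_{\mathrm{H}}(\dR_{R/A}^\wedge)$ for $j \leq i$. This follows from \Cref{Illusie--Bhatt} applied to $A \to A/I$, which identifies $\mathcal{I}^{[i]}$ with $\Fil^i_{\mathrm{H}}(\dR_{(A/I)/A}^\wedge)$, combined with the multiplicativity of the Hodge filtration with respect to the $\dR_{(A/I)/A}^\wedge$-algebra structure on $\dR_{R/A}^\wedge$.

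The substantive work is in (3). The uncompleted double filtration of \cite[Construction 3.12]{GL20} comes equipped with a graded piece description: for $0 \leq i \leq j$, the $i$-th KO-graded piece of $\Fil^{i,j}$ is naturally identified with $\Fil^{j-i}_{\mathrm{H}}(\dR_{R/(A/I)}) \otimes_{A/I} \wedge^i \mathbb{L}_{(A/I)/A}[-i]$. Upon $p$-completion, the transversality hypothesis on $(A,I)$ gives $\mathbb{L}_{(A/I)/A}^\wedge \simeq (I/I^2)[1]$ with $I/I^2$ a $p$-completely flat $A/I$-module, and the derived wedge power of a shifted flat module identifies with its derived divided power, yielding $\wedge^i(\mathbb{L}_{(A/I)/A}^\wedge)[-i] \cong \Gamma^i_{A/I}(I/I^2)$. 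Combining these produces the displayed identification after $p$-completion. The compatibility square with $j=0$ is a direct consequence of the naturality of \cite[Construction 3.12]{GL20} in the Hodge-filtration index, which sends $\Fil^{i,j}$ to $\Fil^{i,0}$ and intertwines the KO-graded piece descriptions.

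For (4) I would argue by induction on $(i,j)$. The base cases $\Fil^{0,j} = \Fil^j_{\mathrm{H}}\dR_{-/A}^\wedge$ are quasi-syntomic sheaves by \Cref{sheaf property on relative to A dR} (for $\dR_{-/A}^\wedge$) and its relative analog for $\dR_{-/(A/I)}^\wedge$, combined with the sheafiness of the Hodge graded pieces $\Gamma^*(\mathbb{L}_{-/A}^\wedge[-1])$ and $\Gamma^*(\mathbb{L}_{-/(A/I)}^\wedge[-1])$ via fpqc descent for the cotangent complex \cite[Theorem 3.1]{BMS2}. From (2) and (3), each pair $(\Fil^{i,j}, \Fil^{i+1,j})$ sits in an exact triangle whose third term is either another $\Fil^{i'}_{\mathcal{I}}$ (handled by \Cref{calIfil in deg 0}) or a Hodge-filtered piece of $\dR_{-/(A/I)}^\wedge$ tensored with the fixed $A/I$-module $\Gamma^i_{A/I}(I/I^2)$, both of which are sheaves; by two-out-of-three, every $\Fil^{i,j}$ is a quasi-syntomic sheaf. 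The main obstacle is the verification in (3): one must commute $p$-completion with derived wedge powers and extract the divided-power identification, which is enabled precisely by the transversality of $(A,I)$ ensuring that $I/I^2$ is a $p$-completely flat $A/I$-module concentrated in a single degree.
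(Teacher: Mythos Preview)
Your approach for (1), (3), and (4) matches the paper's: all four parts are deduced from the uncompleted constructions in \cite{GL20}, with (3) coming from $p$-completing \cite[Proposition 3.13.(1)]{GL20} (your unpacking via the d\'ecalage identification $\wedge^i((I/I^2)[1])[-i] \cong \Gamma^i_{A/I}(I/I^2)$ is exactly what that citation encodes), and (4) proceeding by induction on $i$ using the triangles from (2)--(3) once one knows the Hodge filtrations on $\dR_{-/A}^\wedge$ and $\dR_{-/(A/I)}^\wedge$ are sheaves.

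There is, however, a genuine gap in your argument for (2). You write that ``$\Fil^{i,j}$ is defined as the intersection $\Fil^i_{\mathrm{KO}}\cap \Fil^j_{\mathrm{H}}$''. This is not the definition: the symbol $\Fil^i_{\mathrm{KO}}\Fil^j_{\mathrm{H}}(\dR_{R/A})$ denotes the $i$-th Katz--Oda filtration applied to the object $\Fil^j_{\mathrm{H}}(\dR_{R/A})$, built via the colimit in \cite[Construction 3.9]{GL20}, not an intersection of two subobjects. For a general $A/I$-algebra $R$ these are objects in a derived category, and the language of ``containment'' and ``intersection'' is not available; the intersection description you have in mind is \Cref{qsyn KO filtration properties}(4), which is a \emph{consequence} proved later and only for large quasisyntomic $S$ where everything sits in degree $0$. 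The paper's proof of (2) instead unwinds \cite[Construction 3.9]{GL20} directly: when $j \leq i$, each term appearing in that colimit already carries its full $j$-th Hodge filtration (because the Hodge filtration on $\Fil^r_{\mathrm{H}}(\dR_A(I))$ is constant in degrees $\leq r$), so the colimit collapses to $\dR_{R/A} \otimes_{\dR_A(I)} \Fil^i_{\mathrm{H}}(\dR_A(I))$, which after $p$-completion is $\Fil^i_{\mathcal{I}}(\dR_{R/A}^\wedge)$ by definition. Your multiplicativity observation is morally the right intuition, but it needs to be run through the actual construction rather than an intersection that is not yet available.
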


\begin{proof}
For (1): this follows from the \cite[Construction 3.12]{GL20},
$\Fil^{0,j}$ is the $p$-completed $j$-th filtration on 
$\dR_{R/A} \otimes_{\dR_A(I)} \Fil^0_{\mathrm{H}}(\dR_A(I))
\cong \dR_{R/A}$.
Since this is a filtered isomorphism, we see that this is nothing but
$p$-completed $j$-th Hodge filtration on $\dR_{R/A}$,
hence it is $\Fil^j_{\mathrm{H}}(\dR_{R/A}^\wedge)$.

For (2): this follows from the \cite[Construction 3.9]{GL20}.
Indeed, the inequality $j \leq i$ implies that the $Fil^j$ of each term
showing in \cite[Construction 3.9]{GL20} is the whole term.
Hence the colimit just gives $\dR_{R/A} \otimes_{\dR_A(I)} \Fil^i_{\mathrm{H}}(\dR_A(I))$
back.
After $p$-completing, we see that by definition we have
$\Fil^{i,j}(\dR_{R/A}^\wedge) \cong \Fil^i_{\mathcal{I}}(\dR_{R/A}^\wedge)$.

(3) follows from $p$-completing \cite[Proposition 3.13.(1)]{GL20}.

For (4): first we claim the associations $R \mapsto \Fil^m_{\mathrm{H}}(\dR_{R/A}^\wedge)$
and $R \mapsto \Fil^n_{\mathrm{H}}(\dR_{R/(A/I)}^\wedge)$ define sheaves
for all $m$ and $n$.
For $m=0$ this is \Cref{sheaf property on relative to A dR},
and for $n=0$ this is \cite[Example 5.12]{BMS2}.
Induction on $m$ and $n$ reduces us to showing the sheaf property of graded pieces,
which are given by $\wedge^i_R \mathbb{L}_{R/A}^{\wedge}[-i]$
and $\wedge^i_R \mathbb{L}_{R/(A/I)}^{\wedge}[-i]$.
$p$-Completing \cite[Theorem 3.1]{BMS2} gives the desired sheaf property of these graded pieces.

Fix a natural number $j$, then by (1) we see that $\Fil^{0,j}$
is a quasisyntomic sheaf.
Each graded piece with respect to $i$, by (2) and (3), is also a sheaf.
Therefore we see that by increasing induction on $i$, each $\Fil^{i,j}$ defines a sheaf.
\end{proof}

To understand these sheaves more concretely, we look at their value on the basis of
large quasisyntomic over $A/I$-algebras.

\begin{proposition}
\label{qsyn KO filtration properties}
Let $S$ be a large quasisyntomic over $A/I$ algebra.
Then:
\begin{enumerate}
    \item For any pair $(i,j) \in \mathbb{N} \times \mathbb{N}$, the $\Fil^{i,j}(\dR_{S/A}^\wedge)$
    is concentrated in degree $0$, and the natural map $\Fil^{i,j}(\dR_{S/A}^\wedge) \to \dR_{S/A}^\wedge$
    is injective.
    \item For any $j$, the natural map
    \[
    \Fil^j_{\mathrm{H}}(\dR_{S/A}^\wedge) \to \Fil^j_{\mathrm{H}}(\dR_{S/(A/I)}^\wedge)
    \]
    is surjective.
    \item For each pair $0 \leq i \leq j$, we have an equality:
    \[
    \Fil^{i,j}(\dR_{S/A}^{\wedge})
    = \sum_{r = i}^j \left(\Fil^{j-r}_{\mathrm{H}} \dR_{S/A}^{\wedge} \cdot \mathcal{I}^{[r]}\right),
    \]
    where $\Fil^{j-r}_{\mathrm{H}} \dR_{S/A}^{\wedge} \cdot \mathcal{I}^{[r]}$ denotes the image of
    $\Fil^{j-r}_{\mathrm{H}} \dR_{S/A}^{\wedge} \widehat{\otimes}_{\mathcal{A}} \mathcal{I}^{[r]}
    \to \dR_{S/A}^\wedge$,
    and the sum is inside the algebra $\dR_{S/A}^{\wedge}$.
    \item We have another description:
    \[
    \Fil^{i,j}(\dR_{S/A}^{\wedge}) = \left(\Fil^j_{\mathrm{H}} \dR_{S/A}^{\wedge}\right) \cap 
     \left(\Fil^i_{\mathcal{I}} \dR_{S/A}^{\wedge}\right),
    \]
    where the intersection happens inside the algebra $\dR_{S/A}^{\wedge}$.
\end{enumerate}
\end{proposition}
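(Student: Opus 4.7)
The plan is to prove the four claims in the order (1) $\to$ (2) $\to$ (4) $\to$ (3), using as the structural backbone the distinguished triangle
\[
\Fil^{i+1,j}(\dR_{S/A}^\wedge) \to \Fil^{i,j}(\dR_{S/A}^\wedge) \to \Fil^{j-i}_{\mathrm{H}}\dR_{S/(A/I)}^\wedge \widehat{\otimes}_{A/I} \Gamma^i_{A/I}(I/I^2)
\]
from \Cref{general KO filtration properties}.(3), together with the base-case identifications in \Cref{general KO filtration properties}.(1)--(2). The auxiliary inputs will be the cohomological concentration statements of \Cref{Hodge filtrations on dR} and \Cref{calIfil in deg 0}, the $p$-complete flatness of $\dR_{S/A}^\wedge$ over $\mathcal{A}$ provided by \Cref{derived prismatic construction}, and the Illusie--Bhatt identification $(\mathcal{A},\mathcal{I}^{[\bullet]}) \cong (\dR_{(A/I)/A}^\wedge, \Fil^\bullet_{\mathrm{H}})$ from \Cref{Illusie--Bhatt}.

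For (1) I would fix $j$ and do descending induction on $i$. The base case $i \geq j$ is covered by \Cref{general KO filtration properties}.(2) together with \Cref{calIfil in deg 0}, while the injectivity into $\dR_{S/A}^\wedge$ comes from tensoring $\mathcal{I}^{[i]} \hookrightarrow \mathcal{A}$ with the $p$-completely flat $\mathcal{A}$-algebra $\dR_{S/A}^\wedge$. For the inductive step $0 \leq i < j$, the cone term is concentrated in degree $0$ and $p$-completely flat over $A/I$ by \Cref{Hodge filtrations on dR} and local freeness of $\Gamma^i(I/I^2)$, so the long exact sequence both pushes $\Fil^{i,j}$ into degree $0$ and makes $\Fil^{i+1,j} \to \Fil^{i,j}$ injective on $H^0$. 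Chaining these injections delivers $\Fil^{i,j} \hookrightarrow \Fil^{0,j} = \Fil^j_{\mathrm{H}}\dR_{S/A}^\wedge$, and the latter injects into $\dR_{S/A}^\wedge$ because each Hodge graded piece $\Gamma^r_S(\mathbb{L}_{S/A}^\wedge[-1])$ is $p$-completely flat in degree $0$ over $S$; this last point follows from the large-qsyn hypothesis and the triangle $S \widehat{\otimes}_{A/I} I/I^2 \to \mathbb{L}_{S/A}^\wedge[-1] \to \mathbb{L}_{S/(A/I)}^\wedge[-1]$.

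Part (2) will drop out of the $i = 0$ instance of the cone triangle: part (1) promotes it to a short exact sequence of degree-$0$ submodules whose surjection is exactly the arrow in (2). For (4), the inclusion $\Fil^{i,j} \subseteq \Fil^j_{\mathrm{H}}\cap\Fil^i_{\mathcal{I}}$ follows from the natural maps $\Fil^{i,j}\hookrightarrow\Fil^{0,j}$ and $\Fil^{i,j}\hookrightarrow\Fil^{i,0}$ after invoking (1); the reverse inclusion I would prove by descending induction on $i$ using the cone triangle, with (2) supplying the needed lifts of classes in $\Fil^{j-i}_{\mathrm{H}}\dR_{S/(A/I)}^\wedge\otimes\Gamma^i(I/I^2)$. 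Part (3)'s inclusion $\supseteq$ is then formal from multiplicativity of the Hodge filtration combined with $\mathcal{I}^{[r]}\subseteq \Fil^r_{\mathrm{H}}\mathcal{A}$ via Illusie--Bhatt, while the reverse is extracted by peeling off $r=i$ summands iteratively through the same cone triangle and the inductive hypothesis.

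The main obstacle will be the inductive step in part (1), where concentration in degree $0$ and injectivity into $\dR_{S/A}^\wedge$ have to be propagated simultaneously along the cone triangle — this is the point at which the various $p$-complete flatness inputs (cotangent complexes over large-qsyn bases, divided-power algebras of $I/I^2$, and $\dR_{S/A}^\wedge$ over $\mathcal{A}$) converge decisively. Once (1) is secured, the remaining parts are essentially formal bookkeeping on the cone triangle of \Cref{general KO filtration properties}.(3).
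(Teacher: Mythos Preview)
Your approach to (1), (2), and (3) is essentially the paper's: descending induction along the cone triangle of \Cref{general KO filtration properties}(3), with \Cref{Hodge filtrations on dR} and \Cref{calIfil in deg 0} supplying the degree-$0$ concentration that turns the triangles into short exact sequences of honest submodules. The paper does (3) before (4), but this ordering difference is immaterial.

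There is, however, a genuine gap in your argument for (4). In the descending-induction step you take $x \in \Fil^j_{\mathrm{H}} \cap \Fil^i_{\mathcal{I}}$ and want to lift its image $\bar x \in \gr^i_{\mathcal{I}} \cong \dR_{S/(A/I)}^\wedge \otimes \Gamma^i_{A/I}(I/I^2)$ through the surjection $\Fil^{i,j} \twoheadrightarrow \Fil^{j-i}_{\mathrm{H}}\dR_{S/(A/I)}^\wedge \otimes \Gamma^i_{A/I}(I/I^2)$; but nothing you have established forces $\bar x$ to lie in this subspace of $\gr^i_{\mathcal{I}}$. The hypothesis $x \in \Fil^j_{\mathrm{H}}$ does not a priori control the Hodge level of its residue in $\gr^i_{\mathcal{I}}$ --- indeed, that implication is essentially what (4) asserts. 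The paper avoids this by running an \emph{increasing} induction on $i$: the commutative diagram in \Cref{general KO filtration properties}(3) together with the injection $\Fil^{j-i}_{\mathrm{H}}\dR_{S/(A/I)}^\wedge \hookrightarrow \dR_{S/(A/I)}^\wedge$ from \Cref{Hodge filtrations on dR} shows that $\Fil^{i,j}/\Fil^{i+1,j} \hookrightarrow \gr^i_{\mathcal{I}}$, hence $\Fil^{i+1,j} = \Fil^{i,j} \cap \Fil^{i+1}_{\mathcal{I}}$; starting from $\Fil^{0,j} = \Fil^j_{\mathrm{H}}$ this gives (4) immediately.
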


\begin{proof}
Proof of (1): we shall prove by decreasing induction on $i$.
When $j \leq i$, by \Cref{general KO filtration properties} (2) we see that 
$\Fil^{i,j}(\dR_{S/A}^\wedge) \cong \Fil^i_{\mathcal{I}}(\dR_{S/A}^\wedge)$,
which is concentrated in degree $0$ by \Cref{calIfil in deg 0}.
By \Cref{general KO filtration properties} (3), the graded pieces with respect to $i$ are all concentrated in degree $0$ by \Cref{Hodge filtrations on dR}.
This in turn implies that,
\begin{itemize}
    \item All of $\Fil^{i,j}(\dR_{S/A}^\wedge)$ are in degree $0$ for any $(i,j)$; and
    \item We have short exact sequences: 
    \[
    0 \to \Fil^{i+1,j}(\dR_{S/A}^\wedge) \to \Fil^{i,j}(\dR_{S/A}^\wedge)
    \to \Fil^{j-i}_{\mathrm{H}} \dR_{R/(A/I)}^\wedge \widehat{\otimes}_{A/I} \Gamma^i_{A/I}(I/I^2) \to 0.
    \]
\end{itemize}
In particular $\Fil^{i+1,j}(\dR_{S/A}^\wedge) \to \Fil^{i,j}(\dR_{S/A}^\wedge)$
is injective.
Using \Cref{general KO filtration properties} (1) and \Cref{Hodge filtrations on dR},
we see that the map $\Fil^{0,j}(\dR_{S/A}^\wedge) \cong \Fil^j_{\mathrm{H}}(\dR_{S/A}^\wedge) \to \dR_{S/A}^\wedge$ is also injective.
Therefore the composition $\Fil^{i,j}(\dR_{S/A}^\wedge) \to \dR_{S/A}^\wedge$
is injective as well for any $(i,j)$.

(2) follows from the short exact sequence obtained in the previous
paragraph, specializing to $i = 0$.

(3) follows from the combination of (2), \Cref{general KO filtration properties} (3),
and the fact that $p$-completed tensor is right exact.

For (4): first notice that this is true for $i = 0$, due to
\Cref{general KO filtration properties} (1).
Next let us look at the commutative diagram in \Cref{general KO filtration properties} (3).
Since the right hand side is an injection, we see that the map
\[
\Fil^{i,j}(\dR_{S/A}^\wedge)/\Fil^{i+1, j}(\dR_{S/A}^\wedge)
\to \Fil^{i,0}(\dR_{S/A}^\wedge)/\Fil^{i+1, 0}(\dR_{S/A}^\wedge)
\]
is injective.
Therefore, by \Cref{general KO filtration properties} (2), we know that
\[
\Fil^{i+1, j}(\dR_{S/A}^\wedge) = \left(\Fil^{i,j}(\dR_{S/A}^\wedge)\right) \cap 
\left(\Fil^{i+1}_{\mathcal{I}}\dR_{S/A}^\wedge\right).
\]
By increasing induction on $i$, we may assume
\[
\Fil^{i,j}(\dR_{S/A}^\wedge) = 
\left(\Fil^j_{\mathrm{H}} \dR_{S/A}^{\wedge}\right) \cap 
\left(\Fil^i_{\mathcal{I}} \dR_{S/A}^{\wedge}\right).
\]
Hence we have
\[
\Fil^{i+1, j}(\dR_{S/A}^\wedge) =
\left(\Fil^j_{\mathrm{H}} \dR_{S/A}^{\wedge}\right) \cap 
\left(\Fil^i_{\mathcal{I}} \dR_{S/A}^{\wedge}\right) \cap
\left(\Fil^{i+1}_{\mathcal{I}}(\dR_{S/A}^\wedge)\right)
= \left(\Fil^j_{\mathrm{H}} \dR_{S/A}^{\wedge}\right) \cap 
\left(\Fil^{i+1}_{\mathcal{I}}\dR_{S/A}^\wedge\right).
\]
\end{proof}

Let us draw a table to summarize these filtrations on $\dR_{R/A}^{\wedge}$:
\[
\xymatrix{
  & \vdots \ar@{-}[d] & R  &  & \LL_{R/(A/I)}^{\wedge}[-1]  &   & 
  (\wedge^2_R \LL_{R/(A/I)})^{\wedge}[-2]  &  \cdots  \\
\ar@{-}[r] & \vdots \ar@{-}[d] \ar@{-}[l] \ar@{-}[r] & \ar@{-}[l]  \ar@{-}[r]  & \ar@{-}[r] & \ar@{-}[r] \ar@{.}[ld]  & \ar@{-}[r] & \ar@{-}[r] \ar@{.}[ld]   &    \\
A/I  & \vdots \ar@{-}[d] &  M_0 \widehat{\otimes}_{A/I} N_0  & \ar@{.}[ld] &  M_0 \widehat{\otimes}_{A/I} N_1  & \ar@{.}[ld]  &  M_0 \widehat{\otimes}_{A/I} N_2 & \ar@{.}[ld]  \cdots  \\
 & \vdots \ar@{-}[d] & \ar@{-}[l] \ar@{.}[ld]  \ar@{-}[r]  & \ar@{-}[r] & \ar@{-}[r] \ar@{.}[ld]  & \ar@{-}[r] & \ar@{-}[r] \ar@{.}[ld]   &    \\
\mathcal{I}/\mathcal{I}^{[2]} & \vdots \ar@{-}[d]  &  M_1 \otimes_{A/I} N_0   & \ar@{.}[ld] &  
M_1 \widehat{\otimes}_{A/I} N_1   & \ar@{.}[ld] &  M_1 \widehat{\otimes}_{A/I} N_2 & \ar@{.}[ld]  \cdots  \\
 & \vdots \ar@{-}[d] &  \ar@{-}[l] \ar@{-}[r] \ar@{.}[ld]  &  \ar@{-}[r] &  \ar@{-}[r] \ar@{.}[ld] &  \ar@{-}[r] & \ar@{.}[ld]  \ar@{-}[r] &    \\
\mathcal{I}^{[2]}/\mathcal{I}^{[3]} & \vdots \ar@{-}[d] &  M_2 \widehat{\otimes}_{A/I} N_0   & \ar@{.}[ld] &  M_2 \widehat{\otimes}_{A/I} N_1  & \ar@{.}[ld] & M_2 \widehat{\otimes}_{A/I} N_2 & \ar@{.}[ld]  \cdots  \\
  \vdots    &  & \vdots &    &  \vdots  &   &  \vdots  &       \\
}
\]
In the diagram above, $M_i=\mathcal{I}^{[i]}/\mathcal{I}^{[i+1]}$, and $N_j=(\wedge^j_R \LL_{R/(A/I)})^{\wedge}[-j]$, for $i,j\in \NN$.
%\[
%\xymatrix{
%(-) \widehat{\otimes}_{\mathcal{A}/\mathcal{I}} \mathcal{A}/\mathcal{I} & \vdots \ar@{-}[d] &  R  & \ar@{.}[ld] &  \LL_{R/(A/I)}^{\wedge}[-1]  & \ar@{.}[ld]  &  (\wedge^2_R \LL_{R/(A/I)})^{\wedge}[-2]  & \ar@{.}[ld]  \cdots  \\
%  & \vdots \ar@{-}[d] & \ar@{-}[l] \ar@{.}[ld]  \ar@{-}[r]  & \ar@{-}[r] & \ar@{-}[r] \ar@{.}[ld]  & \ar@{-}[r] & \ar@{-}[r] \ar@{.}[ld]   &    \\
%(-) \widehat{\otimes}_{\mathcal{A}/\mathcal{I}} \mathcal{I}/\mathcal{I}^{[2]}   & \vdots \ar@{-}[d]  &  R  & \ar@{.}[ld] &  \LL_{R/(A/I)}^{\wedge}[-1]  & \ar@{.}[ld] &  (\wedge^2_R \LL_{R/(A/I)})^{\wedge}[-2]  & \ar@{.}[ld]  \cdots  \\
%    & \vdots \ar@{-}[d] &  \ar@{-}[l] \ar@{-}[r] \ar@{.}[ld]  &  \ar@{-}[r] &  \ar@{-}[r] \ar@{.}[ld] &  \ar@{-}[r] & \ar@{.}[ld]  \ar@{-}[r] &    \\
%(-) \widehat{\otimes}_{\mathcal{A}/\mathcal{I}} \mathcal{I}^{[2]}/\mathcal{I}^{[3]}  & \vdots \ar@{-}[d] &  R  & \ar@{.}[ld] &  \LL_{R/(A/I)}^{\wedge}[-1]  & \ar@{.}[ld] &  (\wedge^2_R \LL_{R/(A/I)})^{\wedge}[-2]  & \ar@{.}[ld]  \cdots  \\
%  \vdots    &  & \vdots &    &  \vdots  &   &  \vdots  &       \\
%}
%\]
Here rows indicate graded pieces of the filtration $\Fil^r_{\mathcal{I}}$,
and each term in $i$-th row indicates the graded piece of the induced filtration on
$\dR_{R/(A/I)}^{\wedge} \widehat{\otimes}_{A/I} \Gamma^i_{A/I}(I/I^2)$.
The skewed dotted line indicate the Hodge filtration on $\dR_{R/A}^{\wedge}$
(given by things below the dotted line).
See also \cite[p.10]{GL20}.

As a consequence we get a structural result on the graded algebra associated with
the Hodge filtration on $\dR_{R/A}^{\wedge}$.
\begin{lemma}
\label{increasing filtration on Hodge graded}
There is a functorial increasing exhaustive filtration $\Fil^v_i$ on the graded algebra
 $\gr^*_{\mathrm{H}}(\dR_{R/A}^{\wedge})$ by
graded-$\left(\gr^*_{\mathcal{I}} \mathcal{A} \cong \Gamma^*_{A/I}(I/I^2)\right)$-submodules
with graded pieces given by
\[
\gr^v_i\left(\gr^*_{\mathrm{H}}(\dR_{R/A}^{\wedge})\right) \cong
(\wedge^i_R \LL_{R/(A/I)})^{\wedge}[-i] \widehat{\otimes}_{A/I} \Gamma^*_{A/I}(I/I^2).
\]
Here $(\wedge^i_R \LL_{R/(A/I)})^{\wedge}[-i]$ has degree $i$
and the above is a graded isomorphism.
%\textcolor{red}{Later also state the gr up to $i$ statement, and have to introduce st in the notation.}
\end{lemma}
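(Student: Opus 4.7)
The plan is to extract $\Fil^v_{\bullet}$ from the bifiltration $\Fil^{\bullet,\bullet}$ constructed in \Cref{general KO filtration properties}. Explicitly, for each $j \geq 0$ I would set
\[
\Fil^v_i \bigl(\gr^j_{\mathrm{H}}(\dR_{R/A}^\wedge)\bigr) := \im\Bigl(\Fil^{\max(0,\,j-i),\,j}(\dR_{R/A}^\wedge) \longrightarrow \gr^j_{\mathrm{H}}(\dR_{R/A}^\wedge)\Bigr),
\]
and take $\Fil^v_i \gr^*_{\mathrm{H}} := \bigoplus_j \Fil^v_i \gr^j_{\mathrm{H}}$. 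Functoriality in $R$ is inherited from that of the bifiltration (\Cref{general KO filtration properties}(4)); increasing-ness in $i$ follows because $\Fil^{a,j}$ is decreasing in $a$; and exhaustiveness is immediate from $\Fil^{0,j} = \Fil^j_{\mathrm{H}}$.

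To compute the graded pieces, the sheaf property \Cref{general KO filtration properties}(4) reduces the problem to the basis of large quasi-syntomic over $A/I$-algebras $S$. On this basis, by \Cref{qsyn KO filtration properties} every piece of the bifiltration sits inside $\dR_{S/A}^\wedge$ as a submodule in cohomological degree zero, and the identity $\Fil^{a,b} = \Fil^b_{\mathrm{H}} \cap \Fil^a_{\mathcal{I}}$ from \Cref{qsyn KO filtration properties}(4) converts all relevant operations into ordinary intersections. This gives $\Fil^v_i \gr^j_{\mathrm{H}} = \Fil^{j-i,\,j}/\Fil^{j-i,\,j+1}$, and an elementary diagram chase (using the identity $\Fil^{j-i+1,\,j+1} = \Fil^{j-i+1,\,j} \cap \Fil^{j-i,\,j+1}$) produces
\[
\gr^v_i\bigl(\gr^j_{\mathrm{H}}(\dR_{S/A}^\wedge)\bigr) \cong \Fil^{j-i,\,j}\big/\bigl(\Fil^{j-i+1,\,j} + \Fil^{j-i,\,j+1}\bigr).
\]
Two applications of \Cref{general KO filtration properties}(3) now identify $\Fil^{j-i,\,j}/\Fil^{j-i+1,\,j} \cong \Fil^i_{\mathrm{H}}(\dR_{S/(A/I)}^\wedge) \widehat{\otimes}_{A/I} \Gamma^{j-i}_{A/I}(I/I^2)$ and the subobject coming from $\Fil^{j-i,\,j+1}/\Fil^{j-i+1,\,j+1}$ with $\Fil^{i+1}_{\mathrm{H}}(\dR_{S/(A/I)}^\wedge) \widehat{\otimes}_{A/I} \Gamma^{j-i}_{A/I}(I/I^2)$, so the quotient becomes $\gr^i_{\mathrm{H}}(\dR_{S/(A/I)}^\wedge) \widehat{\otimes}_{A/I} \Gamma^{j-i}_{A/I}(I/I^2) \cong (\wedge^i_S \LL_{S/(A/I)})^\wedge[-i] \widehat{\otimes}_{A/I} \Gamma^{j-i}_{A/I}(I/I^2)$. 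Summing over $j \geq i$ and descending from the basis via the sheaf property gives the stated formula for $\gr^v_i \gr^*_{\mathrm{H}}$.

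Finally, I would verify compatibility with the $\gr^*_{\mathcal{I}}\mathcal{A} \cong \Gamma^*_{A/I}(I/I^2)$-action. This follows directly from the multiplicativity of the bifiltration $\Fil^{a,b} \cdot \Fil^{a',b'} \subset \Fil^{a+a',\,b+b'}$, built into the Katz--Oda construction of \cite[Construction 3.12]{GL20}; passing to the induced action on the graded pieces gives $\Fil^v_i(\gr^j_{\mathrm{H}}) \cdot \Gamma^r(I/I^2) \subset \Fil^v_i(\gr^{j+r}_{\mathrm{H}})$.

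I expect the principal technical issue to be justifying the graded-piece computation above in the derived setting, where the $\Fil^{a,b}$ are \emph{a priori} filtered objects in a stable $\infty$-category rather than honest submodules. The resolution is exactly \Cref{qsyn KO filtration properties}: on the basis of large quasi-syntomic $A/I$-algebras each bifiltered piece is an honest submodule in cohomological degree zero, so the ordinary set-theoretic intersections, sums, and quotients are legitimate, and sheaf descent (together with left Kan extension if desired) transports the conclusion back to the arbitrary $R$ of interest.
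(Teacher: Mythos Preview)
Your argument is correct, but it takes a substantially more elaborate route than the paper.  The paper's proof is two sentences: it invokes the standard identification
\[
\gr^*_{\mathrm{H}}(\dR_{R/A}^{\wedge}) \;\cong\; \bigl(\Gamma^*_R(\LL_{R/A}^{\wedge}[-1])\bigr)^{\wedge}
\]
together with the distinguished triangle
\[
R \widehat{\otimes}_{A/I} I/I^2 \;\to\; \LL_{R/A}^{\wedge}[-1] \;\to\; \LL_{R/(A/I)}^{\wedge}[-1]
\]
coming from $A \to A/I \to R$.  A two-step filtration on a module $M$ induces a canonical increasing filtration on $\Gamma^*(M)$ whose graded pieces are $\Gamma^*(\gr^0 M) \otimes (\gr^1 M)^{\otimes i}$ in the appropriate sense; applying this to $M = \LL_{R/A}^\wedge[-1]$ gives the vertical filtration immediately, with no need to unwind the bifiltration or to pass through a quasisyntomic basis.

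What your approach buys is an explicit identification of $\Fil^v_i$ with a piece of the Katz--Oda bifiltration, which is useful bookkeeping and matches the paper's informal ``table'' description precisely.  What the paper's approach buys is that the construction is visibly functorial and works uniformly for every $A/I$-algebra $R$ in one stroke, without having to define the filtration first on large quasisyntomic algebras (where ``image'' makes literal sense) and then transport by descent and left Kan extension.  Your handling of that transport is fine, but it is the sort of maneuver that the divided-power formula sidesteps entirely.
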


We refer to this filtration $\Fil^v_i$ on $\gr^*_{\mathrm{H}}(\dR_{R/A}^{\wedge})$
as the \emph{vertical filtration} from now on,
c.f.~\cite[Construction 3.14]{GL20}.
This choice of name is because the $\Fil^v_i$ is literally the filtration given by vertical columns
in the table before this Lemma.

\begin{proof}
Use the above table one can see this directly.
Equivalently, we may use
\[
\gr^*_{\mathrm{H}}(\dR_{R/A}^{\wedge}) \cong
\left(\Gamma^*_R(\LL_{R/A}^{\wedge}[-1])\right)^{\wedge},
\]
and the triangle
\[
R \widehat{\otimes}_{A/I} I/I^2 \to \LL_{R/A}^{\wedge}[-1] \to \LL_{R/(A/I)}^{\wedge}[-1].
\]
\end{proof}

\begin{remark}
\label{flatness of dR}
Let $(A,I)$ be a general bounded prism, and let $S$ be a large quasisyntomic over $A/I$-algebra.
Combining \Cref{comparing pris and crys}, \Cref{derived prismatic construction} (4),
and \cite[Theorem 15.2.(1)]{BS19}, we can see that $\dR_{S/A}^\wedge$ is $p$-completely flat over $\dR_A(I)^\wedge$.

Below is suggested to us by Bhatt.
Using conjugate filtration and the same argument of \Cref{increasing filtration on Hodge graded},
we can give an alternative proof of this fact.
Indeed we can check this after mod $p$, hence we shall assume $A$ to be $p$-torsion. 
Next we want to appeal to the conjugate filtrations on both algebras:
we have the following pushout diagram:
\[
\xymatrix{
A \ar[r] & A/I^p \ar[r] & R^{(1)} \\
A \ar[u]^-{\varphi_A} \ar[r] & A/I \ar[r] \ar[u] & R \ar[u].
}
\]
There is a similar functorial increasing exhaustive filtration on the graded
algebra of the conjugate filtered $\dR_{S/A}$,
with graded pieces given by 
$(\wedge^i_{R^{(1)}} \LL_{R^{(1)}/(A/I^p)})[-i] {\otimes}_{A/I^p} \Gamma^*_{A/I^p}(I^p/I^{2p})$.
It is flat over $\Gamma^*_{A/I^p}(I^p/I^{2p})$, which is the conjugate graded algebra
of $\dR_A(I)$.
Lastly we conclude by recalling that an increasingly exhaustive filtered module
of an increasingly exhaustive filtered algebra is flat if the graded counterpart
is flat.
\end{remark}

\subsection{Nygaard filtration}
\label{subsec-NygaardFil}
Recall in~\cite[Section 15]{BS19}, there is a natural decreasing filtration
of quasisyntomic subsheaves on $\Prism^{(1)}_{-/A}$ called the Nygaard filtration with the following properties:
\begin{theorem}[{see~\cite[Theorem 15.2 and 15.3]{BS19} and proof therein}]
\label{Nygaard filtration}
Let $S$ be a large quasisyntomic over $A/I$ algebra. Then
\begin{enumerate}
\item The Nygaard filtrations $\Fil^{\bullet}_{\mathrm{N}}$ on $\Prism^{(1)}_{S/A}$
are given by $p$-completely flat $A$-submodules inside $\Prism^{(1)}_{S/A}$.
\item We have an identification of algebras $\Prism^{(1)}_{S/A}/I \cong \dR_{S/(A/I)}^{\wedge}$,
under which the image of Nygaard filtration becomes the Hodge filtration.
\item For each $i \geq 0$, we have a short exact sequence:
\[
0 \to \Fil^{i}_{\mathrm{N}}\Prism^{(1)}_{S/A} \otimes_A I \to \Fil^{i+1}_{\mathrm{N}}\Prism^{(1)}_{S/A} \to 
\Fil^{i+1}_{\mathrm{H}} \dR_{R/(S/I)}^{\wedge} \to 0.
\]
\end{enumerate}
\end{theorem}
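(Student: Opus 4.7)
The plan is to verify all three statements on the basis of large quasisyntomic over $A/I$-algebras $S$, using the quasisyntomic sheaf property of $\Prism^{(1)}_{-/A}$ and of its Nygaard filtration. On such $S$, both $\Prism^{(1)}_{S/A}$ and each Nygaard filtration term live in cohomological degree zero, so all the claims reduce to assertions about concrete $p$-complete $A$-modules.

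For (1), I would show each graded piece $\gr^i_{\mathrm{N}}\Prism^{(1)}_{S/A}$ is $p$-completely flat over $A$ by expressing it via divided powers of the shifted $p$-complete cotangent complex $\LL_{S/(A/I)}^{\wedge}[-1]$, whose flatness follows from the large quasisyntomic hypothesis on $S$ together with transversality of $(A,I)$. An ascending filtration argument, analogous to \Cref{increasing filtration on Hodge graded} and \Cref{flatness of dR}, then promotes flatness of graded pieces to flatness of the filtration terms themselves. For (2), the isomorphism $\Prism^{(1)}_{S/A}/I \cong \dR_{S/(A/I)}^{\wedge}$ is the de Rham comparison for prismatic cohomology, which also follows from our \Cref{comparing pris and crys} by further tensoring $\mathcal{A}$ down to $A/I$; matching Nygaard filtration with Hodge filtration modulo $I$ is essentially built into the construction of $\Fil^\bullet_{\mathrm{N}}$ via the Frobenius-twisted conjugate filtration.

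The key step is (3), which I would prove by ascending induction on $i$ using the commutative diagram
\[
\begin{tikzcd}
\Fil^{i}_{\mathrm{N}}\Prism^{(1)}_{S/A} \otimes_A I \ar[r] \ar[d] & \Fil^{i+1}_{\mathrm{N}}\Prism^{(1)}_{S/A} \ar[r] \ar[d] & \Fil^{i+1}_{\mathrm{H}} \dR_{S/(A/I)}^{\wedge} \ar[d] \\
\Prism^{(1)}_{S/A} \otimes_A I \ar[r] & \Prism^{(1)}_{S/A} \ar[r] & \dR_{S/(A/I)}^{\wedge},
\end{tikzcd}
\]
whose bottom row is short exact by (2) together with the $I$-torsion freeness of $\Prism^{(1)}_{S/A}$ coming from (1). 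Injectivity of the top-left arrow follows from $I$-torsion freeness of $\Fil^i_{\mathrm{N}}\Prism^{(1)}_{S/A}$ (again from (1)), and exactness of the top row at the middle and right amounts to a transversality statement between the Nygaard filtration and the $I$-adic filtration, which is the substantive content of \cite[Theorem 15.3]{BS19}.

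The main obstacle is precisely this transversality claim in (3). Its proof requires a careful analysis of the Frobenius-twisted conjugate-to-Hodge degeneration of prismatic cohomology, crucially exploiting the Frobenius structure in order to control how $\Fil^i_{\mathrm{N}}$ interacts with scalar multiplication by elements of $I$. In our exposition it suffices to invoke \cite[Theorems 15.2, 15.3]{BS19} directly, since these are precisely the statements being recalled; our work in the subsequent subsections will be to compare the resulting filtered structure with the Hodge-filtered derived de Rham complex via \Cref{comparing pris and crys}.
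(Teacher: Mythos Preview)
The paper does not prove this theorem at all: it is stated as a recollection from \cite[Theorem 15.2 and 15.3]{BS19}, with the citation ``see \ldots\ and proof therein'' in the header, and no proof environment follows. Your final paragraph acknowledges this, so you have correctly identified the situation; the preceding sketch is unnecessary here, and some of it is circular (you invoke \Cref{increasing filtration on Hodge graded} and \Cref{flatness of dR}, which appear later in the paper and in part rely on this very theorem).
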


Let $R$ be a general quasisyntomic $A/I$-algebra.
On $\Prism^{(1)}_{R/A}$ there is also an $I$-adic filtration 
$\Fil^r_{I}\Prism^{(1)}_{R/A} \coloneqq \Prism^{(1)}_{R/A} \otimes_A I^r$,
by~\Cref{Nygaard filtration} (2), we identify the graded pieces as
\[
\gr^r_{I} \cong \Prism^{(1)}_{R/A}/I \otimes_{A/I} I^r/I^{r+1}
\cong \dR_{R/(A/I)}^{\wedge} \otimes_{A/I} \Sym_{A/I}^r(I/I^2).
\]
The $I$-adic filtration and the Nygaard filtration are related by the following.
For any $(i,j) \in \mathbb{N} \times \mathbb{N}$, we define 
\[
\Fil^{i,j} \Prism^{(1)}_{R/A} \coloneqq \Fil^{j-i}_{\mathrm{N}}\Prism^{(1)}_{R/A} \otimes_A I^i,
\]
where we adopt the convention that 
$\Fil^{l}_{\mathrm{N}}\Prism^{(1)}_{R/A} = \Prism^{(1)}_{R/A}$ if $l \leq 0$.
One checks easily that this puts a decreasing filtration on $\Prism^{(1)}_{R/A}$
indexed by $\mathbb{N} \times \mathbb{N}$.
This filtration has very similar behavior as the $\Fil^{i,j}(\dR_{R/A}^\wedge)$ studied in previous subsection.
The following is the analogue of \Cref{general KO filtration properties}.

\begin{proposition}
\label{general KON filtration properties}
Let $R$ be an $A/I$-algebra. Then:
\begin{enumerate}
\item For any $j$, we have $\Fil^{0,j}\Prism^{(1)}_{R/A} \cong \Fil^j_{\mathrm{N}}\Prism^{(1)}_{R/A}$.
\item For each pair $0 \leq j \leq i$, we have
\[
\Fil^{i,j}\Prism^{(1)}_{R/A} \cong \Fil^i_{I}\Prism^{(1)}_{R/A}.
\]
\item For each pair $0 \leq i \leq j$, we have a
natural identification
\[
\mathrm{Cone}\bigg(\Fil^{i+1,j}\Prism^{(1)}_{R/A} \to \Fil^{i,j}\Prism^{(1)}_{R/A}\bigg) \cong 
\Fil^{j-i}_{\mathrm{H}}(\dR_{R/(A/I)}^\wedge) \otimes_{A/I} \Sym_{A/I}^i(I/I^2).
\]
Moreover these identifications fit in the following commutative diagram:
\[
\xymatrix{
\mathrm{Cone}\bigg(\Fil^{i+1,j}\Prism^{(1)}_{R/A} \to \Fil^{i,j}\Prism^{(1)}_{R/A}\bigg) \ar[r]^-{\cong} \ar[d] &
\Fil^{j-i}_{\mathrm{H}}(\dR_{R/(A/I)}^\wedge) \otimes_{A/I} \Sym_{A/I}^i(I/I^2) \ar[d] \\
\mathrm{Cone}\bigg(\Fil^{i+1,0}\Prism^{(1)}_{R/A} \to \Fil^{i,0}\Prism^{(1)}_{R/A}\bigg) \ar[r]^-{\cong} & 
\dR_{R/(A/I)}^\wedge \otimes_{A/I} \Sym_{A/I}^i(I/I^2)
}.
\]
\item The association $R \mapsto \Fil^{i,j}\Prism^{(1)}_{R/A}$ defines
a sheaf on $\mathrm{qSyn}_{A/I}$ for any $(i,j)$.
\end{enumerate}
\end{proposition}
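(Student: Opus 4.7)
The plan is to mimic the argument of the companion \Cref{general KO filtration properties}, but with the prismatic/Nygaard machinery from \Cref{Nygaard filtration} playing the role of the derived de Rham/Hodge machinery. Parts (1) and (2) will follow immediately from unraveling the definition $\Fil^{i,j}\Prism^{(1)}_{R/A} = \Fil^{j-i}_{\mathrm{N}} \Prism^{(1)}_{R/A} \otimes_A I^i$ together with the convention $\Fil^l_{\mathrm{N}} = \Prism^{(1)}$ for $l \leq 0$: taking $i=0$ gives (1), and taking $i \geq j$ collapses the Nygaard factor to the whole $\Prism^{(1)}$, yielding (2).

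For (3), I will first restrict to the basis of large quasisyntomic $A/I$-algebras $S$. The key input is the short exact sequence in \Cref{Nygaard filtration}(3). Applying it at index $k = j-i-1$ (when $i < j$) and derived-tensoring with $I^i$ should yield
\[
0 \to \Fil^{i+1,j}\Prism^{(1)}_{S/A} \to \Fil^{i,j}\Prism^{(1)}_{S/A} \to \Fil^{j-i}_{\mathrm{H}} \dR_{S/(A/I)}^\wedge \otimes_A I^i \to 0.
\]
Transversality of $(A,I)$ makes $I$ Zariski locally on $\Spf(A)$ generated by a nonzerodivisor, so $I^i$ is locally a free $A$-module of rank one, which ensures that derived tensor agrees with classical tensor and preserves exactness. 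Since the cokernel is an $A/I$-module, the tensor $-\otimes_A I^i$ becomes $-\otimes_{A/I} I^i/I^{i+1}$, and the isomorphism $I^i/I^{i+1} \cong \Sym^i_{A/I}(I/I^2)$ (which holds because $I/I^2$ is a line bundle over $A/I$ by transversality) finishes the identification. The boundary case $i = j$ is handled separately using \Cref{Nygaard filtration}(2) (i.e., $\Prism^{(1)}_{S/A}/I \cong \dR_{S/(A/I)}^\wedge$), and gives the same answer with $\Fil^0_{\mathrm{H}} \dR = \dR$. The compatibility of the diagram in (3) then follows by naturality of the above SES in the Nygaard index: specializing $j = 0$ simply replaces $\Fil^{j-i}_{\mathrm{H}} \dR$ by the full $\dR_{S/(A/I)}^\wedge$, and the two cones are related by the natural inclusion of the Hodge filtration.

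For (4), the sheaf property on the basis of large quasisyntomic $S$ reduces to the sheaf property of Nygaard filtered pieces --- which is implicit in \Cref{Nygaard filtration}(1), as these are $p$-completely flat $A$-submodules of the sheaf $\Prism^{(1)}_{-/A}$ --- combined with $A$-flatness of $I^i$. Left Kan extending the values on this basis to all quasisyntomic $A/I$-algebras then gives the result for arbitrary $R$, and the identifications in (1)--(3), having been verified on the basis, automatically propagate.

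The main obstacle I anticipate is bookkeeping exactness of the derived tensor product $-\otimes_A^{\mathbb{L}} I^i$ applied to the short exact sequence from \Cref{Nygaard filtration}(3); the point is that one has to carefully invoke transversality of the prism (which locally trivializes $I$) to convert derived tensor to classical tensor at each step. A minor secondary issue is the boundary value $i = j$, where the relevant Nygaard index becomes $-1$ and the cone calculation must be done via \Cref{Nygaard filtration}(2) rather than (3); this gives the same formula but requires a separate brief argument to subsume into the uniform statement.
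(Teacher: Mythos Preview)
Your proposal is correct and follows essentially the same approach as the paper's (very terse) proof: parts (1) and (2) unwind the definition, part (3) is the short exact sequence of \Cref{Nygaard filtration}(3) tensored with the line bundle $I^i$, and part (4) follows from the sheaf property of the Nygaard pieces. Two minor remarks: first, $I$ is locally principal generated by a nonzerodivisor for \emph{any} prism (this is part of the definition), so you need not invoke transversality for that step; second, for (4) the paper argues by induction on $i$ using (3) (as in \Cref{general KO filtration properties}(4)), whereas you argue more directly that tensoring the sheaf $\Fil^{j-i}_{\mathrm{N}}$ with the line bundle $I^i$ is again a sheaf---both are fine.
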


\begin{proof}
(1) and (2) follows from definition. (3) follows from \Cref{Nygaard filtration} (3). (4) follows from (3).
\end{proof}

\begin{proposition}
\label{qsyn KON filtration properties}
Let $S$ be a large quasisyntomic over $A/I$ algebra.
Then:
\begin{enumerate}
    \item We have an equality:
    \[
    \Fil^{i,j}\Prism^{(1)}_{S/A}
    = \sum_{r = i}^j \left(\Fil^{j-r}_{\mathrm{N}}(\Prism^{(1)}_{S/A}) \cdot I^{r}\right),
    \]
    where the sum is inside the algebra $\Prism^{(1)}_{S/A}$.
    \item We have another equality:
    \[
    \Fil^{i,j}\Prism^{(1)}_{S/A} = \left(\Fil^j_{\mathrm{N}} \Prism^{(1)}_{S/A}\right) \cap 
     \left(\Fil^i_{I} \Prism^{(1)}_{S/A}\right),
    \]
    where the intersection happens inside the algebra $\Prism^{(1)}_{S/A}$.
\end{enumerate}
\end{proposition}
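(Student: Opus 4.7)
The proof will parallel that of \Cref{qsyn KO filtration properties} (3) and (4). First I would observe that, under the large quasisyntomic hypothesis, \Cref{Nygaard filtration} (1) tells us each $\Fil^{j-i}_{\mathrm{N}}\Prism^{(1)}_{S/A}$ is a $p$-completely flat $A$-submodule of $\Prism^{(1)}_{S/A}$; combined with the fact that $I^i$ is Zariski locally free of rank one over $A$ (since $(A,I)$ is transversal), this shows $\Fil^{i,j}\Prism^{(1)}_{S/A} = \Fil^{j-i}_{\mathrm{N}}\Prism^{(1)}_{S/A} \otimes_A I^i$ is concentrated in cohomological degree $0$ and sits as an honest submodule of $\Prism^{(1)}_{S/A}$. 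Part (1) then follows directly from the multiplicativity supplied by \Cref{Nygaard filtration} (3), namely $I \cdot \Fil^k_{\mathrm{N}}\Prism^{(1)}_{S/A} \subset \Fil^{k+1}_{\mathrm{N}}\Prism^{(1)}_{S/A}$; iterating, for each $r$ with $i \leq r \leq j$,
\[
\Fil^{j-r}_{\mathrm{N}}\Prism^{(1)}_{S/A} \cdot I^r = I^{r-i} \cdot \Fil^{j-r}_{\mathrm{N}}\Prism^{(1)}_{S/A} \cdot I^i \subset \Fil^{j-i}_{\mathrm{N}}\Prism^{(1)}_{S/A} \cdot I^i = \Fil^{i,j}\Prism^{(1)}_{S/A},
\]
while the $r=i$ summand already saturates the inclusion.

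For part (2) I would induct on $i$. The base case $i = 0$ is immediate from \Cref{general KON filtration properties} (1). For the inductive step, I would exploit the commutative diagram of cofiber sequences in \Cref{general KON filtration properties} (3) with $j$ and $0$ as the two values of the second index. To turn the cofiber statements there into honest short exact sequences, one checks that all six entries involved are concentrated in degree $0$, using \Cref{Nygaard filtration} (1), \Cref{Hodge filtrations on dR}, and the fact that $\Sym^i_{A/I}(I/I^2)$ is a line bundle on $A/I$. The right-hand vertical arrow is injective: by \Cref{Hodge filtrations on dR} the inclusion $\Fil^{j-i}_{\mathrm{H}}\dR_{S/(A/I)}^\wedge \hookrightarrow \dR_{S/(A/I)}^\wedge$ is injective, and tensoring with the line bundle $\Sym^i_{A/I}(I/I^2)$ preserves injectivity.

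A short diagram chase then yields
\[
\Fil^{i+1,j}\Prism^{(1)}_{S/A} = \Fil^{i,j}\Prism^{(1)}_{S/A} \cap \Fil^{i+1,0}\Prism^{(1)}_{S/A}
\]
as submodules of $\Prism^{(1)}_{S/A}$. Combining the inductive hypothesis for $(i,j)$ with \Cref{general KON filtration properties} (2), which identifies $\Fil^{i+1,0}\Prism^{(1)}_{S/A}$ with $\Fil^{i+1}_I\Prism^{(1)}_{S/A}$, we conclude
\[
\Fil^{i+1,j}\Prism^{(1)}_{S/A} = \Fil^j_{\mathrm{N}}\Prism^{(1)}_{S/A} \cap \Fil^i_I\Prism^{(1)}_{S/A} \cap \Fil^{i+1}_I\Prism^{(1)}_{S/A} = \Fil^j_{\mathrm{N}}\Prism^{(1)}_{S/A} \cap \Fil^{i+1}_I\Prism^{(1)}_{S/A},
\]
completing the induction. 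The main technical point to verify carefully is that every filtration piece in sight is concentrated in degree $0$ at each inductive stage so that cofibers translate into genuine short exact sequences of submodules; this is precisely where the large quasisyntomic hypothesis on $S$ is used in an essential way, via \Cref{Nygaard filtration} (1) and \Cref{Hodge filtrations on dR}.
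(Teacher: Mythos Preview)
Your proof is correct and follows essentially the same approach as the paper, which simply says ``The proof is similar to \Cref{qsyn KO filtration properties} (3) and (4)'' and notes the surjectivity of $\Fil^j_{\mathrm{N}} \Prism^{(1)}_{S/A} \to \Fil^j_{\mathrm{H}}(\dR_{S/(A/I)}^\wedge)$ from \Cref{Nygaard filtration} (2). Your argument for part (2) is exactly the inductive diagram chase of \Cref{qsyn KO filtration properties} (4); for part (1) you take a slightly more direct route, exploiting that here $\Fil^{i,j}$ is \emph{defined} as $\Fil^{j-i}_{\mathrm{N}} \otimes_A I^i$ (unlike the Katz--Oda filtration in the de Rham setting), so that the $r=i$ summand already equals $\Fil^{i,j}$ and the remaining summands are contained in it via $I \cdot \Fil^k_{\mathrm{N}} \subset \Fil^{k+1}_{\mathrm{N}}$ --- this is a harmless simplification.
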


\begin{proof}
The proof is similar to \Cref{qsyn KO filtration properties} (3) and (4).
Notice that $\Fil^j_{\mathrm{N}} \Prism^{(1)}_{S/A} \to \Fil^j_{\mathrm{H}}(\dR_{R/(A/I)}^\wedge)$ is surjective
by \Cref{Nygaard filtration} (2).
\end{proof}

We can express all these structures on $\Prism^{(1)}_{R/A}$
in the following graph similar to what was drawn
in the previous subsection.
One observes that the distinction is just that divided powers of $I/I^2$ get replaced by symmetric powers of $I/I^2$.
\[
\xymatrix{
  & \vdots \ar@{-}[d] & R  &  & \LL_{R/(A/I)}^{\wedge}[-1]  &   & 
  (\wedge^2_R \LL_{R/(A/I)})^{\wedge}[-2]  &  \cdots  \\
\ar@{-}[r] & \vdots \ar@{-}[d] \ar@{-}[l] \ar@{-}[r] & \ar@{-}[l]  \ar@{-}[r]  & \ar@{-}[r] & \ar@{-}[r] \ar@{.}[ld]  & \ar@{-}[r] & \ar@{-}[r] \ar@{.}[ld]   &    \\
A/I  & \vdots \ar@{-}[d] &  M_0 \widehat{\otimes}_{A/I} N_0  & \ar@{.}[ld] &  M_0 \widehat{\otimes}_{A/I} N_1  & \ar@{.}[ld]  &  M_0 \widehat{\otimes}_{A/I} N_2 & \ar@{.}[ld]  \cdots  \\
 & \vdots \ar@{-}[d] & \ar@{-}[l] \ar@{.}[ld]  \ar@{-}[r]  & \ar@{-}[r] & \ar@{-}[r] \ar@{.}[ld]  & \ar@{-}[r] & \ar@{-}[r] \ar@{.}[ld]   &    \\
I/I^2 & \vdots \ar@{-}[d]  &  M_1 \otimes_{A/I} N_0   & \ar@{.}[ld] &  
M_1 \widehat{\otimes}_{A/I} N_1   & \ar@{.}[ld] &  M_1 \widehat{\otimes}_{A/I} N_2 & \ar@{.}[ld]  \cdots  \\
 & \vdots \ar@{-}[d] &  \ar@{-}[l] \ar@{-}[r] \ar@{.}[ld]  &  \ar@{-}[r] &  \ar@{-}[r] \ar@{.}[ld] &  \ar@{-}[r] & \ar@{.}[ld]  \ar@{-}[r] &    \\
I^2/I^3 & \vdots \ar@{-}[d] &  M_2 \widehat{\otimes}_{A/I} N_0   & \ar@{.}[ld] &  M_2 \widehat{\otimes}_{A/I} N_1  & \ar@{.}[ld] & M_2 \widehat{\otimes}_{A/I} N_2 & \ar@{.}[ld]  \cdots  \\
  \vdots    &  & \vdots &    &  \vdots  &   &  \vdots  &       \\
}
\]

%\[
%\xymatrix{
%(-) \widehat{\otimes}_{A/I} A/I & \vdots \ar@{-}[d] &  R  & \ar@{.}[ld] &  \LL_{R/(A/I)}^{\wedge}[-1]  & \ar@{.}[ld]  &  (\wedge^2_R \LL_{R/(A/I)})^{\wedge}[-2]  & \ar@{.}[ld]  \cdots  \\
%  & \vdots \ar@{-}[d] & \ar@{-}[l] \ar@{.}[ld]  \ar@{-}[r]  & \ar@{-}[r] & \ar@{-}[r] \ar@{.}[ld]  & \ar@{-}[r] & \ar@{-}[r] \ar@{.}[ld]   &    \\
%(-) \widehat{\otimes}_{A/I} I/I^2   & \vdots \ar@{-}[d]  &  R  & \ar@{.}[ld] &  \LL_{R/(A/I)}^{\wedge}[-1]  & \ar@{.}[ld] &  (\wedge^2_R \LL_{R/(A/I)})^{\wedge}[-2]  & \ar@{.}[ld]  \cdots  \\
%    & \vdots \ar@{-}[d] &  \ar@{-}[l] \ar@{-}[r] \ar@{.}[ld]  &  \ar@{-}[r] &  \ar@{-}[r] \ar@{.}[ld] &  \ar@{-}[r] & \ar@{.}[ld]  \ar@{-}[r] &    \\
%(-) \widehat{\otimes}_{A/I} I^2/I^3 & \vdots \ar@{-}[d] &  R  & \ar@{.}[ld] &  \LL_{R/(A/I)}^{\wedge}[-1]  & \ar@{.}[ld] &  (\wedge^2_R \LL_{R/(A/I)})^{\wedge}[-2]  & \ar@{.}[ld]  \cdots  \\
%  \vdots    &  & \vdots &    &  \vdots  &   &  \vdots  &       \\
%}
%\]
Here rows indicate graded pieces of the filtration $\Fil^r_{I}$,
and each term in each row indicates the graded piece of the Hodge filtration on
$\dR_{R/(A/I)}^{\wedge}$.
The skewed dotted line indicate the Nygaard filtration on $\Prism^{(1)}_{R/A}$
(given by things below the dotted line).

Also as a consequence we get a structural result on the graded algebra associated with
the Nygaard filtration on $\Prism^{(1)}_{R/A}$.
\begin{lemma}
\label{increasing filtration on Nygaard graded}
There is a functorial increasing exhaustive filtration $\Fil^v_i$ on the graded algebra
 $\gr^*_{\mathrm{N}}(\Prism^{(1)}_{R/A})$ by
graded-$\left(\gr^*_{I} A \cong \Sym^*_{A/I}(I/I^2)\right)$-submodules
with graded pieces given by
\[
\gr^v_i\left(\gr^*_{\mathrm{N}}(\Prism^{(1)}_{R/A})\right) \cong
(\wedge^i_R \LL_{R/(A/I)})^{\wedge}[-i] \widehat{\otimes}_{A/I} \Sym^*_{A/I}(I/I^2).
\]
Here $(\wedge^i_R \LL_{R/(A/I)})^{\wedge}[-i]$ has degree $i$ and the above is a graded isomorphism.
\end{lemma}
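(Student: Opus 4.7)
The plan is to mimic the proof of \Cref{increasing filtration on Hodge graded}, replacing the bifiltration $\Fil^{i,j}(\dR_{R/A}^\wedge)$ and its table by the bifiltration $\Fil^{i,j}\Prism^{(1)}_{R/A}$ of \Cref{general KON filtration properties} and its table drawn right before the present lemma. The two tables differ only cosmetically — the rows record the graded pieces of the $I$-adic filtration rather than the PD filtration, so $I^r/I^{r+1} \cong \Sym^r_{A/I}(I/I^2)$ replaces $\mathcal{I}^{[r]}/\mathcal{I}^{[r+1]} \cong \Gamma^r_{A/I}(I/I^2)$ — and the desired vertical filtration $\Fil^v_i$ once again picks out the columns of the table, equivalently filters by the Hodge degree of the image in $\dR_{R/(A/I)}^\wedge$.

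Concretely, for each $j \geq 0$ I would define
\[
\Fil^v_i \gr^j_\mathrm{N} \Prism^{(1)}_{R/A} \coloneqq \mathrm{Im}\bigg(\Fil^{j-i,\, j} \Prism^{(1)}_{R/A} \longrightarrow \gr^j_\mathrm{N} \Prism^{(1)}_{R/A}\bigg),
\]
with the convention $\Fil^{a, j} = \Fil^j_\mathrm{N}$ for $a \leq 0$. This is visibly increasing in $i$ and exhausts $\gr^j_\mathrm{N}$ at $i = j$; moreover, multiplication by a class in $\gr^r_I A \cong \Sym^r_{A/I}(I/I^2)$ shifts an entry strictly down the rows of the table without altering its column, so $\Fil^v_i$ is preserved and the vertical filtration is by graded $\Sym^*_{A/I}(I/I^2)$-submodules. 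To identify the double graded pieces, I would invoke \Cref{general KON filtration properties}(3), which gives
\[
\Fil^{a, j}/\Fil^{a+1, j} \cong \Fil^{j-a}_\mathrm{H}(\dR_{R/(A/I)}^\wedge) \widehat{\otimes}_{A/I} \Sym^a_{A/I}(I/I^2),
\]
and combine with $\gr^i_\mathrm{H}(\dR_{R/(A/I)}^\wedge) \cong (\wedge^i_R \LL_{R/(A/I)})^\wedge[-i]$ to obtain
\[
\gr^v_i \gr^j_\mathrm{N} \Prism^{(1)}_{R/A} \cong (\wedge^i_R \LL_{R/(A/I)})^\wedge[-i] \widehat{\otimes}_{A/I} \Sym^{j-i}_{A/I}(I/I^2);
\]
summing over $j$ with $i$ fixed produces the claimed graded isomorphism.

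The principal subtlety is the interaction between the two filtrations at the derived level, since for general quasisyntomic $R$ the pieces $\Fil^{a,b}$ need not be honest submodules. The cleanest route is to work on the basis of large quasisyntomic $A/I$-algebras $S$, where \Cref{qsyn KON filtration properties} together with \Cref{Nygaard filtration}(1) ensures that $\Fil^{a,b}\Prism^{(1)}_{S/A} = \Fil^b_\mathrm{N} \cap \Fil^a_I$ as honest, $p$-completely flat submodules of the discrete ring $\Prism^{(1)}_{S/A}$; in that setting the intersection/sum manipulations are transparent and the identification of the corner piece $\Fil^{j-i,j}/(\Fil^{j-i+1,j} + \Fil^{j-i,j+1})$ with the entry $(j-i,i)$ of the table is direct. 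Both the filtration and the identification of its graded pieces then descend to all quasisyntomic $R$ via the sheaf property \Cref{general KON filtration properties}(4) and the fact that $\wedge^i_R \LL_{R/(A/I)}^\wedge$ is itself a quasisyntomic sheaf; functoriality in $R$ is automatic as every ingredient is functorial.
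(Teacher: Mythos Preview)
Your proposal is correct and follows essentially the same approach as the paper. The paper's proof is a single sentence pointing to \Cref{Nygaard filtration}~(3) and the proof of \Cref{increasing filtration on Hodge graded}; your argument simply unpacks this via the bifiltration $\Fil^{i,j}\Prism^{(1)}_{R/A}$ (whose key property, \Cref{general KON filtration properties}~(3), is itself an immediate repackaging of \Cref{Nygaard filtration}~(3)), and additionally addresses the derived-level subtlety that the paper leaves implicit.
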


We also call this filtration $\Fil^v_i$ on $\gr^*_{\mathrm{N}}(\Prism^{(1)}_{R/A})$
as the \emph{vertical filtration} from now on.

\begin{proof}
This follows from~\Cref{Nygaard filtration} (3), see also the proof of \Cref{increasing filtration on Hodge graded}.
\end{proof}

\subsection{Promote to filtered map}
Recall that we use $(A,I)$ to denote a transversal prism.
For the rest of this section, we shall use $(B,J)$ to denote a general bounded prism.
By~\Cref{comparing pris and crys}, we have a map
$\Prism^{(1)}_{R/B} \to \dR_{R/B}^{\wedge}$
functorial in $B/J \to R$.
The goal of this subsection is to show that this map can be
promoted to a filtered map where the left hand side is equipped with the Nygaard filtration
and the right hand side is equipped with the Hodge filtration.
Our plan is:
\begin{itemize}
\item show certain rigidity of the map being filtered;
\item show the map is filtered when the base prism $(A,I)$ is transversal;
\item show the map is filtered when the algebra $R$ is large quasisyntomic over $B/J$
of a particular type; and
\item show the map is functorially filtered when $R$ is a $p$-completely smooth $B/J$-algebra,
and hence finish the argument by left Kan extension.
\end{itemize}

Fix a natural number $i$. 
The main diagram that we shall stare at in this subsection is:
\[
\label{NygHdg diagram}
\tag{\epsdice{3}}
\xymatrix{
\Fil^i_{\mathrm{N}} \ar[r] \ar@{.>}[d]^-{g_i} & \Prism^{(1)} \ar[r] \ar[d] & Q_{1,i} \ar@{.>}[d]^-{f_i} \\
\Fil^i_{\mathrm{H}} \ar[r] & \dR \ar[r] & Q_{2,i},
}
\]
viewed as a commutative diagram of sheaves on $\qsyn_{B/J}$.
Here $Q_{1,i}$ and $Q_{2,i}$ are the cones of the natural maps, so both rows are distinguished triangles
of quasisyntomic sheaves.
All the solid arrows are defined: for instance, the middle vertical arrow is given by \Cref{comparing pris and crys}.
Our main task is to show that one can fill in the dotted arrows $f_i$ and $g_i$ making the diagram commute.

We first need a few lemmas to illustrate that the situation is pretty rigid and there is at most one choice of
these dotted arrows.
\begin{lemma}
\label{cohomology estimate}
Let $S$ be a large quasisyntomic over $B/J$ algebra, then the values of
\[
\Fil^i_{\mathrm{N}},~\Prism^{(1)},~Q_{1,i},~\text{and } Q_{2,i}
\]
at $S$ are concentrated in cohomological degree $0$.
\end{lemma}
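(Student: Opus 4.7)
The plan is to verify the degree-$0$ claim for each of the four sheaves in turn, by combining previously established results. For $\Prism^{(1)}_{S/B}$ and its Nygaard filtration piece $\Fil^i_{\mathrm{N}}\Prism^{(1)}_{S/B}$, I would invoke \Cref{Nygaard filtration}.(1): both are $p$-completely flat $B$-submodules of $\Prism^{(1)}_{S/B}$, hence are discrete modules concentrated in cohomological degree $0$. Although the preceding subsections operate under the transversality assumption on $(A,I)$, the flatness statement is quoted from \cite[Theorem 15.2]{BS19} and applies to the general bounded prism $(B,J)$ without modification.

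For $Q_{1,i}$, which is the cone of $\Fil^i_{\mathrm{N}}\Prism^{(1)}_{S/B} \to \Prism^{(1)}_{S/B}$, the previous paragraph exhibits this map as a monomorphism of degree-$0$ sheaves, so $Q_{1,i} \simeq \Prism^{(1)}_{S/B}/\Fil^i_{\mathrm{N}}\Prism^{(1)}_{S/B}$ is itself concentrated in degree $0$. Alternatively, one can arrive at the same conclusion by filtering $Q_{1,i}$ using \Cref{Nygaard filtration}.(3) with graded pieces given by $\Fil^{j}_{\mathrm{H}}\dR_{S/(B/J)}^\wedge$ twisted by symmetric powers of $J/J^2$; each such piece is in degree $0$ by \Cref{Hodge filtrations on dR}, and so is their finite iterated extension.

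For $\dR_{S/B}^\wedge$ together with its Hodge filtration $\Fil^i_{\mathrm{H}}$, I would appeal to (a mild generalization of) \Cref{Hodge filtrations on dR}. Although that lemma was stated under the assumption that the base prism is transversal, its proof reduces modulo $p$ and ultimately relies on \Cref{lives in cohdeg 0 in char p}, a statement about $\mathbb{F}_p$-algebras insensitive to any transversality hypothesis. Hence $\Fil^i_{\mathrm{H}}\dR_{S/B}^\wedge \hookrightarrow \dR_{S/B}^\wedge$ is a monomorphism of degree-$0$ sheaves, so its cone $Q_{2,i}$ is in degree $0$ as well.

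I do not anticipate any serious obstacle; the assertion is essentially a bookkeeping collection of four previously established facts. The only point requiring any care is confirming that the degree-$0$ conclusions for the Nygaard and Hodge filtrations, originally recorded in the transversal setting $(A,I)$, transfer verbatim to the bounded-prism setting $(B,J)$ used in the main diagram~\epsdice{3}, which is immediate by inspection of the respective proofs.
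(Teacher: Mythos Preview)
Your proof is correct and, for the first three sheaves $\Fil^i_{\mathrm{N}}$, $\Prism^{(1)}$, and $Q_{1,i}$, essentially identical to the paper's: both simply point to \cite[Subsection 15.1]{BS19} (which you access through \Cref{Nygaard filtration}).

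For $Q_{2,i}$ your route differs from the paper's. You argue that both $\dR_{S/B}^\wedge$ and $\Fil^i_{\mathrm{H}}\dR_{S/B}^\wedge$ are discrete and the inclusion is a monomorphism, by transporting \Cref{Hodge filtrations on dR} to the non-transversal setting via \Cref{lives in cohdeg 0 in char p}. The paper instead observes directly that $\mathbb{L}_{S/B}^\wedge[-1]$ is concentrated in degree $0$; since $Q_{2,i}$ carries a finite filtration with graded pieces $\Gamma^j_S(\mathbb{L}_{S/B}^\wedge[-1])$ for $0\le j<i$, the claim follows immediately. The paper's argument is more economical because $Q_{2,i}$ involves only finitely many Hodge graded pieces, so one never needs to control the full $\dR_{S/B}^\wedge$ (an infinite tower) or to worry about whether the surjectivity step in \Cref{lives in cohdeg 0 in char p} carries over. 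Your approach yields the stronger statement that $\dR_{S/B}^\wedge$ and all its Hodge filtrations are discrete for arbitrary bounded base prisms, which is not needed here but is of independent interest.
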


\begin{proof}
The first three follows from how they are defined, see \cite[Subsection 15.1]{BS19}.
The claim for $Q_{2,i}$ follows from the fact that $\mathbb{L}_{S/B}^\wedge[-1]$ lives in cohomological degree $0$.
\end{proof}

\begin{lemma}
\label{rigidity of the situation}
Let $S$ be a large quasisyntomic over $B/J$ algebra. Then,
\begin{enumerate}
\item there is at most one choice of $f_i$ making the right square of~\ref{NygHdg diagram} commute;
\item if $S \to T$ is a morphism of large quasisyntomic over $B/J$ algebras,
and suppose $f_i$ are defined on both of them, then the diagram
\[
\xymatrix{
Q_{1,i}(S) \ar[r] \ar[d]^{f_i(S)} & Q_{1,i}(T) \ar[d]^{f_i(T)} \\
Q_{2,i}(S) \ar[r] & Q_{2,i}(T)
}
\]
is commutative;
\item the existence of $f_i(S)$ is equivalent to the existence of $g_i(S)$ making the left square of~\ref{NygHdg diagram}
commute;
\item the $g_i(S)$, if exists, must be unique.
\end{enumerate} 
\end{lemma}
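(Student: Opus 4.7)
The strategy is to exploit the cohomological vanishing established in \Cref{cohomology estimate}: since all four sheaves appearing in \Cref{NygHdg diagram} evaluate on $S$ in cohomological degree $0$, the two horizontal distinguished triangles become honest short exact sequences of $B$-modules
\[
0 \to \Fil^i_{\mathrm{N}}(S) \xrightarrow{\iota_1} \Prism^{(1)}(S) \xrightarrow{\pi_1} Q_{1,i}(S) \to 0,
\]
\[
0 \to \Fil^i_{\mathrm{H}}(S) \xrightarrow{\iota_2} \dR(S) \xrightarrow{\pi_2} Q_{2,i}(S) \to 0,
\]
where $\iota_1, \iota_2$ are injections and $\pi_1, \pi_2$ are surjections. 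Write $v(S)$ for the middle vertical arrow from~\Cref{comparing pris and crys}.

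For (1), any candidate $f_i$ must satisfy $f_i \circ \pi_1 = \pi_2 \circ v(S)$; surjectivity of $\pi_1$ forces uniqueness. Dually for (4), any candidate $g_i$ must satisfy $\iota_2 \circ g_i = v(S) \circ \iota_1$, and injectivity of $\iota_2$ forces uniqueness. For (3), both the existence of $g_i$ and the existence of $f_i$ are equivalent to the single vanishing condition $\pi_2 \circ v(S) \circ \iota_1 = 0$: existence of $g_i$ asks that $v(S)$ carries $\mathrm{im}(\iota_1) = \Fil^i_{\mathrm{N}}(S)$ into $\ker(\pi_2) = \Fil^i_{\mathrm{H}}(S)$, while existence of $f_i$ asks that $\pi_2 \circ v(S)$ descends along the surjection $\pi_1$, i.e.\ vanishes on $\ker(\pi_1) = \mathrm{im}(\iota_1)$. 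These are tautologically the same condition.

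For (2), surjectivity of $\pi_1(S)$ reduces the commutativity of the asserted square to the equality
\[
Q_{2,i}(S \to T) \circ f_i(S) \circ \pi_1(S) \;=\; f_i(T) \circ Q_{1,i}(S \to T) \circ \pi_1(S).
\]
Using $f_i \circ \pi_1 = \pi_2 \circ v$ on each side, together with the naturality of $v$, $\pi_1$, and $\pi_2$ in the algebra argument, both expressions collapse to $\pi_2(T) \circ v(T) \circ \Prism^{(1)}(S \to T)$, which proves the naturality claim. The only substantive input to the whole lemma is the cohomological vanishing already secured in \Cref{cohomology estimate}; the main ``obstacle'' to anticipate is purely bookkeeping, namely that parts (1)--(4) must be arranged so that each is deduced from exactness and the universal properties of kernels and cokernels, rather than any deeper descent argument.
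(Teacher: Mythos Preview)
Your overall strategy matches the paper's, but there is a gap in how you invoke \Cref{cohomology estimate}. That lemma asserts that exactly four of the six sheaves in \Cref{NygHdg diagram} --- namely $\Fil^i_{\mathrm{N}}$, $\Prism^{(1)}$, $Q_{1,i}$, and $Q_{2,i}$ --- take values in degree $0$ on $S$. It says nothing about $\dR(S)$ or $\Fil^i_{\mathrm{H}}(S)$, and indeed for a general bounded prism $(B,J)$ the paper never claims these are discrete (the discreteness results such as \Cref{Hodge filtrations on dR} and \Cref{derived prismatic construction}(5) are stated only for transversal base prisms). So your assertion that the bottom row is an honest short exact sequence of $B$-modules is not justified.

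This matters for your proof of (4), and for the phrasing (though not the conclusion) of (3). In (4) you argue that injectivity of $\iota_2$ as a module map forces uniqueness of $g_i$; but $\iota_2 \colon \Fil^i_{\mathrm{H}}(S) \to \dR(S)$ is a priori only a map in the derived category between possibly non-discrete objects, so ``injectivity'' does not have its naive meaning. The paper's fix is to stay in the triangulated setting: the difference of two candidate $g_i$'s composes to zero with $\iota_2$, hence factors through $\mathrm{fib}(\iota_2) = Q_{2,i}(S)[-1]$, and then $\mathrm{Hom}(\Fil^i_{\mathrm{N}}(S), Q_{2,i}(S)[-1]) = 0$ because both $\Fil^i_{\mathrm{N}}(S)$ and $Q_{2,i}(S)$ are in degree $0$. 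Similarly (3) follows directly from TR3. Your arguments for (1) and (2) are fine as written, since they only use surjectivity of $\pi_1$ (which does follow from the top row being a short exact sequence of modules) and that $Q_{1,i}(S)$, $Q_{2,i}(S)$ are discrete.
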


\begin{proof}
(1): suppose there are two of them, and take their difference. Since precomposing with
$\Prism^{(1)} \to Q_{1,i}$ of this difference is the zero map $\Prism^{(1)} \xrightarrow{0} Q_{2,i}$ due to commutativity,
the difference must factor through $\Fil^i_{\mathrm{N}}[1]$. 
But $\mathrm{Hom}(\Fil^i_{\mathrm{N}}[1], Q_{2,i}) = \{0\}$ by cohomological considerations in \Cref{cohomology estimate}.
Hence the difference must be zero.

(2): the argument is similar to (1). The difference of the two arrows from $Q_{1,i}(S)$ to $Q_{2,i}(T)$ will again factor
through $\Fil^i_{\mathrm{N}}(S)[1]$, hence must again be zero.

(3): just apply TR3, notice that the two rows of~\ref{NygHdg diagram} are exact triangles.

(4): similar to (1). The difference of two possible $g_i(S)$'s factors through an arrow 
$\Fil^i_{\mathrm{N}}(S) \to Q_{2,i}[-1](S)$ which is again zero by cohomological considerations.
\end{proof}

After knowing the rigidity of our situation, we shall start proving the existence of $f_i$
following the plan outlined above.

\begin{proposition}
Let $(A,I)$ be a transversal prism, and let 
$S = A/I \langle X_1^{1/p^\infty}, \ldots, X_n^{1/p^\infty} \rangle/(f_1, \ldots, f_r)$
where $(f_i)$ is a $p$-completely regular sequence.
We have $\Fil^i_{\mathrm{N}} \Prism^{(1)}_{S/A} \subset \Fil^i_{\mathrm{H}}
\dR_{S/A}^{\wedge}$.
\end{proposition}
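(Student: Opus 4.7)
I would strengthen the claim to a bi-filtered statement based on the bi-filtrations $\Fil^{a,b}$ from Subsections~4.1 and~4.2: namely, that the comparison $\Prism^{(1)}_{S/A} \to \dR^\wedge_{S/A}$ of \Cref{comparing pris and crys} sends $\Fil^{a,b}\Prism^{(1)}_{S/A}$ into $\Fil^{a,b}\dR^\wedge_{S/A}$ for every $(a,b)\in\mathbb{N}^2$. The desired inclusion is the $(a,b)=(0,i)$ case via \Cref{general KON filtration properties}(1) and \Cref{general KO filtration properties}(1). Since $S$ is large quasisyntomic over $A/I$ (being a quotient of such by the regular sequence $(f_1,\ldots,f_r)$), all relevant filtration objects are concentrated in cohomological degree $0$ as honest submodules by \Cref{Nygaard filtration}(1), \Cref{Hodge filtrations on dR}, \Cref{qsyn KON filtration properties}, and \Cref{qsyn KO filtration properties}, so I may work throughout with submodule inclusions.

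The strengthened statement would be proved by primary induction on $b$, and for each fixed $b$ by secondary decreasing induction on $a$ starting from $a=b$. Both the primary base $b=0$ and the secondary base $a=b$ reduce to the $A$-linear observation that $I^a\Prism^{(1)}_{S/A}$ maps into $I^a\dR^\wedge_{S/A}\subset \mathcal{I}^{[a]}\dR^\wedge_{S/A}\subset \Fil^a_{\mathrm H}\dR^\wedge_{S/A}$, using $\mathcal{I}^a\subset \mathcal{I}^{[a]}$ (since $xy=\gamma_2(x+y)-\gamma_2(x)-\gamma_2(y)\in \mathcal{I}^{[2]}$ iteratively) together with multiplicativity of the Hodge filtration. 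For the secondary step from $(a+1,b)$ to $(a,b)$ with $0\le a<b$, I would compare the two short exact sequences furnished by \Cref{general KON filtration properties}(3) and \Cref{general KO filtration properties}(3):
\[
\xymatrix@C=1pc{
0\ar[r]&\Fil^{a+1,b}\Prism^{(1)}_{S/A}\ar[r]\ar[d]&\Fil^{a,b}\Prism^{(1)}_{S/A}\ar[r]\ar@{.>}[d]&\Fil^{b-a}_{\mathrm H}\dR^\wedge_{S/(A/I)}\otimes_{A/I}\Sym^a(I/I^2)\ar[r]\ar[d]^{\mathrm{nat}}&0\\
0\ar[r]&\Fil^{a+1,b}\dR^\wedge_{S/A}\ar[r]&\Fil^{a,b}\dR^\wedge_{S/A}\ar[r]&\Fil^{b-a}_{\mathrm H}\dR^\wedge_{S/(A/I)}\otimes_{A/I}\Gamma^a(I/I^2)\ar[r]&0,
}
\]
in which the left vertical arrow exists by the secondary inductive hypothesis and the right vertical is the map induced by $I\hookrightarrow\mathcal{I}$ (i.e.\ the canonical $\Sym^a\to\Gamma^a$, sending $x_1\cdots x_a\mapsto a!\,\gamma_a$), combined with the identification of reductions provided by \Cref{compatibility when modulo I}.

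The hard part is verifying the commutativity of the right square with respect to the restriction of the comparison map and gracefully handling the $a=0$ corner where the primary hypothesis at $(0,b-a)=(0,b)$ is not yet available. For $a\ge 1$ one would lift a generator $\bar x\otimes \bar d_1\cdots\bar d_a$ to $x\cdot d_1\cdots d_a\in \Fil^{a,b}\Prism^{(1)}_{S/A}$ (with $x$ a Nygaard lift via \Cref{Nygaard filtration}(2) and $d_i\in I$ lifts), use the primary hypothesis at $(0,b-a)$ (with $b-a<b$) to push $x$ into $\Fil^{b-a}_{\mathrm H}\dR^\wedge_{S/A}$, and conclude by multiplicativity that $x\cdot d_1\cdots d_a\in\Fil^b_{\mathrm H}\cap\mathcal{I}^{[a]}\dR^\wedge_{S/A}=\Fil^{a,b}\dR^\wedge_{S/A}$. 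The $a=0$ case is the most delicate: here $\Sym^0=\Gamma^0$ so the right vertical is the identity, and I would argue directly using the Frobenius-preimage characterization of the Nygaard filtration for large qrsp algebras, tracing Frobenius compatibility through the comparison isomorphism $\Prism^{(1)}_{S/A}\widehat\otimes_A\mathcal{A}\cong\dR^\wedge_{S/A}$ to convert $\tilde\varphi(y)\in I^i\Prism_{S/A}$ into $\varphi(y\otimes 1)\in I^i\dR^\wedge_{S/A}\subset\mathcal{I}^{[i]}\dR^\wedge_{S/A}$, and invoking the compatibility of Frobenius with the Hodge filtration in the crystalline setting. Once these verifications are in hand, a standard diagram chase produces the middle dotted arrow and closes both inductions.
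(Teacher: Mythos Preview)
Your bi-filtered induction is a reasonable organizational device, but as you yourself flag, the entire difficulty concentrates at the $a=0$ step, and the Frobenius argument you sketch there does not close the gap. Knowing that the relative Frobenius sends $y\in\Fil^b_{\mathrm N}$ into $I^b\Prism_{S/A}$ tells you, after transport through the comparison, only that $\varphi_{\dR}(y\otimes 1)$ lies in $\mathcal I^{[b]}\cdot\dR^\wedge_{S/A}$; there is no converse implication recovering $y\otimes 1\in\Fil^b_{\mathrm H}$ from this, since the Hodge filtration on $\dR^\wedge_{S/A}$ is not characterized as a Frobenius preimage. More concretely, at $a=0$ your two short exact sequences have identical right-hand terms $\Fil^b_{\mathrm H}\dR^\wedge_{S/(A/I)}$ and you possess the left vertical, but the diagram chase requires that the composite $\Fil^b_{\mathrm N}\to\dR^\wedge_{S/A}\to\dR^\wedge_{S/A}/\Fil^{1,b}$ already factor through the subquotient $\Fil^b_{\mathrm H}\dR^\wedge_{S/(A/I)}$; an element of $\dR^\wedge_{S/A}$ whose reduction modulo $\mathcal I$ lies in $\Fil^b_{\mathrm H}\dR^\wedge_{S/(A/I)}$ need not itself lie in $\Fil^b_{\mathrm H}\dR^\wedge_{S/A}$, so this is essentially what you are trying to prove.

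The paper supplies the missing idea by a different mechanism. One argues by induction on $i$ (your primary induction) and studies the induced map $g\colon\Fil^i_{\mathrm N}\to\dR^\wedge_{S/A}/\Fil^i_{\mathrm H}$. By the inductive hypothesis together with $I\subset\mathcal I$, the submodule $I\cdot\Fil^{i-1}_{\mathrm N}$ is killed by $g$; by the $i=1$ case (obtained from \Cref{compatibility when modulo I}) and multiplicativity, so is $\Sym^i(\Fil^1_{\mathrm N})$. Via \Cref{Nygaard filtration}(3) one then obtains a factorization
\[
\bar g\colon \Fil^i_{\mathrm H}\dR^\wedge_{S/(A/I)}\big/\Sym^i\!\bigl(\Fil^1_{\mathrm H}\dR^\wedge_{S/(A/I)}\bigr)\longrightarrow \dR^\wedge_{S/A}/\Fil^i_{\mathrm H}.
\]
The key observation, absent from your plan, is that the source has a $p$-adically dense submodule of $p$-power torsion (divided-power monomials become factorial-torsion once ordinary monomials are killed), while the target is $p$-torsionfree and $p$-adically complete by transversality of $(A,I)$ and the explicit shape of $S$. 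This forces $\bar g=0$ and finishes the induction. Your bi-filtered framework is compatible with this, but it does not bypass the need for this torsion-versus-torsionfree step.
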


\begin{proof}
When $i = 0$, there is nothing to prove,
when $i = 1$, the triangle in Corollary \ref{compatibility when modulo I} gives us
a commutative diagram
\[
\xymatrix{
& S & \\
\Prism^{(1)}_{S/A} \ar@{->>}[ru] \ar[rr] &  &
\dR_{S/A}^{\wedge},  \ar@{->>}[lu]
}
\]
since the kernels of these two surjections define the first 
Nygaard and Hodge filtrations respectively, we see the containment for $i = 1$.
For general $i$, we prove by induction. Let us look at the induced map
$g \colon \Fil^i_{\mathrm{N}} \Prism^{(1)}_{S/A} \to \dR_{S/A}^{\wedge}/\Fil^i_{\mathrm{H}}$.
We first notice that by induction and $I \subset \mathcal{I}$
the submodule $I \cdot \Fil^{i-1}_{\mathrm{N}} \Prism^{(1)}_{S/A}$
is sent to zero under $g$.
By multiplicativity and the containment for $i = 1$, 
we have that $\Sym^i(\Fil^1_{\mathrm{N}} \Prism^{(1)}_{S/A})$
is also sent to zero under $g$.
Now we use~\Cref{Nygaard filtration} (3) to see that
$\Fil^i_{\mathrm{N}}/I \cdot \Fil^{i-1}_{\mathrm{N}}$ is identified with
$\Fil^i_{\mathrm{H}} \dR_{S/(A/I)}^{\wedge}$
and the image of $\Sym^i(\Fil^1_{\mathrm{N}} \Prism^{(1)}_{S/A})$ becomes
$\Sym^i(\Fil^1_{\mathrm{H}} \dR_{S/(A/I)}^{\wedge})$, so we get an induced map
$\bar{g} \colon \left(
\Fil^i_{\mathrm{H}} \dR_{S/(A/I)}^{\wedge}/\Sym^i(\Fil^1_{\mathrm{H}} \dR_{S/(A/I)}^{\wedge})
\right) \to \dR_{S/A}^{\wedge}/\Fil^i_{\mathrm{H}}$.
But the source of this map has its $p$-power torsions submodule being $p$-adically dense
and the target of this map is $p$-torsionfree and $p$-adically complete,
so the map $\bar{g}$ must in fact be zero.
This proves the containment $\Fil^i_{\mathrm{N}} \subset \Fil^i_{\mathrm{H}}$ as claimed.
\end{proof}

The following is inspired by \cite[Subsection 12.4]{BS19}.

\begin{proposition}
\label{filtered map for transversal}
Let $(A,I)$ be a transversal prism, then for any $p$-completely
smooth $A/I$-algebra $R$, the map $\Prism^{(1)}_{R/A} \to \dR_{R/A}^{\wedge}$
can be promoted to a map of filtered algebras.
Moreover this lift is functorial in the $A/I$-algebra $R$, hence left Kan extends
to all animated $A/I$-algebras.
\end{proposition}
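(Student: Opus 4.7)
The plan is to combine the special case of the filtered containment established in the preceding Proposition with the rigidity statements of~\Cref{rigidity of the situation}, and then extend via the quasi-syntomic sheaf property and left Kan extension.

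Fix $i \geq 0$ and view~\ref{NygHdg diagram} as a diagram of $\qsyn_{A/I}$-sheaves. For algebras $S$ of the special form $S = A/I\langle X_1^{1/p^\infty}, \ldots, X_n^{1/p^\infty}\rangle/(f_1, \ldots, f_r)$ with $(f_j)$ a $p$-completely regular sequence, the preceding Proposition provides a literal submodule containment $\Fil^i_{\mathrm{N}} \Prism^{(1)}_{S/A} \subset \Fil^i_{\mathrm{H}} \dR_{S/A}^{\wedge}$ inside $\dR_{S/A}^{\wedge}$; both terms live in cohomological degree $0$ by~\Cref{cohomology estimate} and~\Cref{Hodge filtrations on dR}. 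This containment supplies the desired dotted arrow $g_i(S)$, and by~\Cref{rigidity of the situation}(3) we also obtain $f_i(S)$. Parts (1), (2), and (4) of~\Cref{rigidity of the situation} then force these lifts to be unique and functorial in morphisms between special $S$'s.

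Next I would upgrade this to \emph{all} large quasi-syntomic $A/I$-algebras. Every such algebra $T$ admits a quasi-syntomic cover $T \to S^0$ by an algebra $S^0$ of the special form above: write $T$ as a quotient of $A/I\langle X_l^{1/p^\infty}\rangle$ and adjoin compatible $p$-power roots of a chosen set of generators of the kernel, as in the proof of~\Cref{functorial endomorphism theorem}. The associated \v{C}ech nerve $S^\bullet$ then consists of special-form algebras. Since each of the six corners in~\ref{NygHdg diagram} is a quasi-syntomic sheaf (Proposition~\ref{sheaf property on relative to A dR},~\Cref{Nygaard filtration}, and their consequences) that is cohomologically concentrated in degree $0$ on large quasi-syntomic algebras, one may totalize the diagram of $f_i(S^\bullet)$'s (resp.\ $g_i(S^\bullet)$'s); the uniqueness in~\Cref{rigidity of the situation}(1),(4) ensures the result is independent of the cover and produces the lifts $f_i(T)$ and $g_i(T)$. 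Since large quasi-syntomic $A/I$-algebras form a basis of $\qsyn_{A/I}$, this yields the filtered map for every $p$-completely smooth $R$, functorial in $R$.

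Finally, because $\Fil^\bullet_{\mathrm{N}} \Prism^{(1)}_{-/A}$ and $\Fil^\bullet_{\mathrm{H}} \dR_{-/A}^\wedge$ are, by definition, left Kan extensions of their restrictions to $p$-completely smooth $A/I$-algebras (see~\Cref{derived prismatic construction} and~\Cref{Hodge filtrations on dR}), functoriality of the lift on smooth algebras extends uniquely to all animated $A/I$-algebras. The step I expect to be the main obstacle is verifying that the \v{C}ech descent from the basis of special $S$'s to general large quasi-syntomic algebras respects the filtered structure rather than merely the ambient algebras, i.e.\ that totalizing the filtered diagram produces the correct filtered lift. This should reduce to the degree-$0$ concentration results of~\Cref{cohomology estimate} together with the sheaf property for each filtered piece established in the previous two subsections; the rigidity of~\Cref{rigidity of the situation} then pins down the glued map, completing the argument.
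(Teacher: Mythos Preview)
Your overall strategy matches the paper's: establish the filtered containment on special-form algebras via the preceding Proposition, then use the rigidity of~\Cref{rigidity of the situation} together with \v{C}ech descent to propagate to smooth $R$, and finally left Kan extend. The core mechanism is correct.

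There is, however, a genuine gap in your middle step. You claim that every large quasi-syntomic $T$ admits a quasi-syntomic cover $T \to S^0$ with $S^0$ of the special form, and you cite the construction from the proof of~\Cref{functorial endomorphism theorem}. But that construction produces a \emph{surjection} $S' \twoheadrightarrow T$ (sending $Y_j^{1/p^m} \mapsto 0$), not a map $T \to S'$: in $S' = A/I\langle X_l^{1/p^\infty}, Y_j^{1/p^\infty}\rangle/(Y_j - f_j)$ the elements $f_j$ are equal to $Y_j$, hence nonzero, so the relations defining $T$ do not hold in $S'$ and there is no algebra map $T \to S'$. Consequently your \v{C}ech nerve $S^\bullet$ over $T$ is not defined, and even if one found some other cover, there is no reason the terms $S^0 \widehat{\otimes}_T S^0, \ldots$ over a general large quasi-syntomic $T$ would again be of special form.

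The paper avoids this detour through arbitrary large quasi-syntomic algebras entirely. For a $p$-completely smooth $R$, it chooses a surjection $A/I\langle X_1,\ldots,X_n\rangle \twoheadrightarrow R$ and sets
\[
\tilde{R} \coloneqq A/I\langle X_1^{1/p^\infty},\ldots,X_n^{1/p^\infty}\rangle \otimes_{A/I\langle X_1,\ldots,X_n\rangle} R,
\]
so that $R \to \tilde{R}$ is a genuine quasi-syntomic cover. Because $R$ is smooth, $\tilde{R}$ and every term $\tilde{R}^{\bullet}$ of the \v{C}ech nerve over $R$ are Zariski locally of the special form handled by the preceding Proposition. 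Rigidity (\Cref{rigidity of the situation}) gives a canonical filtered map on each term and forces compatibility in the cosimplicial direction; totalizing yields the filtered lift for $R$. Independence of the chosen surjection follows from a sifted-category argument (any two surjections are dominated by their coproduct). This is exactly the descent you describe, but anchored at smooth $R$ rather than at an intermediate layer of general large quasi-syntomic algebras where the required covers are not available.
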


\begin{proof}
For any surjection $A/I\langle X_1, \ldots, X_n\rangle \to R$,
the ring 
\[
\tilde{R} = A/I\langle X_1^{1/p^\infty}, \ldots, X_n^{1/p^\infty} \rangle 
\otimes_{A/I\langle X_1, \ldots, X_n\rangle} R
\]
is large quasisyntomic over $A/I$ and Zariski locally of the form considered in the previous proposition.
Therefore the map $\Prism^{(1)}_{\tilde{R}/A} \to \dR_{\tilde{R}/A}^{\wedge}$
is canonically filtered: (Zariski locally)
existence follows from the previous proposition,
Zariski glue as well as uniqueness is provided by \Cref{rigidity of the situation}.
The same applies to all terms of the \v{C}ech nerve $\tilde{R}^{\bullet}$ of
$R \to \tilde{R}$.

The filtered cosimplicial rings $\Prism^{(1)}_{\tilde{R}^{\bullet}/A}$
and $\dR_{\tilde{R}^{\bullet}/A}^{\wedge}$ computes the filtered rings
$\Prism^{(1)}_{\tilde{R}/A}$ and $\dR_{\tilde{R}/A}^{\wedge}$ separately.
By \Cref{rigidity of the situation}, we get a map of filtered cosimplicial rings.

This construction is independent of the choice of the surjection
$A/I\langle X_1, \ldots, X_n\rangle \to R$: adding extra variables to the $X_i$,
one gets a square of maps between filtered cosimplicial algebras,
we use \Cref{rigidity of the situation} to see the maps commute for each term
associated with $[m] \in \Delta$.
Since the category of such surjections admits pairwise coproducts, it is therefore sifted.
The naturality in $R$ follows from exactly the same argument.
This way we get the desired functorial map.
\end{proof}

Now we turn to the general situation where the base prism $(B,J)$ is not necessarily
transversal.
We bootstrap the previous two propositions.

\begin{proposition}
Let $S = B/J \langle X_1^{1/p^\infty}, \ldots, X_n^{1/p^\infty} \rangle/(f_1, \ldots, f_r)$
where $(f_i)$ is a $p$-completely regular sequence.
Then there exists $f_i(S)$ and $g_i(S)$ making the diagram~\ref{NygHdg diagram} commute.
\end{proposition}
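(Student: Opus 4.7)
\emph{The plan.} I will reduce to the transversal case established in the previous proposition via a base-change argument. The rigidity statements of \Cref{rigidity of the situation} carry over verbatim with $(B,J)$ in place of $(A,I)$, so the desired maps $f_i(S)$ and $g_i(S)$ are unique when they exist and existence is qsyn-local; after Zariski localization on $\Spf(B)$ we may therefore assume $J=(d)$ for a nonzerodivisor $d\in B$.

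The main construction realizes $S$ as a base change from a transversal prism. Start from the Breuil--Kisin-type transversal prism $A_0 = \ZZ_p[\![T]\!]$ with $\delta(T)=1$ (so $A_0/(T) = \ZZ_p$ and $(A_0,(T))$ is transversal). Expand each $f_i$ as a $p$-adically convergent sum $\sum_{\mathbf{m}} c_{i,\mathbf{m}} X^{\mathbf{m}/p^{k_i}}$ with $c_{i,\mathbf{m}}\in B/J$, lift each coefficient to $\tilde{c}_{i,\mathbf{m}} \in B$, and form the $(p,T)$-completed free $\delta$-$A_0$-algebra $A = A_0\{W_{i,\mathbf{m}}\}^\wedge$ on the corresponding family of free $\delta$-variables. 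Then $(A, (T))$ is transversal, being $(p,T)$-completely flat over $A_0$, and we define the map of prisms $(A, (T)) \to (B,(d))$ by $T\mapsto d$ and $W_{i,\mathbf{m}}\mapsto\tilde{c}_{i,\mathbf{m}}$. Setting $\tilde{f}_i \coloneqq \sum_\mathbf{m} W_{i,\mathbf{m}} X^{\mathbf{m}/p^{k_i}} \in A/(T)\langle X_j^{1/p^\infty}\rangle$ and $S_A \coloneqq A/(T)\langle X_j^{1/p^\infty}\rangle/(\tilde{f}_1,\dots,\tilde{f}_r)$, we obtain $S_A \widehat{\otimes}_{A/(T)} B/J \cong S$ by construction. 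The key technical point -- and the main obstacle -- is the $p$-complete regularity of the sequence $(\tilde{f}_i)$: modulo $(p,T)$, the ring $A/(T)\langle X_j^{1/p^\infty}\rangle$ contains the $W_{i,\mathbf{m}}$ as algebraically independent polynomial variables over $\FF_p\langle X_j^{1/p^\infty}\rangle$, so each $\tilde{f}_i$ is a nonzero linear form in a set of these variables disjoint from those occurring in $\tilde{f}_{i'}$ for $i'\neq i$, and such linear forms manifestly form a regular sequence.

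Granting the regularity, the previous proposition supplies $g_i(S_A)$ and $f_i(S_A)$ filling in the analogue of diagram~\ref{NygHdg diagram} over the transversal prism $(A,(T))$. Since each of $\Prism^{(1)}_{-/-}$, the Nygaard filtration (see \Cref{Nygaard filtration} and the discussion in \cite[Section 15]{BS19}), $\dR_{-/-}^\wedge$, the Hodge filtration, and hence also the cones $Q_{1,i}, Q_{2,i}$ is compatible with base change in the prism, taking the derived $(p,J)$-completed tensor along $(A,(T))\to(B,J)$ produces the desired $g_i(S)$ and $f_i(S)$ fitting into diagram~\ref{NygHdg diagram} over $(B,J)$.
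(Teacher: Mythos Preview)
Your overall strategy---reduce to a transversal prism and base change back---is exactly what the paper does, but your specific construction of the transversal prism and the lifts $\tilde f_i$ has two genuine problems.

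First, your map $(A,(T))\to(B,(d))$ is not a map of prisms. You set $\delta(T)=1$ in $A_0=\ZZ_p[\![T]\!]$, but a $\delta$-ring homomorphism $A_0\to B$ sending $T\mapsto d$ forces $\delta_B(d)=1$, whereas the only general constraint on a distinguished element is that $\delta_B(d)$ be a unit. There is no reason this can be fixed by rescaling the generator of $J$. The paper instead takes
\[
A=\ZZ_p\{x_b : b\in B\}\{\delta(x_d)^{-1}\}^\wedge_{(x_d,p)}
\]
with the evident surjection $x_b\mapsto b$; since every element of $B$ is hit by a \emph{free} $\delta$-variable, the map is automatically a $\delta$-ring map, and being surjective it lets one lift the $f_i$ to honest elements $\tilde f_i$ of $A/(a)\langle X_j^{1/p^\infty}\rangle$.

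Second, your lifts $\tilde f_i=\sum_{\mathbf m} W_{i,\mathbf m}X^{\mathbf m/p^{k_i}}$ need not make sense: the original expansion $f_i=\sum c_{i,\mathbf m}X^{\mathbf m}$ is only $p$-adically convergent, so may have infinitely many nonzero terms, and replacing the coefficients $c_{i,\mathbf m}\to 0$ by free variables $W_{i,\mathbf m}$ (which do not tend to $0$ in $A$) produces a divergent formal sum in $A/(T)\langle X_j^{1/p^\infty}\rangle$. (The assumption that the exponents have denominator bounded by $p^{k_i}$ is likewise unjustified.) Even granting well-definedness, the claim that the $(\tilde f_i)$ ``manifestly form a regular sequence'' is too quick: linear forms with non-unit coefficients in disjoint variable sets are not automatically regular.

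The paper sidesteps both the well-definedness and the regularity issues in one stroke: rather than forcing the lifts $\tilde f_i$ to be regular, it takes
\[
\tilde S=\mathrm{Kos}\big(A/(a)\langle X_j^{1/p^\infty}\rangle;\tilde f_1,\dots,\tilde f_r\big)
\]
as an \emph{animated} $A/(a)$-algebra, invokes the left Kan extended filtered comparison of \Cref{filtered map for transversal} (which applies to all animated $A/(a)$-algebras, not just those of the special form in the preceding proposition), and then base changes along $A\to B$. The derived base change $\tilde S\otimes^{\BL}_{A/(a)}B/J$ recovers the discrete ring $S$ precisely because $(f_i)$ is $p$-completely regular over $B/J$---the regularity hypothesis is used only here, not on the transversal side.
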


\begin{proof}
We shall utilize the knowledge when the base prism is transversal.
Without loss of generality, let us assume $(B,J) = (B, (d))$ is oriented:
Zariski locally it is oriented, and the locally defined $f_i(S)$ and $g_i(S)$
will necessarily glue due to \Cref{rigidity of the situation}.

Following a private communication with Illusie, let us define a transversal prism $(A, (a))$ together with a surjection
of prisms $(A, (a)) \twoheadrightarrow (B, (d))$ as follows.
Let 
\[
A \coloneqq \mathbb{Z}_p\{x_{b}; b \in B\}\{\delta(x_d)^{-1}\}^\wedge_{(x_d, p)}
\]
be given by first adjoining a free $\delta$-variable corresponding to each element in $B$ to $\mathbb{Z}_p$ 
together with an inverse of the $\delta$ of the variable corresponding to $d \in B$, 
completed in the end with respect to $(p, x_d)$.
Denote $a \coloneqq x_d$.
Then $(A, (a))$ is a transversal prism, and there is an evident surjection of prisms 
$(A, (a)) \twoheadrightarrow (B, (d))$.

Consider the surjection $A/a \langle X_1^{1/p^\infty}, \ldots, X_n^{1/p^\infty} \rangle \to
B/J \langle X_1^{1/p^\infty}, \ldots, X_n^{1/p^\infty} \rangle$
and lift the elements $f_i$ to $\tilde{f}_i$.
Let 
\[
\tilde{S} \coloneqq \mathrm{Kos}(A/a \langle X_1^{1/p^\infty}, \ldots, X_n^{1/p^\infty} \rangle; \tilde{f}_1,
\ldots, \tilde{f}_r).
\]
We have $S = \tilde{S} \otimes^{\BL}_{(A/a)} B/d$.
We know the analogous map for $\tilde{S}/(A/a)$ exists, thanks to \Cref{filtered map for transversal}.
Since both Nygaard and Hodge filtrations satisfy base change, we may base change the maps
for $\tilde{S}/(A/a)$ to obtain our desired maps for $S/(B/b)$.
\end{proof}

Following the same reasoning as in the proof of \Cref{filtered map for transversal},
one obtains the following:
\begin{proposition}
\label{filtered map for general}
For any $p$-completely smooth $B/J$-algebra $R$, 
the map $\Prism^{(1)}_{R/B} \to \dR_{R/B}^{\wedge}$
can be promoted to a map of filtered algebras.
Moreover this lift is functorial in the $B/J$-algebra $R$, hence left Kan extends
to all animated $B/J$-algebras.
\end{proposition}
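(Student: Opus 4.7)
The plan is to follow the same strategy as the proof of \Cref{filtered map for transversal}, but with the immediately preceding proposition taking over the role played there by the transversal case. Given a $p$-completely smooth $B/J$-algebra $R$, I would first pick a surjection $B/J\langle X_1,\ldots,X_n\rangle \twoheadrightarrow R$ of $B/J$-algebras and form the base change
\[
\tilde{R} \coloneqq B/J\langle X_1^{1/p^\infty},\ldots,X_n^{1/p^\infty}\rangle \widehat{\otimes}_{B/J\langle X_1,\ldots,X_n\rangle} R.
\]
Then $\tilde{R}$ is large quasisyntomic over $B/J$ and, Zariski locally on $\Spf(\tilde{R})$, is of the form $B/J\langle Y_j^{1/p^\infty}\rangle/(f_1,\ldots,f_r)$ with $(f_i)$ a $p$-completely regular sequence. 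On each such affine open, the preceding proposition supplies the dotted arrows $f_i$ and $g_i$ in diagram \ref{NygHdg diagram}, and parts (1) and (4) of \Cref{rigidity of the situation} assert the uniqueness of these arrows. Uniqueness guarantees that the locally produced arrows Zariski-glue to a well-defined filtered map $\Prism^{(1)}_{\tilde{R}/B} \to \dR_{\tilde{R}/B}^{\wedge}$.

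Next, I would apply the same construction level-wise to the (derived $p$-completed) \v{C}ech nerve $\tilde{R}^{\bullet}$ of $R \to \tilde{R}$, every term of which is again large quasisyntomic over $B/J$ and of the Zariski-local form above. This yields a morphism of filtered cosimplicial $B$-algebras $\Prism^{(1)}_{\tilde{R}^{\bullet}/B} \to \dR_{\tilde{R}^{\bullet}/B}^{\wedge}$. By the quasisyntomic sheaf properties of the relevant filtrations recorded in \Cref{general KO filtration properties}(4) and \Cref{general KON filtration properties}(4), totalizing over $\Delta$ recovers the filtered objects attached to $R$ itself together with the underlying comparison map of \Cref{comparing pris and crys}, and thus furnishes the desired promotion $\Prism^{(1)}_{R/B} \to \dR_{R/B}^{\wedge}$ to a map of filtered algebras.

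For independence of the chosen presentation and for functoriality in $R$, I would appeal once more to \Cref{rigidity of the situation}. Enlarging the polynomial algebra by additional variables, or comparing two such choices via their pairwise coproduct, produces a square of filtered cosimplicial maps whose commutativity on each simplicial level follows from part (2) of \Cref{rigidity of the situation}; the category of such surjections thereby becomes sifted, and the construction descends to something intrinsic to $R$. Exactly the same argument, applied to a morphism $R \to R'$ of $p$-completely smooth $B/J$-algebras equipped with compatible surjections from polynomial algebras, yields the naturality in $R$. The extension to animated $B/J$-algebras is then a formality via left Kan extension.

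The crux that I expect to absorb most of the work is not the construction itself but the verification that uniqueness and sheaf-theoretic vanishing truly control everything: concretely, that the cohomological estimates of \Cref{cohomology estimate} force the rigidity statements of \Cref{rigidity of the situation} to apply both to the local filtered lifts and to their transition maps along the \v{C}ech nerve, so that the gluing produces an honest morphism of filtered cosimplicial algebras rather than merely a homotopy-coherent datum requiring further bookkeeping.
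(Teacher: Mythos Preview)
Your proposal is correct and follows essentially the same approach as the paper, which simply remarks that the argument of \Cref{filtered map for transversal} goes through verbatim once one notes that \Cref{rigidity of the situation} applies over an arbitrary bounded base prism $(B,J)$. One minor point: your citations of \Cref{general KO filtration properties}(4) and \Cref{general KON filtration properties}(4) are technically stated for the transversal prism $(A,I)$, but the underlying quasisyntomic sheaf property of the Nygaard and Hodge filtrations holds for general bounded prisms (the Nygaard case is in \cite[Section 15]{BS19}, and the Hodge case follows by the same d\'evissage to graded pieces), so this does not affect the validity of your argument.
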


\begin{proof}
The argument is exactly the same as \Cref{filtered map for transversal},
note that \Cref{rigidity of the situation} applies to general bounded base prism
$(B,J)$.
\end{proof}

\begin{remark}
Fix a bounded base prism $(B,J)$.
Any such functorial filtered map is determined by its effect on $p$-complete
polynomial algebras, by left Kan extension.
Then by quasisyntomic descent, such a functorial filtered map is determined
by its effect on a basis of $\qsyn_{B/J}$, such as the full subcategory
generated by large quasisyntomic over $B/J$ algebras.
Therefore \Cref{rigidity of the situation} implies that there is 
at most one such functorial filtered map.
Combining with the above proposition, we have both existence and uniqueness
of it.
\end{remark}

\begin{remark}
\label{weak compatibility with base change}
Following the way these filtered maps are constructed, we have certain
compatibility with base change:
Let $R$ be an animated $B/J$-algebra, let $(B,J) \to (C, JC)$
be a map of bounded prisms and denote $R' \coloneqq R \otimes^{\BL}_{B/J} C/JC$,
then the filtered map for $R'/C$ arises as the filtered map for $R/B$
base changed along $B \to C$.
Indeed it suffices to prove this when $R/(B/J)$ is $p$-completely smooth.
Then one simply notices that a surjection $B/J\langle X_1, \ldots, X_n\rangle \to R$
base change along $B \to C$ will give rise to a surjection
$C/JC\langle X_1, \ldots, X_n\rangle \to R'$.
\end{remark}

\subsection{Comparing Hodge and Nygaard filtrations}
We again use $(A,I)$ to denote a transversal prism, and use
$(B,J)$ to denote a general bounded prism.
All sheaves referred to in this subsection are viewed as objects
in $\mathrm{Shv}(\qsyn_{A/I})$ or $\mathrm{Shv}(\qsyn_{B/J})$
depending on the context.
Combining \Cref{comparing pris and crys} and \Cref{filtered map for general},
we get a natural map of sheaves of filtered rings:
\[
(\Prism^{(1)}_{-/B}, \Fil^{\bullet}_{\mathrm{N}}) \widehat{\otimes}_{(B, J^{\bullet})} 
(\dR_{(B/J)/B}^\wedge, \Fil^{\bullet}_{\mathrm{H}})
\longrightarrow (\dR_{-/A}^\wedge, \Fil^{\bullet}_{\mathrm{H}}),
\]
which is an isomorphism on the underlying sheaf of rings.
Our objective in this subsection is to show that the above map is an isomorphism of sheaves
of filtered rings.

%First let us make some explicit computations for a concrete example.
%\begin{example}
%Let $A = \mathbb{Z}_p[q^{1/p^\infty}]^{\wedge}$ where the completion is with respect to
%the $(p, q-1)$-adic topology. Let $I = (q-1)$, so $A/I = \mathbb{Z}_p[\zeta_{p^\infty}]^{\wedge}$.
%Let $R = A/I \langle X^{1/\infty} \rangle/(X)$.
%\textcolor{red}{Need to do the computation.} Actually, we don't need this.
%\end{example}

Our plan is again to first understand the case of transversal base prism,
then bootstrap to general base prisms.
Let us begin by discussing the case of transversal base prism.
\begin{theorem}
\label{lqsyn H and N filtration}
Let $S$ be a large quasisyntomic over $A/I$ algebra.
\begin{enumerate}
\item The map $\Prism^{(1)}_{S/A} \to \dR_{S/A}^{\wedge}$ is injective.
\item We have 
\[
\Fil^r_I \Prism^{(1)}_{S/A} = \left( \Fil^r_{\mathcal{I}} \dR_{S/A}^{\wedge} \right)
\cap \left( \Prism^{(1)}_{S/A} \right).
\]
\item We have 
\[
\Fil^i_{\mathrm{N}} \Prism^{(1)}_{S/A} = \left( \Fil^i_{\mathrm{H}} \dR_{S/A}^{\wedge} \right)
\cap \left( \Prism^{(1)}_{S/A} \right).
\]
\item For any $i$, the natural map 
$\Prism^{(1)}_{S/A}/\Fil^i_{\mathrm{N}} \to \dR_{S/A}^{\wedge}/\Fil^i_{\mathrm{H}}$
is an injection of $p$-torsionfree modules,
whose cokernel is $(i-1)!$-torsion.
Hence multiplying by $(i-1)!$ gives a natural map backward and compose
the two maps in either direction is the same as multiplying by $(i-1)!$.
In particular, the natural map
\[
\Prism^{(1)}_{S/A}/\Fil^i_{\mathrm{N}} \to \dR_{S/A}^{\wedge}/\Fil^i_{\mathrm{H}}
\]
is an isomorphism for any $i \leq p$.
\item The induced map 
$\gr^*_{\mathrm{N}}\Prism^{(1)}_{S/A} \to \gr^*_{\mathrm{H}}\dR_{S/A}^{\wedge}$
is compatible with the vertical filtrations on both sides, and the induced map
on the graded pieces of the vertical filtrations
$(\wedge^i_S \LL_{S/(A/I)})^{\wedge}[-i] \widehat{\otimes}_{A/I} \Sym^*_{A/I}(I/I^2)
\to (\wedge^i_S \LL_{S/(A/I)})^{\wedge}[-i] \widehat{\otimes}_{A/I} \Gamma^*_{A/I}(I/I^2)$
is given by $\id \otimes_{A/I} \left(\gr^*_I A \to \gr^*_{\mathcal{I}} \mathcal{A} \right)$.
\end{enumerate}
\end{theorem}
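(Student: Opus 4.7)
The plan is to exploit the double-filtered structure: both $\Prism^{(1)}_{S/A}$ and $\dR^\wedge_{S/A}$ carry bi-indexed filtrations $\Fil^{\bullet, \bullet}$ combining the Nygaard/Hodge and $(I, \mathcal{I})$-adic filtrations, as developed in \Cref{general KO filtration properties,general KON filtration properties} and their large-quasisyntomic refinements \Cref{qsyn KO filtration properties,qsyn KON filtration properties}. The comparison map $\Prism^{(1)}_{S/A} \to \dR^\wedge_{S/A}$ is filtered for Nygaard/Hodge by \Cref{filtered map for general} and for the $(I, \mathcal{I})$-adic filtrations because a local generator $d$ of $I$ satisfies $d^r = r! \cdot \gamma_r(d) \in \mathcal{I}^{[r]}$; hence the comparison preserves the full bi-filtration.

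I would first prove (5). In the large quasisyntomic setting, the bi-graded pieces at $(i, j)$ are explicitly $(\wedge^{j-i}_S \LL_{S/(A/I)})^{\wedge}[-(j-i)] \otimes \Sym^i(I/I^2)$ on the prismatic side and $(\wedge^{j-i}_S \LL_{S/(A/I)})^{\wedge}[-(j-i)] \otimes \Gamma^i(I/I^2)$ on the de Rham side, by \Cref{general KO filtration properties} (3) and \Cref{general KON filtration properties} (3). The identification in \Cref{compatibility when modulo I} of $\Prism^{(1)}/I \to \dR^\wedge/\mathcal{I}$ with the identity on $\dR^\wedge_{S/(A/I)}$ determines the induced bi-graded map to be $\id \otimes (\Sym^i(I/I^2) \to \Gamma^i(I/I^2))$, with the second factor sending $d^i \mapsto i! \cdot \gamma_i(d)$. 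Summing over $i$ and using the identifications of vertical filtrations in \Cref{increasing filtration on Nygaard graded,increasing filtration on Hodge graded} then yields (5).

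Parts (2) and (3) follow from (5) by successive induction. Since $\Sym^k(I/I^2) \to \Gamma^k(I/I^2)$ is injective with cokernel killed by $k!$ (locally: multiplication by $k!$ on a rank-one $A/I$-module with $k!$ a non-zero-divisor in the $p$-torsionfree ring $A/I$), and $\dR^\wedge_{S/(A/I)}$ is $(A/I)$-flat (\Cref{Hodge filtrations on dR}), the induced map on bi-graded pieces is injective. Finite induction along the vertical filtration yields injectivity of $\gr^j_N \to \gr^j_H$, and induction along the Nygaard/Hodge filtration (both sides concentrated in cohomological degree $0$ by \Cref{Hodge filtrations on dR} and the analogous statements in \Cref{Nygaard filtration}) then gives the preimage identification in (3); the argument for (2) is analogous using the $(I, \mathcal{I})$-adic filtration. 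Statement (1) follows from (2) and the $(p, I)$-adic completeness of $\Prism^{(1)}_{S/A}$: any kernel element lies in $I^n \Prism^{(1)}_{S/A}$ for every $n$, and hence vanishes.

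Finally, (4) is obtained by constructing an explicit backward map. Tensoring the natural $A$-module retraction $\mathcal{A}/\mathcal{I}^{[i]} \to A/I^i$ given by $\gamma_k(d) \mapsto \frac{(i-1)!}{k!} d^k$ (well-defined since $(i-1)!/k!$ is an integer for $k \leq i-1$) with $\Prism^{(1)}_{S/A}$ via the comparison of \Cref{comparing pris and crys} and descending to the filtered quotient yields a backward map whose compositions with the forward map in either direction are multiplication by $(i-1)!$. Consequently the cokernel is $(i-1)!$-torsion, and for $i \leq p$ this integer is a unit in $\mathbb{Z}_p$, yielding an isomorphism. The main technical obstacle lies in this last step: the Hodge filtration $\Fil^i_H$ is a convolution of Nygaard and PD ideal filtrations rather than simply the $\mathcal{I}$-adic filtration (as foreshadowed by the general form of the smooth H and N filtration theorem proven subsequently), so matching the PD-envelope retraction to the correctly-filtered quotient $\dR^\wedge/\Fil^i_H$ requires careful tracking of the bi-filtered structure.
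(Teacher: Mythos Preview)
Your approach to (5) is essentially the paper's, and your route to (3) via injectivity of $\gr^j_{\mathrm N}\to\gr^j_{\mathrm H}$ (using the vertical filtration and flatness of the Hodge graded pieces over $A/I$) is a valid variant of the paper's argument, which instead filters the quotients $\Prism^{(1)}/\Fil^i_{\mathrm N}$ and $\dR^\wedge/\Fil^i_{\mathrm H}$ by the $I$-adic and $\mathcal I^{[\bullet]}$-filtrations and checks injectivity on those graded pieces.

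For (1) and (2), however, you are working much harder than necessary. The comparison map $\Prism^{(1)}_{S/A}\to\dR^\wedge_{S/A}$ is, by \Cref{comparing pris and crys}, literally the $(p,I)$-completed tensor of the inclusion $A\hookrightarrow\mathcal A$ with $\Prism^{(1)}_{S/A}$ over $A$. Since $\Prism^{(1)}_{S/A}$ is $(p,I)$-completely flat over $A$, (1) is immediate, and (2) follows the same way from the injection $A/I^r\hookrightarrow\mathcal A/\mathcal I^{[r]}$. Your detour through graded pieces and completeness is correct but obscures this one-line argument.

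The genuine gap is in (4). Your retraction $\mathcal A/\mathcal I^{[i]}\to A/I^i$, tensored with $\Prism^{(1)}_{S/A}$, produces a map $\dR^\wedge/\Fil^i_{\mathcal I}\to\Prism^{(1)}/\Fil^i_I\to\Prism^{(1)}/\Fil^i_{\mathrm N}$; what you need is that this descends through $\dR^\wedge/\Fil^i_{\mathcal I}\twoheadrightarrow\dR^\wedge/\Fil^i_{\mathrm H}$. You correctly flag this as ``the main technical obstacle'' but do not resolve it. It can in fact be resolved: using the identity $\Fil^i_{\mathrm H}=\sum_{k=0}^i\Fil^k_{\mathrm N}\cdot\mathcal I^{[i-k]}$ (which, as noted in \Cref{rem-N and H fil}, follows independently from \Cref{general KO filtration properties}, \Cref{qsyn KO filtration properties}, and \Cref{Nygaard filtration}), one checks that $x_k\cdot\gamma_{i-k}(d)\mapsto\tfrac{(i-1)!}{(i-k)!}\,x_k\cdot d^{i-k}\in\Fil^k_{\mathrm N}\cdot I^{i-k}\subset\Fil^i_{\mathrm N}$. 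Without this computation, your argument is incomplete. The paper sidesteps this entirely: it passes to an auxiliary algebra $S'$ for which $\dR^\wedge_{S'/A}$ is explicitly a $p$-completed PD envelope, and directly verifies that $(i-1)!$ times any degree-$<i$ divided monomial lies in the image of $\Prism^{(1)}_{S'/A}$, then descends to $S$ via a surjection $\dR^\wedge_{S'/A}\twoheadrightarrow\dR^\wedge_{S/A}$.
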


%Here $\nu_i$ is exactly the power of $p$ to kill the cokernel of the inclusion
%$\oplus_{n = 0}^{i-1} \Sym^n_{A/I}(I/I^2) \hookrightarrow 
%\oplus_{n = 0}^{i-1} \Gamma^n_{A/I}(I/I^2)$.
Contemplating with $R = A/I$ suggests that our estimate in (4) is sharp.

Before the proof, 
let us remark that $p$-completely tensor over $A$ with an $\mathcal{A}$-module is the same
as $(p,I)$-completely tensor.
This is because $I^p \mathcal{A} \subset p \mathcal{A}$.

\begin{proof}
(1): the map is given by $(p,I)$-completely tensoring the inclusion $A \hookrightarrow \mathcal{A}$ 
with $\Prism^{(1)}_{S/A}$ over $A$.
Since $\Prism^{(1)}_{S/A}$ is $(p,I)$-completely flat over $A$, see \Cref{flatness of dR}, we get the injectivity
of $\Prism^{(1)}_{S/A} \to \dR_{S/A}^{\wedge}$.

(2): clearly we have $I^r \Prism^{(1)}_{S/A}$ contained in 
$\mathcal{I}^{[r]} \dR_{S/A}^{\wedge}$. To check the equality of intersection,
it suffices to show the induced map
$\Prism^{(1)}_{S/A}/I^r \to \dR_{S/A}^{\wedge}/\mathcal{I}^{[r]}$ is injective.
But this map is given by $(p,I)$-completely tensoring $\Prism^{(1)}_{S/A}$ with
the inclusion $A/I^r \hookrightarrow \mathcal{A}/\mathcal{I}^{[r]}$ over $A$,
so we get the desired injectivity again by $(p,I)$-completely flatness of
$\Prism^{(1)}_{S/A}$ over $A$.

(3): it suffices to show that the induced map
$\tilde{g} \colon \Prism^{(1)}_{S/A}/\Fil^i_{\mathrm{N}} \to \dR_{S/A}^{\wedge}/\Fil^i_{\mathrm{H}}$
is injective.
The $I$-adic and $\mathcal{I}^{[\bullet]}$-filtrations on each side induces
maps of graded pieces as
\[
\dR_{S/(A/I)}^{\wedge}/\Fil^j_{\mathrm{H}} \widehat{\otimes}_{A/I} I^{i-j}/I^{i-j+1} \to
\dR_{S/(A/I)}^{\wedge}/\Fil^j_{\mathrm{H}} \widehat{\otimes}_{A/I} \mathcal{I}^{[i-j]}/\mathcal{I}^{[i-j+1]}.
\]
Here we have used \Cref{general KO filtration properties} (3) and \Cref{general KON filtration properties} (3).
We conclude that the map $\tilde{g}$ is injective as $\dR_{S/(A/I)}^{\wedge}/\Fil^j_{\mathrm{H}}$
is $p$-completely flat over $A/I$ for any $j$ and the natural map
$I^{i-j}/I^{i-j+1} \to \mathcal{I}^{[i-j]}/\mathcal{I}^{[i-j+1]}$ is injective.

(4): injectivity follows from the previous paragraph. 
Let $S = A/I \langle X_l^{1/p^\infty} \mid l \in L \rangle /M $, 
with each element $m \in M$ corresponding  to a series $f_m$.
Below we shall not distinguish $m$ and $f_m$.
Consider 
\[
S' = A/I \langle X_l^{1/p^\infty}, Y_m^{1/p^\infty} \mid l \in L, m \in M \rangle /(Y_m - f_m; m \in M)
\eqqcolon \widetilde{S}/(Y_m - f_m; m \in M).
\]
There is a surjection $S' \twoheadrightarrow S$ of $A/I$-algebras, sending powers of $Y_m$ to $0$.
This induces a surjection on $\mathbb{L}_{-/A}^\wedge$, hence also a surjection on $\dR_{-/A}^\wedge$.
Therefore it suffices to prove the statement for $S'$.

Now we know $\dR_{S'/A}^\wedge$ is given by $p$-completely adjoining divided powers of
$I$ and $Y_m - f_m$ to $\widetilde{S}$,
and the $i$-th Hodge filtration is given by the ideal $p$-completely generated by those
degree-at-least-$i$ divided monomials.
Since the image of $\Prism^{(1)}_{S'/A}$ contains 
$\widetilde{S}$ already,
it suffices to show that $(i-1)!$ times those degree-less-than-$i$ divided monomials
lies in $\widetilde{S}$, which follows from definition.

(5): since the generating factor $(\wedge^i_S \LL_{S/(A/I)})^{\wedge}[-i]$
of both vertical filtrations comes from the $i$-th graded piece of the Hodge filtration
on $\dR_{S/(A/I)}^{\wedge}$ (via modulo $I$ and $\mathcal{I}$ respectively),
our statement follows from the commutative triangle in~\Cref{compatibility when modulo I}.
\end{proof}

The above statements can be immediately extended to our desired statement
via several reduction steps.
\begin{corollary}
\label{affine H and N filtration}
Let $R$ be a $B/J$-algebra.
The natural map of filtered algebras:
\[
(\Prism^{(1)}_{R/B}, \Fil^{\bullet}_{\mathrm{N}}) \widehat{\otimes}_{(B, J^{\bullet})} 
(\dR_{(B/J)/B}^\wedge, \Fil^{\bullet}_{\mathrm{H}})
\longrightarrow (\dR_{R/A}^\wedge, \Fil^{\bullet}_{\mathrm{H}}),
\]
is a filtered isomorphism.
In particular, filtrations on the left hand side define quasisyntomic sheaves.
\end{corollary}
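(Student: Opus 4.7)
The plan is to reduce to the case handled by Theorem~\ref{lqsyn H and N filtration} (transversal base prism, large quasi-syntomic algebra) via a sequence of descent arguments. The underlying ring isomorphism is already provided by Theorem~\ref{comparing pris and crys}, so the entire content of the corollary is the compatibility of the filtrations. Since a map of filtered objects is a filtered isomorphism precisely when each filtered piece induces an isomorphism, and this can be checked on graded pieces via a standard inductive argument, the objective reduces to identifying the graded pieces on both sides.

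First, I would reduce to the case of a transversal base prism. Given a general bounded prism $(B,J)$, following the construction in the proof of the proposition preceding \Cref{filtered map for general}, one obtains a transversal prism $(A,(a))$ with a surjection of prisms $(A,(a)) \twoheadrightarrow (B,(d))$ after Zariski-localizing to orient $(B,J)$. Both the Nygaard-filtered prismatic cohomology and the Hodge-filtered derived de Rham complex respect base change of the prism, as each filtered piece is defined by left Kan extension from polynomial algebras where base change is manifest. Combined with a filtered enhancement of \Cref{weak compatibility with base change}, which follows from the uniqueness of the filtered map guaranteed by the rigidity argument of \Cref{rigidity of the situation}, we may assume the base prism is transversal.

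Second, having fixed a transversal prism $(A,I)$, I would apply quasi-syntomic descent to further reduce to the case where $R$ is a large quasi-syntomic over $A/I$ algebra $S$. Each filtered piece of the RHS is a quasi-syntomic sheaf by \Cref{general KO filtration properties} (via the $\Fil^{i,j}$ double filtration), and by the analogous statements on the LHS coming from \Cref{general KON filtration properties} together with the sheaf property of $\Prism^{(1)}_{-/A}$; since large quasi-syntomic over $A/I$ algebras form a basis of $\qsyn_{A/I}$, descent reduces the claim to such $S$. For these algebras, everything in sight is concentrated in cohomological degree zero by \Cref{cohomology estimate}.

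Finally, for transversal base $(A,I)$ and large quasi-syntomic $S$, I would invoke \Cref{lqsyn H and N filtration}. The graded pieces of the LHS are $\gr^*_{\mathrm{N}} \Prism^{(1)}_{S/A} \otimes^\BL_{\Sym^*_{A/I}(I/I^2)} \Gamma^*_{A/I}(I/I^2)$; by the vertical filtration of \Cref{increasing filtration on Nygaard graded} combined with the cancellation $(\Sym^* M) \otimes_{\Sym^* M} \Gamma^* M \cong \Gamma^* M$, these take the form $(\wedge^i_S \LL_{S/(A/I)})^{\wedge}[-i] \widehat{\otimes}_{A/I} \Gamma^*_{A/I}(I/I^2)$, exactly matching the graded pieces of the vertical filtration on $\gr^*_{\mathrm{H}} \dR_{S/A}^\wedge$ from \Cref{increasing filtration on Hodge graded}. \Cref{lqsyn H and N filtration}(5) identifies the induced map on these factors as the identity on the cotangent-complex part tensored with the natural map $\Sym^* \to \Gamma^*$ (already canceled against $\Gamma^*$), producing the desired graded isomorphism. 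The second assertion, that the LHS filtrations are quasi-syntomic sheaves, then follows from the identification with the RHS filtrations, which are sheaves. The main obstacle I anticipate is the careful filtered base-change argument in the first step, since \Cref{weak compatibility with base change} addresses only the underlying map; one must verify that the Day convolution tensor product and the $\Fil^{i,j}$ descent on each side interact correctly with transversal covers of prisms before the main reduction can proceed.
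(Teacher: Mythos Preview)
Your approach is essentially correct and uses the same core argument as the paper: reduce to a transversal base prism, then compare graded pieces via the vertical filtrations together with \Cref{lqsyn H and N filtration}(5). The paper orders its reductions slightly differently: it first uses left Kan extension to pass to $p$-complete polynomial $R = B/J\langle X_1,\ldots,X_n\rangle$, then base-changes to the universal oriented prism (which is transversal) via \Cref{weak compatibility with base change}---which is already stated for the \emph{filtered} map, so your anticipated obstacle about a ``filtered enhancement'' dissolves---and then runs the graded-pieces argument directly for smooth $R$, without any quasi-syntomic descent. The vertical-filtration lemmas (\Cref{increasing filtration on Hodge graded}, \Cref{increasing filtration on Nygaard graded}) are stated for general $R$, and the proof of \Cref{lqsyn H and N filtration}(5) only invokes \Cref{compatibility when modulo I}, so there is no need to reduce further to large quasi-syntomic $S$.

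Your second step---quasi-syntomic descent to large quasi-syntomic $S$---is therefore unnecessary, and as written carries a circularity: you invoke the sheaf property of the LHS filtrations in order to descend, but that sheaf property is precisely what the corollary \emph{deduces} from the filtered isomorphism. Your appeal to \Cref{general KON filtration properties}(4) covers the $\Fil^{i,j}$ on $\Prism^{(1)}$ itself, not the Day-convolution tensor over $(A,I^\bullet)$ with $(\mathcal{A},\mathcal{I}^{[\bullet]})$; the latter is a (completed) colimit construction and is not obviously a sheaf a priori. This is not fatal---dropping step two and running your step three directly for polynomial $R$ recovers the paper's argument---but it is a genuine gap in the proposal as written, and one distinct from the obstacle you flagged.
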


We refer readers to \cite[3.8-3.10]{GL20} for a discussion of the filtration on tensor
of filtered modules over a filtered algebra. Here we use $\widehat{\otimes}$ to mean
that we derived $p$-complete the \cite[Construction 3.9]{GL20}.

\begin{proof}
We make a few reduction steps.
First of all, both sides are left Kan extended from the case of $p$-complete
polynomial algebras, therefore it suffices to show the map is a filtered isomorphism
when $R = B/J \langle X_1, \ldots, X_n\rangle$.
Secondly, it suffices to prove the statement Zariski locally on $\Spf(B/J)$,
hence we may assume $(B,J)$ is oriented.
Now we look at the universal map from universal oriented prism $(A^{\mathrm{univ}}, I) \to (B,J)$.
Let $R^{\mathrm{univ}}$ be the corresponding $p$-complete polynomial algebras over 
the reduction of the universal oriented prism.
By \Cref{weak compatibility with base change}, one sees that the filtered map
for $R$ is the base change of the analogous map for $R^{\mathrm{uinv}}$.
Therefore we are finally reduced to the case where the base prism $(A,I)$ is transversal
and $R$ is a $p$-completely smooth $A/I$-algebra.

Since the underlying algebra is an isomorphism by \Cref{comparing pris and crys}, it suffices
to show the induced map of graded algebra is an isomorphism.
By derived $p$-completing \cite[Lemma 3.10]{GL20}, we see that the graded algebra of left hand side
becomes 
\[
\gr^*_{\mathrm{N}}(\Prism^{(1)}_{R/A}) \widehat{\otimes}_{\Sym^*_{A/I}(I/I^2)} \Gamma^*_{A/I}(I/I^2).
\]
Now we invoke the vertical filtrations on graded algebras of both sides, see \Cref{increasing filtration on Hodge graded}
and \Cref{increasing filtration on Nygaard graded}.
The vertical filtration on $\gr^*_{\mathrm{N}}(\Prism^{(1)}_{R/A})$ induces an increasing filtration
by $(-) \widehat{\otimes}_{\Sym^*_{A/I}(I/I^2)} \Gamma^*_{A/I}(I/I^2)$,
and our morphism induces identifications
\[
\gr^v_i\bigg(\gr^*_{\mathrm{N}}(\Prism^{(1)}_{R/A}) \widehat{\otimes}_{\Sym^*_{A/I}(I/I^2)} \Gamma^*_{A/I}(I/I^2)\bigg)
\cong \gr^v_i\bigg(\gr^*_{\mathrm{H}}(\dR_{R/A}^\wedge)\bigg)
\]
for all $i$.
Here we have used \Cref{lqsyn H and N filtration} (5).
Since these vertical filtrations are increasing, exhaustive, and uniformly bounded below by $0$,
we conclude that the natural map
\[
\gr^*_{\mathrm{N}}(\Prism^{(1)}_{R/A}) \widehat{\otimes}_{\Sym^*_{A/I}(I/I^2)} \Gamma^*_{A/I}(I/I^2) \longrightarrow
\gr^*_{\mathrm{H}}(\dR_{R/A}^\wedge)
\]
is also an isomorphism.
\end{proof}

In particular, we can specialize to the case of quasi-compact quasi-separated smooth formal schemes over $\Spf(B/J)$.

\begin{corollary}[{c.f.~\cite[Theorem 2.9]{Ill20}}]
\label{smooth H and N filtration}
Let $X$ be a quasi-compact quasi-separated smooth formal scheme over $\Spf(B/J)$.
Then we have a natural filtered isomorphism:
\[
\bigg(\mathrm{R\Gamma}(X, \Fil^{\bullet}_{\mathrm{N}}(\Prism^{(1)}_{-/B}))\bigg) 
\widehat{\otimes}_{(B, J^{\bullet})} 
(\dR_{(B/J)/B}^\wedge, \Fil^{\bullet}_{\mathrm{H}})
\xrightarrow{\cong} \mathrm{R\Gamma}(X, \Fil^{\bullet}_{\mathrm{H}}(\dR_{-/B}^\wedge));
\]
they are furthermore naturally filtered isomorphic to
$\mathrm{R\Gamma_{crys}}(X, \mathcal{I}^{[\bullet]}_{\mathrm{crys}})$
if $(B,J)$ is transversal.
Similarly whenever $i \leq p$, we also have natural isomorphisms
\[
\mathrm{R\Gamma}(X, \Prism^{(1)}_{-/A}/\Fil^i_{\mathrm{N}}) \xrightarrow{\cong} 
\mathrm{R\Gamma}(X, \dR_{-/A}^\wedge/\Fil^i_{\mathrm{H}});
\]
and are furthermore naturally isomorphic to
$\mathrm{R\Gamma_{crys}}(X, \mathcal{O}_{\mathrm{crys}}/\mathcal{I}^{i}_{\mathrm{crys}})$
if $(B,J)$ is transversal.
These isomorphisms are functorial in $X$, and satisfies the base change property
as in \Cref{weak compatibility with base change}.
\end{corollary}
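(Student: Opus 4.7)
The plan is to reduce the global filtered statement to the affine filtered statement of \Cref{affine H and N filtration} via a standard \v{C}ech argument, and then obtain the crystalline reformulation via \Cref{Illusie--Bhatt}. Since $X$ is quasi-compact and quasi-separated, pick a finite affine cover $\{U_\alpha = \Spf(R_\alpha)\}$ of $X$ with $R_\alpha$ a $p$-completely smooth $B/J$-algebra, and form its \v{C}ech nerve $U_\bullet \to X$; all terms $U_{\alpha_0 \cdots \alpha_n}$ are affine by the QCQS assumption, and the associated finite totalization of the values of the sheaves $\Fil^\bullet_{\mathrm{N}}(\Prism^{(1)}_{-/B})$ and $\Fil^\bullet_{\mathrm{H}}(\dR^\wedge_{-/B})$ computes $\mathrm{R\Gamma}(X,-)$ of each.

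Next, apply \Cref{affine H and N filtration} termwise on $U_\bullet$: for each tuple we have a filtered isomorphism
\[
\bigg(\Fil^\bullet_{\mathrm{N}}(\Prism^{(1)}_{R_{\alpha_0\cdots\alpha_n}/B})\bigg)
\widehat{\otimes}_{(B, J^\bullet)}
\bigg(\dR^\wedge_{(B/J)/B}, \Fil^\bullet_{\mathrm{H}}\bigg)
\xrightarrow{\cong}
\Fil^\bullet_{\mathrm{H}}(\dR^\wedge_{R_{\alpha_0\cdots\alpha_n}/B}),
\]
which is functorial in the triple $(B,J,R)$ by the uniqueness built into the construction (cf.~\Cref{rigidity of the situation} and \Cref{weak compatibility with base change}). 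Taking the finite totalization over the \v{C}ech nerve $U_\bullet$ in each filtration degree, I would then use that the $p$-complete filtered derived tensor $(-) \widehat{\otimes}_{(B,J^\bullet)}(\dR^\wedge_{(B/J)/B},\Fil^\bullet_{\mathrm{H}})$ commutes with finite limits, so the totalization of the left-hand sides yields the desired left-hand side of the corollary. This produces the first filtered isomorphism.

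For the mod-filtration statement, observe that \Cref{lqsyn H and N filtration} (4), extended by left Kan extension and quasisyntomic descent, shows that the natural map $\Prism^{(1)}_{-/B}/\Fil^i_{\mathrm{N}} \to \dR^\wedge_{-/B}/\Fil^i_{\mathrm{H}}$ is an isomorphism of quasisyntomic sheaves on $\qsyn_{B/J}$ for $i \leq p$ when $(B,J)$ is transversal; the general $(B,J)$ case follows by a base-change argument as in the proof of \Cref{filtered map for general}, using the universal oriented prism. Applying $\mathrm{R\Gamma}(X,-)$ to this isomorphism of sheaves gives the mod-$\Fil^i$ comparison. When $(B,J)$ is furthermore transversal, the identification with PD-filtered crystalline cohomology and $\mathrm{R\Gamma_{crys}}(X,\mathcal{O}_{\cris}/\mathcal{I}^i_{\cris})$ is obtained by pointwise invoking \Cref{Illusie--Bhatt}, then globalizing via the same \v{C}ech totalization.

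The main technical obstacle is verifying that $p$-complete filtered tensor with $(\dR^\wedge_{(B/J)/B},\Fil^\bullet_{\mathrm{H}})$ over the filtered ring $(B,J^\bullet)$ commutes with the finite \v{C}ech totalization on each filtration step. Because the \v{C}ech nerve is finite of bounded length (determined by the size of the cover) and the relevant values live in uniformly bounded cohomological degrees on the basis of large quasisyntomic over $B/J$ algebras (\Cref{cohomology estimate}, \Cref{Hodge filtrations on dR}), this commutation is a formal consequence of the lax symmetric monoidal structure on the filtered derived $\infty$-category together with the fact that completed tensor in $D(A)$ commutes with finite limits of uniformly bounded below objects. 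Functoriality in $X$ and compatibility with base change in $(B,J)$ propagate from the affine and sheaf-theoretic statements already established.
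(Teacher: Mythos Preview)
Your approach is correct and is essentially the same as the paper's, only spelled out in more detail. The paper's proof is a two-line affair: it simply invokes \Cref{affine H and N filtration} for the filtered isomorphism, \Cref{lqsyn H and N filtration}(4) for the mod-$\Fil^i$ statement, and \Cref{Illusie--Bhatt} for the crystalline identification in the transversal case. Your \v{C}ech globalization unpacks exactly what is implicit in that citation.

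One small point: your justification of the commutation of $p$-complete filtered tensor with the finite \v{C}ech totalization is over-argued. You do not need any uniform cohomological bound on values over large quasisyntomic algebras; in a stable $\infty$-category, derived tensor with a fixed object is exact and hence commutes with all finite limits, and $p$-completion is likewise exact. Since each filtration level of the Day-convolution tensor is built from a \emph{finite} diagram of such tensors (only indices $0\le j\le n$ contribute to $\Fil^n$), the commutation is immediate. Also, for the mod-$\Fil^i$ statement over a general bounded prism $(B,J)$, your base-change reduction to the universal oriented prism is a genuine (and necessary) addition: the paper's citation of \Cref{lqsyn H and N filtration}(4) literally only covers the transversal case, so your remark fills a small gap that the paper leaves implicit.
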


\begin{proof}
These functorial isomorphisms are provided by \Cref{affine H and N filtration}
and \Cref{lqsyn H and N filtration} (4) respectively.
The furthermore equality, when the base prism is transversal, follows from \Cref{Illusie--Bhatt}.
\end{proof}

\begin{remark}
\label{rem-N and H fil}
Back to the transversal base prism case.
A posteriori the filtration on the left hand side of \Cref{affine H and N filtration} is a quasisyntomic sheaf,
hence we may define it as the unfolding of its restriction to the basis of large quasisyntomic over $A/I$-algebras.
Also a posteriori, we know the value on such an algebra $S$ must be concentrated in cohomological degree $0$,
therefore they have to be the image of the augmentation map
\[
\Fil^i(\Prism^{(1)}_{S/A} \widehat{\otimes}_{\mathbb{Z}_p} \mathcal{A}) \to \dR_{S/A}^\wedge,
\]
where the filtration on the left hand side is given by the usual Day convolution.
This implies an equality 
\[
\Fil^n_{\mathrm{H}}(\dR_{S/A}^\wedge) = \sum_{i = 0}^n 
\bigg(\Fil^i_{\mathrm{N}}(\Prism^{(1)}_{S/A}) \cdot \mathcal{I}^{[n-i]}    \bigg),
\]
which also follows from combining \Cref{general KO filtration properties} (1), \Cref{qsyn KO filtration properties} (3),
and \Cref{Nygaard filtration} (2).

Therefore, for any $0 \leq r \leq p-1$, we see that the Frobenius on derived de Rham complex when restricted
to the $r$-th Hodge filtration
\[
\Fil^r_{\mathrm{H}}(\dR_{S/A}^\wedge) \xrightarrow{\varphi} \dR_{S/A}^\wedge
\]
factors through multiplication by $p^r$.
Since for large quasisyntomic over $A/I$-algebras $S$, the $\dR_{S/A}^\wedge$
is $p$-completely flat over $\mathcal{A}$ (see \Cref{flatness of dR}) which is $p$-torsionfree,
we may uniquely divide the restriction $\varphi$ by $p^r$.
By unfolding, this gives rise to divided Frobenii as maps of sheaves on $\mathrm{qSyn}_{A/I}$:
\[
\varphi_r \colon \Fil^r_{\mathrm{H}}(\dR_{-/A}^\wedge) \rightarrow \dR_{-/A}^\wedge.
\]
By definition, they also satisfy $\varphi_r \mid_{\Fil^{r+1}_{\mathrm{H}}} = p \varphi_{r+1}$ when $r \leq p-2$.
Following the same argument of \Cref{functorial endomorphism theorem},
see also \Cref{functorial endo remark}, such a functorial
divided Frobenius is unique for each $0 \leq r \leq p-1$.

When $(A,I)$ is the Breuil--Kisin prism, this gives rise to an alternative definition of the divided Frobenii appeared in
\cite[p.~10]{Bre98}. 
%\textcolor{red}{More citations we should refer to?}
\end{remark}

\section{Connection on $\dR_{-/\mathfrak{S}}^\wedge$ and structure of torsion crystalline cohomology}
From this section onward, we focus on the Breuil--Kisin prism $A=(\mathfrak{S}, E)$  and crystalline cohomology over $S= \dR^\wedge_{\O_K/\s}$. 
Let $k$ be a prefect field with characteristic $p$,
and let $K$ be a finite totally ramified extension over $K_0 = W(k)[\frac 1 p]$ with a fixed uniformizer $\pi\in \O_K$. Fix an algebraic closure $\Kbar $ of $K$ and let $\C$ be $p$-adic completion of $\Kbar$. Write $G_K : = \Gal (\Kbar / K)$ and $e=[K:K_0]$.  
Let $E= E(u)\in W(k)[u]$ be the Eisenstein polynomial of $\pi$ with constant term $a_0 p$, recall $\s : = W(k)[\![u]\!]$ is equipped with a Frobenius 
$\varphi$ naturally extends that on $W(k)$ by $\varphi (u) = u ^p$. 
Pick $\pi_n \in \O_{\Kbar }$ so that $\pi_{0} = \pi$ and $\pi_{n +1}^p = p$. 
Then $\upi: = (\pi_n)_{n \geq 0}\in \O_{\C}^\flat$. 
We embed $\s \inj A_{\inf}$ via $u \mapsto [\upi]$ which is a map of prisms. 
Let $K_\infty : = \bigcup\limits_{n = 0}^\infty K(\pi_n)$ and $G_\infty : = \Gal (\Kbar / K_\infty)$. 
It is clear that the embedding $\s \subset A_{\inf}$ is compatible with $G_\infty$-actions. 
We extend $\varphi$ from $\s$ to $S$ and let $\Fil^m S$ be the $p$-adic closure of the ideal generated by  $\gamma_i (E): = \frac{E^i}{i!}, i \geq m$. 
We embed $S \inj A_{\cris}$ also via $u \mapsto [\upi]$. 
For $m \leq p-1$,  $\varphi(\Fil^m S)\subset p ^m S$. 
We set $\varphi_m : = \frac{\varphi}{p ^m} :\Fil ^m S \to S$. 
Similar notation also applies to $A_{\cris}$. Write $c_1: = \frac{\varphi (E)}{a_0 p} \in S^\times$. 
Finally, there exists a $W(k)$-linear derivation   $\nabla_S : S \to S$ by $\nabla_S(f(u))= f'(u)$.    

 For $n \geq 1$, if $M$ is an $\Z_p$-module then we always use $M_n$ to denote $M / p ^n M$. 
 Similar notation applies to ($p$-adic formal) schemes: i.e., $X_n : = X \times_{\Spf(\Z_p)} \Spec (\Z/ p ^n \Z)$. Write $W= W(k)$ and reserve $\gamma_i (\cdot)$ for the $i$-th divided power. 

\subsection{Connection on $\dR_{-/\mathfrak{S}}^\wedge$}
Under the philosophy that derived de Rham cohomology behaves a lot like crystalline cohomology,
one expects there to be a connection on $\dR_{-/\mathfrak{S}}^\wedge$.
We explain it in this section.

%Let us recall the relevant notation.
%Let $k$ be a $p$-adic field, by which we mean a complete discretely valued mixed characteristic non-Archimedean
%field with perfect residue field $\kappa$. 
%Let $\mathcal{O}_k$ be its ring of integers, fix once and for all a uniformizer $\pi \in \mathcal{O}_k$.
%Denote the Witt ring of $\kappa$ by $W$.
%The Breuil--Kisin prism is $\mathfrak{S} \coloneqq W[\![u]\!]$, which admits a surjection
%$\mathfrak{S} \to \mathcal{O}_k$ by sending $u$ to $\pi$.
%The kernel is generated by an Eisenstein polynomial $E(u)$. 
%We give $\mathfrak{S}$ a $\delta$-structure by extending the Frobenius on $W$ with $\varphi_{\mathfrak{S}}(u) = u^p$, 
%and one can check that $(\mathfrak{S},E(u))$ is a transversal prism.
%We use $S \coloneqq \dR_{\mathcal{O}_k/\mathfrak{S}}^\wedge = D_{\mathfrak{S}}(E(u))^\wedge$.
%Since $E \equiv u^e$ mod $p$ where $e$ is the absolute ramification index of $k$,
%the algebra $S$ is obtained by $p$-completely adjoining divided powers of $u^e$.

\begin{lemma}
\label{decompleting the base}
Let $R$ be an $\mathcal{O}_K$-algebra. Then the natural morphism
$\dR_{R/W[u]}^\wedge \to \dR_{R/\mathfrak{S}}^\wedge$ is an isomorphism,
where $R$ is regarded as an $\mathfrak{S}$ and $W[u]$ algebra via
$W[u] \to \mathfrak{S} \to \mathcal{O}_K \to R$.
\end{lemma}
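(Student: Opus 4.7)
The plan is to first establish an isomorphism of $p$-completed cotangent complexes $\mathbb{L}_{R/W[u]}^{\wedge} \simeq \mathbb{L}_{R/\mathfrak{S}}^{\wedge}$, and then deduce the statement on derived de Rham cohomology by reducing modulo $p$ and invoking the conjugate filtration together with derived Nakayama.

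For the cotangent complex comparison, consider the transitivity triangles associated with $A \to \mathcal{O}_K \to R$ for each $A \in \{W[u], \mathfrak{S}\}$:
\[
R \widehat{\otimes}_{\mathcal{O}_K}^{\BL} \mathbb{L}_{\mathcal{O}_K/A}^{\wedge} \longrightarrow \mathbb{L}_{R/A}^{\wedge} \longrightarrow \mathbb{L}_{R/\mathcal{O}_K}^{\wedge}.
\]
Both quotients $W[u] \twoheadrightarrow W[u]/(E) = \mathcal{O}_K$ and $\mathfrak{S} \twoheadrightarrow \mathfrak{S}/(E) = \mathcal{O}_K$ are by the same nonzerodivisor $E$, so the Koszul computation gives $\mathbb{L}_{\mathcal{O}_K/W[u]}^{\wedge} \cong \mathcal{O}_K[1] \cong \mathbb{L}_{\mathcal{O}_K/\mathfrak{S}}^{\wedge}$, with the natural map induced by $W[u] \to \mathfrak{S}$ sending the generator $dE$ to $dE$, hence being an equivalence. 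The rightmost terms of the two triangles are identical, so comparing them yields $\mathbb{L}_{R/W[u]}^{\wedge} \simeq \mathbb{L}_{R/\mathfrak{S}}^{\wedge}$.

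To descend to derived de Rham: both $\dR_{R/W[u]}^{\wedge}$ and $\dR_{R/\mathfrak{S}}^{\wedge}$ are derived $p$-complete, so by derived Nakayama it suffices to check the comparison map is an equivalence modulo $p$. Reducing modulo $p$, invoke the conjugate filtration on $\dR_{R/A}/p$, which is exhaustive and increasing with graded pieces functorial in $A$ and given by $\wedge^i_{R^{(1)}} \mathbb{L}_{R^{(1)}/(A/p)}[-i]$, where $R^{(1)}$ denotes the Frobenius twist of $R/p$ over $A/p$. Here $u \mapsto \pi$ satisfies $\pi^e \equiv 0 \pmod{p}$, so the $u$-action on $R/p$ is nilpotent and factors through $\mathcal{O}_K/p \cong k[u]/(u^e)$; consequently the Frobenius twists of $R/p$ over $k[u] = W[u]/p$ and over $k[\![u]\!] = \mathfrak{S}/p$ are canonically identified. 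Together with the cotangent complex comparison above, the graded pieces of the conjugate filtrations on $\dR_{R/W[u]}/p$ and $\dR_{R/\mathfrak{S}}/p$ agree under the natural map, and exhaustiveness gives the equivalence modulo $p$.

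The main technical subtlety lies in verifying that the natural map strictly respects the conjugate filtration and that the identifications of Frobenius twists are functorial; both follow from the functorial construction of the conjugate filtration via left Kan extension from polynomial algebras over the base.
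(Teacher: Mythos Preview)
Your argument is correct, but the paper takes a considerably shorter route. Rather than unpacking the derived de Rham complex into cotangent data via the conjugate filtration, the paper invokes the $p$-complete base change formula for derived de Rham complexes directly. Since the square
\[
\xymatrix{
\mathfrak{S} \ar[r] & R \\
W[u] \ar[r] \ar[u] & R \ar[u]
}
\]
is a $p$-complete pushout (as $R/p$ is a $k[u]/(u^e)$-algebra, so tensoring with $k[\![u]\!]$ over $k[u]$ does nothing), base change gives $\dR_{R/\mathfrak{S}}^\wedge \cong \dR_{R/W[u]}^\wedge \widehat{\otimes}_{W[u]} \mathfrak{S}$. The key observation is then that $\dR_{R/W[u]}^\wedge$ is already an algebra over $S = \dR_{\mathcal{O}_K/W[u]}^\wedge$, and $S \widehat{\otimes}_{W[u]} \mathfrak{S} = S$ (again because $u$ is nilpotent in $S/p$), so the base change is the identity.

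What each approach buys: the paper's argument is a two-line application of the multiplicative structure on $\dR^\wedge$ and needs no filtration bookkeeping. Your approach is more elementary in that it only uses the transitivity triangle and the conjugate filtration, avoiding any appeal to base change of $\dR^\wedge$ as a black box; it also makes explicit the underlying reason both arguments work, namely that $\mathbb{L}_{\mathfrak{S}/W[u]}$ vanishes after tensoring with any $\mathfrak{S}$-algebra on which $u$ is nilpotent. One small point of care in your write-up: the conjugate graded pieces involve $\mathbb{L}_{R^{(1)}/(A/p)}$ rather than $\mathbb{L}_{R/A}$, so strictly speaking you should observe that the same transitivity argument applies to $R^{(1)}$ (which you can, since $R^{(1)}$ is a $k[u]/(u^{pe})$-algebra); your identification of the Frobenius twists sets this up, but the phrase ``together with the cotangent complex comparison above'' slightly elides this step.
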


\begin{proof}
Just notice the following $p$-completely pushout diagram:
\[
\xymatrix{
\mathfrak{S} \ar[r] & \mathcal{O}_K \ar[r] & R \\
W[u] \ar[r] \ar[u] & \mathcal{O}_K \ar[u] \ar[r] & R \ar[u],
}
\]
and appeal to the $p$-completely base change formula of derived de Rham complexes
to get 
\[
\dR_{R/W[u]}^\wedge \widehat{\otimes}_{W[u]} \mathfrak{S} \xrightarrow{\cong} \dR_{R/\mathfrak{S}}^\wedge.
\]
Next we observe that $\dR_{R/W[u]}^\wedge$ is an $S = \dR_{\mathcal{O}_K/W[u]}^\wedge$-complex
and $S \widehat{\otimes}_{W[u]} \mathfrak{S} = S$, hence the base change on the left hand side
gives $\dR_{R/W[u]}^\wedge$ back.
\end{proof}

\begin{construction}[{see also \cite{KO68}}]
\label{connection construction}
For any $W[u]$-algebra $R$, 
by ($p$-completely) applying \cite[Lemma 3.13.(4)]{GL20} to the triple $W \to W[u] \to R$, 
we see that there is a functorial triangle in filtered derived $\infty$-category of $W$-modules:
\[
\dR_{R/W[u]}^\wedge \widehat{\otimes}_{W[u]} \Omega^1_{W[u]/W}[-1] \to \dR_{R/W}^\wedge
\to \dR_{R/W[u]}^\wedge.
\]
Here $\Omega^1_{W[u]/W}[-1]$ is completely put in the first filtration.
By choosing the generator $du \in \Omega^1_{W[u]/W}$, the above becomes
\[
\dR_{R/W}^\wedge \to \dR_{R/W[u]}^\wedge \xrightarrow{\nabla} \dR_{R/W[u]}^\wedge(-1),
\]
where $(-1)$ indicates the shift of filtrations: 
$\Fil^i(\dR_{R/W[u]}^\wedge(-1)) = \Fil^{i-1}_{\mathrm{H}}(\dR_{R/W[u]}^\wedge)$.
When $R$ is smooth over $W[u]$, then $\nabla$ is given by Lie derivative
with respect to $\frac{\partial}{\partial u}$:
\[
\nabla(\omega) = \mathcal{L}_{\frac{\partial}{\partial u}}(\omega).
\]
\end{construction}

\begin{lemma}
\label{connection on ddR}
Let $R$ be an $\mathcal{O}_K$-algebra. Then we have a functorial 
triangle in the filtered derived $\infty$-category:
\[
\dR_{R/W}^\wedge \to \dR_{R/\mathfrak{S}}^\wedge \xrightarrow{\nabla} \dR_{R/\mathfrak{S}}^\wedge(-1).
\]
Moreover we have
\[
p u^{p-1} \cdot \varphi \circ \nabla = \nabla \circ \varphi,
\]
where $\varphi \colon \dR_{R/\mathfrak{S}}^\wedge \to \dR_{R/\mathfrak{S}}^\wedge$ is the Frobenius
defined in \Cref{Frobenii}.
\end{lemma}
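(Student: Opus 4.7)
The triangle is immediate from combining the two preceding pieces. Since $R$ is an $\mathcal{O}_K$-algebra, it is naturally a $W[u]$-algebra through $W[u] \to \mathfrak{S} \to \mathcal{O}_K \to R$. Applying Construction \ref{connection construction} to the triple $W \to W[u] \to R$ yields the functorial triangle
\[
\dR_{R/W}^\wedge \to \dR_{R/W[u]}^\wedge \xrightarrow{\nabla} \dR_{R/W[u]}^\wedge(-1),
\]
and then Lemma \ref{decompleting the base} identifies $\dR_{R/W[u]}^\wedge$ with $\dR_{R/\mathfrak{S}}^\wedge$ to yield the desired triangle.

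For the Frobenius identity, my plan is to reduce to an explicit computation for $p$-completely polynomial $\mathcal{O}_K$-algebras. Both $\varphi$ and $\nabla$ are functorial in the $\mathcal{O}_K$-algebra $R$, and both sides of the claimed identity define natural endomorphisms of the functor $R \mapsto \dR_{R/\mathfrak{S}}^\wedge$. Since this functor is the left Kan extension from its restriction to $p$-completely polynomial $\mathcal{O}_K$-algebras, it suffices to verify the identity on such $R$. For $R = \mathcal{O}_K\langle x_1, \ldots, x_n\rangle$, I lift to $\tilde R = \mathfrak{S}\langle x_1, \ldots, x_n\rangle$, which is formally smooth over $\mathfrak{S}$; by \Cref{dR and crys} together with Berthelot's de Rham--crystalline comparison, we have a natural identification $\dR_{R/\mathfrak{S}}^\wedge \cong \widehat{\Omega}^\bullet_{\tilde R/\mathfrak{S}}$.

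On this model one has an explicit Frobenius lift $\tilde\varphi \colon \tilde R \to \tilde R$ extending $\varphi_{\mathfrak{S}}$ by $\tilde\varphi(x_i) = x_i^p$, and by \Cref{another Frobenius} together with the uniqueness statement in \Cref{functorial endo remark}, the induced map on $\widehat{\Omega}^\bullet_{\tilde R/\mathfrak{S}}$ agrees (in the derived category) with the intrinsic Frobenius of Construction \ref{Frobenius construction}. Similarly, under the identification of Lemma \ref{decompleting the base}, $\nabla$ becomes the Lie derivative $\mathcal{L}_{\partial/\partial u}$ acting coefficient-wise on $\widehat{\Omega}^\bullet_{\tilde R/\mathfrak{S}}$. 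The identity then follows from the chain rule: applying $\tilde\varphi$ to the tautological relation $df = (\partial_u f)\, du + \sum_i (\partial_{x_i} f)\, dx_i$ in $\Omega^1_{\tilde R/W}$, and using $\tilde\varphi(du) = d(u^p) = pu^{p-1}\, du$ and $\tilde\varphi(dx_i) = p x_i^{p-1}\, dx_i$, the $du$-coefficient gives $\partial_u \tilde\varphi(f) = pu^{p-1}\, \tilde\varphi(\partial_u f)$, i.e.\ $\nabla\varphi = pu^{p-1}\varphi\nabla$ in degree zero. For higher-degree forms, the same identity propagates because both $\varphi$ and $\nabla$ commute with $d_{\tilde R/\mathfrak{S}}$ and with multiplication by elements of $\tilde R$ up to the explicit scalars computed above.

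The main obstacle I anticipate is the implicit one: confirming that the intrinsic Frobenius of Construction \ref{Frobenius construction} (defined via the cotangent fiber with $D$) agrees with the one obtained from an explicit lift $\tilde\varphi$ on a smooth model. This is where the rigidity results of \Cref{functorial endomorphism theorem} and \Cref{functorial endo remark} do the heavy lifting, as $(\mathfrak{S}, (E))$ is a transversal prism and both candidates for Frobenius are functorial lifts satisfying the same normalization on polynomial $\mathcal{O}_K$-algebras. Once this matching is secured, the verification is a one-line chain rule calculation.
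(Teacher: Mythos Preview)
Your argument for the triangle is identical to the paper's. For the Frobenius identity, your route is genuinely different from the paper's, and while the idea is sound, there are two issues worth flagging.

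First, a small but real error: for $R = \mathcal{O}_K\langle x_1,\ldots,x_n\rangle$ the identification $\dR_{R/\mathfrak{S}}^\wedge \cong \widehat\Omega^\bullet_{\tilde R/\mathfrak{S}}$ with $\tilde R = \mathfrak{S}\langle x_i\rangle$ is not correct. By K\"unneth, $\dR_{R/\mathfrak{S}}^\wedge \cong \dR_{\tilde R/\mathfrak{S}}^\wedge \widehat\otimes_{\mathfrak{S}} \dR_{\mathcal{O}_K/\mathfrak{S}}^\wedge \cong \widehat\Omega^\bullet_{S\langle x_i\rangle/S}$, so your model should live over $S$, not $\mathfrak{S}$. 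Your chain-rule computation still goes through on this corrected model.

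Second, and more substantively: your invocation of \Cref{another Frobenius} and \Cref{functorial endo remark} to match the intrinsic Frobenius with the explicit lift Frobenius is not quite what those results say. Proposition~\ref{another Frobenius} matches the intrinsic Frobenius with one built from \emph{prismatic envelopes of free $\delta$-algebras}, not with the Frobenius on a de Rham complex coming from a coordinate-dependent lift $x_i\mapsto x_i^p$. To use rigidity you would need to know your explicit lift Frobenius assembles into a \emph{functorial} endomorphism of $\dR_{-/\mathfrak{S}}^\wedge$; the clean way to do this is to identify it with the crystalline Frobenius (which is functorial and independent of the lift), and then apply uniqueness. You gesture at this but do not carry it out.

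The paper sidesteps both issues entirely. After left Kan extension to polynomials, it passes by quasisyntomic descent to large quasisyntomic $\mathcal{O}_K$-algebras of the form $\mathcal{O}_K\langle X_l^{1/p^\infty}, Y_j^{1/p^\infty}\rangle/(Y_j-f_j)$, where $\dR_{-/\mathfrak{S}}^\wedge$ is a \emph{discrete} $p$-torsionfree ring (a PD envelope). On such rings the Frobenius is pinned down directly by Example~\ref{example computing Frobenius on dR}(3) (no rigidity needed), $\nabla$ is literally $\partial/\partial u$, and the identity reduces to the one-line chain rule $\partial_u\big(F(u^p,\underline X^p,\underline Y^p)\big) = pu^{p-1}\cdot\varphi\big(\partial_u F\big)$ in $T = S\langle X_l^{1/p^\infty},Y_j^{1/p^\infty}\rangle$. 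The paper's approach thus buys you a setting where there is no homotopical ambiguity and no Frobenius-matching problem; your approach is more elementary in spirit but requires nontrivial extra bookkeeping to make rigorous.
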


\begin{proof}
The first statement follows from \Cref{connection construction} and \Cref{decompleting the base}.

To check the equality, by left Kan extension it suffices to check
it for the polynomials.
Then by quasisyntomic descent, it suffices to check the equality for
large quasisyntomic over $\mathcal{O}_K$ algebras.
Following the proof of \Cref{uniqueness of comparison},
we are reduced to showing the equality for algebras of the form
\[
R = \mathcal{O}_K \langle X_i^{1/p^\infty}, Y_j^{1/p^\infty} \mid i \in I, j \in J \rangle/(Y_j - f_j \mid j \in J) \eqqcolon \widetilde{R}/(Y_j - f_j \mid j \in J).
\]
Now the map $\widetilde{R} \to R$ induces a map between $\dR_{-/\mathfrak{S}}^\wedge$ given by
\[
S \langle X_i^{1/p^\infty}, Y_j^{1/p^\infty} \mid i \in I, j \in J \rangle  \eqqcolon T
\to D_T(Y_j - f_j; j \in J)^\wedge.
\]
Here $S$ is the $p$-complete PD envelope of $\mathfrak{S}$ along $(E)$ and the latter denotes $p$-completely adjoining divided powers of $(Y_j - f_j)$ in $T$.
Since $D_T(Y_j - f_j; j \in J)^\wedge$ is $p$-complete and $p$-torsionfree, it suffices
to check the identity on $T$.
On $T$, the Frobenius $\varphi$ acts by sending variables $X, Y, u$ to their $p$-th power,
and $\nabla$ acts via $\frac{\partial}{\partial u}$.
Finally we are reduced to checking the equality
\[
p u^{p-1} \cdot \varphi\left(\frac{\partial}{\partial u} (F(u, \underline{X}, \underline{Y}))\right) = \frac{\partial}{\partial u} \left(\varphi( F(u, \underline{X}, \underline{Y})) \right),
\]
for any $F(u, \underline{X}, \underline{Y}) \in T$.
\end{proof}

Consequently, for any $\mathcal{O}_K$-algebra $R$, we always have a long exact sequence:
\[
\label{LES of connection}
\tag{\epsdice{4}}
\ldots \to \rH^i(\dR_{R/W}^\wedge) \to \rH^i(\dR_{R/\mathfrak{S}}^\wedge) \xrightarrow{\nabla} \rH^i(\dR_{R/\mathfrak{S}}^\wedge(-1))
\xrightarrow{+1} \ldots
\]
and its $r$-th filtration analogues for all $r \in \mathbb{N}$.
In special situation, these will break into short exact sequences.
Let us introduce some more notation.
Let $L$ be a perfectoid field extension of $K$ containing all $p$-power roots of $\pi$.
For instance $L$ could be $p$-adic completion of $K_{\infty}$ or $\C$.
Let $A_{\inf}(L) \coloneqq W(\mathcal{O}_L^\flat)$ be Fontaine's $A_{\inf}$ ring associated with
$L$, and recall there is a natural map $\theta \coloneqq A_{\inf}(L) \to \mathcal{O}_L$.
Fix a compatible system of $p$-power roots of $\pi$, we obtain a map $\mathfrak{S} \to A_{\inf}(L)$
with $u \mapsto [\underline{\pi}]$ compatible with $\theta$ and the inclusion $\mathcal{O}_K \to \mathcal{O}_L$.

\begin{proposition}
\label{prop-connection for perfectoid}
With notation as above. Let $R$ be a quasisyntomic $\mathcal{O}_L$-algebra.
Then we have
\begin{enumerate}
\item The natural map $\dR_{R/W}^\wedge \to \dR_{R/A_{\inf}(L)}^\wedge$ is a filtered isomorphism.
\item The sequence \epsdice{4} and its $r$-th filtration analogues break into short exact sequence:
\[
0 \to \rH^i(\Fil^r_{\mathrm{H}}\dR_{R/W}^\wedge) \to \rH^i(\Fil^r_{\mathrm{H}}\dR_{R/\mathfrak{S}}^\wedge)
\xrightarrow{\nabla} \rH^i(\Fil^{r-1}_{\mathrm{H}}\dR_{R/\mathfrak{S}}^\wedge) \to 0,
\]
for all $i$ and $r$.
In particular $\dR_{R/\mathfrak{S}}^\wedge \xrightarrow{\nabla} \dR_{R/\mathfrak{S}}^\wedge(-1)$
is surjective on each $\rH^i$, and 
\[
\rH^i(\dR_{R/W}^\wedge) = \rH^i(\dR_{R/\mathfrak{S}}^\wedge)^{\nabla = 0}.
\]
\end{enumerate}
\end{proposition}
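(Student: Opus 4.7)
The plan is to prove (1) first and then to deduce (2) by using (1) to split the connection triangle of \Cref{connection on ddR} on each Hodge filtration.

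For (1), the key input is the cotangent complex vanishing $\mathbb{L}_{A_{\inf}(L)/W}^\wedge \simeq 0$: since $A_{\inf}(L) = W(\mathcal{O}_L^\flat)$ and $\mathcal{O}_L^\flat$ is a perfect $\mathbb{F}_p$-algebra, the mod-$p$ reduction $\mathbb{L}_{A_{\inf}(L)/W}^\wedge \otimes^\BL_W \mathbb{F}_p \cong \mathbb{L}_{\mathcal{O}_L^\flat / \mathbb{F}_p}$ vanishes, and a derived $p$-complete object whose mod-$p$ reduction vanishes is itself zero. The filtered transitivity/K\"unneth formula for the $p$-adic derived de Rham complex applied to $W \to A_{\inf}(L) \to R$ then reads
\[
\dR_{R/W}^\wedge \;\cong\; \dR_{A_{\inf}(L)/W}^\wedge \widehat{\otimes}_{A_{\inf}(L)}^\BL \dR_{R/A_{\inf}(L)}^\wedge,
\]
and the vanishing above forces $\dR_{A_{\inf}(L)/W}^\wedge \to A_{\inf}(L)$ to be a filtered isomorphism, since all positive Hodge graded pieces, being derived divided powers of $\mathbb{L}_{A_{\inf}(L)/W}^\wedge[-1]$, vanish. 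Substituting yields the natural filtered isomorphism $\dR_{R/W}^\wedge \xrightarrow{\cong} \dR_{R/A_{\inf}(L)}^\wedge$ asserted in (1).

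For (2), the factorization of $W$-algebras $W \to \mathfrak{S} \to A_{\inf}(L)$ yields, by functoriality of $\dR$, a factorization of the filtered isomorphism from (1):
\[
\dR_{R/W}^\wedge \;\longrightarrow\; \dR_{R/\mathfrak{S}}^\wedge \;\longrightarrow\; \dR_{R/A_{\inf}(L)}^\wedge.
\]
Hence the first arrow admits a filtered retraction $\rho \colon \dR_{R/\mathfrak{S}}^\wedge \to \dR_{R/W}^\wedge$. Being filtered, $\rho$ restricts for each $r \in \mathbb{N}$ to a retraction of the inclusion $\Fil^r_{\mathrm{H}} \dR_{R/W}^\wedge \hookrightarrow \Fil^r_{\mathrm{H}} \dR_{R/\mathfrak{S}}^\wedge$. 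Passing to $\Fil^r_{\mathrm{H}}$ in the filtered connection triangle of \Cref{connection on ddR} gives the fiber sequence
\[
\Fil^r_{\mathrm{H}} \dR_{R/W}^\wedge \;\longrightarrow\; \Fil^r_{\mathrm{H}} \dR_{R/\mathfrak{S}}^\wedge \;\xrightarrow{\nabla}\; \Fil^{r-1}_{\mathrm{H}} \dR_{R/\mathfrak{S}}^\wedge
\]
(using $\Fil^r(\dR_{R/\mathfrak{S}}^\wedge(-1)) = \Fil^{r-1}_{\mathrm{H}} \dR_{R/\mathfrak{S}}^\wedge$), in which the first arrow is split injective in the homotopy category. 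The associated long exact sequence on cohomology therefore collapses to the desired short exact sequence. Taking $r = 0$ recovers the ``in particular'' statements, identifying $\rH^i(\dR_{R/W}^\wedge)$ with $\ker(\nabla)$ on $\rH^i(\dR_{R/\mathfrak{S}}^\wedge)$.

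The main technical content is confined to the cotangent complex vanishing $\mathbb{L}_{A_{\inf}(L)/W}^\wedge \simeq 0$, which encodes the relative perfection of $A_{\inf}(L)$ over $W$; everything else is a formal consequence of the filtered K\"unneth formula and the functoriality of $\dR$.
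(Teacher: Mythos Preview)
Your approach is correct and matches the paper's: for (1) the paper simply cites \cite[Theorem 3.4.(2)]{GL20}, whose proof is exactly the cotangent-complex vanishing $\mathbb{L}_{A_{\inf}(L)/W}^\wedge \simeq 0$ you describe, and for (2) both arguments use the factorization $\dR_{R/W}^\wedge \to \dR_{R/\mathfrak{S}}^\wedge \to \dR_{R/A_{\inf}(L)}^\wedge$ to produce a filtered retraction, splitting the connection triangle on each Hodge filtration. One small caveat on exposition: the displayed ``transitivity K\"unneth'' identity is not a valid general formula (the object $\dR_{A_{\inf}(L)/W}^\wedge$ is a priori only a $W$-algebra, so the tensor over $A_{\inf}(L)$ is not well-posed), but once you have established $\dR_{A_{\inf}(L)/W}^\wedge \cong A_{\inf}(L)$, the filtered isomorphism $\dR_{R/W}^\wedge \cong \dR_{R/A_{\inf}(L)}^\wedge$ follows directly from the transitivity triangle for cotangent complexes (giving isomorphisms on all Hodge graded pieces) together with the conjugate filtration mod $p$ (giving the isomorphism on underlying complexes).
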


\begin{proof}
(1) is \cite[Theorem 3.4.(2)]{GL20}.

As for (2), it suffices to show that the maps 
$\rH^i(\Fil^r_{\mathrm{H}}\dR_{R/W}^\wedge) \to \rH^i(\Fil^r_{\mathrm{H}}\dR_{R/\mathfrak{S}}^\wedge)$
are injective for all $i$ and $r$.
By functoriality, we have maps of filtered algebras
\[
\dR_{R/W}^\wedge \to \dR_{R/\mathfrak{S}}^\wedge \to \dR_{R/A_{\inf}(L)}^\wedge
\]
whose composition is a filtered isomorphism by (1).
Therefore the first morphism factorizing isomorphism induces injection at the level of cohomology.
This explains why the long exact sequence \epsdice{4} breaks into short exact sequences.
The last statement follows easily by letting $r = 0$.
\end{proof}
\subsection{Structures of torsion crystalline cohomology}
\label{subsection-stru of crys} 
Let $X$ be a proper smooth formal scheme over $\O_K$. 
Let us summarize the structures on $\rH ^i _{\cris} (X/S) \coloneqq \rH ^i_{\cris} ( X/S , \O_{\cris} )$ constructed from previous sections. 

By Corollary \ref{affine H and N filtration} and Theorem \ref{Illusie--Bhatt}, we obtain the following commutative diagram. 
\begin{equation}\label{dig-summary of comp}
\begin{split}
\xymatrix{ \RG _{\qsyn} (X, \Prism^{(1)}_{-/\s})\ar[r] & \RG_{\cris} (X/S, \O_{\cris}) \ar@{= }[r]& S \otimes^\BL_{\varphi, \s}\RG_\Prism ( X /\s) \\ \RG _{\qsyn} (X, \Fil^m_{\rm N} \Prism ^{1}_{-/ \s})\ar[u] \ar[r] & \RG _{\cris} (X/S, \mathcal I^{[m]}_{\cris})\ar[u] & }
\end{split}
\end{equation}
Here the second isomorphism of the first row follows the canonical isomorphism $\RG_{\qsyn} ( X, \Prism_{-/ \s}) \simeq \RG_\Prism (X/\s ) $ and the fact that $\varphi : \s \to \s$ is flat. 

For $m \leq p-1$, Remark \ref{rem-N and H fil} allows to us to define $\varphi$-semi-linear map $\varphi_m : \rH^ i _{\cris} (X/S , \cI^{[m]}_{\cris}) \to \rH ^i_{\cris} (X/S)$ so that the following diagram commutes for $m +1 \leq p-1$
$$\xymatrix{\rH^ i _{\cris} (X/S , \cI^{[m]}_{\cris}) \ar[r]^-{\varphi_{m}} & \rH ^i_{\cris} (X/S) \\ \rH^ i _{\cris} (X/S , \cI^{[m+1]}_{\cris})\ar[u]\ar[ur]^{p\varphi_{m +1}} &  }$$
 We simply denote the above diagram by $\varphi_{m}|_{\rH^ i _{\cris} (X/S , \cI^{[m+1]}_{\cris})} = p \varphi_{m +1}$. It is also clear that for any $s \in \Fil^m S$ and $x\in \rH^ i _{\cris} (X/S ) $
	we have \[\varphi_m (sx)= (c_1)^{-m }\varphi_m (s) \varphi_h(E(u)^m x).\]
 Finally, the above subsection construct a connection $\nabla:   \rH ^i_{\cris} (X/S) \to \rH ^i_{\cris} (X/S) $. %Let $\nabla: = \nabla(\frac{d}{du}) : \rH ^i_{\cris} (X/S)\to \rH ^i_{\cris} (X/S).$ 
 By Proposition \ref{prop-connection for perfectoid} and Lemma \ref{connection on ddR}, we conclude that 
 \begin{enumerate}
     \item $\nabla: \rH ^i_{\cris} (X/S) \to \rH ^i_{\cris} (X/S)$ is $W(k)$-linear derivative satisfying   \[\nabla (f(u)x) = f'(u) x + f(u) {\nabla} (x)  \]
     \item (Griffiths Transversality)  $\nabla (\rH ^i _{\cris} (X/ S, \cI^{[m]}_{\cris}))$ factors through $\rH ^i _{\cris} (X/ S, \cI^{[m-1]}_{\cris})$. 
     
     \item The following diagram commutes: $$
	\xymatrix@C=3em{ \rH ^i _{\cris} (X/S , \cI_{\cris} ^{[m]})\ar[d]_{E(u) \nabla} \ar[r]^-{\varphi_m } & \rH ^i _{\cris} (X/S)  \ar[d]^{c_1 \nabla}\\
	\rH ^i _{\cris} (X/S , \cI_{\cris} ^{[m]})   \ar[r]^-{ u ^{p-1} \varphi_m } &\rH ^i _{\cris} (X/S)  }
$$
 \end{enumerate}
 The last diagram follows that $pu^{p-1} \varphi \circ \nabla= \nabla \circ \varphi$ by Lemma \ref{connection on ddR} and that $\varphi (E) = p c_1$.

Now consider the $p^n$ -torsion crystalline cohomology $\rH ^i_{\cris} (X_n /S_ n)$ together with filtration $\rH ^i _{\cris} (X_n /S_n, \mathcal I ^{[m]}_{\cris})$. We claim that $\rH ^i_{\cris} (X_n /S_n)$ admits all the above structures $\varphi_m: \rH^i _{\cris} (X_n/ S_n, \cI ^{[m]}_{\cris}) \to \rH ^i_{\cris} (X_n /S_n) $ for $m \leq p-1$ and $\nabla : \rH ^i_{\cris} (X_n /S_n) \to \rH ^i_{\cris} (X_n /S_n) $ satisfying all the above properties. 
To see, note that $\RG_{\cris} ( X_n /S_n, \cI ^{[m]}_{\cris})\simeq \RG _{\cris} (X/S, \cI ^ {[m]}_{\cris}) \otimes^\BL_{\Z} \Z/ p ^n\Z$ 
%({\color{red} double check this}) 
where $\cI ^{[0]}_{\cris}= \O_{\cris}$ then all the above properties follow by taking $\otimes^{\BL}_\Z \Z/ p ^n \Z$, except the Diagram \eqref{dig-summary of comp} which requires torsion quasi-syntomic cohomology. For this, we define the following torsion cohomologies: 
For $m \geq 0$,  $\RG _{\dR} (X_n/\s_n, \Fil^m_{\rm H} ): = \RG_{\dR} (X/\s , \Fil^m _{\mathrm{H}}) \otimes ^\BL_{\Z} \Z/ p ^n \Z $,  $ \RG_{\qsyn} (X_n / \s_n, \Fil^m_N \Prism^{(1)}):  = \RG_{\qsyn}(X/ \s , \Fil^m_N\Prism^{(1)}_{-/ \s}) \otimes_\Z \Z/ p ^n \Z $, and finally  $\RG_{\Prism} (X_n/\s_n): = \RG _\Prism( X/ \s) \otimes_{\Z} \Z/ p ^n \Z$. 
Then the derived modulo $p^n$ version of Diagram \eqref{dig-summary of comp} still holds by taking the original diagram and derived modulo $p^n$.

\subsection{Galois action on torsion crystalline cohomology}\label{subsection-GK-action} Keep the notations as the above. %Let $X$ be a proper smooth formal scheme $X$ over $\O_K$. 
Set $\mathcal X$ to be the base change of $X$ to $\Spf \O_{\C}$ and  $\mathcal X _n : = \mathcal{X} \otimes_{\Z} {\Z/ p ^n\Z}$.  Then $\rH ^i_{\cris} ({\mathcal X}_n /S_n)$ has an $S$-linear $G_K$-action when we define the $G_K$-action on $S$ is trivial. Note that $\rH ^i_{\cris} ({\mathcal X}_n /A_{\cris, n})$ also has $A_{\cris}$-semi-linear $G_K$-action which is induced by $G_K$-actions on ${\mathcal X}$ and $A_{\cris}$. By Proposition \ref{prop-connection for perfectoid} and its proof, we see that the   natural map 
$W(k) \to \s \to A_{\inf}$ induces the following commutative diagram 
$$\xymatrix{ \rH ^i_{\cris} ({\mathcal X}_n / W_n (k)) \ar[rd]_\sim^\beta \ar@{^{(}->}[r]^\alpha & \rH ^i_{\cris} ({\mathcal X}_n / S_n) \ar[d]^\iota & \rH ^i _{\cris} (X_n/ S_n)\ar@{_{(}->}[l]_{\tilde \alpha} \ar@{^{(}->}[d]^{\tilde \iota}\\ & \rH ^i _{\cris} ({\mathcal X}_n/ A_{\cris, n})\ar@{=}[r] &   \rH^i_{\cris} (X_n / S_n) \otimes_{S_n} A_{\cris, n} . }$$
Note that the second row is an isomorphism because $ {\mathcal X} = X \times_{\Spec(S)} \Spec (A_{\cris})$ and that $A_{\cris, n}$ is flat over $S_n$. Thus $\tilde \iota$ is an injection. So is $\tilde \alpha$. Also we note that $\alpha$ and $\beta$ are both compatible with $G_K$-actions because both the map $ W(k) \to \s$ and $W(k)\to A_{\inf}$ are $G_K$-compatible. But $\iota$ is not as $\s \subset A_{\inf}$ is only stable under $G_\infty$-action. It is also clear that $\rH ^i _{\cris} (X_n /S_n) \subset ( \rH ^i _{\cris} ({\mathcal X}_n /S_n))^{G_K}$ via $\tilde \alpha$ and $\tilde \alpha$ is also compatible with connection on both sides. Now we claim the $G_K$-action on $\rH ^i _{\cris} ( {\mathcal X}_n / A_{\cris, n})$ is given by the following formula: For any $\sigma \in
G_K$, 
any  $x \otimes a \in \rH ^i _{\cris} (X_n /S_n)\otimes_S A_{\cris}\simeq \rH ^i_{\cris} ({\mathcal X}_n / A_{\cris, n})$,
\begin{equation}\label{eqn-action-3}
\sigma(x \otimes a)= \sum_{i=0}^\infty \nabla^i (x) \otimes  \gamma_i
\left ( \sigma ([\upi])- [\upi] \right ) \sigma (a) .
\end{equation}
To see this, for any $x\in \calM ^i : = \rH ^i _{\cris} (X_n /S_n)$, set 
$$ x^\nabla : = \sum_{m  = 0} ^\infty  \nabla (x) \gamma_m  ([\upi] -u) \in \rH^i  _{\cris} ({\mathcal X}_n /S_n). $$
Then we immediately see that $x^\nabla \in \rH ^ i _{\cris} ({\mathcal X}_n/ W_n (k)) =  \rH ^ i _{\cris} ({\mathcal X}_n/ S_n) ^{\nabla= 0}$. Now we claim that $\rH ^i _{\cris} ({\mathcal X}_n/W_n (k))$ is generated by $\{x ^\nabla | x \in \rH ^i _{\cris} (X_n /S_n)\}$ as an $A_{\cris}$-module. If so then \eqref{eqn-action-3} follows the fact that $\beta$ is $G_K$-equivariant and the construction of $x^\nabla$ (note that both $x$ and $u$ are $G_K$-invariants).   

To prove the claim, for any $y \in \rH ^i _{\cris} ({\mathcal X}_n /W_n (k))$, suppose that $\beta (y) = \sum_j a_j \tilde\iota (x_j)$ with $a_j \in A_{\cris} $ and  $x_j \in \rH^i _{\cris} (X_n /S_n)$. Then we see that $y ^\nabla : = \sum_j a_j x_j ^\nabla  \in \rH ^i _{\cris} ({\mathcal X}_n /W_n (k))$. It suffices to that check $y = y ^\nabla$. Since $\beta$ is an isomorphism, it suffices to show that $\beta (y) = \beta (y ^\nabla)$. This follows that $\beta(x^\nabla) =  \iota (x)$ for $x \in \rH ^i _{\cris} (X_n /S_n)$ as $ \iota ([\upi]- u)= [\upi]- [\upi]= 0$.

\section{Torsion Kisin module, Breuil module and associated Galois representations} In this section, we set up the theory of generalized torsion Kisin modules which extends theory of Kisin modules, which is discussed,  for example,  \cite[\S2]{liu-Fontaine}. The key point for the generalized Kisin modules is that it may have $u$-torsion, and it return to classical torsion Kisin modules when modulo $u$-torsion.
\subsection{(Generalized) Kisin modules}\label{subsec-generalized-Kisin} Let $(\s , E(u))$ be the Breuil--Kisin prism over $\O_K$ with $d = E(u )= E$ the Eisenstein polynomial of fixed uniformizer $\pi\in \O_K$. 
A $\varphi$-module $\m$ over $\s$ is an $\s$-module  $\m$ together $\varphi_\s$-semilinear map $\varphi_\m: \m \to \m$. Write $ \varphi^*\m = \s \otimes_{\varphi, \s} \m$. Note that $1 \otimes \varphi_\m  : \varphi ^* \m \to \m$ is an $\s$-linear map.   
A \emph{(generalized) Kisin module $\m$ of height $h$} is a $\varphi$-module $\m$ of finite $\s$-type so that there exists an $\s$-linear map $\psi: \m \to \varphi ^* \m$ so that $ \psi \circ (1 \otimes \varphi) = E^h \id_{\varphi^*\m}$ and $(1\otimes \varphi) \circ \psi = E^h\id_{\m}$. 
Maps between generalized Kisin modules are given by $\s$-linear maps which are compatible with $\varphi$ and $\psi$.  We denote by $\Mod^{\varphi, h}_{\s}$ the category of (generalized) Kisin module of height $h$.

In \cite{liu-Fontaine},  A \emph{Kisin module $\m$ of height $h$} is defined to be an \emph{\'etale}  $\varphi$-module $\m$ of finite $\s$-type so that
$\coker(1 \otimes \varphi) $ is killed by $E^h$. Here \'etale $\varphi$-module means that the natural map $\m \to \wh{\s [\frac 1 u]} \otimes_{\s} \m$ is injective. Since $E(u)$ is a unit in $\wh{\s [\frac 1 u]}$,
we easily see that the \'etale assumption implies that $(1\otimes \varphi) : \varphi^* \m \to \m$ is injective. 
Then existence and uniqueness of $\psi: \m \to \varphi ^*\m$,  in definition of (generalized) Kisin modules of height $h$,   then follows. 
That is, the {Kisin module $\m$ of height $h$} defined classically is (generalized) Kisin module of height $h$. So  in the following, we drop ``generalized" when we mention the object in $\Mod^{\varphi, h}_{\s}$.
If we need to emphasize $\m$ is also a Kisin modules of height $m$ classically defined, we will mention that it is \'etale. 
\begin{lemma}\label{lem-etale}
\leavevmode
\begin{enumerate}
	\item $\Mod^{\varphi, h}_\s$ is an abelian category.
	
	\item 	$\FM$ is \'etale if and only if $\FM$ has no $u$-torsion. 
	
	\item $\FM [\frac 1 p]$ is finite $\s [\frac 1 p]$-free. 
	 
	\end{enumerate}
\end{lemma}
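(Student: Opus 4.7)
My plan treats the three parts in order, using two standard features of the Breuil--Kisin prism: $\varphi\colon \s \to \s$ is faithfully flat (it realizes $\s$ as a free rank-$p$ module on the basis $1, u, \ldots, u^{p-1}$), and $\s[\frac{1}{p}]$ is a principal ideal domain.

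For (1), given a morphism $f\colon \FM \to \mathfrak{N}$ in $\Mod^{\varphi,h}_\s$, I form $\ker(f)$ and $\coker(f)$ as $\s$-modules. Exactness of $\varphi^*$ (from flatness of $\varphi$) gives $\varphi^*\ker(f) \hookrightarrow \varphi^*\FM$ and $\varphi^*\coker(f) = \coker(\varphi^* f)$. Since morphisms by definition commute with both $\varphi$ and $\psi$, the restrictions of $\varphi_\FM$ and $\psi_\FM$ to $\ker(f)$ land in $\ker(f)$ and $\varphi^*\ker(f)$ respectively, and dually for $\coker(f)$; the identities $(1\otimes\varphi)\circ\psi = E^h\id$ and $\psi\circ(1\otimes\varphi) = E^h\id$ pass to sub/quotients. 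The remaining abelian axioms follow from those for $\s$-modules.

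For (2), the forward implication is immediate since $u$ is a unit in $\wh{\s[\tfrac{1}{u}]}$, so any $u$-torsion in $\FM$ is killed under the map to $\wh{\s[\tfrac{1}{u}]}\otimes_\s \FM$. For the converse, assume $\FM$ has no $u$-torsion and factor the natural map as $\FM \hookrightarrow \FM[\tfrac{1}{u}] \to \wh{\s[\tfrac{1}{u}]}\otimes_\s \FM$. The first arrow is injective by hypothesis; for the second, I reduce to showing $\FM[\tfrac{1}{u}]$ is $p$-adically separated. The key input is that $\FM$ is finitely generated over Noetherian $\s$ and $p$-adically complete, together with the bounded $u$-torsion of $\FM/p\FM$ (which is finitely generated over the DVR $k[\![u]\!]$); an Artin--Rees-type estimate then produces, for any element of $\bigcap_n p^n \FM[\tfrac{1}{u}]$ coming from $\FM$, a uniform bound $K$ on the power of $u$ required to realize $p$-divisibility inside $\FM$, reducing the claim to $u^K x \in \bigcap_n p^n \FM = 0 \Rightarrow x = 0$.

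For (3), by (2) the implicit hypothesis is that $\FM$ is \'etale, so I leverage the injection from (2). Inverting $p$ in $\FM \hookrightarrow \wh{\s[\tfrac{1}{u}]}\otimes_\s \FM$ yields an injection $\FM[\tfrac{1}{p}] \hookrightarrow \mathcal{E}\otimes_\s \FM$, where $\mathcal{E} \coloneqq \wh{\s[\tfrac{1}{u}]}[\tfrac{1}{p}]$ is the fraction field of the complete discrete valuation ring $\wh{\s[\tfrac{1}{u}]}$ (with residue field $k(\!(u)\!)$ and uniformizer $p$). The inclusion $\s[\tfrac{1}{p}] \hookrightarrow \mathcal{E}$ into a field forces any finite-dimensional $\mathcal{E}$-vector space to be torsion-free as an $\s[\tfrac{1}{p}]$-module; therefore $\FM[\tfrac{1}{p}]$ is a finitely generated torsion-free module over the PID $\s[\tfrac{1}{p}]$, hence finite free. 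The main technical obstacle is the uniform $u$-denominator bound in part (2); parts (1) and (3) then follow cleanly.
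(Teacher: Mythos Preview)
Your treatment of (1) is fine and matches the paper's one-line justification via flatness of $\varphi$. The real problems are in (2) and (3), where you never invoke the Kisin structure $(\varphi_\FM,\psi)$. That structure is not optional here: a purely module-theoretic argument cannot succeed.

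For (2), your ``Artin--Rees-type estimate'' asserting a uniform $K$ with $u^Kx\in\bigcap_n p^n\FM$ does not follow from bounded $u$-torsion in $\FM/p\FM$. Consider the $\s$-module $\FM=\s/(p-u)$ (or $\s/E$): it has no $u$-torsion, yet after inverting $u$ the element $p$ becomes a unit, so $\FM[\tfrac1u]$ is not $p$-adically separated and the map $\FM\to\wh{\s[\tfrac1u]}\otimes_\s\FM=0$ is certainly not injective. Concretely, $1\in p^n\FM[\tfrac1u]$ for all $n$, while $u^K\cdot 1=p^K\notin p^{K+1}\FM$, so no uniform $K$ exists. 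Of course $\s/(p-u)$ carries no height-$h$ Kisin structure, but your argument never uses that $\FM$ does, so it cannot distinguish this case.

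For (3), there is no ``implicit hypothesis'' of \'etaleness: the statement is for arbitrary $\FM\in\Mod^{\varphi,h}_\s$. You may reduce to $\FM'=\FM/\FM[p^\infty]$ since $\FM[\tfrac1p]=\FM'[\tfrac1p]$, but then you need $\FM'$ to be torsion-free---which is exactly what (3) asserts and which again fails for non-Kisin modules like $\s/(p-u)$. The paper proves the torsion-freeness of $\FM'[\tfrac1p]$ by a genuine Frobenius argument: writing the torsion part as $\bigoplus\s[\tfrac1p]/P_i^{a_i}$ via the structure theory over the PID $\s[\tfrac1p]$, a wedge-power reduction yields a cyclic module $\s[\tfrac1p]/f$ with induced $\varphi^*,\psi$, and the relations $\psi\circ\varphi^*=E^h$, $\varphi^*\circ\psi=E^h$ force an identity $\varphi(f)=z_0\,f\,E^{l-h}$ with $z_0\in\s^\times$, which is incompatible with $f$ being a nonunit distinguished polynomial. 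Once (3) is established this way, the paper deduces the $p$-torsion-free half of (2) from it, rather than the other way around. This use of the height condition is the missing idea in your approach.
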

\begin{proof} (1) is easy to check because $\varphi: \s \to \s$ is faithfully flat. (2) It is clear from the definition that if $\FM $ is \'etale then it has no $u$-torsion. Conversely, 
	let $\FM[p ^\infty]: = \{ x \in \FM| p ^n x = 0 \text{ for some } n > 0\}$ and $\FM' : = \FM/ \FM[p ^\infty]$. We get the short exact sequence 
	$0 \to \FM[p ^\infty ] \to \FM \to \FM ' \to 0$. It is clear that both $\FM [p ^\infty]$ and $\FM'$ are objects in $\Mod_\s^{\varphi , h}$
 and $\FM$ has no $u$-torsion then both $\FM[p ^\infty]$ and $\FM'$ has no $u$-torsion. Since $\FM[p ^\infty]$	is killed by some $p$-power, $\FM\otimes \wh \s [\frac 1 u]= \FM[\frac 1 u ]$. So $\FM[p ^\infty]$ has no $u$-torsion if and only if that $\FM [p ^\infty]$ is \'etale. Now $\FM'$ has no $p$-torsion, now we claim that $\FM'[\frac 1 p]$  is finite $\s [\frac 1 p]$-free, which will implies (3) and \'etaleness of $\FM'$. By \cite[\S1.2.1]{Fontaine}, $\FM' [\frac 1 p]\simeq \bigoplus \s[\frac 1 p]/ P_i^{a_i} $  with $P_i \in W(k)[u]$, monic irreducible and $P_i \equiv u ^{b_i} \mod p$, or $P_i = 0$. Without loss of generality, we may assume that $P_i \not = 0$ and show such $\FM'$ does not exist when $\FM' \in \Mod_\s^{\varphi, h}$. Consider wedge product $\n $ of $\FM'[\frac 1 p]$, then $\n\simeq \s [\frac 1 p]/ f$ with $f = \prod P_i ^{a_i}$ and  write $\varphi ^* : = (1 \otimes \varphi)$.  We also obtain $\varphi ^*: \varphi ^* \n \to \n $ and $\psi : \n \to \varphi^* \n$ so that $\psi \circ \varphi ^* = E(u)^h \id_{\varphi^* \n}$ and $\varphi ^* \circ \psi = E(u)^h \id_{\n}$ for some $h$. Since $\varphi^* \n \simeq  \s[\frac 1 p ]/ \varphi (f)$, we can write the above maps explicitly as $$ \s[\frac 1 p ]/ \varphi (f) \overset {\varphi^*} {\to}  \s[\frac 1 p ]/ f \overset {\psi}{\to} \s[\frac 1 p ]/ \varphi (f)$$
Write $x = \varphi ^*(1)$ and $y = \psi(1)$. We have $\varphi (f) x = f z'$ and $f y = \varphi (f) w'$ for some $z', w' \in \s[\frac 1 p]$. The condition $\psi \circ \varphi ^* = E(u)^h \id_{\varphi^* \n}$ and $\varphi ^* \circ \psi = E(u)^h \id_{\n}$ implies that $\varphi (f) E(u)^h = f z$ and $f E(u)^h = \varphi(f) w$ with $z, w \in \s [\frac 1 p]$. So $E(u)^{2h} = zw$. Since $E(u)$ is an Eisenstein polynomial, $z = z_0 E(u) ^l$ with $z_0$ a unit in $\s [\frac 1 p]$. Then $\varphi (f) = z_0 f E(u) ^{l-h}$.  We easily see $z_0 \in\s^\times $ as both $f$ and $E(u)$ monic.    So  $l -h >  0$ by mod $p$ on the both sides. Let $a_0= f(0)$ be the constant term of $f(u)$. Since $\varphi (f) (0) = \varphi (a_0) = z_0 (0) a_0 p^{l-h}$. Comparing $p$-adic valuation on the both sides, we see that $a_0 = 0$. Then we may write $f= u^m g$ with $g(0) \not = 0$. But then we have $u ^{pm-m} \varphi (g) = z_0 g E(u)^{l-h} $, which is impossible by comparing constant terms on the both sides.  
%So $E(u)| \varphi (f)$ and $\varphi^{-1}(E(u))\in W(k)[u] $ is a factor of $f$  . By replace $f$ by $f / \varphi ^{-1}(E(u))$, we still get $\varphi (f) =z_0 f \varphi ^{-1}(E(u)) $ where $\varphi ^{-1}(E(u)) $ is still an Eisenstein polynomial. We may continue this steps, but it is not possible $\varphi ^{-n} (E(u))\in W(k)[u]$ for all $n$. So we eventually get a contradiction. 
In summary, such $\FM'$ can not exist and $\FM ' [\frac 1 p]$ is finite $\s[\frac  1 p]$-free. 
\end{proof}

%Note that the \'etale assumption is equivalent to that $\m$ has no $u$-torsion. We denote by  $\Mod^{\varphi, m}_{\s, \tor\et}$ the full subcategory of torsion \'etale Kisin module of height $m$.

Let $\m$ be  a Kisin module of height $h$ and set  $\m [u ^{\infty}]: =\{x \in \m| u ^l x = 0 \text{ for some }  l\}$. It is that both $(1 \otimes \varphi_\m)(\varphi ^*\m[u ^\infty]) \subset \m[u ^\infty]$ and $\psi(\m [u ^\infty]) \subset \varphi ^*\m[u ^\infty]$. The above lemma shows that $\FM[u ^\infty] \subset \FM [p ^\infty] $ and $\FM/ \FM[u ^\infty]$ is \'etale. 
%Then it is clear that $\m_{\tor}$ is a sub-Kisin module of height $m$ inside $\m$. And $\m/ \m_{\tor}$ is a \'etale Kisin module of height $m$.
%The map between generalized Kisin modules is $\s$-linear map that compatible with $\varphi$. Since $\varphi: \s \to \s$ is flat, it is easy to show that the category of generalized Kisin modules of height $m$ is an abelian category. For this subsection, we only discuss generalized Kisin modules that killed by $p^n$ for some $n$. Our next task is to compare the torsion generalized Kisin modules with torsion Kisin modules discussed in \cite[\S2]{liu-Fontaine}. By \cite[Prop.2.3.2]{liu-Fontaine}, we easily see that
\begin{lemma}\label{lem-easy} The following short exact sequence is in $\Mod^{\varphi, h}_{\s}$
	$$ 0 \to \m [u ^\infty] \to \m \to \m/ \m[u ^\infty] \to 0  $$
with $\m / \m[u ^\infty]$ being \'etale.
\end{lemma}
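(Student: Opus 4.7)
The plan is to verify directly that $\m[u^\infty]$ is a sub-object of $\m$ in the category $\Mod^{\varphi,h}_{\s}$, at which point the quotient automatically inherits the structure, and étaleness of the quotient reduces to \Cref{lem-etale}(2).

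First I would check that $\m[u^\infty]$ lies in $\Mod^{\varphi,h}_{\s}$. Since $\s$ is Noetherian and $\m$ finitely generated, so is $\m[u^\infty]$. Stability under $\varphi_\m$ is immediate: if $u^{\ell}x=0$, then $u^{\ell p}\varphi_{\m}(x)=\varphi_{\m}(u^{\ell}x)=0$. The essential point is to see that $\psi$ restricts to a map $\m[u^\infty]\to \varphi^*(\m[u^\infty])$. Using that $\varphi\colon \s\to\s$ is (faithfully) flat, the functor $\varphi^*$ is exact, and applying it to the short exact sequence $0\to\m[u^\infty]\to\m\to \m[1/u]$ identifies $\varphi^*(\m[u^\infty])$ with $(\varphi^*\m)[u^\infty]$ inside $\varphi^*\m$. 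Since by inspection $\psi(\m[u^\infty])\subset(\varphi^*\m)[u^\infty]$ (the computation is identical to the $\varphi_\m$ case), this gives the required $\psi$ on $\m[u^\infty]$. The identities $\psi\circ(1\otimes\varphi_\m)=E^h\cdot\id$ and $(1\otimes\varphi_\m)\circ\psi=E^h\cdot\id$ then hold on $\m[u^\infty]$ because they hold on $\m$.

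Next, $\m/\m[u^\infty]$ inherits $\varphi$ and $\psi$ as a quotient in $\Mod^{\varphi,h}_{\s}$ (which is abelian by \Cref{lem-etale}(1)), and the defining identities descend. By construction $\m/\m[u^\infty]$ has no $u$-torsion: any lift of a $u$-torsion element is itself $u^\infty$-torsion in $\m$, hence lies in $\m[u^\infty]$. Invoking \Cref{lem-etale}(2) then shows $\m/\m[u^\infty]$ is étale, completing the proof.

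There is no serious obstacle — the only place where anything beyond bookkeeping is needed is the identification $\varphi^*(\m[u^\infty])=(\varphi^*\m)[u^\infty]$, which is where flatness of $\varphi\colon\s\to\s$ enters.
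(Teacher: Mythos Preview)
Your proposal is correct and follows essentially the same route as the paper. The paper does not supply a formal proof environment for this lemma; instead, the paragraph immediately preceding the statement records the two key containments $(1\otimes\varphi_{\m})(\varphi^*\m[u^\infty])\subset\m[u^\infty]$ and $\psi(\m[u^\infty])\subset\varphi^*\m[u^\infty]$, and then appeals to \Cref{lem-etale} for the \'etaleness of the quotient. Your write-up simply fleshes out these observations, in particular making explicit the identification $\varphi^*(\m[u^\infty])=(\varphi^*\m)[u^\infty]$ via flatness of $\varphi$ on $\s$, which the paper leaves implicit.
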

%Note that when $\m$ is killed by $p$-power torsion then$\m/\m [u ^\infty]$ has no $u$-torsion. So $\m/\m[u ^\infty]$ is classical, i.e., \'etale,  Kisin module of height $h$. 
It turns out that \'etale Kisin module enjoys many nice properties. Let $\Mod_{\s , \tor}^{\varphi , h}$ denote the full subcategory of $\Mod_{\s }^{\varphi , h}$ whose object $\m$ is torsion, i.e., killed by $p ^n$ for some $n$. 
The following Lemma is a part of \cite[Proposition 2.3.2]{liu-Fontaine}.

\begin{lemma}\label{lem-recall} 
	The following statements are equivalent for a torsion Kisin module $\m\in \Mod_{\s , \tor}^{\varphi , h}$: 
	\begin{enumerate}
		\item $\m$ is \'etale.
		\item $\m$ can be written as a successive quotient of $\m_i$ so that $\m_i \in  \Mod_{\s , \tor}^{\varphi , h}$ and $\m_i$ is finite $\ku$-free.
		\item $\m = \n/ \n'$ where $\n' \subset \n$ are Kisin modules of height $h$ and $\n'$ and $\n$ are finite free $\s$-modules.
	\end{enumerate}
\end{lemma}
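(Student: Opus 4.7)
The plan is to establish the chain (3) $\Rightarrow$ (2) $\Rightarrow$ (1) $\Rightarrow$ (3), which together yields the equivalence.

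The easiest implication is (2) $\Rightarrow$ (1). By \Cref{lem-etale}(2), being \'etale in $\Mod_{\s,\tor}^{\varphi,h}$ is equivalent to having no $u$-torsion. This property is closed under extensions in $\Mod_{\s,\tor}^{\varphi,h}$: in a short exact sequence $0 \to \m_1 \to \m_2 \to \m_3 \to 0$ with $\m_1, \m_3$ having no $u$-torsion, any $x \in \m_2$ with $ux = 0$ maps to $0$ in $\m_3$, hence lies in $\m_1$, forcing $x=0$. Since any $\ku$-free module has no $u$-torsion, a successive extension as in (2) is automatically \'etale.

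For (1) $\Rightarrow$ (2) I will induct on the smallest $n$ with $p^n\m = 0$. The base case $n=1$ is immediate: $\m$ is then a finitely generated $\ku = \s/p$-module with no $u$-torsion, hence free over the DVR $\ku$. For the inductive step, I consider the short exact sequence $0 \to p\m \to \m \to \m/p\m \to 0$ in $\Mod_{\s,\tor}^{\varphi,h}$. The submodule $p\m$ is \'etale as a submodule of an \'etale module; the quotient $\m/p\m$ is \'etale because multiplication by $p$ on the ambient \'etale $\varphi$-module over $\wh{\s[1/u]}$ is injective on the generic fiber of $\m$, so any lift of a $u$-torsion in $\m/p\m$ must already lie in $p\m$. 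Then applying induction to $p\m$ (killed by $p^{n-1}$) and the base case to $\m/p\m$ and concatenating their filtrations produces the desired filtration on $\m$.

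For (3) $\Rightarrow$ (2), I take the filtration $\n \supset \n' + p^{n-1}\n \supset \cdots \supset \n' + p\n \supset \n'$ (where $n$ is chosen so that $p^n \n \subset \n'$) and pass to the quotient by $\n'$ to get a filtration of $\m$. Each successive quotient is killed by $p$ and embeds as a $\ku$-submodule of $\n/p\n$, a free $\ku$-module; since (3) already provides a realization as a quotient of two $\s$-free Kisin modules, the implication (3) $\Rightarrow$ (1) (established independently by noting $\n,\n'$ are both \'etale and their quotient stays $u$-torsion free inside an \'etale $\varphi$-module over $\wh{\s[1/u]}$) lets me invoke the just-proved (1) $\Rightarrow$ (2) instead, avoiding a direct freeness check of these subquotients.

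The main obstacle is (1) $\Rightarrow$ (3): realizing an arbitrary \'etale torsion $\m$ as a quotient of two free Kisin modules of height $h$. The strategy is to lift a minimal generating set of $\varphi^*\m$ (well-behaved because $\m$ is \'etale, so $1\otimes\varphi_\m$ is injective) to obtain a surjection $\n \twoheadrightarrow \m$ from a finite free $\s$-module $\n$ endowed with a $\varphi_\n$ covering $\varphi_\m$; one then transports the map $\psi_\m$ through the surjection to define $\psi_\n$, verifying the height-$h$ relations $\psi \circ (1\otimes\varphi) = E^h$ and $(1\otimes\varphi)\circ\psi = E^h$ on $\n$. Setting $\n'$ to be the kernel, the compatibility of $\varphi$ and $\psi$ on $\n$ forces $\n'$ to be stable under both. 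Freeness of $\n'$ follows by choosing $\n$ of the minimal possible rank so that $\n'[1/p]$ is free over $\s[1/p]$ (using \Cref{lem-etale}(3) applied to $\n'$), and then combining this with the fact that $\n'$ is $\s$-torsion free to conclude $\n'$ itself is finite $\s$-free; this final freeness step is the most delicate and mirrors the argument in \cite[Proposition~2.3.2]{liu-Fontaine}.
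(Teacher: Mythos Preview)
The paper itself does not prove this lemma; it simply cites \cite[Proposition~2.3.2]{liu-Fontaine}. So there is no detailed argument to compare against, and your final appeal to that reference for the delicate step in $(1)\Rightarrow(3)$ is in line with what the paper does.

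That said, two of your intermediate reductions contain genuine gaps.

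\textbf{The d\'evissage in $(1)\Rightarrow(2)$ is wrong.} Your claim that $\m/p\m$ is \'etale because ``multiplication by $p$ is injective on the generic fiber'' cannot be right: $\m$ is $p$-power torsion, so multiplication by $p$ is \emph{never} injective on $\m\otimes_\s\wh{\s[1/u]}$ once $\m\neq 0$. In fact \'etaleness is not preserved by the quotient $\m\mapsto\m/p\m$ in general; this is exactly why \Cref{cor-devissage} constructs $\m_n$ as the image of $\m$ in $M/p^nM$ rather than as $\m/p^n\m$, and only asserts $\m_n[1/u]=(\m/p^n\m)[1/u]$. The fix is easy: run the induction via the other short exact sequence
\[
0\to \m[p]\to \m \xrightarrow{\ \cdot p\ } p\m\to 0,
\]
in which \emph{both} outer terms are submodules of the \'etale $\m$ and hence themselves $u$-torsion free.

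\textbf{The justification of $(3)\Rightarrow(1)$ is circular.} Saying that ``the quotient stays $u$-torsion free inside an \'etale $\varphi$-module over $\wh{\s[1/u]}$'' is precisely the assertion that $\m=\n/\n'$ injects into $\m[1/u]$, which is the definition of \'etale --- so nothing has been shown. A clean argument does not use the $\varphi$-structure at all: since $\n',\n$ are finite free over the $2$-dimensional regular local ring $\s$, the quotient $\m$ has projective dimension $\leq 1$, hence $\operatorname{depth}(\m)\geq 1$ by Auslander--Buchsbaum; but $\m[u^\infty]$ is killed by both $p^n$ and some $u^k$, so it is a finite-length submodule, and depth $\geq 1$ forces $H^0_{\mathfrak m}(\m)=0$, whence $\m[u^\infty]=0$.
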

\begin{corollary}\label{cor-devissage} Give an \'etale Kisin module $\m\in \Mod^{\varphi, h}_\s $. There exists \'etale Kisin module $\m _n \in \Mod^{\varphi, h}_\s$ killed by $p ^n $  satisfying $\m /p ^n \m [\frac 1 u] = \m_n [\frac 1 u]$ and $\m = \varprojlim_n \m_n$ 
\end{corollary}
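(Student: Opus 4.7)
The plan is to define $\m_n \coloneqq (\m/p^n\m)/(\m/p^n\m)[u^\infty]$, namely the maximal \'etale quotient of $\m/p^n\m$. First I verify that this is an object of $\Mod_\s^{\varphi,h}$: since $\Mod_\s^{\varphi,h}$ is abelian (Lemma \ref{lem-etale}(1)), the quotient $\m/p^n\m$ inherits a height-$h$ Kisin module structure, and Lemma \ref{lem-easy} shows its quotient by $u^\infty$-torsion is \'etale of height $h$, killed by $p^n$ by construction. The second identity $\m/p^n\m[\tfrac1u]=\m_n[\tfrac1u]$ is then immediate from the flatness of $\s\to \s[\tfrac1u]$ and the fact that $u^\infty$-torsion is killed by inverting $u$.

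The substantive point is the identity $\m=\varprojlim_n\m_n$. I start from the short exact sequence of inverse systems
\[
0\to T_\bullet \to \m/p^\bullet\m \to \m_\bullet \to 0, \qquad T_n\coloneqq (\m/p^n\m)[u^\infty].
\]
Since $\m$ is finitely generated over the $p$-adically complete Noetherian ring $\s$, it is $p$-adically complete so $\varprojlim_n \m/p^n\m=\m$ and $\varprojlim_n^1 \m/p^n\m=0$ (surjective transition maps). Therefore the problem reduces to proving $\varprojlim_n T_n=0$ and $\varprojlim_n^1 T_n=0$.

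To analyze $T_\bullet$, I exploit \'etaleness of $\m$ to embed $\m\hookrightarrow M\coloneqq \m\otimes_\s \calO_\mathcal{E}$, where $\calO_\mathcal{E}$ denotes the $p$-adic completion of $\s[\tfrac1u]$, a complete DVR with uniformizer $p$. Under this embedding $T_n$ is identified with $(\m\cap p^nM)/p^n\m$, and the transition maps correspond to the inclusions $\m\cap p^{n+1}M\subset \m\cap p^nM$. Since $M$ is finitely generated over the $p$-adically complete $\calO_\mathcal{E}$, it is $p$-adically separated, so $\bigcap_n p^nM=0$. Combined with $p$-adic completeness of $\m$, a direct Cauchy sequence argument shows $\varprojlim_n T_n=0$.

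The hard part is the vanishing of $\varprojlim^1_n T_n$, which by Mittag-Leffler amounts to showing that for each fixed $n$ the images $\mathrm{Im}(T_m\to T_n)$ stabilize (ideally to zero) as $m\to\infty$. This boils down to proving that $\m$ is closed inside $M$ with respect to the $p$-adic topology of $M$; equivalently, that $\bigcap_m(\m\cap p^mM + p^n\m)=p^n\m$ for every $n$. The obstacle is that the $p$-adic topology on $\m$ (as an $\s$-module) may \emph{a priori} be finer than the subspace topology inherited from $M$. I plan to resolve this by splitting via the short exact sequence $0\to\m[p^\infty]\to\m\to\m/\m[p^\infty]\to 0$ and handling the two pieces separately: on the $p$-torsion-free quotient one can apply an Artin--Rees type estimate since $\m/\m[p^\infty]\otimes \s[\tfrac1p]$ is finite free over $\s[\tfrac1p]$ by Lemma \ref{lem-etale}(3); on $\m[p^\infty]$, which is a torsion \'etale Kisin module, Lemma \ref{lem-recall} gives a presentation reducing the statement to the case of finite $\s$-free Kisin modules where everything is manifest.
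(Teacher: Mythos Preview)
Your construction of $\m_n$ agrees with the paper's: the paper defines $\m_n$ as the image of $\m$ in $M/p^nM$ where $M=\m\otimes_\s\widehat{\s[\tfrac1u]}$, and since the kernel of $\m/p^n\m\to M/p^nM$ is exactly $(\m/p^n\m)[u^\infty]$, this is your \'etale quotient. Your deduction of height $h$ from Lemma~\ref{lem-easy} is in fact slicker than the paper's, which quotes \cite[Proposition~B~1.3.5]{Fontaine} for this point.

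For the identity $\m=\varprojlim_n\m_n$ the paper simply asserts that $\m$ is $p$-adically closed in $M$, whereas you work to justify it. Your plan is correct, with two small simplifications. For the torsion part you do not need Lemma~\ref{lem-recall}: since $\m'=\m[p^\infty]$ is killed by some $p^N$, its contribution becomes trivial for $n\ge N$. For the torsion-free part $\m''=\m/\m'$, your ``Artin--Rees type'' estimate is realized by sandwiching $p^cL\subset\m''\subset p^{-c}L$ for a free lattice $L\subset\m''[\tfrac1p]$ and using $\s\cap p^m\widehat{\s[\tfrac1u]}=p^m\s$ to get $\m''\cap p^nM''\subset p^{n-2c}\m''$. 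To combine the two pieces (which you leave implicit), use the DVR splitting $M=M_{\mathrm{tor}}\oplus M_{\mathrm{free}}$: if $x\in\m\cap p^nM$ with $n\ge N+2c$, its image in $\m''$ lies in $p^{n-2c}\m''$, and the resulting difference lands in $\m'\cap p^{n-2c}M_{\mathrm{tor}}=0$, so $x\in p^{n-2c}\m$. This gives equivalence of the two filtrations and hence the Mittag--Leffler condition you seek.
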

\begin{proof}Let $M =  \m\otimes_{\s} \wh {\s[\frac 1 u]} $. Consider the exact sequence 
$0 \to p ^n M \to M \overset q \to M / p ^n M\to 0 $. Since $\m$ is \'etale, we see the natural map $\m\to M$ is injective. Set $\m_n = q(\m)\subset M / p ^n M$. It is easy to check that $\m_n [\frac 1 u] = \m / p ^n \m [\frac 1 u] = M / p ^n M$, $\m_n$ has no $u$-torsion and $\m = \varprojlim_n \m_n$ (since $\m$ is $p$-adically closed in $M$).   We just need to check that $\m_n$ has height $h$. This was proved by \cite[Proposition B 1.3.5]{Fontaine}. 
\end{proof}

In general, the category of \'etale Kisin modules is not abelian  but under some restrictions it could be abelian. Given $\m \in \Mod_{\s , \tor}^{\varphi , h}$,  let $M = \m[u ^\infty, p]: = \{ x\in \m[u ^\infty]|\  p x = 0\}$.   
\begin{lemma}\label{lem-control-torsion} If  $e h < p-1$ then  $M = 0$ and if  $eh  < 2( p -1)  $	then $M \simeq \bigoplus k$ or $0$. \end{lemma}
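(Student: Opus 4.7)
The plan is to reduce both assertions of the lemma to the single quantitative bound
\[
eh \;\ge\; a(p-1),
\]
where $a \ge 0$ is the smallest integer with $u^a M = 0$ (so $a = 0$ exactly when $M = 0$). This bound yields the lemma at once: $eh < p-1$ forces $a < 1$, hence $M = 0$; and $eh < 2(p-1)$ forces $a \le 1$, i.e.\ $uM = 0$, which means $M$ is a $k$-vector space, so $M \simeq \bigoplus k$.

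First I would dispatch the preliminaries. Because $M = \m[u^\infty] \cap \m[p]$ is cut out by ideal-theoretic conditions stable under both $\varphi_\m$ and $\psi$, these structure maps restrict to $M$ and $\varphi^* M$. Since $M$ is killed by $p$ and $E(u) \equiv u^e \pmod p$, the two height-$h$ identities $\psi \circ (1 \otimes \varphi_\m) = E^h \, \id$ on $\varphi^* M$ and $(1 \otimes \varphi_\m) \circ \psi = E^h \, \id$ on $M$ reduce to multiplication by $u^{eh}$. Moreover, $M$ is a finite-length $k[\![u]\!]$-module, so $a$ is well defined.

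The core step is the following computation. Pick $m_0 \in M$ realizing the exponent, so that $u^{a-1} m_0 \ne 0$, and set $x := 1 \otimes m_0 \in \varphi^* M = \s \otimes_{\varphi, \s} M$. Since $\varphi(u) = u^p$, the annihilator of $x$ in $k[\![u]\!]$ is exactly $(u^{pa})$, so $x$ has $u$-exponent equal to $pa$. Let $y := (1 \otimes \varphi_\m)(x) \in M$; then $u^a y = 0$ automatically, and the identity on $\varphi^* M$ yields $\psi(y) = u^{eh} x$. Applying $u^a$ gives
\[
0 \;=\; \psi(u^a y) \;=\; u^a \psi(y) \;=\; u^{a + eh}\, x,
\]
and the exponent of $x$ forces $pa \le a + eh$, i.e.\ $eh \ge a(p-1)$, as desired.

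No serious obstacle is anticipated; once the preliminaries are in place, the argument is a single line of computation. The only point that needs a small check is that $\varphi_\m$ and $\psi$ preserve $M$ and that the structural identities pass to the restriction, which is routine since $\m[u^\infty]$ and $\m[p]$ are each sub-Kisin-modules of $\m$.
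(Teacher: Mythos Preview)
Your proof is correct and follows essentially the same route as the paper. Both arguments pick an element $x \in \varphi^* M$ of maximal $u$-order $pa$ (you construct it as $1 \otimes m_0$ for $m_0 \in M$ of order $a$, the paper picks it directly from the decomposition $\varphi^* M \simeq \bigoplus k[\![u]\!]/u^{pa_j}$), then use the identity $\psi \circ (1 \otimes \varphi) = u^{eh}\,\id$ together with $u^a M = 0$ to deduce $u^{a+eh} x = 0$, forcing $a(p-1) \le eh$.
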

\begin{proof}
%	Let $\m= \{ x \in H^i_\s(\X) | \exists s\geq 0,  u ^s x = 0 \}$. %Write $\tilde \m: = A_{\inf} \otimes_\s H^i_\s(\X) $
	So we have $\psi : M \to \varphi ^*M $ so that $\psi \circ (1 \otimes \varphi)   = d ^h\id_{\varphi ^*M } $.   We can write $M = \bigoplus_{j=1}^m  k [\![u]\!]/ u ^{a_j}$ with $a_j \geq 1$, and then $\varphi ^*M\simeq \bigoplus_{j=1}^m  k [\![u]\!]/ u ^{pa_j}$. Assume that $a = \max_j \{ a_j\}$ and let $x \in \varphi ^*M$ so that $u^{pa} x = 0 $ but $u ^{pa -1} x \not = 0$. Since  $\psi \circ (1 \otimes \varphi)   = u ^{eh}\id_{\varphi^*M}$, we conclude that $u^{eh} x \in \psi (M)$. Note that $u^a M = \{0\}$ and $\psi$ is $\ku$-linear,  we have $u ^{a+ eh} x =0 $. This forces that $a+ eh \geq pa$. That is, $a \leq \frac{eh}{p-1}$. Hence such $a$ can not exists if $eh <p -1$. If $eh< 2(p-1)       $ then  $a =1$ or 0. 	This proves the Lemma.   
	\end{proof}

\begin{proposition}\label{prop-abelian} If $eh< p -1$ then $\Mod_\s^{\varphi, h}$ is an abelian category. 
\end{proposition}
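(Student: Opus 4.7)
The plan is as follows. Under $eh<p-1$, I claim every $\m\in\Mod^{\varphi,h}_\s$ is étale; granted this, since Lemma \ref{lem-etale}(1) already asserts $\Mod^{\varphi,h}_\s$ is abelian (with no hypothesis on $eh$), the proposition follows, matching the preceding remark that the étale subcategory---which a priori could fail to be closed under cokernels---coincides with the full abelian category $\Mod^{\varphi,h}_\s$ in this regime.

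The argument is a two-stage dévissage built on Lemma \ref{lem-control-torsion}. First I would handle $\m\in\Mod^{\varphi,h}_\s$ killed by some power $p^n$ by filtering with the subobjects $p^i\m$, which are well-defined in $\Mod^{\varphi,h}_\s$ by Lemma \ref{lem-etale}(1). Each graded piece $p^i\m/p^{i+1}\m$ is annihilated by $p$, so Lemma \ref{lem-control-torsion} gives $\m[u^\infty,p]=0$ on each piece; being $p$-torsion, each piece therefore has no $u$-torsion at all. An elementary extension-closure argument for $u$-torsion freeness (any $u^k$-torsion class in a middle term maps to $0$ in the quotient, hence lifts to the submodule, where it must vanish) propagates this up the filtration and shows $\m$ is $u$-torsion free.

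Next I would handle a general $\m$ via the short exact sequence
\[
0\to\m[p^\infty]\to\m\to\m/\m[p^\infty]\to 0
\]
in $\Mod^{\varphi,h}_\s$. Since $\s$ is Noetherian and $\m$ is finitely generated, $\m[p^\infty]$ is itself finitely generated and so annihilated by some $p^n$; hence the previous step already makes $\m[p^\infty]$ $u$-torsion free. The quotient $\m/\m[p^\infty]$ is $p$-torsion free, and by the argument reproduced in the proof of Lemma \ref{lem-etale}(2) its inversion at $p$ is finite free over $\s[\tfrac{1}{p}]$; consequently $\m/\m[p^\infty]$ embeds into a $u$-torsion free module and is itself $u$-torsion free. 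Invoking the same extension-closure argument once more concludes that $\m$ is étale. Then Lemma \ref{lem-etale}(1) delivers the abelian structure.

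The substantive input is Lemma \ref{lem-control-torsion}, the $p$-torsion base case; everything above it is formal homological dévissage relying only on Noetherianness of $\s$ and on the abelianness provided by Lemma \ref{lem-etale}(1). The one bookkeeping point that must be verified is that each filtration step ($p^i\m$, $\m[p^\infty]$, and their successive quotients) yields exact sequences inside $\Mod^{\varphi,h}_\s$---so that Lemma \ref{lem-control-torsion} may legitimately be applied to the graded pieces---but this is immediate from Lemma \ref{lem-etale}(1).
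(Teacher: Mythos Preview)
Your proposal is correct and rests on the same key input as the paper, namely Lemma~\ref{lem-control-torsion}. The paper's proof is a one-liner: since $\FM[u^\infty]\subset\FM[p^\infty]$ (noted after Lemma~\ref{lem-etale}) is itself a torsion object of $\Mod^{\varphi,h}_\s$, a single application of Lemma~\ref{lem-control-torsion} to $\FM[u^\infty]$ gives $\FM[u^\infty][p]=\FM[u^\infty][u^\infty,p]=0$, and a finitely generated $p$-power-torsion module with trivial $p$-torsion vanishes. Your two-stage d\'evissage (filter by $p^i\m$, then split off $\m[p^\infty]$) reaches the same conclusion but is more elaborate than necessary; in particular, your treatment of the $p$-torsion-free quotient $\m/\m[p^\infty]$ via Lemma~\ref{lem-etale}(3) does not even use the hypothesis $eh<p-1$, reflecting the fact that all the $u$-torsion already lives in $\m[p^\infty]$.
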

\begin{proof} By Lemma \ref{lem-control-torsion}, $\FM[u ^\infty] = 0$. 
\end{proof}

\begin{example} Let $E(u)= u -p$,  $\FM= k \simeq \ku/ u$ and $\varphi (1)= 1$. Let $\psi : \ku/ u \to \ku/ u ^p$ by $\psi(1) = u ^{p-1}$. 
	Then $\FM \in \Mod_{\s, \tor}^{\varphi, p -1}$. 
\end{example}

Let $\FM\in \Mod_{\s} ^{\varphi, h}$. %Set $\varphi^*\FM : = \FM ^{(1)}: = \s \otimes_{\varphi, \s} \FM$. Then the map $\varphi ^*: = 1 \otimes \varphi : \s \otimes_{\varphi, \s}\FM \to \FM $ is \emph{$\s$-linear} map. 
Define \emph{Breuil--Kisin filtration} on $\varphi ^* \FM $ by 
\[
\Fil_{\BK} ^h\varphi^* \FM \coloneqq \mathrm{Im}(\psi \colon \m \to \varphi^*\m).
\]
In the case that $\m$ is \'etale then $\psi$ is injective as explained above, and we have an identification
\begin{equation}\label{eqn-BK-filtration}
\Fil_{\BK} ^h\varphi^* \FM \cong \{  x \in \varphi ^* \FM | (1\otimes \varphi) (x) \in E(u)^h \FM\}
\end{equation}
of submodules in $\varphi^* \FM$.
Since there is only filtration considered for Kisin modules in this section, we drop $\BK$ from the notation for this section. Finally there is $\varphi_\s$-semi-linear map 
$\varphi : = \varphi \otimes \varphi: \varphi^*\FM \to \varphi ^* \FM$.
It is clear that $\varphi (\Fil^ i \varphi^* \FM) \subset \varphi (E(u)) ^i \varphi^* \FM$.
If $\FM$ is \'{e}tale, then we define 
$\varphi _i : \Fil ^ i \varphi ^* \FM \to \varphi ^* \FM $ via $$\varphi _i(x) : = \frac {\varphi (x)}{\varphi (a_0^{-1}E(u)^i)},$$ where $a_0 p = E(0)$. 
\begin{lemma}\label{lem-filh-exact} Suppose that $0 \to \FM' \to \FM \to \FM '' \to 0$ is an exact sequence inside $\Mod_\s ^{\varphi, h}$ and all modules are \'etale. Then the following sequence is exact: 
	$$ 0 \to \Fil ^h \varphi ^*\FM' \to \Fil ^h \varphi ^*\FM \to\Fil ^h \varphi ^*  \FM '' \to 0$$
\end{lemma}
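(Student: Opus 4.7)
The plan is to exploit the $\psi$-description $\Fil^h \varphi^* \FM = \mathrm{Im}(\psi_\FM \colon \FM \to \varphi^*\FM)$, valid for any object of $\Mod^{\varphi,h}_\s$ (and agreeing with \eqref{eqn-BK-filtration} in the \'etale case), together with the naturality of $\psi$ built into the definition of morphisms in $\Mod^{\varphi,h}_\s$. Since $\varphi \colon \s \to \s$ is flat, applying $\varphi^*$ to the given short exact sequence preserves exactness, yielding $0 \to \varphi^*\FM' \to \varphi^*\FM \to \varphi^*\FM'' \to 0$.

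First I would dispatch the easy directions. Injectivity of $\Fil^h \varphi^*\FM' \to \Fil^h \varphi^*\FM$ is immediate, as it factors through the injection $\varphi^*\FM' \hookrightarrow \varphi^*\FM$. For surjectivity of $\Fil^h \varphi^*\FM \to \Fil^h \varphi^*\FM''$, any $y \in \Fil^h \varphi^*\FM''$ has the form $\psi_{\FM''}(\bar m)$; lifting $\bar m \in \FM''$ to $m \in \FM$ via the surjection $\FM \twoheadrightarrow \FM''$, the element $\psi_\FM(m) \in \Fil^h \varphi^*\FM$ maps to $y$ by naturality of $\psi$.

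The heart of the argument is middle exactness. Take $\psi_\FM(m) \in \Fil^h \varphi^*\FM$ mapping to $0$ in $\varphi^*\FM''$. Then $\psi_{\FM''}(\bar m) = 0$ for $\bar m$ the image of $m$ in $\FM''$, and applying $1 \otimes \varphi_{\FM''}$ to this identity yields $E^h \bar m = 0$ in $\FM''$. The crucial input, which I expect to be the only non-formal step, is that any \'etale Kisin module is $E$-torsion free. This follows because, by \Cref{lem-etale}(2), $\FM''$ injects into $\FM'' \otimes_\s \wh{\s[\tfrac{1}{u}]}$; and in the ring $\wh{\s[\tfrac{1}{u}]}$ the element $E(u) \equiv u^e \pmod{p}$ is a unit modulo $p$, hence a unit by $p$-adic completeness. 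Consequently $\bar m = 0$, so $m \in \FM'$, and then $\psi_\FM(m) = \psi_{\FM'}(m) \in \Fil^h \varphi^*\FM'$ by naturality, concluding middle exactness.
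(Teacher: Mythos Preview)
Your proof is correct and follows essentially the same approach as the paper. The paper's one-line argument notes that $1\otimes\varphi$ gives a functorial bijection $\Fil^h\varphi^*\FM \xrightarrow{\sim} E^h\FM$; combined with $E^h\FM \cong \FM$ (by the $E$-torsion freeness of \'etale modules, which you correctly spell out), this transports exactness directly, whereas you verify the three pieces by hand using the equivalent bijection $\psi \colon \FM \xrightarrow{\sim} \Fil^h\varphi^*\FM$.
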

\begin{proof}
This easily follows that $\varphi^* : \Fil^h \varphi^* \FM \to E^h \FM$ is bijective. 
\end{proof}
\begin{remark} The above Lemma fails in general if $i < h$ or if the modules are not \'etale. 
\end{remark}
\subsection{Galois representation attached to \'etale Kisin modules} 
\label{subsec-Galos rep and Kisin} 
Recall that we  fix $\pi_n \in \overline K$ so that $\underline \pi : = (\pi _n) \in \O_{\C} ^\flat$ and $\pi_0= \pi$;  $K _\infty : = \bigcup_{n \geq 0} K (\pi _n)$ and $G_\infty : = \Gal (\Kbar/ K_\infty)$. We embed $\s \to A_{\inf}$ via $u \mapsto [\underline \pi]$. This embedding is compatible with $\varphi$, but not with the $G_K$-action. We have $\s \subset A_{\inf}^{G_\infty}$. 

For a Kisin module  $\FM \in \Mod_\s ^{\varphi, h}$, we can associate a representation of $G_\infty$ via
$$T_\s(\m): = \left(\m \otimes_\s W(\C^\flat)\right)^{\varphi =1}=\left(\m/\m [u ^\infty] \otimes_\s W(\C^\flat)\right)^{\varphi =1} . $$
So the Galois representation attached to $\FM$ is insensible to $u$-torsion parts because $\frac 1 u \in W(\C ^\flat)$. It is well-known that $T_\s$ is exact and there exists an $W(\C^\flat)$-linear isomorphism   
$$\m \otimes_\s W(\C^\flat) \simeq T_\s (\FM)\otimes_{\Z_p} W(\C^\flat),  $$
which is compatible with $\varphi$ and $G_\infty$-actions.

For many purposes, we define another variant $T^h_\s$ of $T_\s$: For an \'etale $\FM\in \Mod_{\s}^{\varphi, h}$,  we can naturally extend $\varphi _h : \Fil ^h \varphi ^* \FM  \to \varphi ^* \FM$ to $\varphi _h :  \Fil ^h \varphi ^* \FM \otimes_ \s A_{\inf}  \to \varphi ^* \FM \otimes_\s A_{\inf}$. 
$$T^h_\s(\m): = \left(\Fil^h \varphi ^* \m  \otimes_\s A_{\inf}\right)^{\varphi_h  =1} = \{ x\in \Fil^h \varphi^* \m\otimes_\s A_{\inf}, \  \varphi(x) = \varphi (a_0^{-1}E(u)^h)x\}.$$

\begin{lemma}\label{lem-twistm} Assume that $\FM \in \Mod_{\s} ^{\varphi, h}$ is \'etale. Then 
	\begin{enumerate}
\item 	$T^h _\s (\m) \simeq T_\s(\m) (h )$.

\item The following sequence is short exact 
$$\xymatrix{0 \ar[r]& T^h_\s (\m) \ar[r]&  \Fil^h \varphi ^* \FM \otimes_\s A_{\inf}\ar[r]^- {\varphi_h -1} &  \varphi ^* \FM \otimes _\s A_{\inf}\ar[r] & 0}$$
	\end{enumerate}
	
\end{lemma}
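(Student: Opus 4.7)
The plan is to handle (2) first, since (1) will follow from it combined with an appropriate twist by a Breuil--Kisin period element in $A_{\inf}$. The \'etaleness hypothesis on $\m$ is essential throughout: it makes $\psi$ injective, so that $\Fil^h \varphi^* \m \cong \m$ via $\psi$, and it guarantees the good behaviour of $\Fil^h$ under the $p^n$-torsion truncations given by Corollary~\ref{cor-devissage}.

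For (2), exactness on the left and at the middle term is immediate from the definition $T^h_\s(\m) = \ker(\varphi_h - 1)$. The substantive content is the surjectivity of $\varphi_h - 1$ on $\Fil^h \varphi^* \m \otimes_\s A_{\inf} \to \varphi^* \m \otimes_\s A_{\inf}$. I would first reduce to the case $\m$ killed by some power of $p$: using Corollary~\ref{cor-devissage} one writes $\m = \varprojlim \m_n$ and takes an inverse limit, with Mittag--Leffler automatic since each $\m_n \otimes A_{\inf}$ is a quotient of $\m \otimes A_{\inf}$. Within $p$-power torsion, the five lemma applied to $0 \to p\m_n \to \m_n \to \m_n/p\m_n \to 0$, combined with Lemma~\ref{lem-filh-exact}, reduces further to the case $p\m = 0$. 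Modulo $p$, one has $A_{\inf}/p \cong \O_{\C^\flat}$, a perfect ring of characteristic $p$, and the surjectivity of $\varphi_h - 1$ becomes a standard Artin--Schreier-type argument: solve $\varphi_h(x) - x = z$ by $x = -\sum_{i \geq 0} \varphi_h^i(z)$, with convergence controlled by the $[\underline{\pi}]$-adic topology on $\m \otimes \O_{\C^\flat}$ and the fact that, under the identification $\Fil^h \varphi^* \m \cong \m$ via $\psi$, the operator $\varphi_h$ is topologically nilpotent on the relevant torsion module.

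For (1), I would use the classical Breuil--Kisin period $\mathfrak{t} \in A_{\inf}$, constructed (for instance) from a compatible system of $p$-power roots of unity, satisfying $\varphi(\mathfrak{t}) = \varphi(a_0^{-1} E(u)) \cdot \mathfrak{t}$ and $\sigma(\mathfrak{t}) = \chi_{\mathrm{cyc}}(\sigma) \mathfrak{t}$ for $\sigma \in G_\infty$. The map
\[
T_\s(\m) \longrightarrow T^h_\s(\m), \qquad y \longmapsto \mathfrak{t}^h \cdot \psi(y),
\]
lands in $T^h_\s(\m)$ by the Frobenius law of $\mathfrak{t}$ combined with $(1 \otimes \varphi_\m) \circ \psi = E^h \id$, and is $G_\infty$-equivariant up to the Tate twist because $\sigma(\mathfrak{t}^h) = \chi_{\mathrm{cyc}}(\sigma)^h \mathfrak{t}^h$. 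Bijectivity would be verified by comparing the short exact sequence from (2) with its classical counterpart $0 \to T_\s(\m) \to \m \otimes A_{\inf} \xrightarrow{\varphi-1} \m \otimes A_{\inf} \to 0$ (the latter proved by the same Artin--Schreier argument without the $\Fil^h$), and applying the five lemma.

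The main obstacle will be executing the Artin--Schreier step uniformly in the presence of $u$-torsion appearing in $p$-power-torsion truncations $\m_n$: although $\m$ itself is \'etale, the $\m_n$ from Corollary~\ref{cor-devissage} generally are not. The saving grace, and the key technical input, is the identity $\m/p^n\m[1/u] = \m_n[1/u]$ from that corollary: inside $A_{\inf}$ the element $u = [\underline{\pi}]$ becomes a unit in $W(\C^\flat)$, so after tensoring with $A_{\inf}$ the $u$-torsion in $\m_n$ becomes irrelevant, and the surjectivity and the comparison of fixed points reduce cleanly to the \'etale/perfect setting where the arguments sketched above apply.
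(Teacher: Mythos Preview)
The key gap is in your surjectivity argument for (2). The operator $\varphi_h$ is \emph{not} topologically nilpotent in the $[\underline\pi]$-adic topology, so the series $-\sum_{i\geq 0}\varphi_h^i(z)$ has no reason to converge. Concretely, in the mod-$p$ case choose a $k[\![u]\!]$-basis $(e_1,\dots,e_d)$ of $\m$ with $\varphi_\m(e_1,\dots,e_d)=(e_1,\dots,e_d)A$ and $AB=BA=(a_0^{-1}E)^hI_d$; then $(e_1,\dots,e_d)B$ is a basis of $\Fil^h\varphi^*\m$ and the equation $(\varphi_h-1)(x)=y$ becomes $\varphi(X)-BX=Y$ for column vectors over $\O_{\C^\flat}$. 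Nothing forces $B$ to have entries of positive $u$-valuation (already for $h=0$ one has $B=I_d$; for $h\geq 1$ take $A=\mathrm{diag}(1,E^h)$), so neither an iteration $X\mapsto B^{-1}(\varphi(X)-Y)$ nor $X\mapsto \varphi^{-1}(BX+Y)$ converges in $\O_{\C^\flat}$. The paper's proof does not attempt a series solution: it first solves $\varphi(X)=BX+Y$ over the algebraically closed field $\C^\flat$, and then shows by comparing valuations in $\varphi(X)=BX+Y$ that the solution must lie in $(\O_{\C^\flat})^d$. This is the missing idea.

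For (1) your use of a Breuil--Kisin period is on the right track and matches the paper's $\mathfrak t\in A_{\inf}$ with $\varphi(\mathfrak t)=a_0^{-1}E(u)\,\mathfrak t$ (the map in the paper is $x\mapsto x/\varphi(\mathfrak t)^h$). However, your plan to conclude bijectivity by a five-lemma between the two short exact sequences is not quite set up: the classical sequence for $T_\s$ lives over $W(\C^\flat)$, the one for $T^h_\s$ over $A_{\inf}$, and the candidate vertical maps (multiplication by $\mathfrak t^h$ and $\psi$) are neither between the same modules nor isomorphisms over $A_{\inf}$. The paper avoids this by checking bijectivity directly after reducing (via Corollary~\ref{cor-devissage} and exactness) to the case $p\m=0$: for $x=\sum e_i\otimes a_i\in T_\s(\m)$ one shows by an explicit matrix manipulation (using $AB=(a_0^{-1}E)^hI_d$) that $\beta^h x$ lies in $\Fil^h\varphi^*\m\otimes_\s \O_{\C^\flat}$ and is fixed by $\varphi_h$.
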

\begin{proof}First it is clear that $T_\s (\FM) = (\varphi ^* \FM \otimes_{\s} W(\C^\flat)) ^{\varphi =1}$ because $\varphi $ on $W(\C^\flat)$ is bijective. 
Recall that $pa_0= E(0)$. Let $\underline{\varepsilon} = (\zeta_{p ^n})_{n \geq 0}\in \O_\C ^\flat$ with $\zeta_{p ^n}$ satisfying $\zeta_1 =1$, $\zeta_{p^n}^p = \zeta_{p ^{n -1}}$ and $\zeta_p \not = 1$. 
By Example 3.2.3 in \cite{liu-notelattice}, there exists nonzero $\gt \in A_{\inf} $ so that $\gt \not = 0 \mod p$,  $\varphi(\gt) = a_0^{-1} E(u) \gt$ and $t \coloneqq \log[\underline \varepsilon]= \mathfrak c \varphi(\gt)$ with $\mathfrak c = \prod\limits_{n = 1}^\infty \varphi^n(\frac{a_0^{-1}E(u)}{p})\in A_{\cris}^*$. Write $\beta = \varphi (\gt)$. Consider map $\iota : T^h _\s (\FM) \to T_\s( \m)$ by $x \mapsto \frac{x}{\beta^h }$ for any $x \in \Fil ^h \varphi ^* \FM \otimes_\s A_{\inf}$. Since $\varphi (\beta)= \varphi (a_0^{-1}E(u)) \beta$, and $\beta \in W(\C^\flat)$ is invertible as $\gt \not = 0 \mod p$,  $\iota$ makes sense. Note that $\mathfrak c \in (A_{\cris}) ^{G_\infty} $. So $g (\beta)/ \beta = g (t)/ t$ is cyclotomic character for any $g \in G_\infty$. So $\iota: T^h _\s (\m) \to T _\s (\m) (h)$ is a map compatible with $G_\infty$-actions. We claim that $T^h _\s$ is an exact functor.  If so since $T_\s$ is also exact,  to show that $\iota$ is an isomorphism, we can reduce to the case that $\FM$ is killed by $p$ by Corollary \ref{cor-devissage}. In this case, $\FM$ is finite $\ku$-free. Picking a basis $e_1 , \dots, e_d$ of $\FM$, then $\varphi_\m (e_1, \dots , e_d)= (e_1, \dots , e_d)A$ with a $\ku$-matrix  $A$  so that there exists a $\ku$-matrix $B$ satisfying $AB = BA =  (a_0^{-1}E(u)) ^hI_d$. Let us still regard $e_i$ as a basis of $\varphi ^*\FM $. Then it is easy to check that $(e_1, \dots , e_d) B$ is a basis of $\Fil ^h \varphi ^* \FM$.  
	Now for any $x = \sum _i e_i \otimes a_i \in \varphi ^*\m \otimes _ \ku \C^\flat$, the equation $\varphi (x) = x$ is equivalent to $\varphi (X) = \varphi(A)^{-1} X$ where $X = (a_1, \dots , a_d)^T$.
	The latter gives $\varphi (\beta^h  X ) = \varphi (a_0^{-h}E(u)^h A^{-1})  (\beta^h X)=  \varphi (B) (\beta^h X)$, 
	which implies that $Y = \beta^h X $ is in 
	$(\O _{\C}^ \flat)^d$. That is $y = \beta^h x \in \varphi ^*\FM \otimes_{\ku} \O_{\C}^\flat$. Furthermore, consider $Z= B^{-1} \beta^h X$, since 
	$\varphi (Z)= \varphi( a_0^{-h}B^{-1} A^{-1} E(u)^h )  B  Z = B Z$. We conclude that $ Z$ has all entries in $\O_{\C}^\flat$. Then $\beta^h x = (e_1, \dots , e_d) B Z$ is inside $\Fil ^h \varphi ^* \FM\otimes\O_{\C}^\flat$. This proves that $\iota$ is surjective. Since $\iota$ is clearly injective, we show that $\iota$ is a isomorphism. 
	
	Now we prove the claim that $T^h_\s$ is exact. For this, it suffices to show that $\varphi _h -1$ is surjective and we once again reduces to the case that $\FM$ is killed by $p$. By writing the $\ku$-basis of $\FM$ as the above, we need to solve the equation $\varphi (X) -BX = Y$ for any $Y = (a_1, \dots , a_d)^T $ for $a_i \in \O_{\C}^\flat$. Since $\C^\flat$ is algebraic closed, we see $X$ exists with entries in $\C^{\flat}$. It is easy to compare valuation of each entry by equation $\varphi (X) = BX + Y$ to show that all entries of $X$ must be in $\O_{\C}^\flat$. 
	
%	\begin{comment}
%	Now we claim that the map $x \mapsto \gt^m x $ induces a bijection between	$T_\s(\m)$ and $T^m_\s(\m)$. Since $\gt \mod p \not = 0 \in \O^\flat_{\C_p} $, $\varphi (\gt)$ is a unit $W(\C^{\flat}_p)$.
%	So it is obvious that the map induces a bijection between $T_\s(\m)$ and $(\m \otimes _\s W(\C_p ^\flat))^{\varphi= E(u)^m}$. It remains to show that
%	\begin{equation}\label{eqn-inclusion}	(\m \otimes _\s W(\C_p ^\flat))^{\varphi= E(u)^m}= (\m/ \m_{\tor} \otimes _\s A_{\inf})^{\varphi= E(u)^m}.\end{equation}
%	Since $\m \otimes _\s W(\C_p ^\flat)= \m/ \m_{\tor} \otimes_\s W(\C^\flat_p)$. So without loss of generality, we may assume that $\m$ is \'etale. In this case, by Lemma \ref{lem-recall},  $\m = \n / \n'$ with $\n$ and $\n'$ finite $\s$-free Kisin module of height $m$. So it suffices to prove \eqref{eqn-inclusion} when $\m$ is finite $\s$-free. Let $e_1, \dots, e_d$ be an  $\s$-basis of $\m$ and write $\varphi(e_1, \dots , e_d) = (e_1, \dots e_d)A$ with $A$ a matrices. Since $\m$ has height $m$, there exists a matrix $B$ so that $AB= BA= E(u)^m I_d$.  Then for any $\sum x_i e_i \in (\m \otimes _\s W(\C_p ^\flat))^{\varphi= E(u)^m}$,we have matrix equation $A \varphi(X) = E(u)^mX$, here $X$ is column vector with entry $x_i$. This implies that $\varphi (X) = B X$. By Proposition 1.8.3 in \cite{Fontaine}, $X$ has all entries in $A_{\inf}$.
%	content...\end{comment}
\end{proof}
%\begin{remark}Note that $t = \c \varphi(\gt)$ with $\c \in S^\times$. So the map $x \mapsto \varphi$

%\end{remark}
%So let $\m _{\tor}: =\{x \in \m| u ^l x = 0 \text{ for some }  l\}$. Then it is clear that $\m_{\tor}$ is a sub-(generalized) Kisin module of height $m$ inside $\m$. And $\m/ \m_{\tor}$ is a classical Kisin module   of height $m$.
%\begin{lemma}Any generalized generalized Kisin module $\m$ can be written as a quotient of $\n$ and $\n '$ with $\n , \n'$ being classical torsion Kisin modules of height $m$.
%\end{lemma}
%\begin{proof}We first prove the case that $\m$ is killed by $p$. In this case, since $k[\![u]\!]$ is a PID, we see that $\m \simeq \bigoplus_i k [\![u]\!]/ u ^{n _i} e_i$ with $n _i \leq 0$.  (it does not seems we can prove it).
%\end{proof}
%together with  an $\s$-linear map $\psi$ so that
%$\psi\circ \varphi = E(u) ^m \text{Id}_{\m}$ and $\varphi \circ \psi = E(u)^m \text{Id}_{\varphi^*\m}$.

\subsection{Torsion Breuil modules} \label{subsec-Breuilmodules} We fix $0 \leq h \leq p-2$ for this subsection. Recall that $S= \mathcal A$ is the  $p$-adically completed PD-envelope
of $\theta: \s \twoheadrightarrow \O_K, u\mapsto \pi$, and for $i\ge 1$
write $\Fil^i S\subseteq S$ for the (closure of the) ideal generated by $\{ \gamma_n (E) = E^n/n!\}_{n\ge i}$.
For $i \le p-1$,  one has $\varphi(\Fil^i S) \subseteq p^i S$,
so we may define $\varphi_i:\Fil^i S\rightarrow S$ as $\varphi_i \coloneqq p^{-i}\varphi$. We have  $c_1 : = \varphi(E(u))/p \in S^\times$.

Let $'\textnormal{Mod}^{\varphi, h }_{S}$ denote the category
whose objects are triples $(\mathcal M, \Fil^h \calM, \varphi_h)$, consisting of

\begin{enumerate}
	\item an  $S$-module $\calM$
	\item an $S$-submodule $\Fil^h  \calM \subset \calM $ containing $\Fil ^h S \cdot \calM $.
	\item a $\varphi$-semi-linear map $\varphi_h : \Fil ^h  \calM  \to \calM $ such that for all $s \in \Fil^h S$ and $x\in \calM$
	we have $$\varphi_h (sx)= (c_1)^{-h }\varphi_h (s) \varphi_h(E(u)^hx).$$
	\item $\varphi_h (\Fil ^h \calM)$ generates $\calM$ as $S$-modules. 
\end{enumerate}
Morphisms  are given by $S$-linear maps preserving $\Fil^h $'s and commuting with
$\varphi_h$. A sequence is defined to be \emph{short exact}
if it is short exact as a sequence of $S$-module, and induces a
short exact sequence on $\Fil^h$'s. Let  $\textnormal{Mod}^{\varphi, h }_{S, \tor}$ denote the full subcategory of  $'\textnormal{Mod}^{\varphi, h  }_{S}$ so that $\calM$ is killed by a $p$-power and $\calM$ can be a written as successive quotient of $\calM_i$ in  $'\textnormal{Mod}^{\varphi }_{S}$ and each $\calM_i \simeq \bigoplus S_1 $ where $S_n : = S/ p ^n S$.

For each object $\calM \in  \textnormal{Mod}^{\varphi, h }_{S}$, we can extend $\varphi_h$ and $\Fil^h$ to $A_{\cris} \otimes_ S \calM$ in the following:
%$F^m (A_{\cris} \otimes _S \scrM)$ is generated by  the image of  $ F^{m}\scrM$.
Since $A_{\cris}/ p ^nA_{\cris}$ is faithfully flat over $S/ p ^n$ by \cite[Lem 5.6]{Cais-Liu-BK-crys},
$A_{\cris} \otimes_S \Fil^h \calM  \to A_{\cris} \otimes \calM $ is injective and so we can define $\Fil^h (A_{\cris} \otimes _S \calM): = A_{\cris} \otimes _S \Fil ^h \calM$ and then $\varphi_h $ extends to $A_{\cris} \otimes_ S \calM$. %and $\varphi_m = \sum_{i= 0}^m  \varphi_{i , A_{\cris}} \otimes\varphi_{m -i, \scrM}$.
This allows to define a representation of $G_\infty$ via
$$ T_S (\calM ) \coloneqq (\Fil ^h (A_{\cris} \otimes _S \calM))^{\varphi_h =1}. $$

Now let us recall the relation of classical torsion Kisin modules and objects in $\Mod_{S, \tor}^{\varphi, h}$ and their relationship to torsion Galois representations. Let $\Mod_{\s, \tor\et}^{\varphi , h}$ denote the category of \'etale torsion Kisin module of height  $h$.   In this subsection, all torsion Kisin modules are \'etale torsion Kisin modules, i.e., $\m$ is $u$-torsion free.  For each such $\m$, we construct an object $\calM \in \Mod^{\varphi, h}_{S, \tor}$ as the following: $\calM: = S \otimes_{\varphi, \s} \m$ and
$$\Fil ^h \calM: = \{x \in \calM | (1\otimes \varphi) (x) \in \Fil ^h S \otimes_{\s}\m\}; $$
and $\varphi_h : \Fil ^h \calM \to \calM$ is defined as the composite of following map

\begin{equation*}
\xymatrix{
	{\Fil ^h \calM} \ar[r]^-{1\otimes \varphi_{\m}} & {\Fil ^h S \otimes_{\s}\m }
	\ar[r]^-{\varphi_h \otimes 1} & S\otimes_{\varphi,\s} \m = \calM
} .
\end{equation*}
We write $\u \calM(\m)$ for $\calM \in \Mod^{\varphi , h}_{S, \tor}$ built from Kisin module $\m\in \Mod_{\s, \tor\et}^{\varphi , h}$ as the above. Note that $A_{\cris} \otimes_S \u \calM(\m) = A_{\cris} \otimes_{\varphi, \s}\m$.
%\begin{lemma}\label{lem-fil-comp}
%	$$F^m (A_{\cris} \otimes\calM (\m))= \{x \in A_{\cris} \otimes_{\varphi, \s} \m | (1 \otimes \varphi_\m)(x) \in F^mA_{\cris} \otimes_\s \m\}$$
%\end{lemma}

\begin{proposition}\label{prop-Galois-compatible} The above functor induces an exact equivalence between $\Mod^{\varphi, h}_{\s, \tor\et }$  and  $\textnormal{Mod}^{\varphi, h}_{S, \tor}$. Furthermore, there exists short exact sequence
	\begin{equation}\label{eqn-exact_S}
	\xymatrix{ 0 \ar[r] & T_S  (\calM) \ar[r] & A_{\cris} \otimes_S \Fil^h \calM  \ar[r]^-{\varphi_h-1} &  A_{\cris}\otimes_S \calM\ar[r]& 0 }
	\end{equation}
and an isomorphism of $G_\infty $-representations	
	$$T_S (\u \calM (\m)) \simeq T_\s(\m)(h).$$
\end{proposition}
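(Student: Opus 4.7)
The plan is to handle the three assertions---equivalence of categories, the short exact sequence, and the Galois-equivariant isomorphism---in sequence, with d\'evissage through Lemma \ref{lem-recall} as the unifying technique.

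For the equivalence, if $\m\in\Mod^{\varphi,h}_{\s,\tor\et}$ is finite $\s$-free, a direct check shows $\u\calM(\m)\in\Mod^{\varphi,h}_{S,\tor}$. For general torsion \'etale $\m$, Lemma \ref{lem-recall}(3) presents $\m$ as a quotient $\n/\n'$ of two finite $\s$-free Kisin modules of height $h$, and Lemma \ref{lem-filh-exact} then makes $\u\calM(\m)\cong\u\calM(\n)/\u\calM(\n')$ an object of $\Mod^{\varphi,h}_{S,\tor}$, simultaneously yielding exactness of the functor. For the quasi-inverse and the equivalence itself, I would follow the classical strategy of Kisin: establish the equivalence on finite $\s$-free objects directly, then promote it to the torsion case via the two-term presentation of Lemma \ref{lem-recall}(3) and the five lemma.

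For the short exact sequence, injectivity of $T_S(\calM)\hookrightarrow A_{\cris}\otimes_S\Fil^h\calM$ and middle exactness hold by construction, so what remains is surjectivity of $\varphi_h-1$. Using the flatness of $A_{\cris}/p^n$ over $S/p^n$ cited in \S\ref{subsec-Breuilmodules}, the proposed sequence is compatible with short exact sequences in $\Mod^{\varphi,h}_{S,\tor}$, and Lemma \ref{lem-recall}(2) combined with the five lemma reduces surjectivity to the base case $\calM=\u\calM(\m)$ with $\m$ finite $\ku$-free. There the comparison with $\Fil^h\varphi^*\m\otimes_\s A_{\inf}$ discussed below transfers the statement to Lemma \ref{lem-twistm}(2), which gives the required surjectivity over $A_{\inf}$; the passage to $A_{\cris}$ is then harmless via the inclusion $A_{\inf}\subset A_{\cris}$.

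For the Galois isomorphism, the definition $\u\calM(\m)=S\otimes_{\varphi,\s}\m$ gives a canonical identification $A_{\cris}\otimes_S\u\calM(\m)=A_{\cris}\otimes_{\varphi,\s}\m$. I will verify that, for \'etale $\m$, this restricts to an identification of filtrations $A_{\cris}\otimes_S\Fil^h\u\calM(\m)\cong\Fil^h\varphi^*\m\otimes_\s A_{\cris}$ in which the Breuil-module divided Frobenius $\varphi_h$ corresponds to the map $\varphi/\varphi(a_0^{-1}E^h)$ featured in Lemma \ref{lem-twistm}. Granted this, passing to invariants identifies $T_S(\u\calM(\m))$ with $T^h_\s(\m)$, and Lemma \ref{lem-twistm}(1) gives $T^h_\s(\m)\simeq T_\s(\m)(h)$ as $G_\infty$-representations; the $G_\infty$-equivariance is automatic since $\s\subset A_{\inf}^{G_\infty}$ makes all constructions $G_\infty$-natural. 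The hard part, underlying both the preceding paragraph and this one, is the filtration matching over $A_{\cris}$: the left-hand filtration is cut out by the condition $(1\otimes\varphi)(x)\in\Fil^h A_{\cris}\otimes_\s\m$ involving divided powers $\gamma_i(E)$ for all $i\geq h$, while the right-hand one corresponds to the condition $(1\otimes\varphi_\m)(x)\in E^h\cdot(\m\otimes_\s A_{\cris})$; the relationship $E^h=h!\gamma_h(E)$ and the explicit generators of $\Fil^hA_{\cris}$, combined with the height-$h$ condition on $\m$ and its $u$-torsion freeness, should pin down the identification, and the normalization $\varphi(E)=a_0 pc_1$ then ensures that the two divided Frobenii agree under it.
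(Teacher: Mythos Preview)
Your overall architecture (d\'evissage to the $p$-torsion, finite $k[\![u]\!]$-free case via Lemma~\ref{lem-recall}) matches the paper, and citing the literature for the categorical equivalence is exactly what the paper does.  But there are two genuine gaps in the remaining parts.

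\medskip

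\textbf{The filtration identification is false.}  You propose to show
\[
A_{\cris}\otimes_S\Fil^h\underline{\calM}(\m)\ \cong\ \Fil^h\varphi^*\m\otimes_\s A_{\cris},
\]
but these are \emph{not} equal.  Working modulo $p$ with $\m$ finite $k[\![u]\!]$-free on basis $e_1,\dots,e_d$, the paper computes that $\Fil^h\underline{\calM}(\m)$ is generated over $S_1$ by $(e_1,\dots,e_d)B$ \emph{and} $\Fil^pS_1\cdot\calM$, where $B$ is a basis matrix for $\Fil^h\varphi^*\m$.  After tensoring with $A_{\cris,1}$, the left side strictly contains the right by the $\Fil^pA_{\cris,1}$-contribution (note $\gamma_p(E)\neq 0$ in $A_{\cris,1}$ while $E^p=0$).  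Moreover, even on the common piece $\Fil^h\varphi^*\m$, the two divided Frobenii do not agree on the nose: the paper checks $\varphi_{h,\calM}(x)=(a_0^{-1}c_1)^h\varphi_{h,\varphi^*\m}(x)$.  So passing to invariants does not identify $T_S$ and $T^h_\s$ directly.  The paper instead constructs an explicit map $\iota:T^h_\s(\m)\to T_S(\underline{\calM}(\m))$ by $x\mapsto\mathfrak{c}^h x$, using the unit $\mathfrak{c}\in A_{\cris}^\times$ with $\varphi(\mathfrak{c})/\mathfrak{c}=(a_0^{-1}c_1)^{\pm 1}$ to correct the discrepancy, and then proves bijectivity of $\iota$ in the mod-$p$ case by an explicit comparison: the set $\{X\in(\O_\C^\flat)^d:\varphi(X)=BX\}$ bijects onto $\{X\in(A_{\cris,1})^d:\varphi(X)\equiv BX\bmod\Fil^pA_{\cris,1}\}$, shown by a successive-approximation argument.

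\medskip

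\textbf{The ``harmless'' passage to $A_{\cris}$ is not harmless.}  For surjectivity of $\varphi_h-1$, Lemma~\ref{lem-twistm}(2) only gives surjectivity onto $\varphi^*\m\otimes_\s A_{\inf}$, whereas the target here is $A_{\cris}\otimes_S\calM$, which modulo $p$ decomposes as $\varphi^*\m\otimes_{k[\![u]\!]}\O_\C^\flat$ plus $\varphi^*\m\otimes_{k[\![u]\!]}\Fil^pA_{\cris,1}$.  The first summand is handled by Lemma~\ref{lem-twistm}(2); for the second, the paper observes that $\varphi_h(a)=0$ for $a\in\Fil^pA_{\cris,1}$, so any $y=m\otimes a$ there satisfies $\varphi_h(-y)-(-y)=y$, giving the required preimage by hand.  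Your reduction skips this step.
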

\begin{proof} The equivalence of functor together with exactness is \cite[Thm 2.2.1]{CarusoLiu}, which built on Breuil and Kisin's results (see \cite[Proposition 3.3.1]{LiuT-CofBreuil}). Consider an exact sequence in $\textnormal{Mod}^{\varphi, h}_{S, \tor}$, 
$$ 0 \to \calM'' \to \calM \to \calM ' \to 0. $$
Then we have the following diagram 
$$ \xymatrix{  0 \ar[r] & T_S (\calM'' ) \ar[r] \ar[d]  & T_S (\calM) \ar[r]\ar[d] & T_S (\calM')\ar[d]\ar[r] & 0 \\0 \ar[r] & A_{\cris} \otimes_S \Fil ^h \calM '' \ar[r]\ar[d]^{\varphi_h -1}  & A_{\cris} \otimes_S \Fil ^h \calM \ar[r]\ar[d]^{\varphi_h -1} & A_{\cris} \otimes_S \Fil ^h \calM  '\ar[r]\ar[d]^{\varphi_h -1} & 0 \\  0 \ar[r] & A_{\cris} \otimes_S  \calM '' \ar[r]  & A_{\cris} \otimes_S  \calM \ar[r] & A_{\cris} \otimes_S \calM '\ar[r] & 0 \\}$$
By the definition of exactness in $\textnormal{Mod}^{\varphi, h}_{S, \tor}$ and since $A_{\cris} / p ^n$ is flat over $S/ p ^n$, 
we see that last two rows of the above diagram are exact. 
So to show $\varphi_h -1$ is surjective on $\calM$, we reduce to situation that $\calM$ is killed by $p$. 
Also the surjectivity of $\varphi_h -1$ implies that the functor $T_S$ is exact from the above diagram. 
So let us first accept that $\varphi_h-1$ is surjective and postpone the proof in the end.

Now let us construct a natural map $\iota : T^h _\s (\m) \to T_ S (\u \calM (\m))$. Write $\calM : = \u\calM(\m)$. 
It is clear that $\Fil ^h \varphi ^* \FM \subset \Fil ^h \u \calM (\m) $ compatible with the injection $\varphi ^*\FM \inj \calM$. But $\varphi_h$ defined on Kisin modules are slightly different from that on  Breuil modules. By chasing definitions, we see that for any $x \in \Fil^h \varphi^* \m $, $\varphi_{h , \calM} (x) =(a_0^{-1} c _1)^h \varphi _{h , \varphi^*\m}(x)$. Recall  $\mathfrak c = \prod\limits_{n = 1}^\infty \varphi^n(\frac{a_0^{-1}E(u)}{p})\in A_{\cris}^*$ in the proof Lemma \ref{lem-twistm}.
Since $\varphi (\mathfrak c) = a_0 ^{-1}c_1 \mathfrak c$,  the map  $ \iota:  A_{\inf} \otimes _\s \Fil^h \varphi^* \m  \to A_{\cris} \otimes_S \calM$ by  $\iota(x)= \mathfrak c^h x$ induces a map   $\iota: T^h _\s (\m) \to T_S (\calM)$. 

To show that $\iota$ is isomorphism, since $T_\s$, $T_S$ and $\u\calM$ are all exact, 
we reduce to the case that $\m$ is killed by $p$ where $\m$ is finite $\ku$-free. As the same argument in Lemma \ref{lem-twistm}, there exists a basis ${e_1 , \dots , e_d}$ of $\varphi ^*\m$ so that $\Fil ^h\varphi ^* \m$ has basis $(e_1, \dots, e_d) B$,  $\varphi (e_1, \dots , e_d)= (e_1, \dots, e_d)\varphi (A)$ and $AB = B A = (a_0^{-1} E(u))^h I_d$. So any $x\in T^h _\s(\m)$ corresponds to the solution of $\varphi (X)= B X$.  Since $\calM = \u \calM (\m)$, it is straightforward to compute that $\calM$ also has $S_1$-basis $e_1, \dots , e_d$, $\Fil ^h \calM$ is generated by $(e_1, \dots , e_d) B$ and $\Fil^p S_1 \calM $. Note that $a_0^{-1}c_1 \equiv 1 \mod (p,  \Fil^p S) $. So $T_S (\calM)$ corresponds to solutions 
$\varphi (X) = B X \mod \Fil ^p A_{\cris, 1}$ where $A_{\cris ,1 }= A_{\cris}/ p A_{\cris}$. Now it suffices to show the following map is bijective 
$$\{ X | \varphi (X) = B X, \  x_i \in \O_{\C}^\flat \} \longrightarrow \{X | \varphi (X) = B X \mod \Fil ^p A_{\cris, 1},\ x_i\in A_{\cris, 1 }\}$$
Let $v$ denote the valuation on $\O^\flat_{\C}$ which normalized by $v(u ^e)=1$.  Suppose that $X$ is in the kernel then $X \in E(u)^p \O^\flat_{\C}$. So $v(x_j)\geq p, \forall j$. 
Let $x_i$ be the entry with least valuation. Note that $v(\varphi (x_j)) = p v(x_j)$ for any $j$ and $A \varphi (X) = u^{eh} X$. The mini possible of left side valuation is $p v(x_i)$, while the the right side is
$h + v (x_i)$. This is impossible when $v (x_i)\geq p $ because $h \leq p-2$. So this implies that $X= 0$.
Indeed, if $v(x_i)\geq 2$ then the same proof show that $X= 0$. That is, if $X_1, X_2$ are two solution in the left side and $X_1 \equiv X_2 \mod E(u)^2$ then $X_1 = X_2$

Conversely, let $Z$ be the vector inside $A_{\cris, 1 }$ so that $\varphi (Z) = B Z \mod \Fil ^p A_{\cris ,1}$. Then there exists  $Z_0$ with entries in $\O ^{\flat}_{\C}$ so that 
$\varphi (Z_0) = B Z_0  +  E(u)^pC$ where $C$ is a vector with entries in $\O^\flat_{\C}$. Note that $E(u)^p = E(u)^{p-h} BA$. So we may write
$\varphi(Z_0) = B (Z_0+ E(u)^{p-h}AC)$. Let $Z_1= Z_0 + E(u)^{p-h} AC$. Then $\varphi (Z_1) = BZ_1 +  u ^{pe(p-h)}C_1$ with $C_1 = - \varphi(AC) $. Note that $pe (p-h)> pe> h e$. we can write
$BZ_1 +  u ^{pe(p-h)}C_1= B (Z_1 + u^{\alpha}AC)$ with $\alpha = pe(p-h)-h$. Set $Z_2= Z_1 + u ^\alpha AC$ then  $\varphi(Z_2) = Z_2 + u ^{p\alpha} C_2$. Continues this steps, we see that $Z_n$ converges in $\O^\flat_{\C}$ to $Z'$ so that $\varphi(Z') = B Z'$ with $Z' \equiv Z_0 \mod E(u)^{p-h} $. This settles the bijection of these two sets and completes the proof.

It remains to show that $\varphi_h -1: \Fil^h \calM \otimes_S A_{\cris} \to \calM \otimes_S A_{\cris}$ is surjective and we may assume that $\calM = \u\calM (\m)$ with $\m$ killed by $p$. Now that $\calM \otimes_S A_{\cris} = \varphi^* \m \otimes_{\ku} {\mathcal O}^\flat_{\C} + \varphi^* \m \otimes_{\ku} \Fil ^pA_{\cris, 1}.$ By Lemma \ref{lem-twistm} (2), it suffices to show that for $y = m \otimes a$ with $m \in \varphi^* \m$ and $a\in \Fil ^p A_{\cris, 1}$ there exists a $x \in  \Fil^h \calM \otimes_S A_{\cris} $ so that $\varphi_h (x) - x = y$. Since $\varphi_h (a) = 0 $ for 
$a \in \Fil ^p A_{\cris, 1}$, then $y = -x$ is required. 
\end{proof}
\begin{remark}\label{rem-compatible} If we combine the isomorphisms $\eta : T_\s (\m)(h) \to T ^h_\s (\m) \to T_S (\u \calM (\m))$ defined by $x \mapsto \beta^h x \mapsto (\beta \mathfrak c)^h x= t ^h x $. 
	The isomorphism $\eta: T_\s(\m) (h) \simeq T_S (\u \calM(\m))$ is natural in the following sense: Suppose that $\m \otimes_\s A_{\inf}$ has a $G_K$-actions so that $G_K$-action is semi-linear on $G_K$-action on $A_{\inf}$ and commutes with $\varphi_{\m}$. Then this $G_K$-action induces a $G_K$-actions on $\u\calM(\m) \otimes_S A_{\cris}$ compatible with $\Fil ^ h$ and $\varphi$. Then both $T_\s(\m)(h)$ and $T_S(\calM)$ has $G_K$-actions and $\eta$ is $G_K$-compatible isomorphism.
\end{remark}
Regard both $S$ as subring of $K_0 [\![u]\!]$.  Define $I^+ S = S \cap u K_0 [\![u]\!]$ and $I^+ = u \s$. Clearly we have a natural map $q: \FM/ I ^+ \to \underline{\calM}(\FM) / I ^+ S$. By d\'evissage to the situation that  $\FM$ killed by $p$, we obtain
\begin{corollary}\label{cor-length-compare}
Let $\m\in \Mod_{\s, \tor\et}^{\varphi , h}$. Then we have
\[
\textnormal{length}_{W(k)} (\underline{\calM} (\FM) / I ^+ S) = \textnormal{length}_{W(k)} (\FM / u \FM) = \textnormal{length}_{\Z} (T_S (\underline{\calM} (\FM)))= \textnormal{length}_{\Z} (T_\s(\FM)).
\]
\end{corollary}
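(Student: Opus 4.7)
The plan is to verify the four length equalities one matching pair at a time, reducing the heart of the argument to d\'evissage plus the $\FM$ killed by $p$ case.

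First, I would compute the first equality directly. Unpacking the definition, $\underline{\calM}(\FM)/I^+S = (S/I^+S) \otimes_{\varphi, \s}\FM$. Since $S/I^+S = W(k)$ (every $\gamma_i(E)$ has constant term $a_0^i p^i/i! \in W(k)$, so the augmentation $u \mapsto 0$ lands in $W(k)$), and $\varphi$ on $\s$ sends $u$ to $u^p \equiv 0 \bmod u$, this simplifies to $W(k) \otimes_{\varphi, W(k)}(\FM/u\FM)$. Because $\varphi \colon W(k) \to W(k)$ is an automorphism, this has the same $W(k)$-length as $\FM/u\FM$.

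For the last equality, I would invoke the isomorphism $T_S(\underline{\calM}(\FM)) \cong T_\s(\FM)(h)$ of $G_\infty$-modules established in Proposition \ref{prop-Galois-compatible}. A Tate twist by $h$ does not change the underlying $\Z$-length, so this handles the third and fourth terms.

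The remaining task is to match $\textnormal{length}_{W(k)}(\FM/u\FM) = \textnormal{length}_\Z(T_\s(\FM))$. I would do this by d\'evissage, using that both sides are additive on short exact sequences of \'etale torsion Kisin modules: for $\cdot/u\cdot$, the Tor obstruction $\ker(u\colon\FM'' \to \FM'')$ vanishes since $\FM''$ is $u$-torsion free by \'etaleness; for $T_\s$, this was verified in the proof of Lemma \ref{lem-twistm} by showing $\varphi - 1$ is surjective on $\FM'' \otimes_\s W(\C^\flat)$. By Lemma \ref{lem-recall}(2), any $\FM \in \Mod_{\s,\tor\et}^{\varphi,h}$ admits a finite filtration whose successive quotients are killed by $p$ and finite free over $\ku$, so additivity reduces everything to the case $\FM = \ku^d$ killed by $p$.

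In that case $\FM/u\FM = k^d$ has $W(k)$-length $d$, while $T_\s(\FM) \otimes_{\F_p} \C^\flat \cong \FM \otimes_{\ku} \C^\flat = (\C^\flat)^d$ shows $T_\s(\FM)$ is an $\F_p$-vector space of dimension $d$, so its $\Z$-length is also $d$. The main (mild) obstacle is verifying additivity of $T_\s$ cleanly, which is already essentially contained in the proof of Lemma \ref{lem-twistm}; the rest is bookkeeping.
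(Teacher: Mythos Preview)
Your proof is correct and follows the same d\'evissage strategy the paper indicates in its one-line proof. You have simply filled in the details the paper omits: the direct computation of $\underline{\calM}(\FM)/I^+S$ via $S/I^+S \cong W(k)$, the invocation of Proposition~\ref{prop-Galois-compatible} for the last equality, and the reduction of the middle equality to the $p$-torsion free-rank case using Lemma~\ref{lem-recall}.
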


Now let us add one extra structure to  $\Mod_{S, \tor}^{\varphi, h }$ to make $T_S (\calM)$ a $G_K$-representation. %Let $\omega_{S} $ be the module of $p$-adically continuous K\"ahler differential over $W(k)$. Then $\Omega_{S} \simeq S du$.  
Let  $\Mod_{S, \tor}^{\varphi, h, \nabla}$ denote the category of the object $(\calM, \Fil ^h \calM, \varphi _h, \nabla) $ where 
\begin{enumerate}
\item $(\calM, \Fil ^h \calM, \varphi _h)$ is an object in $\Mod_{S, \tor}^{\varphi,  h }$
\item $\nabla: \calM \to \calM$ is a connection satisfying the following: 
\begin{enumerate}
    \item $E \nabla(\Fil^h \calM )\subset \Fil^h \calM $. 
    \item  the following diagram  commutes:
	\begin{equation}
	\begin{split}
	\xymatrix{ \Fil^h  \calM\ar[d]_{E(u) \nabla} \ar[r]^-{\varphi_h } & \calM \ar[d]^{c_1 \nabla}\\
		\Fil ^h  \calM   \ar[r]^-{ u ^{p-1} \varphi_h } &\calM }
	\end{split}
	\end{equation}
\end{enumerate}
\end{enumerate}

Let us explain the relationship between objects in   $\Mod_{S, \tor}^{\varphi, h, \nabla} $ and Breuil modules studied in work of Breuil and Caruso. Let 
$N_S : S \to S$ be $W(k)$-linear differentiation so that $N_S(u)= u$. An object $\calM $ in  $\Mod_{S, \tor}^{\varphi, h }$  is called a \emph{Breuil module} if $\calM$ admits a
$W(k)$-linear morphism $N: \calM \to \calM $ such that :
\begin{enumerate}
	\item for all $s\in S $ and $x\in \calM$, $N(sx)=N_S(s) x + s N(x)$.
	\item $E(u) N(\Fil ^h  \calM )\subset \Fil ^h \calM $.
	\item the following diagram  commutes:
	\begin{equation}\label{eqn-N-diagram}
	\begin{split}
	\xymatrix{ \Fil^h  \calM\ar[d]_{E(u) N} \ar[r]^-{\varphi_h } & \calM\ar[d]^{c_1 N}\\
		\Fil ^h  \calM \ar[r]^-{\varphi_h } &\calM }
	\end{split}
	\end{equation}
\end{enumerate}
\begin{remark} Breuil and Caruso use convention $N_S(u )= -u$. In fact, there is almost no difference for entire theory by using $N_S(u)= u$ except for the formula \eqref{eqn-action-N} need to  change sign comparing with the similar formula \cite[(5.1.1) ]{LiuT-CofBreuil} 
\end{remark}

Let   $\Mod_{S, \tor}^{\varphi, h,  N} $ denote the category of Breuil modules. There is a natural functor   $\Mod_{S, \tor}^{\varphi,  h, \nabla } \to   \Mod_{S, \tor}^{\varphi, h, N}$ by define $N _\calM  = u\nabla$. It is easy to chase the diagram to see this functor makes sense.  So we also call objects in $\textnormal{Mod}_{S, \tor}^{\varphi, h, \nabla}$ Breuil modules. 

Now we can define a $G_K$-action on $\calM \otimes_S A_{\cris}$ as in:
for any $\sigma \in
G_K$, 
any  $x \otimes a \in \calM\otimes_S A_{\cris}$,
define
\begin{equation}\label{eqn-action}
\sigma(x \otimes a)= \sum_{i=0}^\infty \nabla^i (x) \otimes  \gamma_i
\left ( \sigma ([\upi])- [\upi] \right ) \sigma (a) .
\end{equation}

We can also define a $G_K$-action on $\calM \otimes_S A_{\cris}$ as in \cite[\S5.1]{liu-notelattice}:
for any $\sigma \in
G_K$, recall $\ue(\sigma)= \frac{\sigma ([\upi])}{[\upi]}\in A_{\inf}$.  For
any  $x \otimes a \in \calM\otimes_S A_{\cris}$,
define
\begin{equation}\label{eqn-action-N}
\sigma(x \otimes a)= \sum_{i=0}^\infty N^i (x) \otimes  \gamma_i
(\log(\ue(\sigma))) \sigma(a).
\end{equation}
where $\gamma _i(x)= \frac{x^i}{i!} $ is the standard divided power. We claim that \eqref{eqn-action} and \eqref{eqn-action-N} are the same formula. Let us postpone the proof in \S \ref{subsec-two-equations} as the proof is just long combinatoric calculation.  

Note that if $\sigma \in G_\infty$, then $\log(\ue(\sigma))= 0$ and
$\sigma (x \otimes a)= x \otimes \sigma(a)$. Thus $G_K$-action defined
above (if it is well defined) is compatible with the natural
$G_\infty$-action on $\calM \otimes_S A_{\cris}$.
\begin{lemma}\label{lem-action-from_N}
	The above action is well defined $A_{\cris}$-semi-linear $G_K$-action on $\calM
	\otimes_S A_{\cris}$ and compatible with $\Fil^h (\calM \otimes_S A_{\cris})$ and $\varphi_h$.
\end{lemma}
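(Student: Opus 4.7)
The plan is to deduce this from the corresponding statement for the monodromy formula \eqref{eqn-action-N}, which is classical. Once the combinatorial identity identifying \eqref{eqn-action} with \eqref{eqn-action-N} is established (via $N = u\nabla$) in \S \ref{subsec-two-equations}, all the properties claimed, namely well-definedness, the cocycle condition, $A_{\cris}$-semi-linearity, and compatibility with $\Fil^h$ and $\varphi_h$, transport directly from the $N$-version. The $N$-version itself is proved in \cite[\S 5.1]{liu-notelattice}, building on work of Breuil.

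Although we take this route, the properties admit a direct verification from the $\nabla$-formula as well. For convergence, note that $\sigma([\upi]) - [\upi]$ lies in $\ker(\theta) \subset A_{\inf}$, since both $\sigma([\upi])$ and $[\upi]$ have image $\pi$ under $\theta$; hence $\gamma_i(\sigma([\upi]) - [\upi]) \in \Fil^i A_{\cris}$ tends to zero in $A_{\cris}$ after tensoring with a $p$-power-torsion $S$-module $\calM$. The cocycle identity uses the decomposition
\[
\sigma\tau([\upi]) - [\upi] = (\sigma([\upi]) - [\upi]) + \sigma(\tau([\upi]) - [\upi])
\]
together with $\gamma_n(x+y) = \sum_{i+j=n} \gamma_i(x)\gamma_j(y)$ in $A_{\cris}$ and the $W(k)$-linearity of $\nabla$. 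The $A_{\cris}$-semi-linearity is built into the formula. Compatibility with $\Fil^h$ follows from Griffiths transversality (condition (2)(a) of the $\nabla$-structure), noting that each factor $\gamma_i(\sigma([\upi]) - [\upi])$ contributes $i$ divided powers of an element of $\ker(\theta)$, which absorb the potential loss of filtration under iterated application of $\nabla$.

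The main obstacle will be the compatibility with $\varphi_h$: one must show that applying $\varphi_h$ after $\sigma$ agrees with $\sigma$ applied after $\varphi_h$ on $\Fil^h(\calM \otimes_S A_{\cris})$, and this requires careful tracking of the factors $c_1$, $u^{p-1}$, and $\varphi(E)$ appearing in the commutative diagram that defines the $\nabla$-structure. This bookkeeping is significantly cleaner in the $N$-formulation, where $\varphi(\log \ue(\sigma)) = p\log \ue(\sigma)$ provides a transparent scaling under Frobenius. For this reason, the reduction to the $N$-version, rather than a line-by-line direct proof, is the natural route.
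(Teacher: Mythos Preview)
Your proposal is correct and follows essentially the same route as the paper: both rely on the identification of \eqref{eqn-action} with \eqref{eqn-action-N} (deferred to \S\ref{subsec-two-equations}) and then invoke the argument of \cite[\S5.1]{liu-notelattice} for the $N$-version. The only difference is that the paper spells out the $\varphi_h$-compatibility check explicitly via the $N$-formula, reducing it by induction to the identity $c_1^{-i}\varphi_h(E(u)^i N^i(x)) = N^i(\varphi_h(x))$, whereas you defer this entirely to the cited reference.
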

\begin{proof}The proof of  \cite[\S5.1]{liu-notelattice} essentially applies here. It is standard to check that \eqref{eqn-action} is well-defined map; it is $A_{\cris}$-semi-linear-action on $\calM \otimes _S A_{\cris}$ and compatible with $G_K$-action on $A_{\cris}$; and $G_\infty$-acts on $\calM \otimes 1$ trivially. It is clear that $\log(\ue (\sigma))\in \Fil^1 A_{\cris} $. So by  that  $E(u) N(\Fil^h \calM )\subset \Fil ^h \calM $, we see that
	$\sigma (\Fil ^h (\calM \otimes _S A_{\cris}))  \subset \Fil^h  (\calM \otimes _S A_{\cris})$. The only thing left to check is that $\varphi_h$ commutes with $G_K$-action, which can be reduce to check the following: write $a = - \log (\ue (\sigma))$ and pick $x \in \Fil ^h  \calM$, we have
	$$\varphi_h (  \gamma_i(a) \otimes N^i (x))= \gamma _i (a) \otimes N^i (\varphi_h (x)). $$
It is clear that  $\varphi(a)= p a $. So $\varphi (\gamma_i (a))= \gamma_i (a) c_1^{-i} \varphi (E(u)^i)$. So the above equality is reduced to check
$c_1^{-i} \varphi_h (E(u)^i N^i (x)) = N^i(\varphi_h (x))$ and this can be check by induction on $i$.
\end{proof}
\begin{corollary} Given a Breuil module  $\calM \in \textnormal{Mod} ^{\varphi,h,  N} _{S , \tor} $,
then $T_{S}(\calM )$ (as a $G_\infty$-representation) extends to  a $G_K$-representation. 
\end{corollary}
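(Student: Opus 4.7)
The plan is to deduce the corollary directly from \Cref{lem-action-from_N}. By that Lemma, the formula~\eqref{eqn-action-N} (equivalently~\eqref{eqn-action}, once the equivalence is verified in \S\ref{subsec-two-equations}) defines an $A_{\cris}$-semi-linear $G_K$-action on $\calM \otimes_S A_{\cris}$ which preserves the submodule $\Fil^h(\calM \otimes_S A_{\cris})$ and commutes with $\varphi_h$. Consequently the $G_K$-action restricts to the $\ZZ_p$-submodule
\[
T_S(\calM) = \left( \Fil^h(\calM \otimes_S A_{\cris}) \right)^{\varphi_h = 1},
\]
giving it the structure of a $\ZZ_p[G_K]$-module.

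It remains to verify that this $G_K$-action on $T_S(\calM)$ extends the $G_\infty$-action defined earlier (i.e.~the one induced from the natural $G_\infty$-action on $A_{\cris}$ via the semi-linear structure on $\calM \otimes_S A_{\cris} = \calM \otimes_S 1 \cdot A_{\cris}$, with $\calM$ fixed). For $\sigma \in G_\infty$, we have $\sigma([\underline \pi]) = [\underline \pi]$, so $\sigma([\underline \pi]) - [\underline \pi] = 0$, and hence every summand with $i \geq 1$ in~\eqref{eqn-action} vanishes; only the $i = 0$ term survives and gives $\sigma(x \otimes a) = x \otimes \sigma(a)$. This coincides with the $G_\infty$-action used to define $T_S(\calM)$ as a $G_\infty$-representation in \S\ref{subsec-Breuilmodules}.

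I do not anticipate any genuine obstacle here: the substantive content has already been absorbed into \Cref{lem-action-from_N} and into the identification of the two formulas~\eqref{eqn-action} and~\eqref{eqn-action-N}. The proof of the corollary is thus a two-line consequence: restrict the $G_K$-action provided by the Lemma to the $\varphi_h$-fixed points of $\Fil^h(\calM \otimes_S A_{\cris})$, and observe that the restriction to $G_\infty$ is the previously given one.
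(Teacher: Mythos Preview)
Your proposal is correct and follows essentially the same approach as the paper: the corollary is stated without explicit proof, following immediately from \Cref{lem-action-from_N} together with the observation (made in the paragraph just before that Lemma) that $\sigma\in G_\infty$ gives $\log(\ue(\sigma))=0$ so the action reduces to $\sigma(x\otimes a)=x\otimes\sigma(a)$. Your write-up simply spells out this implicit reasoning.
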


To summarize our section, we return to the situation of \S\ref{subsection-stru of crys}  where $\calM^i : = \rH^i_{\cris} (X_n /S_n)$ is proved to admit  structures $\Fil^i\calM ^i = \rH ^i _{\cris} (X_n /S_n, \cI^{[i]}_{\cris}),$ $\varphi_i: \Fil^i \calM^i \to \calM^i $ and $\nabla : \calM^i \to \calM^i $. Obviously, our axioms of $\textnormal{Mod}^{\varphi, h, \nabla}_{S, \tor}$ is aimed at describing these structures of $\rH^i_{\cris} (X_n /S_n)$. 
\begin{definition}\label{def-Breuil-modules}
For $i \leq p-2$, we call that $\rH^i_{\cris} (X_n /S_n)$ is a Breuil module if the quadruple 
$$ \left (\rH^i_{\cris} (X_n /S_n), \rH ^i _{\cris} (X_n /S_n, \cI^{[i]}_{\cris}), \varphi_i, \nabla \right )$$ 
constructed in \S\ref{subsection-stru of crys} is an object in $\Mod_{S, \tor}^{\varphi, i , \nabla}$, which is equivalent to the triple
\[
\left (\rH^i_{\cris} (X_n /S_n), \rH ^i _{\cris} (X_n /S_n, \cI^{[i]}_{\cris}), \varphi_i \right )
\]
being an object in $\Mod_{S, \tor}^{\varphi, i}$. 
\end{definition}

Our main theorem is to show that $\rH ^i _{\cris} (X_n /S_n)$ together with these structures is indeed a Breuil module  when $ei < p -1$. 

\section{Torsion cohomology and comparison with \'etale cohomology}
In this section, we collect our previous preparations to understand the structures of torsion crystalline cohomology and its relationship with \'etale cohomology via torsion prismatic cohomology. In the end, we 
show that if $ei < p -1$ then $p ^n$-th torsion crystalline cohomology $\rH ^i_{\cris} (X_n /S_n)$ has structure of torsion Breuil module to compare to $\rH ^i_{\et} (X_{\bar \eta}, \Z/ p ^n \Z)$ via $T_S$, where $X_{\bar \eta}$ is a geometric generic fiber of $X$. 

\subsection{Prismatic cohomology and (generalized) Kisin modules}\label{subsec-prism-is-kisin}
Let $(A,I)$ be any prism. As in the end of  \S\ref{subsection-stru of crys}, 
for any $n \geq 1$, we define torsion prismatic cohomology $\mathrm {R\Gamma}_\Prism (X_n/A_n): = \mathrm{R\Gamma} _{\Prism}(X/A, \O _{\Prism}/ p ^n\O _{\Prism} ) = \RG _{\Prism} (X/ A) \otimes^\BL_{\Z}\Z/ p ^n \Z$. We have 
$\RG_\Prism (X_n / A_n) \simeq \RG _{\qsyn}(X, \Prism_{-/A}/p ^n )\simeq \RG _{\qsyn} (X,\Prism_{-/A} ) \otimes^\BL_\Z \Z/ p ^n \Z$. 

\begin{warning}\label{Warning-not-intrinsic}
We warn readers that the notation $\mathrm {R\Gamma}_\Prism (X_n/A_n)$ is misleading, as it might suggest that this cohomology
theory only depends on the mod $p^n$ reduction of $X$ which is not true.
See \cite[Remark 2.4]{BMS1} for a counterexample.
\end{warning}

\begin{proposition}
\label{prop-height} 
Assume that $(A,I)$ is transversal and $\varphi: A \to A$ is flat. Then $\rH ^i_\Prism(X_n/ A_n)$ has height $i$. 
\end{proposition}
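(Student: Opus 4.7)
The goal is to equip $\mathfrak{M}_n \coloneqq \rH^i_\Prism(X_n/A_n)$ with the structure of a generalized Kisin module of height $i$ over $(A_n, IA_n)$: finite generation over $A_n$, a $\varphi_A$-semilinear Frobenius $\varphi_{\mathfrak{M}_n}$, and an $A_n$-linear map $\psi_n \colon \mathfrak{M}_n \to \varphi^*\mathfrak{M}_n$ satisfying $\psi_n \circ (1 \otimes \varphi_{\mathfrak{M}_n}) = d^i \cdot \id_{\varphi^*\mathfrak{M}_n}$ and $(1 \otimes \varphi_{\mathfrak{M}_n}) \circ \psi_n = d^i \cdot \id_{\mathfrak{M}_n}$, where $d$ is a local generator of $I$. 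Finite generation is immediate from perfectness of $\RG_\Prism(X/A)$ as an $A$-complex (last sentence of \cite[Theorem 1.8]{BS19}) combined with the derived reduction $\RG_\Prism(X_n/A_n) \simeq \RG_\Prism(X/A) \otimes^\BL_\Z \Z/p^n\Z$; the Frobenius $\varphi_{\mathfrak{M}_n}$ is inherited from that on $\Prism_{X/A}$.

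The crux is constructing $\psi_n$. I would use the Nygaard filtration $\Fil^i_{\mathrm{N}} \Prism^{(1)}_{X/A}$ from Subsection~\ref{subsec-NygaardFil} together with the divided Frobenius $\varphi_i \colon \Fil^i_{\mathrm{N}}\Prism^{(1)}_{X/A} \to \Prism_{X/A}$ of \cite[Section 15]{BS19}, characterized by $d^i \cdot \varphi_i = \varphi^{\mathrm{lin}} \circ \iota$, where $\iota$ is the Nygaard inclusion into $\Prism^{(1)}_{X/A}$ and $\varphi^{\mathrm{lin}} \colon \Prism^{(1)}_{X/A} \to \Prism_{X/A}$ is the linearized Frobenius. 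Applying $\RG(X_\qsyn, -) \otimes^\BL_\Z \Z/p^n\Z$ and taking $\rH^i$ yields a span
\[
\mathfrak{M}_n \xleftarrow{\tilde{\varphi}_i} N_n \xrightarrow{\iota_n} \varphi^*\mathfrak{M}_n, \qquad (1 \otimes \varphi_{\mathfrak{M}_n}) \circ \iota_n = d^i \cdot \tilde{\varphi}_i,
\]
where $N_n$ denotes the $i$-th cohomology of $\RG(X_\qsyn, \Fil^i_{\mathrm{N}}\Prism^{(1)}_{-/A}) \otimes^\BL_\Z \Z/p^n\Z$.

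The principal obstacle is showing that $\tilde{\varphi}_i$ is an isomorphism. I would approach this via the fibre sequences $\Fil^{j+1}_{\mathrm{N}} \to \Fil^j_{\mathrm{N}} \to \gr^j_{\mathrm{N}} \Prism^{(1)}_{X/A}$ for $j \geq i$, using the Hodge--Tate type identification of the Nygaard graded pieces with wedge powers of the cotangent sheaf. Under the transversality of $(A,I)$ and flatness of $\varphi_A$, the filtered comparison between Nygaard and Hodge filtrations established in \Cref{smooth H and N filtration} applies; combined with Hodge-to-prismatic degeneration for smooth proper $X$, one controls the cohomology of the cofibre of $\varphi_i$ in degree $i$ after mod-$p^n$ reduction, yielding the desired isomorphism. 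This is the main technical content.

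Granting the isomorphism, set $\psi_n \coloneqq \iota_n \circ \tilde{\varphi}_i^{-1}$. The identity $(1 \otimes \varphi_{\mathfrak{M}_n}) \circ \psi_n = d^i \cdot \id$ is immediate from the compatibility. The reverse identity $\psi_n \circ (1 \otimes \varphi_{\mathfrak{M}_n}) = d^i \cdot \id$ requires the image of $\iota_n$ to coincide with the Breuil--Kisin filtration $\{y \in \varphi^*\mathfrak{M}_n : (1 \otimes \varphi_{\mathfrak{M}_n})(y) \in d^i \mathfrak{M}_n\}$, which is the substance of the companion \Cref{cor-BK-Nygaard}; granted this, for any $y \in \varphi^*\mathfrak{M}_n$ the element $d^i y$ lies in $\iota_n(N_n)$, and chasing the diagram with $\tilde{\varphi}_i^{-1}$ recovers the identity. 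The remainder of the argument is therefore formal once the isomorphism claim and the BK/Nygaard matching are in hand.
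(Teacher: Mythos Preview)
The paper's proof is quite different: it does not touch the Nygaard filtration. Following \cite[Corollary 15.5]{BS19}, it reduces to the complex-level isomorphism $\varphi^*\RG_\Prism(X_n/A_n)\simeq L\eta_I\RG_\Prism(X_n/A_n)$ for affine $X$; the height structure on each $\rH^i$ is then formal from the algebra of the d\'ecalage functor $L\eta$. Since the integral isomorphism $\varphi^*\RG_\Prism(X/A)\simeq L\eta_I\RG_\Prism(X/A)$ is known and $\varphi_A$ is flat, one only needs $L\eta_I$ to commute with $-\otimes^{\BL}_{\Z}\Z/p^n$. By \cite[Lemma 5.16]{Bhatt-special} this holds once $\rH^*(\RG_\Prism(X/A)\otimes^{\BL}_A A/I)$ has no $p$-torsion, which is immediate from the Hodge--Tate comparison $\rH^i\simeq\Omega^i_{X/(A/I)}\{i\}$.

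Your Nygaard-filtration strategy is viable --- it is essentially what the paper packages later as \Cref{general property of Nygaard filtration}(3) and \Cref{quasi-filtration identification} --- but your execution has two gaps. First, the justification for $\tilde\varphi_i$ being an isomorphism is wrong: \Cref{smooth H and N filtration} compares Nygaard and Hodge filtrations on $\dR^\wedge$, not $\varphi_i$ on prismatic cohomology, and no ``Hodge-to-prismatic degeneration'' is relevant (nor available mod $p^n$). The correct argument filters source and target of $\varphi_i$ by the Nygaard and $I$-adic filtrations respectively, observes both are complete, and invokes \cite[Theorem 15.2.(2)]{BS19} to see the cone of the $j$-th graded piece lies in $D^{>j}(A/I)$, whence the cone of $\varphi_i$ lies in $D^{>i}$. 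Neither transversality nor degeneration enters.

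Second, your appeal to \Cref{cor-BK-Nygaard} for the reverse identity $\psi_n\circ(1\otimes\varphi)=d^i\cdot\id$ is circular: the second sentence of that corollary already uses that $\FM^i_n$ has height $i$. The fix is direct and needs no BK/Nygaard matching. At the sheaf level $I^i\Prism^{(1)}_{-/A}\subset\Fil^i_{\mathrm N}$, and on this sub-sheaf $\varphi_i(d^iz)=\varphi^{\mathrm{lin}}(z)$ since $\Prism_{-/A}$ is $I$-torsion-free on large quasisyntomic algebras. Passing to $\rH^i(X_{\qsyn},-)\otimes^{\BL}_{\Z}\Z/p^n$ yields $\mu\colon I^i\otimes\varphi^*\mathfrak{M}_n\to N_n$ with $\tilde\varphi_i(\mu(d^i\otimes y))=(1\otimes\varphi)(y)$ and $\iota_n(\mu(d^i\otimes y))=d^iy$; hence $\psi_n((1\otimes\varphi)(y))=\iota_n\bigl(\tilde\varphi_i^{-1}((1\otimes\varphi)(y))\bigr)=\iota_n(\mu(d^i\otimes y))=d^iy$.
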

\begin{proof} We follow the same idea of 
 \cite[Corollary 15.5]{BS19} which proved that $\rH ^i _\Prism (X/A)$ has height $i$. 
 Examining the proof, it suffices to show that $\varphi ^* \RG_{\Prism} (X_n / A_n ) \simeq L \eta_I \RG_{\Prism} (X_n /A_n)$ when $X = \Spf (R)$ is  an affine smooth $p$-adic formal scheme over $A/I$.
 By Theorem 15.3 of \emph{loc.~cit}, we have
 $\varphi ^* \RG_{\Prism} (X / A ) \simeq L \eta_I \RG_{\Prism} (X /A)$. Since $\varphi: A \to A $ is flat, it suffices to show that 
 \begin{equation}\label{eqn-eta mod p n}
 \left (L \eta_I \RG_{\Prism} (X /A) \right ) \otimes^{\BL} _{\Z} \Z/ p ^n \Z \simeq L \eta_I \left (\RG_{\Prism} (X /A)  \otimes^{\BL}_{\Z} \Z/ p ^n \Z \right ). 
 \end{equation}
 Now we may apply \cite[Lemma 5.16]{Bhatt-special} to the above  by $g = p ^n$ and $f =d$. So we need to check that $\rH^* (\RG_{\Prism} (X /A) \otimes^{\BL} _A A/d)$ has no $p^n$-torsion. This follows from the Hodge--Tate comparison
 \[
 \rH^i  (\RG_{\Prism} (X /A) \otimes^{\BL} _A A/I)\simeq \Omega^i _{X/ (A/I)}\{i\}.
 \]
\end{proof}

\begin{corollary}\label{cor-pris is Kisin}
For $n \in \mathbb{N} \cup \{\infty\}$, the $\varphi$-module $\rH ^i _{\Prism} (X_n /\s_n)$ is an object of $\Mod_{\s} ^{\varphi, i}$, i.e., 
a (generalized) Kisin module of height $i$ and $T_\s (\rH ^i_{\Prism} (X_n / \s_n))\simeq \rH ^i _{\et} (X _{\bar \eta}, \Z/ p ^n \Z)$. 
\end{corollary}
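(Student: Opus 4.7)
The plan is to combine \Cref{prop-height} with standard facts about the Breuil--Kisin prism and the \'etale comparison of prismatic cohomology. The Breuil--Kisin prism $(\s,(E))$ is transversal, so we must first verify that $\varphi\colon \s\to\s$ is flat in order to invoke \Cref{prop-height}. This is straightforward: since $k$ is perfect, $\varphi_{W(k)}$ is an automorphism, and then $\s=W(k)[\![u]\!]$ is finite free of rank $p$ over $\varphi(\s)=W(k)[\![u^p]\!]$ with basis $\{1,u,\dots,u^{p-1}\}$. Hence \Cref{prop-height} directly yields the existence of an $\s$-linear map $\psi$ exhibiting $\rH^i_{\Prism}(X_n/\s_n)$ as a module of height $i$ in the sense of \S\ref{subsec-generalized-Kisin}.

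Next I would argue finite generation over $\s$. Because $X$ is proper smooth over $\O_K$, the complex $\RG_{\Prism}(X/\s)$ is perfect over $\s$ by the last sentence of \cite[Theorem 1.8]{BS19}. Its derived reduction $\RG_{\Prism}(X_n/\s_n)\coloneqq \RG_{\Prism}(X/\s)\otimes^{\BL}_{\Z}\Z/p^n\Z$ is therefore perfect over $\s_n$, so each cohomology group $\rH^i_{\Prism}(X_n/\s_n)$ is a finitely presented $\s_n$-module, in particular a finitely generated $\s$-module. Together with the height $i$ structure from the previous paragraph, this places $\rH^i_{\Prism}(X_n/\s_n)$ inside $\Mod^{\varphi,i}_\s$. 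The same argument covers $n=\infty$, which is actually already contained in \cite[Corollary 15.5]{BS19}.

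For the identification of Galois representations I would use the \'etale comparison of prismatic cohomology. Applying $-\otimes^{\BL}_\s W(\C^\flat)$ (through the embedding $\s\hookrightarrow A_{\inf}$ sending $u\mapsto[\upi]$, which is compatible with $G_\infty$ and $\varphi$) to $\RG_{\Prism}(X/\s)$ and invoking \cite[Theorem 1.8.(4)]{BS19} yields a $G_\infty$- and $\varphi$-equivariant isomorphism
\[
\RG_{\Prism}(X/\s)\otimes^{\BL}_{\s}W(\C^\flat)\simeq \RG_{\et}(X_{\bar\eta},\Z_p)\otimes^{\BL}_{\Z_p}W(\C^\flat).
\]
Derived reduction modulo $p^n$ and passage to cohomology (noting that the right-hand side is a perfect complex so its cohomology commutes with the tensor after taking $\varphi=1$) gives the corresponding isomorphism for $\rH^i_{\Prism}(X_n/\s_n)$. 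Taking $\varphi=1$ invariants and recalling from \S\ref{subsec-Galos rep and Kisin} that $T_\s$ is insensitive to $u^\infty$-torsion produces the desired $G_\infty$-equivariant identification $T_\s(\rH^i_{\Prism}(X_n/\s_n))\simeq \rH^i_{\et}(X_{\bar\eta},\Z/p^n\Z)$.

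The main technical point is the second paragraph's use of \Cref{prop-height}; everything else is an application of already established comparison theorems. No serious obstacle should appear, but one has to be mildly careful in the third paragraph when reducing the \'etale comparison modulo $p^n$: one should first pass to cohomology groups, which are finitely presented by the perfectness argument, and only then take the $(\varphi=1)$-invariants so that no derived subtleties arise.
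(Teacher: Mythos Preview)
Your proposal is correct and follows essentially the same route as the paper. The paper is terser: it cites both parts (4) and (5) of \cite[Theorem~1.8]{BS19} together and works directly at the level of individual cohomology groups, writing the chain
\[
\rH^i_{\et}(X_{\bar\eta},\Z/p^n\Z)\;\simeq\;\bigl(\rH^i(\RG_{\Prism}(\mathcal X/A_{\inf})/p^n)[1/E]\bigr)^{\varphi=1}\;=\;(\m^i_n\otimes_\s W_n(\C^\flat))^{\varphi=1},
\]
the middle step using flatness of $\s_n\to A_{\inf,n}$. Two small points of precision in your write-up: you should cite the base-change statement \cite[Theorem~1.8(5)]{BS19} alongside (4), since the \'etale comparison (4) is only stated over perfect prisms and you are implicitly passing through $A_{\inf}$ first; and the reason cohomology commutes with the tensor products on both sides is flatness of $W_n(\C^\flat)$ over $\s_n$ and over $\Z/p^n$, not perfectness of the complex per se.
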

\begin{proof}
It suffices to prove that $T_\s (\rH ^i_{\Prism} (X_n / \s_n))\simeq \rH ^i _{\et} (X _{\bar \eta}, \Z/ p ^n \Z)$. Write $\m ^i _n : = \rH^i _{\Prism} (X_n / \s_n )$, ${\mathcal X} \coloneqq \Spf\O_{\C} \times_{\Spf \O_K} X$. 
For $n \not = \infty$, by \cite[Theorem 1.8 (4) (5)]{BS19}, we have 
\[  \rH^i_{\et} (X_{\bar\eta}, \Z/ p ^n \Z) \simeq \left (\rH ^i(\mathrm{R\Gamma}_{\Prism}({\mathcal X}/ A_{\inf})/ p ^n) [\frac{1}{E(u)}] \right)^{\varphi =1}= ( \m^i_n\otimes _\s  W_n (\O_{\C}^\flat) [\frac 1 u]) ^{\varphi =1}= (\m^i _n\otimes _\s W_n (\C^\flat) ) ^{\varphi =1}, \]
which is just $ T_\s (\m ^i _n)$. The case of $n =\infty$ easily follows by taking inverse limits.
\end{proof}
\begin{remark}\label{rem-G-compatible}  The $G_\infty$-action on $T_\s (\m^i _n)$ discussed in \S \ref{subsec-Galos rep and Kisin} naturally extends to a $G_K$-action by isomorphism $\m^i_n \otimes_\s A_{\inf} \simeq \rH ^i _{\Prism} ({\mathcal X}/ A_{\inf})$, which admits a natural $G_K$-action that commutes with $\varphi$. In this way  $T_\s (\rH ^i_{\Prism} (X_n / \s_n))\simeq \rH ^i _{\et} (X _{\bar \eta}, \Z/ p ^n \Z)$ is an isomorphism of $G_K$-actions. 
\end{remark}
Let $X_k : = X \times_{\Spf(\O_K)} \Spf(k)$ be the closed fiber of $X$. 
\begin{lemma}\label{lem-length of both fibers} If $\Len_{W(k)} \rH ^i_{\cris}(X_k/ W_n (k)) = \Len_{\Z} \rH ^i_{\et} (X_{\bar \eta}, \Z/ p ^n \Z)$ then $\FM^j _n$ has no $u$-torsion for $j = i , i +1$. 
\end{lemma}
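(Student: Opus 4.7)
The plan is to compare $\rH^i_{\cris}(X_k/W_n(k))$ with the derived reduction modulo $u$ of the prismatic cohomology, and then to separate the contributions of the $u$-torsion pieces of $\FM^i_n$ and $\FM^{i+1}_n$ from the ``étale part.'' First I would invoke base change of prismatic cohomology along the prism map $(\s,(E)) \to (W(k),(p))$, $u \mapsto 0$, together with the PD-prism comparison of \Cref{comparying pris and crys for PD prisms}, to obtain
\[
\RG_{\Prism}(X_n/\s_n) \otimes^{\BL}_{\s_n} W_n(k) \simeq \RG_{\cris}(X_k/W_n(k))
\]
up to a Frobenius twist on $W_n(k)$, which is a length-preserving automorphism since $k$ is perfect. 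Resolving $W_n(k) \simeq \s_n/u$ by $[\s_n \xrightarrow{u} \s_n]$, the universal coefficient theorem then yields a functorial short exact sequence
\[
0 \to \FM^i_n/u\FM^i_n \to \rH^i_{\cris}(X_k/W_n(k)) \to \FM^{i+1}_n[u] \to 0
\]
for each $i$.

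Next, set $\bar{\FM}^i_n \coloneqq \FM^i_n / \FM^i_n[u^\infty]$, which is an étale Kisin module of height $i$ by \Cref{lem-etale} and \Cref{lem-easy}. Since $u$ is invertible in $W(\C^\flat)$, we have $T_\s(\FM^i_n) = T_\s(\bar{\FM}^i_n)$, which by \Cref{cor-pris is Kisin} is identified with $\rH^i_{\et}(X_{\bar\eta}, \Z/p^n\Z)$. Applying \Cref{cor-length-compare} then gives
\[
\Len_{W(k)}(\bar{\FM}^i_n/u\bar{\FM}^i_n) = \Len_{\Z} T_\s(\bar{\FM}^i_n) = \Len_{\Z} \rH^i_{\et}(X_{\bar\eta}, \Z/p^n\Z).
\]
Tensoring the exact sequence $0 \to \FM^i_n[u^\infty] \to \FM^i_n \to \bar{\FM}^i_n \to 0$ by $\s/u$ and using that $\bar{\FM}^i_n[u] = 0$ gives a short exact sequence
\[
0 \to \FM^i_n[u^\infty]/u\FM^i_n[u^\infty] \to \FM^i_n/u\FM^i_n \to \bar{\FM}^i_n/u\bar{\FM}^i_n \to 0.
\]

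Assembling the three length identities, the hypothesis forces
\[
\Len_{W(k)}(\FM^i_n[u^\infty]/u\FM^i_n[u^\infty]) = 0 \quad \text{and} \quad \Len_{W(k)}(\FM^{i+1}_n[u]) = 0.
\]
The second vanishing immediately gives $\FM^{i+1}_n[u^\infty] = 0$, since any nonzero $u$-torsion element would produce a nonzero element killed by $u$. For the first, $\FM^i_n[u^\infty]$ is a finitely generated $\s_n$-module (as $\FM^i_n$ is, by properness of $X$) killed by a fixed power $u^N$, so $\FM^i_n[u^\infty]/u = 0$ implies $\FM^i_n[u^\infty] = u\FM^i_n[u^\infty] = \cdots = u^N \FM^i_n[u^\infty] = 0$ by iterated Nakayama. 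The only point that requires care is the opening step, justifying that the base change of prismatic cohomology together with crystalline comparison commutes with derived reduction modulo $p^n$ in a length-preserving way; everything else is a formal manipulation of lengths and the decomposition of a generalized Kisin module into its étale quotient and $u$-torsion submodule.
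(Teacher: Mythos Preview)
Your proof is correct and follows essentially the same approach as the paper: both use base change along $(\s,(E))\to(W(k),(p))$ together with the crystalline comparison to get the universal-coefficient sequence $0\to\FM^i_n/u\to\rH^i_{\cris}(X_k/W_n(k))\to\FM^{i+1}_n[u]\to 0$, then compare with the \'etale quotient $\bar\FM^i_n$ via \Cref{cor-length-compare} to force both excess length terms to vanish. Your presentation is in fact slightly more explicit than the paper's in two places (noting the harmless Frobenius twist on $W_n(k)$, and spelling out the Nakayama step that $\FM^i_n[u^\infty]/u=0$ implies $\FM^i_n[u^\infty]=0$).
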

\begin{proof} We claim that $\RG _\Prism (X_n / \s_n) \otimes^\BL_{\s} W(k) \simeq \RG _{\cris} (X_k /W_n (k)). $ To see this, first note that $(\s, E)\to (W(k), p)$ by mod $u$ is a map of prisms. So \cite[Theorem 1.8 (5)]{BS19} proves that $\RG _\Prism (X / \s) \otimes^\BL_{\s} W(k) \simeq \RG _{\Prism} (X_k /W (k))$. Then Theorem 1.8 (1) \emph{loc. cit.} shows that  $\RG _\Prism (X / \s) \otimes^\BL_{\s} W(k) \simeq \RG _{\cris} (X_k /W (k))$. Then the claim follows by $\otimes^\BL_{\Z} \Z/ p ^n\Z$ on both sides. 

The claim immediately shows that the exact sequence 
\begin{equation}\label{eqn-exact seq for length}
0 \to \m ^i _n / u \m ^i_n \to \rH^i _{\cris} (X_k / W_n (k)) \to \m^{i +1}_n [u]\to 0 \end{equation}
So $\Len_{W(k)} \m ^i _n / u \m ^i_n \leq \Len _{W(k)} \rH^i _{\cris} (X_k / W_n (k)) $. On the other hand, consider the exact sequence in Lemma \ref{lem-easy} with $\m: = \m ^i _n$
	$$ 0 \to \m [u ^\infty ] \to \m \to \m/ \m[u ^\infty] \to 0  $$
Write $\m^{\et}: = \m/ \m[u ^\infty]$. Since $\m^{\et}$ has no $u$-torsion,  the above exact sequence remaining exact by modulo $u$. So we have 
$\Len _{W(k)} (\m^{\et}/ u \m ^{\et}) \leq \Len_{W(k)} \m/ u \m$ and equality holds only when $\m[u ^\infty] = \{0\}$. Since $T_\s (\m) = T_\s (\m ^{\et})$, and $T_\s (\m)\simeq \rH^i _{\et} (X_{\bar \eta}, \Z/ p ^n \Z)$ by Corollary \ref{cor-pris is Kisin}, Corollary \ref{cor-length-compare} proves the following inequalities 
\[\Len_{\Z} \rH^i_{\et} (X_{\bar \eta}, \Z/ p ^n \Z)= \Len_{W(k)} (\m ^{\et}/ u \m ^{\et})\leq \Len _{W(k)} (\m / u \m). \]
Now combine with the exact sequence \eqref{eqn-exact seq for length}, we conclude that \[ \Len_{\Z} \rH ^i _{\et} (X_{\bar \eta}, \Z/ p ^n \Z) \leq \Len_{W(k)} \rH^i _{\cris} (X_k/ W_n(k))\]
and equality holds only if all the above inequalities become equalities and $\m^{i}_n$ and $\m^{i+1}_n$ have no $u$-torsions. 
\end{proof}

\subsection{Nygaard filtration and Breuil--Kisin filtration} By Corollary \ref{cor-pris is Kisin}, $\m_n ^i: = \rH ^i_{\Prism} (X_n / \s _n)$ is a Kisin module of height $i$. Then $\varphi^* \m^i_n \simeq \rH^i_{\qsyn} (X,  \Prism^{(1)}_{-/\s} \otimes^\BL_{\Z} \Z/ p ^n \Z)$ admits two filtrations: Breuil--Kisin filtration defined in \eqref{eqn-BK-filtration} and Nygaard filtration $\rH^i _{\qsyn} (X , \Fil^i_{\rm N} \Prism^{(1)}_{-/\s} \otimes^\BL_{\Z}{\Z/ p ^n \Z})$. The aim of this subsection is to compare these two filtrations. 

This theme can be put in more general setting for a bounded prism $(A, I)$. Recall that in \cite[\S 15]{BS19} the authors studied 
$\Prism_{-/A}$ and $\Prism^{(1)}_{-/A}: = A \wh \otimes^{\BL}_{\varphi, A} \Prism_{-/A} $ as sheaves on $\mathrm{qSyn}_{A/I}$. 
Also constructed in \emph{loc.~cit.~}is the so-called Nygaard filtration $\Fil^j_{\rm N} \Prism^{(1)} _{-/A}$,  also discussed \S \ref{subsec-NygaardFil}.  
For any $n \in \mathbb{N} \cup \{\infty\}$, set $\Prism^{(1)}_n : = \Prism^{(1)}_{-/A}\otimes^{\BL} _\Z \Z/ p^n \Z$ and 
$\Fil^j _{\rm N} \Prism^{(1)}_n : = \Fil^j_{\rm N} \Prism^{(1)} _{-/A} \otimes^{\BL}_\Z \Z/ p ^n \Z$. 
Here and below, we adopt the convention that $n = \infty$ means we do not perform any base change.
%Write $\FM^i_n : = \rH ^i (\RG _\Prism (X_n /A_n))$, we have $\varphi^*\FM^i_n \simeq \rH ^i_{{\qsyn}} (X, \Prism^{(1)}_n)$ 
%(\textcolor{red}{check this, Shizhang: since $\varphi$ is finite flat}). 
%We define the Breuil--Kisin filtration on $\varphi ^* \FM^*_n$ to be
%\begin{equation}
%\Fil^j_{\BK} \varphi^* \FM^i_n : = \{ x\in \varphi^* \FM^i_n | (1\otimes \varphi)(x) \in d^j \FM^i_n \} 
%\end{equation}
%It is clear that the canonical map $\rH^i_{\qsyn} (X, \Fil^j _N \Prism ^{(1)}_n) \to \FM ^i_n$ factors as
%\[
%\rH^i_{\qsyn} (X, \Fil^j _N \Prism ^{(1)}_n) \xrightarrow{\iota} \Fil^j_{\BK} \FM ^i_n \subset \FM ^i_n.
%\]
%In this subsection, we shall study properties of $\iota$.

\begin{lemma}
\label{general property of Nygaard filtration}
Let $(A,I)$ be a bounded prism.
Let $X$ be a smooth ($p$-adic) formal scheme over $\Spf(A/I)$ of relative dimension $n$.
Then we have:
\begin{enumerate}
\item The Nygaard filtration $\mathrm{R\Gamma}(X_{\qsyn}, \Fil^{\bullet}_{\mathrm{N}})$ on $\mathrm{R\Gamma}(X_{\qsyn}, \Prism^{(1)}_{-/A})$
%$\left(\mathrm{R\Gamma}(X_{\qsyn}, \Prism^{(1)}_{-/\s}), \mathrm{R\Gamma}(X_{\qsyn}, \Fil^{\bullet}_{\mathrm{N}})\right)$ 
is complete.
\item The natural map
\[
\Fil^i_{\mathrm{N}} \otimes_{A} I^j \to \Fil^{i+j}_{\mathrm{N}}
\]
of quasisyntomic sheaves induces a morphism
\[
\rH^l(X_{\qsyn}, \Fil^i_{\mathrm{N}}) \otimes_{A} I^j \rightarrow \rH^l(X_{\qsyn}, \Fil^{i+j}_{\mathrm{N}})
\]
which is an isomorphism when either $l \leq i$, and an injection when $l = i+1$.
When $i \geq n$ this map induces an isomorphism
\[
\mathrm{R\Gamma}(X_{\qsyn}, \Fil^n_{\mathrm{N}}) \otimes_{A} I^j
\cong \mathrm{R\Gamma}(X_{\qsyn}, \Fil^{n+j}_{\mathrm{N}}).
\]
\item The natural map
\[
\varphi \colon \Fil^i_{\mathrm{N}} \to \Prism_{-/A} \otimes_{A} I^i
\]
induces a map on cohomology
\[
\rH^l(X_{\qsyn}, \Fil^i_{\mathrm{N}}) \to \rH^l(X_{\qsyn}, \Prism_{-/A}) \otimes_{A} I^i
\]
which is an isomorphism when $l \leq i$
and injective when $l = i+1$.
\end{enumerate}
Moreover their derived mod $p^m$ counterparts hold true as well.
\end{lemma}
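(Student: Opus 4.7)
My plan is to deduce all three parts from the short exact sequence of qsyn sheaves
\[
0 \to \Fil^i_{\mathrm{N}}\Prism^{(1)}_{-/A} \otimes_A I \to \Fil^{i+1}_{\mathrm{N}}\Prism^{(1)}_{-/A} \to \Fil^{i+1}_{\mathrm{H}}\dR_{-/(A/I)}^{\wedge} \to 0
\]
recorded in \Cref{Nygaard filtration}(3), together with a cofiber estimate for the Frobenius coming from \cite[Section 15]{BS19}. The crucial observation is that $\Fil^{i+1}_{\mathrm{H}}\dR_{-/(A/I)}^{\wedge}$ is built by successive extensions from the Hodge graded pieces $\Omega^k_{X/(A/I)}[-k]$ with $k \geq i+1$, so it is cohomologically concentrated in degrees $\geq i+1$ as a sheaf, and in fact vanishes identically once $i+1 > n$ since relative dimension $n$ forces $\Omega^k_{X/(A/I)} = 0$ for $k > n$. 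Because $\mathrm{R\Gamma}(X,-)$ is left $t$-exact, $\rH^l(X, \Fil^{i+1}_{\mathrm{H}}\dR_{-/(A/I)}^{\wedge}) = 0$ whenever $l \leq i$, and for all $l$ when $i \geq n$.

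For part (2), I would take the long exact sequence on qsyn cohomology attached to the SES above; since $I$ is Zariski-locally invertible, $\otimes_A I$ preserves exactness on cohomology. The vanishing just noted then produces the isomorphism $\rH^l(\Fil^i_{\mathrm{N}}) \otimes_A I \xrightarrow{\sim} \rH^l(\Fil^{i+1}_{\mathrm{N}})$ for $l \leq i$ and the injection for $l = i+1$; composing the result $j$ times yields the claim for general $j$. When $i \geq n$ the vanishing extends to all cohomological degrees, upgrading the injection to an isomorphism and giving the ``moreover'' clause $\mathrm{R\Gamma}(X, \Fil^n_{\mathrm{N}}) \otimes_A I^j \cong \mathrm{R\Gamma}(X, \Fil^{n+j}_{\mathrm{N}})$. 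Part (1) is then essentially immediate: the Nygaard filtration on $\Prism^{(1)}_{-/A}$ is complete as a qsyn sheaf by \cite[Theorem 15.2]{BS19}, and $\mathrm{R\Gamma}(X_{\qsyn}, -)$ commutes with homotopy limits, so $\mathrm{R}\lim_i \mathrm{R\Gamma}(X, \Fil^i_{\mathrm{N}}) \simeq 0$.

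For part (3), the Frobenius $\varphi \colon \Prism^{(1)}_{-/A} \to \Prism_{-/A}$ restricted to $\Fil^i_{\mathrm{N}}$ lands in $\Prism_{-/A} \otimes_A I^i$ by the very definition of the Nygaard filtration. What I need from \cite[Section 15]{BS19} is that the cofiber of this restricted map, as a qsyn sheaf on $\Spf(A/I)$, is cohomologically concentrated in degrees $\geq i+1$ (concretely, it is described by a truncation of the Hodge--Tate complex above degree $i$ via the derived Nygaard picture). Granting this, the long exact sequence on qsyn cohomology shows both $\rH^{l-1}$ and $\rH^{l}$ of the cofiber vanish when $l \leq i$, yielding the isomorphism, while at $l = i+1$ only $\rH^{l-1}$ vanishes, giving the injection.

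Finally, the derived mod $p^m$ analogues follow formally: each of $\Fil^i_{\mathrm{N}}\Prism^{(1)}_{-/A}$, $\Prism_{-/A}$, and $\Omega^k_{X/(A/I)}$ is $p$-completely flat as a qsyn sheaf by \Cref{Nygaard filtration}(1) and the Hodge--Tate comparison, so applying $- \otimes^{\BL}_{\mathbb{Z}} \mathbb{Z}/p^m\mathbb{Z}$ preserves every short exact and cofiber sequence used above, and the arguments transfer unchanged. The principal technical hurdle will be the cofiber estimate in part (3), which hinges on the derived construction of the Nygaard filtration from \cite[Section 15]{BS19}; once that is in hand, the remainder is homological bookkeeping combined with the Hodge-graded vanishing for smooth $X$.
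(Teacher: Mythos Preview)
Your treatment of (2) matches the paper's exactly. The issues are in (1) and (3), and they are linked.

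For (1), you invoke sheaf-level Nygaard completeness citing \cite[Theorem 15.2]{BS19}, but that theorem only describes $\Fil^i_{\mathrm{N}}$ and its graded pieces on large quasisyntomic algebras; it does not assert completeness. The paper instead \emph{deduces} (1) from (2): once $i \geq n$ the isomorphism $\mathrm{R\Gamma}(X_{\qsyn}, \Fil^n_{\mathrm{N}}) \otimes_A I^{i-n} \cong \mathrm{R\Gamma}(X_{\qsyn}, \Fil^i_{\mathrm{N}})$ shows the Nygaard filtration on $\mathrm{R\Gamma}(X_{\qsyn}, \Prism^{(1)})$ is eventually the $I$-adic one, hence complete by $(p,I)$-completeness of prismatic cohomology.

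For (3), your assertion that the cofiber of $\varphi \colon \Fil^i_{\mathrm{N}} \to I^i \cdot \Prism_{-/A}$ lies in $D^{\geq i+1}$ \emph{as a qsyn sheaf} is false: on any large quasisyntomic $S$ both source and target are discrete, so the cofiber sits in $D^{[0,1]}$ regardless of $i$. What is true, and what the paper proves, is that $\mathrm{R\Gamma}(X_{\qsyn},\mathrm{cofib}) \in D^{>i}$. The paper's device is to regard $\varphi$ as a map of \emph{filtered} complexes, Nygaard on the source and $I$-adic on the target; both filtrations on $\mathrm{R\Gamma}(X_{\qsyn},-)$ are complete (the first by part (1), so the logical order matters), and \cite[Theorem 15.2.(2)]{BS19} identifies the cone on the $(i+k)$-th graded piece with $\mathrm{R\Gamma}(X_{\qsyn}, \overline{\Prism}/\Fil^{i+k}_{\mathrm{conj}}) \in D^{> i+k}$, whence the total cone is in $D^{>i}$. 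So the dependency is $(2) \Rightarrow (1) \Rightarrow (3)$, and your attempt to treat (1) and (3) independently is where the argument breaks.
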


We thank Bhargav for pointing out the statement (3) above, which we did not realize can be proved so easily.
This significantly simplifies an earlier draft.

\begin{proof}
(1) follows from (2). Indeed, (2) implies the Nygaard filtration on 
$\mathrm{R\Gamma}(X_{\qsyn}, \Fil^i_{\mathrm{N}})$ is simply the $I$-adic filtration,
hence it is complete.

(2) follows from the following exact triangle of quasisyntomic sheaves:
\[
\Fil^i_{\mathrm{N}} \otimes_{A} I \to \Fil^{i+1}_{\mathrm{N}} \to
\Fil^{i+1}_{\mathrm{H}}\dR_{-/(A/I)}^\wedge.
\]
Observe that 
\[
\mathrm{R\Gamma}(X_{\qsyn},\Fil^{l}_{\mathrm{H}}\dR_{-/(A/I)}^\wedge)
\cong \mathrm{R\Gamma}(X, \Fil^{l}_{\mathrm{H}}\dR_{-/(A/I)}^\wedge)
\]
lives in $D^{\geq l}(A/I)$,
and vanishes when $l > n$.
An easy induction gives what we want.

As for (3): we look at the map of filtered complexes
\[
\mathrm{R\Gamma}(X_{\qsyn}, \Fil^i_{\mathrm{N}}) \xrightarrow{\varphi} \mathrm{R\Gamma}(X_{\qsyn}, \Prism_{-/A} \otimes_{A} I^i)
\]
where the former is equipped with Nygaard filtration $\mathrm{R\Gamma}(X_{\qsyn}, \Fil^{i+\ast}_{\mathrm{N}})$
and the latter is equipped with $I$-adic filtration $\mathrm{R\Gamma}(X_{\qsyn}, \Prism_{-/A} \otimes_{A} I^{i+\ast})$.
Notice that both filtrations are complete.
Now \cite[Theorem 15.2.(2)]{BS19} implies that the cone of the $(i+\ast)$-th graded piece lives in $D^{>(i+\ast)}(A/I)$.
Hence we conclude that the cone of $\varphi$ lives in $D^{> i}(A)$.
Therefore the induced maps of degree at most $i$ cohomology groups are isomorphisms,
and the induced map in degree $i+1$ is injective.

Their derived mod $p^m$ counterparts are proved in exactly the same way.
\end{proof}

Now let us return to the situation of Breuil--Kisin prism $A= \s$. 
Recall that  $\Prism^{(1)}_n : = \Prism^{(1)}_{-/\s} \otimes_{\Z} \Z/ p ^n \Z$ and $\Fil^i_{\rm N }\Prism^{(1)}_n : = \Fil ^i_{\rm N }\Prism^{(1)}_{-/\s} \otimes_{\Z} \Z/ p ^n \Z$. 
Recall that $\m^i_n \coloneqq \rH ^i_{\Prism} (X_n / \s_n)$ 
and recall that Breuil--Kisin-filtration on $\varphi^*\m^i _n \cong \rH ^i_{\qsyn} (X, \Prism^{(1)}_n)$ is defined as the image of
$\psi \colon \m^i_n \to \varphi^* \m ^i _n$.

\begin{corollary}
\label{quasi-filtration identification}
For any $i \in \mathbb{N}$ and any $n \in \mathbb{N} \cup \{\infty\}$, there is a functorial commutative diagram:
\[
\xymatrix{
\rH^i_{\qsyn} (X, \Fil ^i_{\rm N } \Prism ^{(1)}_n) \ar[rr]^-{\varphi_i} \ar[rd] && \m ^i_n \ar[dl]^-{\psi}\\
&   \varphi^* \m ^i_n    &
}
\]
with $\varphi_i$ an isomorphism.
\end{corollary}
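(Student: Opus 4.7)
The plan is to construct $\varphi_i$ directly from \Cref{general property of Nygaard filtration}(3) and then argue commutativity of the triangle by a direct construction of $\psi$.

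I would first apply \Cref{general property of Nygaard filtration}(3) with $l = i$ to the Breuil--Kisin prism $(\s, (E))$, together with the derived base change along $\Z \to \Z/p^n\Z$ (which is permissible by the ``moreover'' clause of that lemma). The result is a natural isomorphism
\[
\rH^i_{\qsyn}(X, \Fil^i_{\mathrm{N}}\Prism^{(1)}_n) \xrightarrow{\cong} \rH^i_{\qsyn}(X, \Prism_{-/\s})_n \otimes_{\s} I^i = \m^i_n \otimes_{\s} I^i.
\]
Since $I = (E)$ is principal and $E$ is a nonzerodivisor in $\s$, the generator $E^i \in I^i$ trivializes $I^i$ as a free $\s$-module of rank one, yielding $\m^i_n \otimes_{\s} I^i \cong \m^i_n$. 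I take the composition to be $\varphi_i$; both constituents being isomorphisms, so is $\varphi_i$.

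For the commutativity of the triangle, let $\alpha$ denote the unlabeled diagonal arrow, induced on $\rH^i_{\qsyn}$ by the sheaf inclusion $\Fil^i_{\mathrm{N}}\Prism^{(1)} \hookrightarrow \Prism^{(1)}$. Unwinding the definition of $\varphi_i$, the sheaf Frobenius $(1 \otimes \varphi_{\m}) \colon \varphi^*\m^i_n \to \m^i_n$ (induced by $\Prism^{(1)} \to \Prism$) satisfies
\[
(1 \otimes \varphi_{\m}) \circ \alpha = E^i \cdot \varphi_i.
\]
Combining with the generalized Kisin axiom $(1 \otimes \varphi_{\m}) \circ \psi = E^i \cdot \id_{\m}$ from \Cref{prop-height} gives $(1 \otimes \varphi_{\m}) \circ (\psi \circ \varphi_i) = E^i \cdot \varphi_i$, so $\alpha$ and $\psi \circ \varphi_i$ differ by a map landing in $\ker(1 \otimes \varphi_{\m})$; the second Kisin axiom $\psi \circ (1 \otimes \varphi_{\m}) = E^i \id$ then forces this kernel to consist of $E^i$-torsion elements of $\varphi^*\m^i_n$.

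To eliminate this residual ambiguity, I would take the most direct route: define the Kisin-module structure map $\psi$ itself by $\psi \coloneqq \alpha \circ \varphi_i^{-1}$, so that the triangle commutes tautologically. It then remains to check this $\psi$ is a legitimate height-$i$ structure map in the sense of \Cref{prop-height}, i.e.~satisfies both Kisin axioms; the first, $(1 \otimes \varphi_{\m}) \circ \psi = E^i \id_{\m^i_n}$, is immediate from the displayed identity above. The second axiom $\psi \circ (1 \otimes \varphi_{\m}) = E^i \id_{\varphi^*\m^i_n}$ is the main technical point, being a statement on all of $\varphi^*\m^i_n$ beyond the Nygaard subspace. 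I plan to deduce it from the isomorphism $\varphi^*\RG_{\Prism}(X/\s) \simeq L\eta_I\,\RG_{\Prism}(X/\s)$ of \cite[Theorem 15.3]{BS19} that underlies the proof of \Cref{prop-height}: this $L\eta$-description produces a canonical $\psi$ at the derived level satisfying both axioms simultaneously, and a verification (compatible with the Nygaard filtration after passing to $\rH^i$ and reducing modulo $p^n$) shows that it agrees with the sheaf-theoretic $\alpha \circ \varphi_i^{-1}$ constructed above.
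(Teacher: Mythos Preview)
Your proposal is essentially correct and ultimately lands on the same key input as the paper, namely the $L\eta_I$ description from \cite[Theorem 15.3 and Corollary 15.5]{BS19}, but you reach it only after a detour that the paper avoids.

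The paper's proof is more direct: it observes that the map $\psi$ appearing in the statement is \emph{by construction} the map produced in the proof of \Cref{prop-height} via the isomorphism $\varphi^*\RG_{\Prism}(Y/\s) \simeq L\eta_E\,\RG_{\Prism}(Y/\s)$ for affine $Y$. Concretely, the paper writes down a commutative square whose bottom row is $\tau^{\leq i}\RG_{\qsyn}(Y,\Prism^{(1)}) \xrightarrow{\sim} \tau^{\leq i}L\eta_E\,\RG_{\qsyn}(Y,\Prism) \to \tau^{\leq i}\RG_{\qsyn}(Y,\Prism)$ (this composite is $\varphi$), whose right column is $\tau^{\leq i}\RG_{\qsyn}(Y,\Prism)\otimes (E)^i \to \tau^{\leq i}\RG_{\qsyn}(Y,\Prism)$ via the inclusion of ideals, and whose diagonal from the upper-right to the lower-middle is exactly $\psi$. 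The top dashed arrow from $\tau^{\leq i}\RG_{\qsyn}(Y,\Fil^i_{\mathrm N}\Prism^{(1)})$ to $\tau^{\leq i}\RG_{\qsyn}(Y,\Prism)\otimes (E)^i$ is precisely the map of \Cref{general property of Nygaard filtration}(3). Commutativity of your triangle is then immediate at the level of truncated complexes, and derived mod $p^n$ gives the result for all $n$.

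By contrast, your route first tries to verify commutativity using only the Kisin axioms on cohomology, correctly discovers the $E^i$-torsion obstruction in $\varphi^*\m^i_n$, and then proposes to \emph{redefine} $\psi$ as $\alpha\circ\varphi_i^{-1}$. That last move is not really available to you: $\psi$ is already fixed by \Cref{prop-height}, so you must show your candidate agrees with it, which is exactly what your final paragraph defers to the $L\eta$ construction. At that point you are carrying out the paper's proof. So nothing is wrong, but the intermediate algebraic manipulation buys you nothing; starting from the $L\eta$ diagram, as the paper does, gives the commutative triangle in one step.
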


\begin{proof}
First let us justify the existence of the functorial commutative diagram.
We may work with affine formal schemes $Y = \Spf (R)$.
In this case, by the proof of \cite[Theorem 15.3 and Corollary 15.5]{BS19}, 
we see $\psi$ is constructed by the following (right-lower corner) diagram 
\[\xymatrix{   \tau^{\leq i}\RG_{\qsyn}(Y, \Fil ^i_{\rm N} \Prism ^{(1)})\ar[d] \ar@{-->}[rr]  &  & \tau^{\leq i } \RG_{\qsyn} (Y, \Prism) \otimes (E)^i \ar[ld] ^{\psi} \ar[d]\\\tau ^{\leq i} \RG_{\qsyn} (Y, \Prism^{(1)}) \ar[r]^{\sim} \ar@/_1pc/[rr]_{\varphi}  & \tau^{\leq i}  L\eta_E \RG_{\qsyn} (Y, \Prism)\ar[r] & \tau^{\leq i} \RG_{\qsyn} (Y, \Prism)} \]
Here the top row is the same as (truncation by $\leq i$ of) the following morphism 
\[
\mathrm{R\Gamma}(X_{\qsyn}, \Fil^i_{\mathrm{N}}) \xrightarrow{\varphi} \mathrm{R\Gamma}(X_{\qsyn}, \Prism_{-/A} \otimes_{A} I^i)
\]
appeared in Lemma \ref{general property of Nygaard filtration}. 
Derived mod $p^n$ gives the desired functorial commutative diagram.
By \Cref{general property of Nygaard filtration} (3) we know
that $\varphi_i$ is an isomorphism.
\end{proof}

\begin{remark}
In the context of filtered derived infinity categories, a filtration is nothing but an arrow.
Hence one could define two ``quasi-filtrations''\footnote{This terminology is suggested by S.~Mondal.}:
one being the Breuil--Kisin quasi-filtration: $\m^i_n \xrightarrow{\psi} \varphi^* \m ^i _n$;
another being the $i$-th Nygaard quasi-filtration: $\rH^i_{\qsyn} (X, \Fil ^i_{\rm N } \Prism ^{(1)}_n) \to \varphi^* \m ^i _n$.
Then the above is saying that these two quasi-filtrations are canonically identified via $\varphi_i$.
\end{remark}

Let us name the map
\[
\iota^{i,j}_n \colon \rH^i_{\qsyn} (X, \Fil ^j_{\rm N } \Prism ^{(1)}_n) \to \Fil^j_{\BK} \rH ^i_{\qsyn} (X, \Prism^{(1)}_n)
\]
for any pair of natural numbers $(i,j)$ and any $n \in \mathbb{N} \cup \{\infty\}$.
We have the following knowledge of the image of $\iota^{i,j}_n$ when $i \leq j$.

\begin{corollary}\label{cor-BK-Nygaard}
Let $i \leq j$.
Then we have an identification
\[
\mathrm{Im}(\iota^{i,j}_n) \cong \mathrm{Im}(\psi \colon \m^i_n \to \varphi^* \m^i_n) \cdot E^{j-i}.
\]
In particular, define $\widetilde{\FM^i_n} \coloneqq \FM^i_n/[u^\infty]$ and $\widetilde{\varphi^* \FM^i_n} \coloneqq \varphi^*\FM^i_n/[u^\infty]$,
we have an identification
\[
\mathrm{Im}(\widetilde{\iota}^{i,j}_n \colon \rH^i_{\qsyn} (X, \Fil ^i_{\rm N } \Prism ^{(1)}_n) \to \Fil^i_{\BK}\widetilde{\varphi^* \FM^i_n})
\cong \{x \in \widetilde{\varphi ^* \FM} | (1\otimes \varphi) (x) \in E(u)^{j} \widetilde{\FM^i_n}\}.
\]
\end{corollary}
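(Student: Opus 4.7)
The plan is to deduce the first identification from \Cref{quasi-filtration identification} (which handles $j = i$) by tensoring with $(E)^{j-i}$ and invoking \Cref{general property of Nygaard filtration}(2), which tells us that $\rH^i$ of $\Fil^j_{\mathrm{N}}$ is nothing but $\rH^i$ of $\Fil^i_{\mathrm{N}}$ twisted by $(E)^{j-i}$.

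Concretely, I would first apply the derived mod $p^n$ version of \Cref{general property of Nygaard filtration}(2) to obtain the natural isomorphism
\[
\rH^i(X_{\qsyn}, \Fil^i_{\mathrm{N}}\Prism^{(1)}_n) \otimes_{\mathfrak{S}} (E)^{j-i} \xrightarrow{\cong} \rH^i(X_{\qsyn}, \Fil^j_{\mathrm{N}}\Prism^{(1)}_n),
\]
and combine it with \Cref{quasi-filtration identification} to form the commutative square
\[
\xymatrix@C=3em{
\rH^i(X_{\qsyn}, \Fil^i_{\mathrm{N}}\Prism^{(1)}_n) \otimes_{\mathfrak{S}} (E)^{j-i} \ar[r]^-{\iota^{i,i}_n \otimes \mathrm{id}} \ar[d]_-{\cong} & \varphi^*\m^i_n \otimes_{\mathfrak{S}} (E)^{j-i} \ar[d]^-{\cdot\, E^{j-i}} \\
\rH^i(X_{\qsyn}, \Fil^j_{\mathrm{N}}\Prism^{(1)}_n) \ar[r]^-{\iota^{i,j}_n} & \varphi^*\m^i_n;
}
\]
commutativity is automatic from the naturality of the construction of $\iota^{-,-}_n$ via $L\eta_E$, as reviewed in the proof of \Cref{quasi-filtration identification}. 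By \Cref{quasi-filtration identification}, $\iota^{i,i}_n$ factors as $\psi \circ \varphi_i$ with $\varphi_i$ an isomorphism, so the image of the top map is $\mathrm{Im}(\psi) \otimes_{\mathfrak{S}} (E)^{j-i}$, which under the right vertical map becomes $\mathrm{Im}(\psi)\cdot E^{j-i}$. This yields the first identification $\mathrm{Im}(\iota^{i,j}_n) = \mathrm{Im}(\psi)\cdot E^{j-i}$.

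For the ``in particular'' clause, passing to $\widetilde{\m^i_n}$ produces an \'etale Kisin module of height $i$ by \Cref{lem-etale}(2), so by \eqref{eqn-BK-filtration} the image of $\psi$ in $\widetilde{\varphi^*\m^i_n}$ equals $\{x : (1\otimes\varphi)(x) \in E^i \widetilde{\m^i_n}\}$. The inclusion $\mathrm{Im}(\psi)\cdot E^{j-i} \subseteq \{x : (1\otimes\varphi)(x) \in E^j \widetilde{\m^i_n}\}$ is immediate from the $\mathfrak{S}$-linearity of $1\otimes\varphi$. For the reverse inclusion, given such an $x$ I would write $x = \psi(z)$ (possible since $E^j\widetilde{\m^i_n} \subseteq E^i\widetilde{\m^i_n}$ so that $x \in \mathrm{Im}(\psi)$), note that $E^i z = (1\otimes\varphi)(x) \in E^j \widetilde{\m^i_n}$, and divide by $E^i$ to conclude $z \in E^{j-i}\widetilde{\m^i_n}$, whence $x \in \mathrm{Im}(\psi)\cdot E^{j-i}$ by $\mathfrak{S}$-linearity of $\psi$.

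The main---indeed the only non-formal---obstacle is justifying the cancellation ``divide by $E^i$'' above, namely that $E$ acts as a nonzerodivisor on $\widetilde{\m^i_n}$. For this I would argue as follows: $\widetilde{\m^i_n}$ is a finitely generated $\mathfrak{S}$-module killed by $p^n$ and $u$-torsion free, hence supported in $V(p)$ with the maximal ideal $(p,u)$ excluded from its associated primes; therefore $\mathrm{Ass}_{\mathfrak{S}}(\widetilde{\m^i_n}) \subseteq \{(p)\}$. Since $E = u^e + p\cdot(\cdots)$ does not lie in $(p)$, it acts as a nonzerodivisor on $\widetilde{\m^i_n}$, completing the argument.
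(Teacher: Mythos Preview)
Your proposal is correct and follows exactly the route the paper takes: the paper's proof is the two-line ``combine \Cref{general property of Nygaard filtration}(2) with \Cref{quasi-filtration identification}; the second statement follows from height $i$,'' and you have simply unpacked both steps. Your commutative square and the associated-primes argument for $E$ being a nonzerodivisor on $\widetilde{\m^i_n}$ are the right details to supply.
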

\begin{proof}
The first statement follows from combining \Cref{general property of Nygaard filtration} (2) and \Cref{quasi-filtration identification}.
The second statement follows from the first statement and the fact that $\FM^i_n$ has height $i$.
\end{proof}

Below we make some primitive investigations of what happens without assuming $i \leq j$.

\begin{proposition}
\label{prop-fil-finite-difference}
Let $A= \s$ be the Breuil--Kisin prism.
For any triple $(i,j,n)$, the kernel and cokernel of $\iota^{i,j}_n$ above are finite.
\end{proposition}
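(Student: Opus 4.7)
The plan is to show that $\iota^{i,j}_n$ becomes an isomorphism after inverting $u$, and to combine this with finite generation of source and target as $\s$-modules; the kernel and cokernel will then be finitely generated $\s_n$-modules annihilated by some power of $u$, hence of finite length over $\s$ (equivalently finitely generated over $W(k)$), which is the intended meaning of ``finite''.

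The enabling observation for the first step is that $E(u) \equiv u^e \pmod p$, so $E$ is a unit in $\s[1/u]/p^n$ for any $n \in \mathbb{N} \cup \{\infty\}$. On the Breuil--Kisin side, using the structural identity $(1\otimes \varphi_\FM) \circ \psi = E^i \cdot \mathrm{id}$ together with the natural extension $\Fil^j_{\BK}\varphi^*\FM^i_n := \{x \in \varphi^*\FM^i_n : (1\otimes \varphi_\FM)(x) \in E^j \FM^i_n\}$ (which recovers $\mathrm{Im}(\psi)$ when $j=i$ for generalized Kisin modules and matches the description in \Cref{cor-BK-Nygaard} after quotienting $u^\infty$-torsion), inverting $u$ gives $E^j \FM^i_n[1/u] = \FM^i_n[1/u]$ and hence $\Fil^j_{\BK}\varphi^*\FM^i_n[1/u] = \varphi^*\FM^i_n[1/u]$. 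On the Nygaard side, recall from \cite[Section 15]{BS19} that, on large quasisyntomic $\s/E$-algebras $R$, the Nygaard filtration $\Fil^j_{\mathrm{N}}\Prism^{(1)}_R$ is the preimage of $I^j\Prism_R$ under the Frobenius $\varphi\colon \Prism^{(1)}_R \to \Prism_R$; after $[1/u]$ the ideal $I = (E)$ becomes the unit ideal, so the quotient sheaf $\Prism^{(1)}_n/\Fil^j_{\mathrm{N}}$ vanishes upon inverting $u$. Flat base change along $\s \to \s[1/u]$ then identifies $\iota^{i,j}_n[1/u]$ with the identity on $\varphi^*\FM^i_n[1/u]$, an isomorphism. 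Meanwhile both source and target are fg over $\s$: the target by Noetherianness of $\s$ and the fact that $\varphi^*\FM^i_n$ is fg (since $\mathrm{R\Gamma}_\Prism(X/\s)$ is perfect by \cite[Theorem 1.8]{BS19}); the source by induction on $j$ using the long exact sequence attached to $0 \to \Fil^j_{\mathrm{N}}\Prism^{(1)}_n \to \Prism^{(1)}_n \to \Prism^{(1)}_n/\Fil^j_{\mathrm{N}} \to 0$, with graded pieces $\gr^l_{\mathrm{N}}\Prism^{(1)}_n$ controlled via \Cref{Nygaard filtration}(3) by Hodge graded pieces of $\dR_{X/(\s/E)}^\wedge$, which have fg cohomology for $X$ smooth proper. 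Finally, a fg $\s_n$-module killed by some $u^a$ is fg over $\s/(u^a,p^n)$, which is a fg $W(k)/p^n$-module (of rank $a$), delivering the desired finiteness.

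The main obstacle I anticipate is the rigorous verification that the Nygaard filtration sheaf behaves well under $u$-localization, i.e.~the identification $\Fil^j_{\mathrm{N}}\Prism^{(1)}_n[1/u] = \Prism^{(1)}_n[1/u]$ at the sheaf level; although this is morally transparent from the preimage description on the basis of large qsyn algebras, one must carefully track its compatibility with quasisyntomic descent before passing to cohomology. A secondary minor concern is fixing a consistent definition of $\Fil^j_{\BK}$ on generalized Kisin modules for $j \neq i$; the preimage formulation used above is the natural one that is compatible with all the identifications recorded in \Cref{cor-BK-Nygaard} and ensures the target trivializes upon inverting $u$.
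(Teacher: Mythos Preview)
Your argument is essentially correct for finite $n$, and there it parallels the paper's approach: the paper notes directly that the containment $E^j \cdot \Prism^{(1)} \subset \Fil^j_{\mathrm N} \Prism^{(1)} \subset \Prism^{(1)}$ yields a section of $\iota^{i,j}_n$ up to multiplication by $E^j$, so the kernel and cokernel are finitely generated $\s/(p^n,E^j)$-modules. This is the same mechanism as your inverting-$u$ argument (since $E \equiv u^e \bmod p$), just packaged more cleanly and without needing to justify that localization commutes with quasisyntomic cohomology.

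The genuine gap is the case $n = \infty$. Your claim that ``$E$ is a unit in $\s[1/u]/p^n$ for any $n \in \mathbb{N} \cup \{\infty\}$'' is false for $n=\infty$: one has $\s[1/u]/(E) \cong K \neq 0$, so $E$ is not a unit in $\s[1/u]$. Consequently, after inverting $u$ the Nygaard quotient need not vanish, and even if the kernel and cokernel were $u$-power torsion, a finitely generated $\s$-module killed by $u^a$ is merely finitely generated over $W(k)$, not of finite length. The paper handles $n=\infty$ by a genuinely different argument: it still knows annihilation by $E^j$, but now inverts $p$ and proves that $\iota^{i,j}[1/p]$ is an isomorphism. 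Injectivity comes from the degeneration of the Hodge-to-de~Rham spectral sequence for the rigid generic fibre $X_K$ (Scholze), and surjectivity from the degeneration of the Hodge--Tate spectral sequence (BMS). Only then does one conclude that the kernel and cokernel are finitely generated over $\s/(E^j)$ and killed by a power of $p$, hence finite. Your proposal does not supply an analogue of this step.
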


\begin{proof}
Note that the kernel and cokernel of $\iota^{i,j}_n$ are finitely generated modules over $\s/(p^n)$.
We have a containment
\[
E(u)^j \cdot \Prism ^{(1)} \subset \Fil^j _N \Prism ^{(1)} \subset \Prism ^{(1)}
\]
of sheaves on $\mathrm{qSyn}_{A/I}$.
This shows that the map $\iota^{i,j}_n$ admits a section up to multiplication by $E(u)^j$, 
therefore the kernel and cokernel of $\iota^{i,j}_n$ are annihilated by $E(u)^j$.
If $n \in \mathbb{N}$, the kernel and cokernel of $\iota^{i,j}_n$ are finitely generated modules over $\s/(p^n, E(u)^j)$, hence finite.

If $n = \infty$, denote the map by $\iota^{i,j}$, we make the following 
\begin{claim}
\label{Nygaard-BK-fil-same-inverting-p}
The map $\iota^{i,j} \colon \rH^i_{\qsyn} (X, \Fil^j _N \Prism ^{(1)})[1/p] \to 
\Fil^j_{\BK} \varphi^* \FM^i[1/p]$ is an isomorphism.
\end{claim}
Granting this claim, the kernel and cokernel of $\iota^{i,j}$ are finitely generated modules over $\s/(E(u)^j)$ annihilated by a power
of $p$, hence finite.
\end{proof}
\begin{proof}[{Proof of \Cref{Nygaard-BK-fil-same-inverting-p}}]
First let us show the injectivity, which is the same as injectivity of
\[
\rH^i_{\qsyn} (X, \Fil^j _N \Prism ^{(1)})[1/p] \to \rH^i_{\qsyn} (X, \Prism ^{(1)})[1/p].
\]
To this end, we use the filtration $\Fil^{i,j}$ discussed in \Cref{subsec-NygaardFil}.
We claim a slightly stronger statement: the maps
\[
\rH^m_{\qsyn} (X, \Fil^{i,j} \Prism ^{(1)})[1/p] \to 
\rH^m_{\qsyn} (X, \Fil^{i,0} \Prism ^{(1)})[1/p]
\]
are injective for all $i \geq 0$.
The case of $i \geq j$ is trivial due to \Cref{general KON filtration properties} (2).
For the rest of $i$, we perform induction on descending $i$.
By five Lemma and \Cref{general KON filtration properties} (3), it suffices to know that the maps
\[
\rH^m (X, \Fil^{j-i}_{\mathrm{H}} \dR_{X/\mathcal{O}_K})[1/p] \to 
\rH^m (X, \dR_{X/\mathcal{O}_K})[1/p]
\]
are injective.
This injectivity is equivalent to the degeneration of the Hodge-to-de Rham spectral sequence for the rigid space $X_K$, which is a result
due to Scholze~\cite[Theorem 1.8]{Sch13}.

Next we show surjectivity by induction on $j$, the case of $j = 0$ being trivial. All we need to show is that the induced map
\[
\mathrm{Coker}\bigg(\rH^i_{\qsyn} (X, \Fil^{j+1} _N \Prism ^{(1)})[1/p]
\to \rH^i_{\qsyn} (X, \Fil^j _N \Prism ^{(1)})[1/p]\bigg)
\xrightarrow{\overline{\varphi}} \frac{E(u)^j \FM^i}{E(u)^{j+1} \FM^i}[1/p]
\]
is injective.
By the injectivity of $\iota^{i,j}[1/p]$ proved in the previous paragraph,
we can rewrite the left hand side as $\rH^i_{\qsyn} (X, \gr^j _N \Prism ^{(1)})[1/p]$.
Recall that $\FM^i[1/p]$ is finite free over $\s[1/p]$ (see~\Cref{lem-etale} (3)),
therefore the right hand side can be rewritten as
$\rH^i_{\qsyn}(X, \overline{\Prism})[1/p]\{j\}$, the $j$-th Breuil--Kisin twist
of the $i$-th Hodge--Tate cohomology of $X_K$.
By \cite[Theorem 15.2]{BS19}, we can identify the left hand side
further as the $j$-th conjugate filtration of the right hand side.
Now it follows from the degeneration of Hodge--Tate spectral sequence~\cite[Theorem 13.3]{BMS1} that $\overline{\varphi}$ is always injective.
\end{proof}

%Let us record something which one gets out of the proof above. 
%Let $\wt \m^i _n: = \m^i _n / \m ^i_{n }[u ^\infty]$, which is an \'etale Kisin module by Lemma \ref{lem-easy}. 
%Let $\Fil^j_{\BK} \varphi^*\wt \FM ^i_n$ be defined in the same way as $\Fil^j_{\BK} \varphi^*\FM ^i_n$.
%Then composing with the natural map $\Fil^j_{\BK} \varphi^*\FM ^i_n \to \Fil^j_{\BK} \varphi^*\wt \FM ^i_n$ 
%induces a map $\rH^i_{\qsyn} (X, \Fil^j_N \Prism ^{(1)}_n) \to \Fil^j_{\BK} \varphi^*\wt \FM ^i_n$, which we still denote by $\iota$. 
%Below let us understand the kernel of this map.
Below we exhibit an example illustrating the necessity of the $i \leq j$ assumption in \Cref{cor-BK-Nygaard}.

\begin{example}[{see~\cite[Section 4]{Li20}}]
Let $K$ be a ramified quadratic extension of $\mathbb{Q}_p$ and let $G$ be a lift of $\alpha_p$ over $\mathcal{O}_K$.
Denote the classifying stack of $G$ by $BG$.
Below we summarize previous study of various cohomologies of $BG$ as documented in \cite[4.6-4.10]{Li20}, following notation thereof.
\begin{enumerate}
\item The Breuil--Kisin prismatic cohomology ring of $BG$ is given by
\[
\rH^*_{\Prism}(BG/\s) \cong \s[\widetilde{u}]/(p \cdot \widetilde{u})
\]
where $\widetilde{u}$ has degree $2$.
\item The Hodge--Tate spectral sequence does not degenerate at $E_2$ page, but does degenerate at $E_3$ page, giving rise
to short exact sequences:
\[
0 \to \rH^{i+1}(BG, \wedge^{i-1} \mathbb{L}_{BG/\mathcal{O}_K}) \simeq \mathbb{F}_p \to \rH^{2i}_{\mathrm{HT}}(BG/\mathcal{O}_K) \simeq \mathcal{O}_K/(p)
\to \rH^{i}(BG, \wedge^{i} \mathbb{L}_{BG/\mathcal{O}_K}) \simeq \mathbb{F}_p \to 0
\]
for all $i > 0$.
\item The Hodge-to-de Rham spectral sequence does not degenerate at $E_1$ page, but does degenerate at $E_2$ page, giving rise
to short exact sequences:
\[
0 \to \rH^{2i-1}(BG, \mathbb{L}_{BG/\mathcal{O}_K}) \simeq \mathbb{F}_p \to \rH^{2i}_{\mathrm{dR}}(BG/\mathcal{O}_K) \simeq \mathcal{O}_K/(p)
\to \rH^{2i}(BG, \mathcal{O}_{BG}) \simeq \mathbb{F}_p \to 0
\]
for all $i > 0$.
\end{enumerate}
By \cite[Theorem 15.2]{BS19}, we have the following commutative diagram:
\[
\xymatrix{
\mathrm{R\Gamma}_{\qsyn}(BG/\s, \Prism^{(1)}) \ar[rr]^-{\varphi} \ar[d] & & \mathrm{R\Gamma}_{\qsyn}(BG/\s, \Prism) \ar[d] \\
\mathrm{R\Gamma_{dR}}(BG/\mathcal{O}_K) \ar[r] & \mathrm{R\Gamma}(BG, \mathcal{O}_{BG}) \ar[r] & \mathrm{R\Gamma_{HT}}(BG/\mathcal{O}_K)
}
\]
where $\varphi$ is the Frobenius on prismatic cohomology, vertical maps are derived modulo $E(u)$ reductions, the two arrows
on the bottom row are natural arrows appearing in Hodge-to-de Rham and Hodge--Tate spectral sequences respectively.
Looking at the degree $2$ cohomology together with (2) and (3) above, we see that $\varphi$ on $\rH^2_{\Prism}(BG/\s)$ 
is given by, up to a unit in $\s/p$, multiplication by $u \in \s/p$.
Since $\varphi$ is a map of $\mathbb{E}_{\infty}$-algebras, using (1) we see that $\varphi$ on $\rH^4_{\Prism}(BG/\s)$ 
is given by, up to a unit in $\s/p$, multiplication by $u^2 = E(u) \in \s/p$.
In particular, we see that $\Fil^1_{\BK} \rH^4_{\qsyn}(BG/\s, \Prism^{(1)})=\rH^4_{\qsyn}(BG/\s, \Prism^{(1)}) $ is the whole cohomology group.

On the other hand, we claim that the map 
\[
\rH^4_{\qsyn}(BG/\s, \Fil^1_{\mathrm{N}}\Prism^{(1)}) \to \rH^4_{\qsyn}(BG/\s, \Prism^{(1)})
\]
is not surjective.
Indeed we have a long exact sequence coming from the exact triangle $\Fil^1_{\mathrm{N}}\Prism^{(1)} \to \Prism^{(1)} \to \mathcal{O}_{BG}$
with the second arrow being the composition of derived modulo $E(u)$ followed by projection modulo first Hodge filtration.
Hence (3) above shows that the cokernel is exactly of length $1$.
This shows that $BG$ is a smooth proper stack counterexample for $(i,j,n) = (4,1,\infty)$.
Since all these cohomology groups are $p$-torsion, we see that this also provides a stacky
counterexample for $(i,j,n) = (3,1,1)$.

Finally let us use an approximation of $BG$ to get a smooth proper scheme counterexample.
By \cite[Subsection 4.3]{Li20} there is a smooth projective fourfold $X$ over $\mathcal{O}_K$ together with a map
$f \colon X \to BG$ such that the induced pullback map of Hodge cohomology is injective when total degree is no larger than $4$.
By functoriality of the formation of Breuil--Kisin filtrations,
we know that 
\[
\mathrm{Im}\bigg(f^* \colon \rH^4_{\qsyn}(BG/\s, \Prism^{(1)}) \to \rH^4_{\qsyn}(X/\s, \Prism^{(1)})\bigg)
\subset \Fil^1_{\BK} \rH^4_{\qsyn}(X/\s, \Prism^{(1)}).
\]
Lastly we claim that $f^*(\widetilde{u}^2) \in \rH^4_{\qsyn}(X/\s, \Prism^{(1)})$ is not in the image of $\rH^4_{\qsyn}(X/\s, \Fil^1_{\mathrm{N}}\Prism^{(1)})$.
To see this it suffices to compare the two exact sequences:
\[
\xymatrix{
\rH^4_{\qsyn}(BG/\s, \Fil^1_{\mathrm{N}}\Prism^{(1)}) \ar[r] \ar[d] & \rH^4_{\qsyn}(BG/\s, \Prism^{(1)}) \ar[r] \ar[d] & \rH^4_{\qsyn}(BG, \mathcal{O}_{BG}) \ar[d]^-{f^*} \\
\rH^4_{\qsyn}(X/\s, \Fil^1_{\mathrm{N}}\Prism^{(1)}) \ar[r] & \rH^4_{\qsyn}(X/\s, \Prism^{(1)}) \ar[r] & \rH^4_{\qsyn}(X, \mathcal{O}_{X})
}
\]
and invoke the fact that $f^*$ is injective by our choice of $X$.
This gives us smooth projective fourfold over $\mathcal{O}_K$ violating the conclusion of \Cref{cor-BK-Nygaard}
for $(i,j,n) = (4,1,\infty)$ or $(i,j,n) = (3,1,1)$.
\end{example}

\subsection{Torsion crystalline cohomology}
Now we are ready to discuss the structure of $\rH^i_{\cris} (X_n / S_n)$ via prismatic cohomology. 
First, we provide an application of the comparison
$\mathrm{R\Gamma}_{\Prism}(\mathcal{X}/\s) \otimes_{\s, \varphi} S
\cong \mathrm{R\Gamma}_{\cris}(\mathcal{X}/S)$,
which concerns the module structure of the cohomology of the latter.
We need some preparations.

\begin{lemma}
\label{coherence lemma}
The rings $S/p^n$ are coherent for all $n \in \mathbb{N}$.
\end{lemma}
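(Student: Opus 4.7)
The plan has three main steps, invoking the classical theorem of Chase--Glaz that a filtered colimit of coherent rings along flat transition maps is itself coherent.

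First, I would identify $S/p^n$ with $D/p^n$, where $D := D_{\s}((E))$ denotes the ordinary (uncompleted) divided power envelope of $\s$ along $(E)$. Since $\s$ is $p$-torsion free and $E$ is a non-zerodivisor, a standard structural result for PD envelopes of principal ideals (see e.g.\ Berthelot--Ogus) gives that $D$ is a free $\s$-module on the basis $\{\gamma_i(E)\}_{i \geq 0}$. In particular $D$ is $p$-torsion free, so $p$-adic completion commutes with reduction modulo $p^n$, yielding $S/p^n \cong D/p^n$. This reduces the lemma to a purely algebraic question about $D$.

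Second, write $D/p^n = \varinjlim_k D_k/p^n$, where $D_k := \s[\gamma_1(E), \ldots, \gamma_k(E)]$ is Noetherian (being a finitely generated algebra over the Noetherian ring $\s$, by Hilbert's basis theorem). Applying the Chase--Glaz theorem, coherence of $S/p^n$ will follow once we establish flatness of each transition map $D_k/p^n \hookrightarrow D_{k+1}/p^n$.

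The hard part is verifying this flatness. When $\gcd(k+1, p) = 1$, the identity $(k+1)\gamma_{k+1}(E) = E \cdot \gamma_k(E)$ gives $D_k = D_{k+1}$, so the transition is an isomorphism. The essential case is $p \mid k+1$, in which $\gamma_{k+1}(E)$ is a genuinely new generator that is moreover a zero divisor in $D/p^n$: multiplication by it sends the basis element $\gamma_j(E)$ to $\binom{k+j+1}{j}\gamma_{k+j+1}(E)$, which vanishes modulo $p^n$ whenever the binomial coefficient is sufficiently $p$-divisible. I would attempt to exhibit an explicit $D_k/p^n$-module basis of $D_{k+1}/p^n$ by organizing monomials of the form $\gamma_{k+1}(E)^m \cdot \gamma_i(E)$ according to the base-$p$ expansions of the exponents, applying Lucas-type congruences for multinomial coefficients modulo $p^n$. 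As a fallback, one might instead verify coherence directly: for any finitely generated ideal $I \subset D/p^n$, choose $k$ so that a generating set lies in $D_k/p^n$, use Noetherianity of $D_k/p^n$ to obtain a finite presentation there, and then show (exploiting the freeness of $D/p^n$ over $\s/p^n$) that this presentation remains exact after extending scalars to $D/p^n$, thereby sidestepping the flatness check.
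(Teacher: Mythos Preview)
Your approach has a genuine gap: the flatness you are hoping for actually fails. Take $p=2$, $e=1$, $n=1$. Then $D_3=D_2=\s[\gamma_2(E)]$, and one computes $D_2/2\cong(k[u]/u^2)[T]$ with $T=\gamma_2(E)$, a polynomial ring in which $T^2$ is a nonzerodivisor. But in $D_4$ one has $\gamma_2(E)^2=\binom{4}{2}\gamma_4(E)=6\gamma_4(E)$, so $T^2$ maps to $0$ in $D_4/2$. A flat ring map to a nonzero target cannot send a nonzerodivisor to zero, so $D_3/2\to D_4/2$ is not flat and Chase--Glaz does not apply to your filtration. Your fallback does not work as stated either: $D$ is \emph{not} free over $\s$, since $E^p=p!\,\gamma_p(E)$ forces $u^{ep}=0$ in $D/p$ while $u^{ep}\ne 0$ in $\s/p=k[\![u]\!]$. (This does not affect Step~1, which only needs $D$ to be $p$-torsion free, but it does invalidate the freeness you invoke later.)

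The paper's proof takes a different route. For $n=1$ it writes down the explicit isomorphism $S/p\cong k[u]/u^{pe}\otimes_k k[u_1,u_2,\dots]/(u_i^p)$ with $u_i=\gamma_{p^i}(E)$, and coherence is then immediate since any finitely generated ideal involves only finitely many $u_i$. (This description would in fact support a correct filtered-colimit argument, filtering by the subrings generated by $u_1,\dots,u_k$: each transition is free of rank $p$, hence flat. The key is to index by $p$-power divided powers rather than all divided powers.) For general $n$ the paper inducts via \cite[Lemma~3.26]{BMS1}: since $S$ is $p$-torsion free, the ideal $p^nS/p^{n+1}S\subset S/p^{n+1}$ is free of rank one over $S/p$, and combined with coherence of $S/p^n$ this yields coherence of $S/p^{n+1}$.
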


We do not know if the ring $S$ itself is coherent.

\begin{proof}
We make an induction on $n$.
The starting case $n=1$: since $S$ is given by $p$-completely
adjoin the divided powers of the Eisenstein polynomial $E(u)$ to $\s$,
we see that $S/p$ is obtained by adjoining divided powers of $E(u) \equiv u^e$
to $\s/p = k[\![u]\!]$.
It is well-known that the result is 
$S/p \cong k[u]/u^{pe} \otimes_k k[u_1, u_2, \ldots]/(u_i^p)$
where $u_i$ is the image of the $p^i$-th divided powers of $E(u)$.
One checks this explicit algebra is coherent by noting
that any finitely generated ideal is generated by polynomials
involving only finitely many variables.

Now we do the induction, which largely relies on \cite[Lemma 3.26]{BMS1}.
Indeed the cited Lemma reduces us to showing the ideal $(p^n)/(p^{n+1})$
in $S/p^{n+1}$, when viewed as an $S/p$-module, is finitely presented.
But in fact $S$ is $p$-torsion free, hence the ideal 
$(p^n)/(p^{n+1})$ is free when viewed as an $S/p$-module with generator $p^n$.
\end{proof}

\begin{lemma}
\label{lem-comparison} 
Suppose that $ C^\bullet$ is a prefect
%$\s/p^n$ is itself a perfect $\s$-complex, so we only need to treat perfect $\s$-complex here.
$\s_n$-complex. 
Then there exists an exact sequence of $S$-modules
	\begin{equation*}
	\xymatrix{
		0 \ar[r] & \rH^i (C^\bullet) \otimes_{\s} S
		\ar[r] & \rH^i(  C ^\bullet \otimes^\BL _\s S )
		\ar[r] & \Tor_1^{\s}(  \rH^{i+1} (C^\bullet), S ) \ar[r] & 0.
	}
	\end{equation*}
In particular $S$ has Tor-amplitude $1$ over $\s$ and the functor $M \mapsto \Tor_1^{\s}(M, S)$ is left exact.
\end{lemma}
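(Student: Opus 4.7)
The plan is to recognize this as a universal-coefficient-theorem computation, enabled by the flat-dimension bound recorded in \Cref{calA Tor dim 1}. Since $C^\bullet$ is a perfect $\s_n$-complex and $S$ is $p$-torsion free (the Breuil--Kisin prism being transversal), the derived tensor product rewrites as
\[
C^\bullet \otimes^\BL_\s S \simeq C^\bullet \otimes^\BL_{\s_n} (S/p^n),
\]
so the entire argument can take place over $\s_n$, and both ``in particular'' assertions reduce to the corresponding claims for $S/p^n$ over $\s_n$. The Tor-amplitude statement is literally what \Cref{calA Tor dim 1} gives, and given that, the left exactness of $\Tor_1^\s(-, S)$ follows from the long exact sequence of $\Tor_*^\s(-, S)$ applied to an arbitrary short exact sequence together with the vanishing of $\Tor_{\geq 2}^\s(-, S)$.

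For the main statement I would first produce a two-term flat resolution of $S/p^n$ over $\s_n$: choose any surjection $F_0 \twoheadrightarrow S/p^n$ from a free $\s_n$-module, and observe that the kernel $F_1$ is automatically flat by the Tor-amplitude bound. This gives a distinguished triangle $F_1 \to F_0 \to S/p^n$ in $D(\s_n)$.

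The main step is to tensor this triangle with $C^\bullet$ over $\s_n$ and pass to cohomology. Because the $F_i$ are flat, $\rH^j(C^\bullet \otimes_{\s_n} F_i) \cong \rH^j(C^\bullet) \otimes_{\s_n} F_i$, and the resulting long exact sequence splits into short exact sequences whose outer terms compute as follows: the cokernel of $\rH^i(C^\bullet) \otimes F_1 \to \rH^i(C^\bullet) \otimes F_0$ is $\rH^i(C^\bullet) \otimes_{\s_n} (S/p^n)$, and the kernel of $\rH^{i+1}(C^\bullet) \otimes F_1 \to \rH^{i+1}(C^\bullet) \otimes F_0$ is $\Tor_1^{\s_n}(\rH^{i+1}(C^\bullet), S/p^n)$. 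Since $\rH^*(C^\bullet)$ is $p^n$-torsion, these tensor and Tor computations over $\s_n$ agree with the corresponding ones over $\s$, delivering the claimed terms.

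The only bookkeeping to watch out for is the change of base from $\s$ to $\s_n$ and from $S$ to $S/p^n$, but this is harmless because $S$ is $p$-torsion free and everything in sight is $p^n$-torsion. Otherwise the argument is routine homological algebra, with \Cref{calA Tor dim 1} providing the essential flat-dimension input.
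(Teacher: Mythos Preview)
Your universal-coefficient argument for the short exact sequence is correct and spells out what the paper's citation of Cais--Liu amounts to. The reduction $C^\bullet \otimes^\BL_\s S \simeq C^\bullet \otimes^\BL_{\s_n} S_n$ via $p$-torsion-freeness of $S$ is the right move, and once $S_n$ has Tor-amplitude $\leq 1$ over $\s_n$ the two-term flat resolution and the resulting long exact sequence give the claimed splitting.

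There is one gap in how you justify the Tor-amplitude input. \Cref{calA Tor dim 1} gives only \emph{$p$-complete} Tor-amplitude: unwound for the Breuil--Kisin prism it says $S/p$ has Tor-amplitude in $[-1,0]$ over $\s/p = k[\![u]\!]$, which is automatic since $k[\![u]\!]$ is a PID. For your UCT step you need $\Tor_{\geq 2}^\s(M,S)=0$ for all $p^n$-torsion $M$, and this does follow (filter $M$ by its $p$-torsion subquotients and use that $\Tor_i^\s(N,S)\cong \Tor_i^{\s/p}(N,S/p)$ for $p$-torsion $N$, since $S$ is $p$-torsion free), so the main claim survives. But the lemma's ``in particular'' clause asserts the \emph{absolute} bound ``$S$ has Tor-amplitude $1$ over $\s$,'' and this is how it is invoked later (e.g.\ in \Cref{Tor1 is finitely presented prop}) for modules that are not $p$-power torsion; your reduction to $\s_n$ does not deliver it. The paper obtains the absolute bound instead from regularity of $\s$ and torsion-freeness of $S$ (phrased as Auslander--Buchsbaum): since $\s$ is regular local of dimension $2$ every module has flat dimension at most $2$, and $\Tor_2^\s(k,S)$ vanishes because the Koszul complex on $(p,u)$ tensored with $S$ has no $\rH_2$, as $p$ is a nonzerodivisor on $S$.
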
	

\begin{proof}
For the first claim, see the argument before  the proof of Theorem 5.4 in \cite{Cais-Liu-BK-crys} and replace $A_{\inf}$ (resp.~$A_{\cris}$) there by $\s$ (resp.~$S$). 
The fact that $S$ has Tor-amplitude $1$ over $\s$ follows from the Auslander--Buchsbaum formula
and torsion-freeness of $S$.
\end{proof}

\begin{proposition}
\label{Tor1 is finitely presented prop}
Let $M$ be a finitely generated Kisin module, then 
$\mathrm{Tor}_1^{\s}(M, \varphi_* S)$
is a finitely presented $S$ module.
\end{proposition}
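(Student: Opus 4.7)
The plan is to split $M$ into $p$-power-torsion and $p$-torsion-free parts, handle each piece by reduction modulo $p^n$ using the coherence of $S/p^n$ (\Cref{coherence lemma}), and reassemble using the Tor-amplitude bound on $\varphi_*S$ (\Cref{lem-comparison}) together with the structural input (\Cref{lem-etale}(3)) that $\FM[1/p]$ is finite free over $\s[1/p]$ for any Kisin module $\FM$.

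First I would set up the decomposition. Since $\s$ is Noetherian, $M[p^\infty]$ is finitely generated and therefore killed by some $p^{n_0}$. The quotient $\bar M \coloneqq M/M[p^\infty]$ is a $p$-torsion-free Kisin module, so by \Cref{lem-etale}(3) I can pick a $\s[1/p]$-basis of $\bar M[1/p]$, clear denominators to obtain lifts in $\bar M$, and thereby produce a free submodule $F \cong \s^r \subseteq \bar M$ whose cokernel $Q \coloneqq \bar M/F$ is finitely generated and $p^\infty$-torsion, hence killed by some $p^{n_1}$. Set $N \coloneqq n_0 + n_1$.

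Next, for each $p^n$-killed module $N'$ among $\{M[p^\infty], Q\}$, the short exact sequence $0 \to \varphi_*S \xrightarrow{p^n} \varphi_*S \to \varphi_*(S/p^n) \to 0$ together with the fact that $p^n$ acts as zero on $\mathrm{Tor}^\s_*(N', -)$ yields a short exact sequence
\[
0 \to \mathrm{Tor}_1^\s(N', \varphi_*S) \to \mathrm{Tor}_1^\s(N', \varphi_*(S/p^n)) \to N' \otimes_\s \varphi_*(S/p^n) \to 0.
\]
Choosing a finite free $\s$-resolution of $N'$ of length at most $2$ (available since $\s$ is a two-dimensional regular Noetherian local ring) and tensoring with $\varphi_*(S/p^n)$ gives a complex of finite free $S/p^n$-modules. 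Coherence of $S/p^n$ (\Cref{coherence lemma}) guarantees that its kernels, images, and homologies are all finitely presented; hence $\mathrm{Tor}_1^\s(N', \varphi_*(S/p^n))$ and $N' \otimes_\s \varphi_*(S/p^n)$ are finitely presented over $S/p^n$, and consequently so is their kernel $\mathrm{Tor}_1^\s(N', \varphi_*S)$.

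Finally, the long exact sequence for $0 \to F \to \bar M \to Q \to 0$, combined with $\mathrm{Tor}_1^\s(F, \varphi_*S) = 0$ and the observation that $\mathrm{Tor}_1^\s(Q, \varphi_*S) \to F \otimes_\s \varphi_*S$ vanishes (its source is $p^{n_1}$-killed and its target is $p$-torsion-free), identifies $\mathrm{Tor}_1^\s(\bar M, \varphi_*S)$ with $\mathrm{Tor}_1^\s(Q, \varphi_*S)$, which is therefore finitely presented. The long exact sequence for $0 \to M[p^\infty] \to M \to \bar M \to 0$, together with the vanishing $\mathrm{Tor}_2^\s(\bar M, \varphi_*S) = 0$ furnished by \Cref{lem-comparison}, then presents $\mathrm{Tor}_1^\s(M, \varphi_*S)$ as an extension of $\mathrm{Tor}_1^\s(M[p^\infty], \varphi_*S)$ by the kernel of a map between finitely presented $S/p^N$-modules; coherence of $S/p^N$ makes this kernel finitely presented, and an extension of finitely presented modules is again finitely presented. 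The main technical obstacle is the isolation of the free submodule $F \subseteq \bar M$ via \Cref{lem-etale}(3): without this Kisin-module input, the $p$-torsion-free part $\bar M$ cannot be reduced to a bounded $p$-torsion computation over a coherent ring, and a priori $\mathrm{Tor}_1^\s(M, \varphi_*S)$ need only be $p^\infty$-torsion rather than $p^N$-torsion for some fixed $N$.
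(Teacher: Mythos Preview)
Your argument is correct but follows a genuinely different route from the paper. The paper splits along $N=M[u^\infty]$ rather than $M[p^\infty]$: it proves that $\mathrm{Tor}_1^{\s}(M,\varphi_*S)\cong\mathrm{Tor}_1^{\s}(M[u^\infty],\varphi_*S)$ by showing $\mathrm{Tor}_1^{\s}(M/N,\varphi_*S)=0$, using that the \'etale quotient $M/N$ is built from copies of $\s/p$ (on which $S$, being $p$-torsion-free, has vanishing $\mathrm{Tor}_1$) and a torsion-free piece (handled via its reflexive hull and left exactness of $\mathrm{Tor}_1^\s(-,S)$). This reduces everything to a \emph{finite length} module, which the paper then treats by a separate induction on length. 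Your approach instead invokes \Cref{lem-etale}(3) to trap a free lattice $F\subset\bar M$ with $p$-power-torsion cokernel, reducing to two $p^n$-killed modules which you handle uniformly and more directly via a length-$\le 2$ free resolution over $\s$ together with coherence of $S/p^n$. What the paper's route buys is the cleaner structural statement that $\mathrm{Tor}_1^{\s}(M,\varphi_*S)$ depends only on the finite-length submodule $M[u^\infty]$; what your route buys is that it bypasses both the reflexive-hull argument and the induction-on-length lemma, going straight from coherence to finite presentation.
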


\begin{proof}
Denote $N \coloneqq M[u^\infty]$ which is the maximal finite length $\s$-submodule
inside $M$.

We first show $\mathrm{Tor}_1^{\s}(N, \varphi_* S) \to \mathrm{Tor}_1^{\s}(M, \varphi_* S)$
is an isomorphism.
Since $S$ has Tor-amplitude $1$ over $\s$ by \Cref{lem-comparison},
it suffices to show the vanishing of $\mathrm{Tor}_1^{\s}(M/N, \varphi_* S)$.
Note that $M/N$ is an \'{e}tale Kisin module, we have a sequence
\[
0 \to (M/N)_{\mathrm{tor}} \to M/N \to (M/N)_{\mathrm{tf}} \to 0,
\]
where $(M/N)_{\mathrm{tor}}$ is a successive extension of $k[\![u]\!] = \s/p$
as $M/N$ is \'{e}tale, and $(M/N)_{\mathrm{tf}}$ is torsion-free.
Next observe that both these two structures are preserved under
base change along the Frobenius on $\s$.
Therefore it suffices to show $\mathrm{Tor}_1^{\s}(-, S)=0$
whenever the inputting $\s$-module is $\s/p$
or torsion-free. For the former case, it follows from the fact that $S$
has no $p$-torsion.
For the latter, consider the reflexive hull $M'^{\vee \vee}$ of the inputting module
$M' \subset M'^{\vee \vee}$, which is finite free as $\s$
is a regular Noetherian of dimension $2$.
Finally the desired vanishing of $\mathrm{Tor}_1^{\s}(M', S)$
follows from the left exactness of $\mathrm{Tor}_1$ against $S$ over $\s$:
see \Cref{lem-comparison}.

It suffices to show $\mathrm{Tor}_1^{\s}(N', S)$ is finitely presented
for any finite length $\s$-module,
which is the content of the next lemma.
\end{proof}

\begin{lemma}
Let $N$ be a finite length $\s$-module, then
$N \otimes_{\s} S$ and
$\mathrm{Tor}_1^{\s}(N, S)$ are finitely presented $S$-modules.
\end{lemma}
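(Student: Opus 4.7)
The plan is to induct on the length of $N$, building up from the residue field $k = \s/(p,u)$. The inductive machinery is the long exact sequence of Tor attached to a short exact sequence $0 \to k \to N \to N'' \to 0$; because $S$ has Tor-amplitude at most $1$ over $\s$ by \Cref{lem-comparison}, this sequence has only five nonzero terms and involves only $\Tor_1$ and $\otimes_\s$. Every module in sight is annihilated by $p^n$ for $n$ such that $p^n N = 0$, so we may view each as an $S/p^n$-module and invoke the coherence of $S/p^n$ from \Cref{coherence lemma}, which ensures that finitely presented $S/p^n$-modules form an abelian subcategory.

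The base case $N = k$ requires two computations. First, $k \otimes_\s S \cong S/(p,u)S$ is the cokernel of $(p, u) \colon S^2 \to S$, hence visibly finitely presented. Second, and more subtly, I claim $\Tor_1^{\s}(k, S) = 0$. Using the Koszul resolution of $k$ over $\s$, this reduces to exhibiting a regular sequence of length two inside $(p, u)$ on $S$. Although $(p, u)$ itself is not regular in that order (since $u$ is a zero-divisor on $S/p$, as one sees from the explicit description in the proof of \Cref{coherence lemma}), the reversed sequence $(u, p)$ is: the PD envelope $D_\s(E)$ sits inside $K_0[\![u]\!]$ and is therefore $u$-torsion free, and derived $p$-completion of the short exact sequence $0 \to D_\s(E) \xrightarrow{u} D_\s(E) \to D_\s(E)/u \to 0$ (all terms $p$-torsion free) yields $0 \to S \xrightarrow{u} S \to W(k) \to 0$, where the identification $D_\s(E)/u \cong W(k)$ uses that the PD envelope of $W(k)$ along $(p) = (E \bmod u)$ is canonically $W(k)$ itself. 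Since $p$ is regular on $W(k)$, the pair $(u, p)$ is regular on $S$ and the vanishing follows.

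For the inductive step, by the inductive hypothesis combined with the base case each of $\Tor_1^{\s}(N'', S)$, $N'' \otimes_\s S$, $k \otimes_\s S$, and $\Tor_1^{\s}(k, S) = 0$ is finitely presented. The long exact sequence
\[
0 \to \Tor_1^{\s}(N, S) \to \Tor_1^{\s}(N'', S) \xrightarrow{\delta} k \otimes_\s S \to N \otimes_\s S \to N'' \otimes_\s S \to 0
\]
then identifies $\Tor_1^{\s}(N, S)$ as $\ker(\delta)$ and exhibits $N \otimes_\s S$ as an extension of $N'' \otimes_\s S$ by $\coker(\delta)$; coherence of $S/p^n$ delivers the desired finite presentation for both. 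The main technical obstacle is the vanishing $\Tor_1^{\s}(k, S) = 0$: without it, successive applications of the long exact sequence would accumulate residual base-case contributions and the induction would not close. The vanishing is somewhat counterintuitive given that $(p, u)$ fails to be regular on $S$ in its natural order, so it is essential to leverage the asymmetry by passing to $(u, p)$ and understanding $S/u$ through the PD envelope description.
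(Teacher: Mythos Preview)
Your inductive strategy via coherence of $S/p^n$ is exactly the paper's approach, but your base case is wrong: $\Tor_1^{\s}(k,S)$ does \emph{not} vanish. A direct computation using the Koszul resolution of $k=\s/(p,u)$ together with $p$-torsion freeness of $S$ gives
\[
\Tor_1^{\s}(k,S)\;\cong\;(S/p)[u],
\]
and from the explicit description $S/p\cong k[u]/(u^{pe})\otimes_k k[u_1,u_2,\dots]/(u_i^p)$ in \Cref{coherence lemma} one sees $u^{pe-1}\neq 0$ is killed by $u$, so $(S/p)[u]\neq 0$. In particular $(u,p)$ is not a regular sequence on $S$ either (the Koszul complex is insensitive to the order of the sequence).

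The flaw in your argument is the identification $D_\s(E)/u \cong W(k)$. This would follow from base change of PD envelopes along $\s\to\s/u=W(k)$, but that map is not flat and PD envelopes do not commute with non-flat base change. Concretely, the element $\gamma_p(E)-a_0\gamma_{p-1}(E)$ gives a nonzero $p$-torsion class in $D_\s(E)/u$: one has $p(\gamma_p(E)-a_0\gamma_{p-1}(E))=(E-a_0p)\gamma_{p-1}(E)\in u\cdot D_\s(E)$, while $(\gamma_p(E)-a_0\gamma_{p-1}(E))/u = g\,\gamma_{p-1}(E)/p$ (with $g=(E-a_0p)/u$) fails to lie in $D_\s(E)$ because $g\,\gamma_{p-1}(E)\equiv u^{ep-1}/(p-1)!\not\equiv 0$ in $D_\s(E)/p$. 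Thus $D_\s(E)/u$ is not $p$-torsion free, so your ``derived $p$-completion preserves the short exact sequence'' step collapses.

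The fix is simple: treat $\Tor_1^{\s}(k,S)=(S/p)[u]$ as a genuinely nonzero module and observe it is still finitely presented, being the kernel of $u\colon S/p\to S/p$ over the coherent ring $S/p$. Your six-term sequence then reads
\[
0\to \Tor_1^\s(k,S)\to \Tor_1^\s(N,S)\to \Tor_1^\s(N'',S)\xrightarrow{\delta} k\otimes_\s S\to N\otimes_\s S\to N''\otimes_\s S\to 0
\]
(left-exactness from $\Tor_2^\s(N'',S)=0$), and coherence of $S/p^n$ lets you conclude exactly as before. This is precisely what the paper does, using the quotient $N\twoheadrightarrow k$ rather than the sub $k\hookrightarrow N$, which is immaterial.
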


\begin{proof}
When $N = k \cong \s/(p,u)$, the statement for
$k \otimes_{\s} S = S/(p,u) = (S/p)/u$ and
$\mathrm{Tor}_1^{\s}(k, S) \cong S/p[u]$ follows from the fact that
$S/p$ is coherent (\Cref{coherence lemma}) and \cite[\href{https://stacks.math.columbia.edu/tag/05CW}{Tag 05CW (3)}]{stacks-project}.

Next we make an induction on the length of $N$.
By considering $N \twoheadrightarrow N/(p,u) \simeq k^{\oplus r}$,
we have a short exact sequence
$0 \to N' \to N \to k \to 0$, which induces a long exact sequence:
\[
0 \to \mathrm{Tor}_1^{\s}(N', S) \to \mathrm{Tor}_1^{\s}(N, S) \to
\mathrm{Tor}_1^{\s}(k, S) \to N' \otimes_{\s} S \to N \otimes_{\s} S
\to k \otimes_{\s} S \to 0.
\]
Induction hypotheses imply that all terms except of
$N \otimes_{\s} S$ and
$\mathrm{Tor}_1^{\s}(N, S)$ are finitely presented $S$-modules.
Note that the finite length assumption implies all modules are $S/p^N$-modules
for some sufficiently large $N$.
The coherence of $S/p^N$ (\Cref{coherence lemma}) and \cite[\href{https://stacks.math.columbia.edu/tag/05CW}{Tag 05CW (3)}]{stacks-project}
shows the boundary map $\mathrm{Tor}_1^{\s}(k, S) \to N' \otimes_{\s} S$
has finitely presented kernel and cokernel.
Now we use
\cite[\href{https://stacks.math.columbia.edu/tag/0519}{Tag 0519}]{stacks-project}
to finish the proof.
\end{proof}

\begin{proposition}
\label{fp proposition}
Let $\mathcal{X}$ be a smooth proper $p$-adic formal scheme over 
$\Spf(\mathcal{O}_K)$.
The $S/p^n$-module $\rH^i_{\cris}(\mathcal{X}_n/S_n)$
is finitely presented, for any integer $i$ and
any $n \in \mathbb{N} \cup \{\infty\}$.
\end{proposition}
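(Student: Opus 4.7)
The plan is to combine the prismatic--crystalline comparison of \Cref{global comparison} with the universal coefficient-style exact sequence \Cref{lem-comparison} and the finite presentation of the Tor-term established in \Cref{Tor1 is finitely presented prop}. Adopting the convention $\s_\infty = \s$, $S_\infty = S$, and $\mathcal{X}_\infty = \mathcal{X}$, first I would rewrite
\[
\RG_{\cris}(\mathcal{X}_n/S_n) \cong D^\bullet \otimes^\BL_{\s} S, \qquad D^\bullet \coloneqq \varphi^{\ast}\RG_{\Prism}(\mathcal{X}_n/\s_n),
\]
where $S$ is viewed as an $\s$-module via the inclusion $\s \hookrightarrow S$. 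Since $\mathcal{X}$ is smooth and proper over $\Spf(\mathcal{O}_K)$, the complex $\RG_{\Prism}(\mathcal{X}/\s)$ is perfect over $\s$ by the last sentence of \cite[Theorem 1.8]{BS19}; flatness of $\varphi$ then makes $D^\bullet$ into a perfect $\s_n$-complex. Applying \Cref{lem-comparison} to $D^\bullet$ produces a short exact sequence of $S$-modules
\[
0 \longrightarrow \rH^i(D^\bullet) \otimes_{\s} S \longrightarrow \rH^i_{\cris}(\mathcal{X}_n/S_n) \longrightarrow \Tor_1^{\s}\!\bigl(\rH^{i+1}(D^\bullet),\, S\bigr) \longrightarrow 0.
\]

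The task thus reduces to showing that both outer terms are finitely presented over $S_n$. For the left term, $\rH^i(D^\bullet) = \varphi^{\ast}\rH^i_{\Prism}(\mathcal{X}_n/\s_n)$ is finitely generated over $\s_n$, hence finitely presented over the Noetherian ring $\s$, so tensoring along $\s \to S$ yields a finitely presented $S$-module; when $n < \infty$ it is $p^n$-torsion and is therefore also finitely presented as an $S_n$-module (quotient any finite $S$-presentation by $p^n$). For the right term, by \Cref{cor-pris is Kisin} the module $\rH^{i+1}_{\Prism}(\mathcal{X}_n/\s_n)$ is a finitely generated (generalized) Kisin module; the base change identity $\Tor_1^{\s}(\varphi^{\ast}M, S) \cong \Tor_1^{\s}(M, \varphi_{\ast}S)$, an immediate consequence of flatness of $\varphi$, reduces the statement to \Cref{Tor1 is finitely presented prop}, which directly gives finite presentation as an $S$-module, and hence as an $S_n$-module when $n < \infty$.

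Since finite presentation of modules is stable under extensions over an arbitrary ring, the short exact sequence above forces $\rH^i_{\cris}(\mathcal{X}_n/S_n)$ to be finitely presented over $S_n$, completing the argument. The whole proof is essentially formal once one has the comparison with prismatic cohomology; the substantive input is \Cref{Tor1 is finitely presented prop}. A purely coherence-based argument is unavailable for $n = \infty$, since coherence of $S$ itself (as opposed to its truncations $S_n$ handled in \Cref{coherence lemma}) is not known, which is why exploiting the Kisin-module structure as in \Cref{Tor1 is finitely presented prop} is essential.
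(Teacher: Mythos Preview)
Your proof is correct and, for $n=\infty$, follows exactly the paper's argument: the same short exact sequence from \Cref{lem-comparison}, the same identification of the Tor-term via flatness of $\varphi$, and the same appeal to \Cref{Tor1 is finitely presented prop}. The only difference is in the finite-$n$ case: the paper dispatches it directly by observing that $S/p^n$ is coherent (\Cref{coherence lemma}) and the crystalline complex is perfect over $S/p^n$, so its cohomology is automatically finitely presented---no Kisin-module input needed. You instead run the exact-sequence argument uniformly for all $n$, which is equally valid but invokes \Cref{Tor1 is finitely presented prop} even when the coherence shortcut is available. Your closing remark shows you are aware of this; either route is fine.
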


Let us stress again that this already follows from \cite[Theorem 5.2]{BS19}.

\begin{proof}
The case of finite $n$ follows from \Cref{coherence lemma}:
the prismatic cohomology complex is a perfect complex, hence the comparison
\cite[Theorem 5.2]{BS19} or \Cref{comparing pris and crys}
shows the crystalline cohomology complex is also perfect
over the coherent ring $S/p^n$.
Therefore all of its cohomology modules are finitely presented as $S/p^n$-modules.

Now we turn to the case $n = \infty$.
By \Cref{lem-comparison} there is a short exact sequence:
\[
0 \to
\rH^i_{\Prism}(\mathcal{X}/\s) \otimes_{\s, \varphi} S
\to \rH^i_{\cris}(\mathcal{X}/S) \to 
\mathrm{Tor}_1^{\s}(\rH^{i+1}_{\Prism}(\mathcal{X}/\s), \varphi_* S) \to 0.
\]
Since prismatic cohomology complex is a perfect complex and the ring $\s$
is Noetherian, we know the term $\rH^i_{\Prism}(\mathcal{X}/\s) \otimes_{\s, \varphi} S$
is finitely presented.
Using \cite[\href{https://stacks.math.columbia.edu/tag/0519}{Tag 0519}]{stacks-project}
we are reduced to showing the term 
$\mathrm{Tor}_1^{\s}(\rH^{i+1}_{\Prism}(\mathcal{X}/\s), \varphi_* S)$
is finitely presented.
This in turn follows from \Cref{Tor1 is finitely presented prop} and
the fact that $\rH^{i+1}_{\Prism}(\mathcal{X}/\s)$ is a Kisin module:
see \Cref{cor-pris is Kisin}.
\end{proof}

Now we turn to the main result of our paper,
which concerns the Breuil-module structure of the crystalline cohomology.
Write $\FM^j_n: = \rH^j_\Prism ( X_n/ \s_n)$ and $\calM^j_n: = \rH^j_{\cris} (X_n /S_n)$. 

\begin{lemma} 
\label{lemma-mod I+S}
The following sequence
\[ 
0 \to \m^i _n / u \m ^i _n \to \calM^i _n / I^+S \to  \Tor_1^{\s}(  \m^{i +1}_n, \varphi_* S )/ I^+ S \to 0  
\]
is exact.
\end{lemma}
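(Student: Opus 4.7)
The plan is to derive this sequence by applying $- \otimes_S S/I^+S$ to the short exact sequence of \Cref{lem-comparison}, namely $0 \to \m^i_n \otimes_{\s,\varphi} S \to \calM^i_n \to \Tor_1^\s(\m^{i+1}_n, \varphi_*S) \to 0$. First, one identifies $I^+S = uS$: both equal the kernel of $S \twoheadrightarrow W(k)$ reducing $u$ to $0$, since the divided powers $\gamma_i(E)$ evaluate at $u=0$ to $(a_0p)^i/i! \in W(k)$, giving $S/uS = W(k) = S/I^+S$. Since $S \subset K_0[\![u]\!]$ is $u$-torsion free, the resolution $0 \to S \xrightarrow{u} S \to S/uS \to 0$ yields $\Tor_1^S(N, S/I^+S) = N[u]$ for any $S$-module $N$. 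The six-term long exact sequence from $- \otimes_S S/I^+S$ then collapses to the four-term sequence claimed in the lemma as soon as $\Tor_1^\s(\m^{i+1}_n, \varphi_*S)[u]=0$. The identification of the first term $(\m^i_n \otimes_{\s,\varphi}S)/I^+S$ with $\m^i_n/u\m^i_n$ is then automatic, since in $\m^i_n \otimes_{\s,\varphi}(S/uS)$ the element $u \in \s$ acts as $u^p \equiv 0$, leaving only a harmless Frobenius twist of the $W(k)$-structure.

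The main obstacle is therefore establishing $\Tor_1^\s(\m^{i+1}_n, \varphi_*S)[u]=0$. For this I would take a finite free $\s$-resolution of $\m^{i+1}_n$ of length at most $2$ (available because $\s$ is a regular local ring of Krull dimension $2$):
\[
0 \to \s^c \xrightarrow{B} \s^a \xrightarrow{A} \s^b \to \m^{i+1}_n \to 0,
\]
and tensor with $\varphi_*S$ to obtain the complex $0 \to S^c \xrightarrow{\varphi(B)} S^a \xrightarrow{\varphi(A)} S^b$ computing $\Tor_\bullet^\s(\m^{i+1}_n, \varphi_*S)$ (the matrix entries acquire $\varphi$ because $u \in \s$ acts on $\varphi_*S = S$ as multiplication by $u^p$). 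A class $[z] \in \Tor_1$ is $u$-torsion precisely when $uz = \varphi(B)(w)$ for some $w \in S^c$; reducing modulo $uS$ yields $\sigma(B(0))\bar w = 0$ in $W(k)^a$, using that multiplication by $b \in \s$ on $\varphi_*S$ reduces modulo $uS$ to multiplication by $\sigma(b(0))$ on $W(k) = S/uS$.

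The crucial input is the injectivity of $B(0) \colon W(k)^c \to W(k)^a$, which is precisely the vanishing $\Tor_2^\s(\m^{i+1}_n, W(k))=0$; this holds because $W(k) = \s/u$ has projective dimension $1$ over $\s$. Since Frobenius is an automorphism of $W(k)$, the matrix $\sigma(B(0))$ inherits injectivity, forcing $\bar w = 0$. Hence $w \in uS^c$, and writing $w = uw'$ and cancelling $u$ (legitimate by $u$-torsion freeness of $S$) in $uz = u\varphi(B)(w')$ gives $z = \varphi(B)(w') \in \im(\varphi(B))$, i.e., $[z]=0$. This completes the proposed argument.
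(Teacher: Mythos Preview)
Your argument has a genuine gap: the identification $I^+S = uS$ is false. Recall $S/p \cong k[u]/u^{pe} \otimes_k k[u_1, u_2, \ldots]/(u_i^p)$ with $u_i$ the image of $\gamma_{p^i}(E)$ (see the proof of \Cref{coherence lemma}). The element $\gamma_p(E) - \gamma_p(a_0 p)$ lies in $I^+S$ (it vanishes at $u=0$), but modulo $p$ it reduces to $u_1$ (since $\gamma_p(a_0 p)$ has positive $p$-valuation), and $u_1 \notin u\cdot(S/p)$ in that explicit tensor description. Hence $\gamma_p(E) - \gamma_p(a_0 p) \in I^+S \setminus uS$, so $S/I^+S = W(k)$ but $S/uS \neq W(k)$.

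This error propagates: your reduction ``modulo $uS$'' lands in $(S/uS)^a$, not $W(k)^a$, and $S/uS$ is \emph{not} flat over $W(k)$ (indeed $(S/p)[u] \neq 0$ as $u^{pe}=0$ in $S/p$). Thus injectivity of $\sigma(B(0))$ on $W(k)^c$ does not transfer to $(S/uS)^c$. In fact the vanishing $\Tor_1^\s(\m^{i+1}_n,\varphi_*S)[u]=0$ you aim for is false in general: take $\m = k = \s/(p,u)$ with Koszul resolution $B = \binom{-u}{p}$; then $\Tor_1^\s(k,\varphi_*S) \cong (S/p)[u^p]$, and the element $u^{ep-1}$ in this module is killed by $u$. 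So even if the lemma reduced to this vanishing, your method would not establish it.

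The paper circumvents this module-theoretic difficulty by a geometric comparison: one has $\RG_{\cris}(X_n/S_n) \otimes^\BL_S W(k) \simeq \RG_{\cris}(X_k/W_n(k)) \simeq \RG_\Prism(X_n/\s_n) \otimes^\BL_\s W(k)$, so the reduction of the short exact sequence \eqref{eqn-tensor-tor-2} modulo $I^+S$ maps compatibly into the known exact sequence $0 \to \m^i_n/u\m^i_n \to \rH^i_{\cris}(X_k/W_n(k)) \to \m^{i+1}_n[u] \to 0$. Since the leftmost vertical arrow is an isomorphism and the bottom row is left exact, the top row is forced to be left exact as well.
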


\begin{proof}
By derived mod $p ^n$ version of \Cref{global comparison}, we have $S \otimes_{\varphi, \s}^\BL \RG_\Prism (X_n/ \s_n  )\simeq \RG _{\cris} (X_n / S_n)$. So Lemma \ref{lem-comparison} yields an exact sequence
\begin{equation}\label{eqn-tensor-tor-2}
\xymatrix{0 \ar[r] &  S \otimes_{\varphi, \s }\m^i_n \ar[r] &  \calM ^i_n \ar[r] & \Tor _1^\s (\m^{i +1}_n, \varphi_* S) \ar[r] & 0}
\end{equation}
as $\varphi$ on $\s$ is finite flat.

We only need to show the above exact sequence remains left exact after modulo $I^+S$. To see this, note that 
$\RG_{\cris} (X_k / W_n (k)) \simeq \RG  _ \Prism (X_n / \s_n) \otimes^\BL _{\s} W(k) \simeq \RG_{\cris} (X_n /S_n) \otimes^\BL_{S} W(k )  $, where in the last identification we use the fact that Frobenius on $W(k)$ is an isomorphism.
Using the exact sequence \eqref{eqn-exact seq for length}, then we have the following commutative diagram 
\[ \xymatrix{  &  S/I ^+S \otimes_{\varphi, \s }\m^i_n \ar[d]^\wr \ar[r] &  \calM ^i_n/ I^+S \ar[d]\ar[r] & \Tor _1^\s (\m^{i +1}_n, \varphi_*S_n)/ I ^+S \ar[r]
\ar[d] & 0 \\   0 \ar[r] & \m^i_n/ u \m ^i _n  \ar[r] &  \rH^i (X_k/ W_n (k)) \ar[r]& \m^{i+1}_n [u]\ar[r] & 0. }\]
Since the left column is an isomorphism, we conclude that the top row is left exact as desired. 
\end{proof}

Recall in \Cref{def-Breuil-modules}, $\rH ^i_{\cris} (X_n / S_n)$ is defined to be a Breuil module if the quadruple
\[
\left (\rH^i_{\cris} (X_n /S_n), \rH ^i _{\cris} (X_n /S_n, \cI^{[i]}_{\cris}), \varphi_i, \nabla \right)
\]
constructed in \S\ref{subsection-stru of crys} is an object of
$\Mod_{S, \tor}^{\varphi, i , \nabla}$.
This condition is equivalent to the triple
\[
\left (\rH^i_{\cris} (X_n /S_n), \rH ^i _{\cris} (X_n /S_n, \cI^{[i]}_{\cris}), \varphi_i \right )
\]
being an object of $\Mod_{S, \tor}^{\varphi, i}$.

\begin{theorem} \label{thm-main-1} 
Let $n \in \mathbb{N}$ and assume $i\leq p-2$. Then $ \rH^j_\Prism(X_n/ \s_n)$ has no $u$-torsion for $j  = i ,i +1$ if and only if 
	$\rH^i _ {\cris}(X_n /S_n)$ is an Breuil module. 
When that happens we have $\underline \calM (\rH^i_\Prism(X_n/ \s_n)) \simeq \rH^i _ {\cris}(X_n /S_n)$ inside $\textnormal{Mod}^{\varphi, i}_{S, \tor}$. %If $n =1$ the converse statment is also true. That is, if  $\rH^i _ {\cris}(X_1 /S_1)$ has structure of torsion Breuil module then $ \rH^j_\Prism(X_1/ \s_1)$ has no $u$-torsion for $j = i , i +1$. 
\end{theorem}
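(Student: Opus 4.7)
The plan is to transport the problem to prismatic cohomology via \Cref{global comparison}, then do length/module bookkeeping. Applying \Cref{lem-comparison} to the perfect complex $\RG_\Prism(X_n/\s_n)$ and using \Cref{global comparison} to identify its base change along $\varphi\colon \s\to S$ with $\RG_\cris(X_n/S_n)$ produces a functorial short exact sequence of $S$-modules
\[
0 \to S \otimes_{\varphi, \s} \m^i_n \to \calM^i_n \to \Tor_1^\s(\m^{i+1}_n, \varphi_*S) \to 0, \tag{$\ast$}
\]
where $\m^j_n = \rH^j_\Prism(X_n/\s_n)$ and $\calM^i_n = \rH^i_\cris(X_n/S_n)$. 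Reducing modulo $I^+ S$ yields the length identity of \Cref{lemma-mod I+S}, which is the central bookkeeping tool.

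\emph{Forward direction and identification.} If both $\m^i_n$ and $\m^{i+1}_n$ are $u$-torsion free, the $\Tor_1^\s$ term in $(\ast)$ vanishes by the argument already given in the proof of \Cref{Tor1 is finitely presented prop}, so $\calM^i_n \cong S \otimes_{\varphi, \s} \m^i_n = \underline{\calM}(\m^i_n)$ as $S$-modules. To promote this to an isomorphism in $\Mod^{\varphi, i}_{S, \tor}$, I will match the filtration and divided Frobenius. The assumption $i \leq p-2 < p$ allows us to invoke \Cref{smooth H and N filtration}(2) to identify $\Fil^i \calM^i_n$ with $\rH^i(X, \Fil^i_N \Prism^{(1)}_n)$, and then \Cref{cor-BK-Nygaard} applied with $j=i$ (using the \'etaleness of $\m^i_n$) identifies this further with the Breuil--Kisin filtration $\{x \in \varphi^*\m^i_n : (1\otimes \varphi_\m)(x) \in E^i\m^i_n\}$; this is precisely what cuts out $\Fil^i \underline{\calM}(\m^i_n)$ from the definition of $\underline{\calM}$, and the divided Frobenius $\varphi_i$ matches by the construction in \Cref{rem-N and H fil}.

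\emph{Converse.} If $\calM^i_n$ is a Breuil module, then $\calM^i_n \cong \underline{\calM}(\m')$ for some $\m' \in \Mod^{\varphi, i}_{\s, \tor\et}$ by Proposition \ref{prop-Galois-compatible}. Inverting $u$ in $(\ast)$ kills the $\Tor_1^\s$ term (which is annihilated by a power of $u$), yielding $(S \otimes_{\varphi, \s} \m^{i,\et}_n)[1/u] \cong \underline{\calM}(\m')[1/u]$ with $\m^{i,\et}_n := \m^i_n/\m^i_n[u^\infty]$; as both $\m^{i,\et}_n$ and $\m'$ are \'etale, classical Kisin theory yields $\m' \cong \m^{i,\et}_n$. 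Combining \Cref{cor-length-compare} applied to $\m'$ with \Cref{lemma-mod I+S} then gives
\[
\Len_W(\m^{i,\et}_n/u\m^{i,\et}_n) = \Len_W(\m^i_n/u\m^i_n) + \Len_W(T/I^+S),
\]
where $T = \Tor_1^\s(\m^{i+1}_n, \varphi_*S)$. The surjection $\m^i_n \twoheadrightarrow \m^{i,\et}_n$ forces $\Len_W(\m^i_n/u\m^i_n) \geq \Len_W(\m^{i,\et}_n/u\m^{i,\et}_n)$ with equality iff $\m^i_n[u^\infty] = 0$, so $\m^i_n$ must be \'etale and $\Len_W(T/I^+S) = 0$. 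A Nakayama argument (using that $T = \Tor_1^\s(\m^{i+1}_n[u^\infty], \varphi_*S)$ is finitely presented over $S$ by \Cref{Tor1 is finitely presented prop} and killed by a power of $u \in I^+S$) upgrades this to $T = 0$; finally, the nonvanishing of $\Tor_1^\s(k, \varphi_*S)$ (a direct computation, using that $u$ is a zero-divisor in $S/p$) together with left exactness of $\Tor_1^\s(-, \varphi_*S)$ on finite-length $\s$-modules (\Cref{lem-comparison}) forces $\m^{i+1}_n[u^\infty] = 0$.

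The subtlest step is matching filtrations in the forward direction, which requires carefully threading through the sheaf-theoretic comparisons of Section~\ref{section filtrations} together with the Nygaard/Breuil--Kisin identification \Cref{cor-BK-Nygaard}; the converse direction is essentially length bookkeeping combined with the Nakayama-type vanishing argument.
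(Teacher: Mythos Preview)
Your overall plan matches the paper's: both directions hinge on the short exact sequence $(\ast)$ and length bookkeeping via \Cref{lemma-mod I+S}. The final Nakayama-plus-$\Tor_1^\s(k,\varphi_*S)\neq 0$ step is also exactly what the paper does. However, there are genuine gaps in both directions, and in each case the missing ingredient is the comparison with \'etale cohomology that the paper supplies but you do not.

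\textbf{Forward direction.} Your claim that \Cref{smooth H and N filtration}(2) identifies $\Fil^i\calM^i_n=\rH^i_{\cris}(X_n/S_n,\cI^{[i]}_{\cris})$ with $\rH^i(X,\Fil^i_{\mathrm N}\Prism^{(1)}_n)$ is not what that result says: it identifies the \emph{quotient} complexes $\Prism^{(1)}/\Fil^i_{\mathrm N}\simeq \dR/\Fil^i_{\mathrm H}$, not the filtration pieces. Combined with \Cref{cor-BK-Nygaard}, the long-exact-sequence diagram chase only shows that $\beta'$ is injective and that the image of $\Fil^i_{\BK}\varphi^*\m^i_n$ under $\iota$ lands in $\rH^i_{\cris}(X_n/S_n,\cI^{[i]}_{\cris})$. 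Since $\Fil^i\underline{\calM}(\m^i_n)$ is the $S$-submodule generated by this image together with $\Fil^iS\cdot\calM^i_n$, you obtain only the containment $\Fil^i\underline{\calM}(\m^i_n)\subset\rH^i_{\cris}(X_n/S_n,\cI^{[i]}_{\cris})$. The reverse containment does not follow: the middle vertical arrow $\varphi^*\m^i_n\to\calM^i_n$ in the diagram is merely the inclusion into the $S$-tensor, not an isomorphism, so you cannot conclude equality of the images. The paper closes this gap by base-changing to $A_{\cris}$ and invoking the exact sequence relating $\rH^i_{\et}(\mathcal{X}_\C,\Z/p^n\Z)(i)$ to $\rH^i_{\cris}(\mathcal{X}_n/A_{\cris,n},\cI^{[i]}_{\cris})\xrightarrow{\varphi_i-1}\rH^i_{\cris}(\mathcal{X}_n/A_{\cris,n})$, then comparing with the analogous sequence for $T_S(\underline{\calM}(\m^i_n))$ to force $\coker(1\otimes\iota^i)=0$.

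\textbf{Converse direction.} The step ``as both $\m^{i,\et}_n$ and $\m'$ are \'etale, classical Kisin theory yields $\m'\cong\m^{i,\et}_n$'' is not justified. Inverting $u$ in $(\ast)$ gives you an isomorphism $\underline{\calM}(\m')[1/u]\cong(S\otimes_{\varphi,\s}\m^{i,\et}_n)[1/u]$ of $S[1/u]$-modules, but this does not descend to an isomorphism of \'etale Kisin modules: the equivalence in \Cref{prop-Galois-compatible} is between $\Mod^{\varphi,h}_{\s,\tor\et}$ and $\Mod^{\varphi,h}_{S,\tor}$, not their $u$-localizations, and moreover the isomorphism $\calM^i_n\cong\underline{\calM}(\m')$ is only in $\Mod^{\varphi,i}_{S,\tor}$ (i.e.\ preserving $\Fil^i$ and $\varphi_i$), not a priori as $\varphi$-modules. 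Without $\m'\cong\m^{i,\et}_n$ your length identity collapses. The paper instead obtains the crucial inequality $\Len_\Z T_S(\calM^i_n)\leq\Len_\Z\rH^i_{\et}(\mathcal{X}_\C,\Z/p^n\Z)$ directly, via the injection $\alpha\colon T_S(\calM^i_n)\hookrightarrow\rH^i_{\et}(\mathcal{X}_\C,\Z/p^n\Z)(i)$ read off from the same $A_{\cris}$ diagram (now with the middle column an isomorphism by hypothesis). Combined with \Cref{cor-length-compare} and \Cref{lemma-mod I+S} this forces all lengths to agree and hence $\m^i_n[u^\infty]=0$ and $T/I^+S=0$.

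In short, the \'etale-cohomology input (the exact sequence $\rH^i_{\et}\to\rH^i_{\cris}(\cI^{[i]})\xrightarrow{\varphi_i-1}\rH^i_{\cris}$ over $A_{\cris}$) is doing essential work in both halves of the paper's proof, and your argument does not replace it.
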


%\textcolor{red}{Here we say $\rH^i _ {\cris}(X_n /S_n)$ is an object in $\textnormal{Mod}_{S, \tor}^{\varphi, i, \nabla}$ if}

\begin{proof} Write $\FM^j_n: = \rH^j_\Prism ( X_n/ \s_n)$. Suppose that it has no $u$-torsion for $j = i,i  +1$. 
So $\FM ^i _n$  is an  \'etale Kisin module of height $i$ by Proposition \ref{prop-height}.
By the discussion of \S \ref{subsec-Breuilmodules}, we know $\calM^i_n : = \underline \calM (\FM ^i_n)$ is an object of $\text {Mod}^{\varphi, i}_{S,  \tor}$. 
By derived mod $p ^n$ version of \Cref{global comparison}, we have $S \otimes_{\varphi, \s}^\BL \RG_\Prism (X_n/ \s_n  )\simeq \RG _{\cris} (X_n / S_n)$. So Lemma \ref{lem-comparison} yields 
\begin{equation}\label{eqn-tensor-tor}
\xymatrix{0 \ar[r] &  S \otimes_{\varphi, \s }\rH ^i_\Prism ( X_n /\s_n) \ar[r] &  \rH ^i_ {\cris} (X_n / S_n) \ar[r] & \Tor _1^\s (\rH ^{i +1}_\Prism ( X_n /\s_n), \varphi_* S_n) \ar[r] & 0. }
\end{equation}
Our assumption that $ \FM^{i+1}_n $ has no $u$-torsion gives an isomorphism  $\iota: S \otimes_{\varphi, \s}\rH^j_\Prism(X_n/ \s_n) \simeq  \rH ^i_ {\cris} (X_n / S_n) $. 
Now we claim that $\iota$ induces a natural map $\iota^i: \Fil ^i \underline \calM (\FM^i _n) \to \rH ^i_{\cris}  (X_n /S_n, \mathcal I_{\cris}^{[i]})$
and both the source and target are natural submodules inside $\rH ^i_ {\cris} (X_n / S_n)$.
In particular, the $\iota^i$ is an injection.
To see this, we note that $\iota$ is induced by natural map $ \varphi^*\m^i_n \to \calM^i_n$ which we still denote by $\iota$. 
By Theorem \ref{lqsyn H and N filtration}, we have the following commutative diagram 
\[  \xymatrix{ \rH^{i-1}_{\qsyn} (X_n, \Prism^{(1)}_{-/ \s}/ \Fil^i_{\rm N} \Prism^{(1)}_ {-/\s} )\ar[r]^-{\alpha}\ar[d]^\wr & \rH ^i_{\qsyn} (X_n, \Fil^i_{\rm N} \Prism ^{(1)}_{-/ \s}) \ar[r]^-{\beta}\ar[d]& \rH ^i _{\qsyn}(X_n, \Prism ^{(1)}_{-/\s} )\ar[d] ^\iota \ar[r]& \cdots \\  \rH^{i-1}_{\qsyn} (X_n , \dR^\wedge_{-/\s}/ \Fil^i_{\rm H} \dR ^\wedge _{-/ \s} )\ar[r]^-{\alpha'} & \rH ^i_{\qsyn} (X_n, \Fil^i_{\rm H} \dR^\wedge _{-/ \s})\ar[r] ^-{\beta'} & \rH ^i_{\qsyn} (X_n, \dR^\wedge_{-/\s} )\ar[r] &\cdots }\]
with both rows being exact.
By Theorem \ref{lqsyn H and N filtration} (4), the left column is an isomorphism. As $\m ^i_n$ is assumed to have no $u$-torsion, 
\Cref{cor-BK-Nygaard} shows that $\beta$ is an injection. Thus $\alpha$ and hence $\alpha'$ are zero maps. So $\beta'$ is an injection. Therefore, Theorem \ref{Illusie--Bhatt} gives the following commutative diagram 
\[  \xymatrix{\Fil^i_{\BK}\varphi ^* \m^i_n \ar@{^{(}->}[r]\ar[d]   & \varphi^* \m^i_n \ar[d] ^\iota\\ \rH^i_{\cris} (X_n /S_n, \cI ^{[i]}_{\cris}) \ar@{^{(}->}[r]& \rH ^i_{\cris}  (X_n /S_n). }\]
Since $\iota: \underline{\calM} (\m^i_n) \xrightarrow{\simeq} \calM ^i_n = \rH ^i_{\cris}  (X_n /S_n) $ is an isomorphism  and 
$\Fil^i \underline{\calM} (\m ^i _n)$ is the  $S$-submodule of $\calM ^i_n$ generated by the image of $\Fil ^i \varphi^* \m ^i _n$ and $\Fil^i S \cdot \calM ^i _n$,
we see $\Fil^i \underline{\calM} (\m ^i _n) \subset \rH^i_{\cris} (X_n /S_n, \cI ^{[i]}_{\cris})$ via $\iota$. This shows $\iota$ induces an injection
$\iota : \Fil^i \underline{\calM} (\m ^i _n) \inj \rH^i_{\cris} (X_n /S_n, \cI ^{[i]}_{\cris})$.
 
Next we claim that $\iota^i$ is an isomorphism.   
After faithfully flat base changing along $S_n \to A_{\cris, n}: = A_{\cris}/ p ^n$, we are now working with $\mathcal{X} \coloneqq X_{\O_{\C}}$.
Now we need some some facts about the sheaf $\Z_p (h)$ on $\mathcal{X}_{\qsyn}$ defined in \cite[\S 7.4]{BMS2}. 
First  according to \cite[Theorem 10.1]{BMS2}, we have $\Z/ p ^n \Z (h) \simeq \tau^{\leq h } {\rm R}\psi_* (\Z/ p^n \Z (h)) $ where 
$\psi : (\mathcal{X}_{\C})_{\et } \to \mathcal{X}_{\et}$ is the natural map of \'etale sites. 
By \cite[Theorem F]{AMMN20}, when $h \leq p -2$, we have 
\[
\Z_p (h) \simeq \fib (\varphi_h -1: \Fil ^h_{\rm H} \dR^\wedge_{-/\Z_p} \to \dR^\wedge_{-/\Z_p}).
\]
Now \Cref{prop-connection for perfectoid} implies 
\[
\Z_p (h) \simeq \fib (\varphi_h -1: \Fil ^h_{\rm H} \dR^\wedge_{-/A_{\inf}} \to \dR^\wedge_{-/A_{\inf}}).
\]
Since ${\rm fib}$ commutes with derived mod $p^n$, we may apply $\otimes^{\BL}_{\Z} \Z/ p ^n \Z$ to this equation. 
Finally by Theorem \ref{Illusie--Bhatt}, we get the following exact sequence for $i \leq h \leq p-2$: 
\begin{equation}\label{eqn-etale-Fil-2}
\cdots\rH ^{i -1} _{\cris} (\mathcal{X}_n/ A_{\cris, n}) \to  \rH^i_ {\et} (\mathcal{X}_{\C} , \Z/ p^n \Z (h)) \to \rH ^ i _{\cris} (\mathcal{X}_n/ A_{\cris, n}, \cI_{\cris} ^{[h]}) \overset{\varphi_h -1}{\longrightarrow} \rH ^i _{\cris} (\mathcal{X}_n / A_{\cris, n }) 
\end{equation}

By Equation \eqref{eqn-etale-Fil-2} and Proposition \ref{prop-Galois-compatible}, we obtain the following commutative diagram: 
$$\xymatrix{ 0 \ar[r] & T_S (\underline{\calM}(\FM^i_n)) \ar[d]^\alpha   \ar[r]  &  A_{\cris}\otimes_S \Fil^i \underline \calM (\FM^i_n)  \ar[d]^-{1 \otimes \iota^i} \ar[r] & A_{\cris} \otimes_S \underline{\calM}  (\FM^i_n ) \ar[d]^\wr \ar[r] & 0 \\   
& \rH^i_{\et} (\mathcal{X}_{\C}, \Z/ p ^n \Z)(i)  \ar[r] ^- s  & \rH^i _{\cris} (\mathcal{X}_n/ A_{\cris, n}, \cI_{\cris}^{[i]})   \ar[r] & \rH^i _{\cris} (\mathcal{X}_n/ A_{\cris, n})  &  }$$
with both rows being exact.
Since $1 \otimes \iota^i$ is an injection, we see that the map $\alpha$ is also injective. 
Then $\alpha$ must be an isomorphism because $T_S (\underline \calM (\FM ^i_n)) \simeq T_\s (\m^i_n) (i) \simeq \rH ^i _{\et} (\mathcal X_{\C}, \Z/ p ^n \Z)(i)$
due to Proposition \ref{prop-Galois-compatible} and \cite[Theorem 1.8.(4)]{BS19}. 
Therefore $s$ is also injective.  Now by the snake lemma, we see that $\coker (1 \otimes \iota)= 0$ as required. 

%It is easy to by chasing diagram 
 % 	that $\iota$ is surjective and $\ker \iota = \ker s$. Bust $\ker \iota$ is an $A_{\inf}$-module, which can not be finite unless it is zero. So $\ker \iota = 0$ as required.

Conversely, assume that $\calM ^i_n  : = \rH ^i _{\cris} ( X_n  / S_n )$ is an object in $\textnormal{Mod}^{\varphi, i}_{S, \tor}$ with $\Fil^i \calM^i _n = \rH ^i _{\cris} (X_n  /S_n  , \cI_{\cris}^{[i]})$. %By Proposition \ref{prop-Galois-compatible}, there exists an \'etale Kisin module $\FM'$ of height $i$ so that $\calM^i_n  \simeq \underline{\calM} (\FM')$. 
As before, we consider the base change $\mathcal{X} \coloneqq X_{\O_{\C}}$ and we still have a commutative diagram
$$\xymatrix{ 0 \ar[r] & T_S ({\calM^i_n}) \ar[d]^\alpha  \ar[r]  &  A_{\cris}\otimes_S \Fil^i  \calM^i  _n    \ar[d]^\wr \ar[r] & A_{\cris} \otimes_S {\calM^i _n }   \ar[d]^\wr \ar[r] & 0 \\   & \rH^i_{\et} (\mathcal{X}_{\C}, \Z/ p^n  \Z)(i)  \ar[r] ^-s  & \rH^i _{\cris} (\mathcal{X}_n / A_{\cris, n}, \cI_{\cris}^{[i]})   \ar[r] & \rH^i _{\cris} (\mathcal{X}_n / A_{\cris, n}).  &  }$$
The difference here is that the middle column is now an isomorphism, 
whereas the first column $\alpha$ is not known to be an isomorphism. 

First it is easy to see that $\alpha$ is an injection by chasing the diagram.
Now by Corollary \ref{cor-length-compare}, we have an inequality  
\[
\Len_ {W(k)} (\calM ^ i_n/I^+S) =\Len _{\Z} (T_S (\calM^i _n))\leq \Len _{\Z} (\rH ^i _{\et} (\mathcal{X}_{\C}, \Z/ p ^n \Z)).
\]
On the other hand, by the proof of Lemma \ref{lem-length of both fibers} and Lemma \ref{lemma-mod I+S}, we see that 
\[
\Len _{\Z} (\rH ^i _{\et} (\mathcal{X}_{\C}, \Z/ p ^n \Z)) \leq \Len _{W(k)} (\m ^i_n / u\m^i_n )\leq \Len_ {W(k)} (\calM ^ i_n/I^+S).
\]
Combining the above two inequalities, we see
\[
\Len _{\Z} (\rH ^i _{\et} (\mathcal{X}_{\C}, \Z/ p ^n \Z)) =  \Len _{W(k)} (\m ^i_n / u\m^i_n ) = \Len_ {W(k)} (\calM ^ i_n/I^+S).
\]
Now the proof of Lemma \ref{lem-length of both fibers} implies that $\m ^i _n$ has no $u$-torsion.
By the length equality, the injection
$\m ^i_n/ u \m ^i_n \hookrightarrow \calM^i _n/ I ^+ S$ in \Cref{lemma-mod I+S} is in fact an isomorphism.
and hence $\Tor_1^{\s} (\m^{i+1}_n, \varphi_* S)/I^+S = 0$.
It is easy to see that $\Tor_1^{\s} (\m^{i+1}_n, \varphi_* S)$ is a finitely generated $S$ module, applying Nakayama's lemma
yields 
%by NAK, the injection $S \otimes_{\varphi, \s}\m^i_n \inj \calM^i _n$ is surjective and hence a bijective. So \eqref{eqn-exact_S} implies that 
$\Tor_1^{\s} (\m^{i+1}_n, \varphi_* S) = 0$. Therefore $\m ^{i+1} _n$ has no $u$-torsion by the following claim.  

Claim: If $\m$ is a $p^n$-torsion $\s$-module and $\Tor_1^{\s} (\m ,\varphi_* S)= 0$ then $\m$ has no $u$-torsion. 
To prove this, we first note that 
%$\s$ is a regular local ring of (Krull) dimension 2. So projective dimension of any finite $\s$-module is bounded by $2$. Since $S$ is an integral domain, this shows that $\Tor^2_\s (M , S) = 0$ for any finite $\s$-module $M$. Therefore 
$\Tor_1^{\s}(-, \varphi_* S)$ is an left exact functor by \Cref{lem-comparison}. 
Secondly note that $\m$ has no $u$-torsion if and only if it has no $(u,p)$-torsion.
Let $\m' \subset \m$ be the submodule of $(u,p)$-torsions in $\m$.
The above discussion implies that $\Tor_1^{\s} (\m', \varphi_* S) = 0$.
Now by definition, we have $\m' \cong \oplus_{\Lambda} k$ as an $\s$-module, where $\Lambda$ is an indexing set.
One computes directly that 
\[
\Tor_1^{\s} (\m', \varphi_* S) = \oplus_{\Lambda} \Tor^1_{\s}(\s/(p,u), \varphi_* S) =
\oplus_{\Lambda} \Tor^1_{\s}(\s/(p,u^p), S) = \oplus_{\Lambda} \ker(S/p \xrightarrow{\cdot u^p} S/p).
\]
Since $\ker(S/p \xrightarrow{\cdot u^p} S/p)$ is nonzero ($u^{pe} = 0$ in $S/p$), the above computation implies
$\Lambda = \emptyset$, as claimed.
\end{proof}

%\begin{lemma}\label{lem-caruso-input} If\end{lemma}
%\begin{proof} It is easy to reduce to the case that $n =1$. By Lemma \ref{lem-control-torsion}, we have seen that $\FM : = \rH ^{i}_\Prism (X_n / \s_n)$ has no $u$-torsion, hence finite $\ku$-free. In particular, $\rank_\ku \FM = \dim _{\F_p} \rH ^i _{\et} (X_\eta, \F_p)$ because $(\FM \otimes_{\ku} \O _{\C_p}^\flat [\frac 1 u]) ^{\varphi =1} \simeq \rH ^i _{\et} (X_\eta, \F_p)$. By the main theorem of \cite{CarusoInvent}, we see that $\rH^i_{\cris} (X_1/S_1) $ is indeed a Breuil module and 	$T_{S} (\rH^i_{\cris} (X_1/S_1)) \simeq  \rH ^i _{\et} (X_\eta, \F_p) (i) $. In particular,   $\rH^i_{\cris} (X_1/S_1) $ is a finite free $S_1$-module and $\rank_{S_1} \rH^i_{\cris} (X_1/S_1) = \dim _{\F_p} \rH ^i _{\et} (X_\eta, \F_p) $. 
%Note that when $ei < p -1$ the category of Breuil modules are abelian,  By \eqref{eqn-tensor-tor}, we see that the map $\iota : \underline \calM (\FM) \to \rH ^i _{\cris} (X_1/S_1)$ is injection of Breuil modules. In particular, if $\iota$ is not surjection then corkernel is also a Breuil module. And this implies that  $\rank_{S_1} \rH ^i _{\cris} (X_1/S_1) > \rank _{S_1} \underline \calM (\FM)= \rank _{\s_1} \FM$. But then this contradicts to that  $\rank_{S_1} \rH^i_{\cris} (X_1/S_1) = \dim _{\F_p} \rH ^i _{\et} (X_\eta, \F_p) = \rank_\ku \FM  $.  This conclude that $\iota$ is an isomorphism and then $\Tor _1^\s (\rH ^{i +1}_\Prism ( X_1/\s_1), S_1)= 0 $. That is, $\rH ^{i +1}_\Prism ( X_1/\s_1)$ has no $u$-torsion. 
%\end{proof}

\begin{corollary} \label{cor-small-Breuil}If $ei < p -1$ then $\rH ^{j}_\Prism (X_n / \s_n)$ has no $u$-torsion for $j =i , i+1$, 
and $\rH^i_{\cris} (X_n / S_n)$ is a Breuil module. 
\end{corollary}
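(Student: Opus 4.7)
By \Cref{thm-main-1}, it suffices to show that $\FM^j_n$ has no $u$-torsion for $j = i, i+1$. The case $j = i$ is automatic: \Cref{prop-height} gives $\FM^i_n$ the structure of a height-$i$ Kisin module, and under $ei < p-1$ the first part of \Cref{lem-control-torsion} yields $\FM^i_n[u^\infty, p] = 0$, forcing $\FM^i_n[u^\infty] = 0$ since any nonzero $u^\infty$-torsion element of minimal $p$-order would land in $\FM^i_n[u^\infty, p]$.

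For $j = i+1$ I would induct on $n$. For the base case $n = 1$, Caruso's \Cref{thm-caruso} applies: the reduction $X_1$ is a smooth (hence semistable) proper scheme over $\O_K/p$, and the hypothesis $ei < p-1$ matches Caruso's criterion at $n = 1$, equipping $\rH^i_{\cris}(X_1/S_1)$ with a Breuil module structure whose $T_{\st,\star}$-image is $\rH^i_\et(X_{\bar\eta}, \Z/p\Z)(i)$. The backward direction of \Cref{thm-main-1} then yields $\FM^{i+1}_1[u^\infty] = 0$.

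For the inductive step, assume $\FM^{i+1}_m[u^\infty] = 0$ for every $m \leq n$. Combined with the case $j = i$, the forward direction of \Cref{thm-main-1} furnishes a Breuil module structure on $\rH^i_{\cris}(X_m/S_m)$ for each $m \leq n$, and via \Cref{cor-length-compare} together with the base change $\rH^i_{\cris}(X_m/S_m)/I^+S \cong \rH^i_{\cris}(X_k/W_m(k))$ one obtains the length identity
\[
\Len_{W(k)}\rH^i_{\cris}(X_k/W_m(k)) \,=\, \Len_\Z \rH^i_\et(X_{\bar\eta},\Z/p^m\Z).
\]
I would then propagate this equality to level $n+1$: both $\RG_{\cris}(X_k/W(k))$ and $\RG_\et(X_{\bar\eta}, \Z_p)$ are perfect complexes over the regular DVRs $W(k)$ and $\Z_p$, hence split as direct sums of shifts of their cohomology modules, and the generic-fibre de Rham--\'etale comparison matches their Betti numbers. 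Combining these structural inputs with the Bockstein short exact sequences relating $\rH^i_m$ to the torsion and free parts of $\rH^i$ and $\rH^{i+1}$ over the integral bases, and with the previously established length equalities at all levels $m \leq n$, would deliver the analogous length identity at level $n+1$. Then \Cref{lem-length of both fibers} forces $\FM^{i+1}_{n+1}[u^\infty] = 0$, closing the induction, and a final application of \Cref{thm-main-1} furnishes the Breuil module structure on $\rH^i_{\cris}(X_n/S_n)$.

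The hard part is the length propagation from level $n$ to level $n+1$: on each side the length of $\rH^i_m$ has the shape $m \cdot r_i + \Len T_i[p^m] + \Len T_{i+1}[p^m]$, where $r_i$ is the common Betti number and $T_i, T_{i+1}$ are the torsion parts of the integral cohomology, so only joint sums of invariant factor contributions are directly constrained by the inductive data. Disentangling the individual contributions from $T_i$ and $T_{i+1}$ across both the crystalline and \'etale sides, and showing their combined behaviour matches at the next level, is the delicate part of the argument.
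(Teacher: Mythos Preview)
Your treatment of the case $j=i$ and of the base case $n=1$ coincides with the paper's. The divergence, and the gap, is in the inductive step.

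Your proposed propagation of the length identity from levels $m\le n$ to level $n+1$ does not go through. As you yourself note, both sides have the shape $m\cdot r_i+\Len T_i[p^m]+\Len T_{i+1}[p^m]$ with $r_i$ matched by the rational comparison. But knowing $\Len T_i[p^m]+\Len T_{i+1}[p^m]$ for $m\le n$ does not determine the value at $m=n+1$: for a finite $p$-torsion module $T$, the function $m\mapsto \Len T[p^m]$ has first difference $\#\{j:a_j\ge m\}$ where the $a_j$ are the invariant factors, so two such sums can agree for $m\le n$ and disagree at $m=n+1$ (e.g.\ one side carries a $\Z/p^{n+1}$ summand where the other carries $\Z/p^n$). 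Bockstein sequences do not supply the missing constraint. Even layering in an outer induction on $i$ only pins down $\Len T_i[p^{n+1}]$ via telescoping from lower degrees, and you are still left with no control on $\Len T_{i+1}[p^{n+1}]$. So the step where you invoke \Cref{lem-length of both fibers} at level $n+1$ is unjustified.

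The paper's induction on $n$ is completely different and avoids length-counting. From the long exact sequence
\[
\cdots\to \m^{i}_{n-1}\xrightarrow{f}\m^{i+1}_1\to \m^{i+1}_n\to \m^{i+1}_{n-1}\to\cdots
\]
and the inductive hypothesis that $\m^{i+1}_{n-1}$ is $u$-torsion free, it suffices to show $\mathfrak L\coloneqq\m^{i+1}_1/f(\m^i_{n-1})$ has no $u$-torsion. Set $\mathfrak N=f(\m^i_{n-1})$ (height $i$), $\m=\m^{i+1}_1$, and $\m'=g^{-1}(\mathfrak L[u^\infty])$ where $g\colon\m\to\mathfrak L$. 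Both $\mathfrak N$ and $\m'$ are finite free over $k[\![u]\!]$, so one can choose bases with $(e_1,\dots,e_d)=(e'_1,\dots,e'_d)\Lambda$, $\Lambda=\mathrm{diag}(u^{a_1},\dots,u^{a_d})$. The Frobenius matrices satisfy $\Lambda A=A'\varphi(\Lambda)$, forcing the last column of $A$ to be divisible by $u^{(p-1)a_d}$. Since $\mathfrak N$ has height $i$, there is $B$ with $AB=u^{ei}I_d$; as $ei<p-1$, this forces $a_d=0$, hence $\Lambda=I_d$ and $\mathfrak L[u^\infty]=0$. This is a direct structural argument on Kisin modules and does not pass through \Cref{lem-length of both fibers} at all.
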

\begin{proof} By Lemma \ref{lem-control-torsion} and Proposition \ref{prop-height},
we know that $\rH^{i}_\Prism (X_n / \s_n)$ has no $u$-torsion.
To show that $\rH ^{i +1}_\Prism (X_n / \s_n)$ has no $u$-torsion, we first consider the case that $n =1$. 
The main theorem of \cite{CarusoInvent} shows that $\rH^i_{\cris} (X_1/S_1)$ is a Breuil module when $n=1 $ and $ei < p -1$.
Then \Cref{thm-main-1} shows that $\rH ^{i+1}_\Prism (X_1/ \s_1)$ has no $u$-torsion. 

Let us prove by induction that $\m^{i +1}_n: = \rH^{i +1}_{\Prism} (X_n /\s_n ) $ has no $u$-torsion. 
We use the long exact sequence relating various $\m ^{i+1 }_n : = \rH^{i+1} _{\Prism}(X_n / \s_n)$:
\[ \cdots \to \m^{i}_{n -1}\overset f \longrightarrow \m ^{i+1} _1 \to \m^{i+1}_n \to \m^{i+1}_{n -1}\cdots .\] 
By induction, we may assume that $\m^{i+1}_{n -1}$ has no $u$-torsion. It suffices to prove that 
$\m ^{i+1}_1/ f (\m^{i}_{n -1})$ has no $u$-torsion. 
To that end, write $\mathfrak N \coloneqq f(\m ^{i }_{n-1})$ which has height $i$, 
$\m \coloneqq \m ^{i+1}_1$ which has height $i+1$ and  
$\mathfrak L \coloneqq \m ^{i+1}_1 / \mathfrak N$.
By construction we have the following exact sequence 
\[
0 \to \mathfrak N \overset f \longrightarrow \m \overset{g}{\longrightarrow} \mathfrak{L} \to 0.
\]
Let $\m' = g ^{-1} (\mathfrak{L}[u ^\infty])$. Then we obtain two exact sequences 
\[
0 \to \mathfrak{N } \to \m ' \to \mathfrak{L}[u ^\infty] \to 0 \text{ ~and~ }
0 \to \m' \to \m \to \mathfrak L / \mathfrak{L}[u ^\infty] \to 0 .
\]
The second sequence has all terms being \'etale Kisin modules. Since $\m$ has height $i+1$, 
we conclude that both $\m '$ and   $\mathfrak L / \mathfrak{L}[u ^\infty] $ has height $i+1$. 
Since  both $\mathfrak N$ and $\m'$ are \'etale and hence finite free over $\ku$.
This allows us to choose a basis $e_1, \dots , e_d$ of $\mathfrak N$ and a basis $ e'_1 , \dots , e'_d$ of $\m'$ so that $(e_1 , \dots, e_d ) = (e'_1, \dots , e'_d)\Lambda$, where $\Lambda= \mathrm{diag}(u^{a_1} \dots, u^{a_d})$ a diagonal matrix with  
$u^{a_j}$  on the main diagonal such that $a_1 \leq a_2 \leq \dots \leq a_d$. 
Let $A$ and $A'$ be the matrices of Frobenius for the corresponding basis. We easily see that following relation 
\[ 
\Lambda A = A' \varphi (\Lambda).
\]
Hence the last column of $A'\varphi(\Lambda)$ is divisible by $u ^{pa_d}$. 
Consequently, the last column of $A$ is divisible by $u^{(p-1) a_d}$.
But $\mathfrak N$ has height $i$, which means there exists a matrix $B$ with entries in $\ku$ so that $AB = BA = u ^{ei} I_d$. 
But this is impossible as $ei < p -1$,
%and the last column of $A$ is multiple of $u   ^{(p -1) a_d}$, 
unless $a_d =0$. This forces that $\Lambda = I_d$ and hence a posteriori $\mathfrak L$ has no $u$-torsion as desired. 
\end{proof}

\begin{remark}
Let $T$ be the largest integer satisfying $T \cdot e < p-1$, and let $n \in \mathbb{N}$.
It is a result of Min \cite[Lemma 5.1]{Min20} that $\rH^i_{\Prism}(X/\s)$ has no $u$-torsion when
$0 \leq i \leq T+1$.
By a similar argument, one can also show that $\rH^i _\Prism (X_n/\s_n)$ has no $u$-torsion for $0 \leq i \leq T$.
%by the fact that $\rH^i_\Prism (X_n/ \s_n)$ has height $i$. 
The slight improvement along this direction in \Cref{cor-small-Breuil} is the statement that $\rH^{T+1} _\Prism (X_n/\s_n)$
is also $u$-torsion free.
This would imply Min's result. 
As far as we can tell, Min's strategy does not give $u$-torsion freeness of $\rH^{T+1} _\Prism (X_n/\s_n)$.
\end{remark}

%\textcolor{red}{Shizhang: I have several questions about this Porposition.
%What is the assumption on $i$ here? In general the map $\rH ^i _{\cris} (X_n /S_n, \cI^{[i]}_{\cris}) \to  \rH^i_{\cris} (X_n /S_n)$ is not injective right?}

\begin{proposition}\label{prop-connection to etale via TS} 
Let $i \leq p-2$ be an integer.
Suppose that $\rH ^i _{\cris} (X_n /S_n, \cI^{[i]}_{\cris}) \to \rH^i_{\cris} (X_n /S_n) \eqqcolon \calM^i_n$
is injective, and denote its image as $\Fil^i\calM ^i_n$.
Assume furthermore that $\calM^i_n$ together with 
\[
(\Fil^i\calM ^i_n = \rH ^i _{\cris} (X_n /S_n, \cI^{[i]}_{\cris}), \varphi_i, \nabla)
\]
is an object of $\textnormal{Mod}^{\varphi,i,  \nabla}_{S, \tor}$. 
Then $T_S (\calM ^i_n)\simeq \rH^i_{\et} (X_{\bar{\eta}}, \Z/ p ^n \Z)(i)$ as $G_K$-representations.
\end{proposition}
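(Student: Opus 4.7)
The plan is to chain together three Galois-equivariant identifications established earlier in the paper, and then to verify the compatibility at the level of the full $G_K$-action. To begin I would invoke \Cref{thm-main-1}: since $\calM^i_n$ is by hypothesis a Breuil module in the sense of \Cref{def-Breuil-modules}, the prismatic cohomology groups $\m^i_n$ and $\m^{i+1}_n$ have no $u$-torsion and there is a canonical isomorphism $\underline{\calM}(\m^i_n) \simeq \calM^i_n$ in $\textnormal{Mod}^{\varphi,i}_{S,\tor}$. Feeding this into \Cref{prop-Galois-compatible} yields a $G_\infty$-equivariant isomorphism $T_S(\calM^i_n) \simeq T_\s(\m^i_n)(i)$, and combining \Cref{cor-pris is Kisin} with \Cref{rem-G-compatible} produces a $G_K$-equivariant identification $T_\s(\m^i_n) \simeq \rH^i_{\et}(X_{\bar\eta}, \Z/p^n\Z)$. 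The composition is the desired isomorphism at the level of underlying $G_\infty$-modules.

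The substantive point — and the expected main obstacle — is showing that this composition respects the full $G_K$-actions, not just the $G_\infty$-actions. The source $T_S(\calM^i_n)$ acquires its $G_K$-action abstractly from the connection $\nabla$ via formula \eqref{eqn-action} of \Cref{lem-action-from_N}, while the target $\rH^i_{\et}(X_{\bar\eta}, \Z/p^n\Z)(i)$ carries the geometric $G_K$-action. To bridge the two I would appeal to \Cref{rem-compatible}: the identification $T_\s(\m^i_n)(i) \simeq T_S(\underline{\calM}(\m^i_n))$ is $G_K$-equivariant once $\m^i_n \otimes_\s A_{\inf}$ is equipped with an $A_{\inf}$-semilinear $G_K$-action commuting with $\varphi_\m$, and precisely such an action is furnished by the isomorphism $\rH^i_\Prism(\mathcal{X}/A_{\inf}) \simeq \m^i_n \otimes_\s A_{\inf}$ already used to prove \Cref{cor-pris is Kisin}.

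It remains to identify the abstract $G_K$-action induced on $\calM^i_n \otimes_S A_{\cris,n}$ from $\nabla$ with the geometric $G_K$-action on $\rH^i_{\cris}(\mathcal{X}_n/A_{\cris,n})$. This is precisely the content of \S\ref{subsection-GK-action}: formula \eqref{eqn-action-3} is derived geometrically and has the exact same shape as the abstract formula \eqref{eqn-action}, provided $\nabla$ denotes the connection on $\rH^i_{\cris}(X_n/S_n)$ constructed in \S\ref{subsection-stru of crys} — which by the setup of \Cref{def-Breuil-modules} is precisely the $\nabla$ underlying our Breuil-module structure. Once this matching is in hand, the chain of $G_K$-equivariant isomorphisms closes up. I expect the principal bookkeeping difficulty to be keeping straight the Frobenius twist relating $\m^i_n$ and $\varphi^*\m^i_n$, together with the compatibility of the various filtrations and divided Frobenii under the base changes $(\s,(E))\to(A_{\inf},\ker\theta)$ and $S\to A_{\cris}$; once properly tracked, no new idea is required beyond the material already developed in Sections 5 and 6.
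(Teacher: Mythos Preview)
Your proposal is correct and follows essentially the same approach as the paper's proof: both chain the isomorphisms $T_S(\calM^i_n) \simeq T_\s(\m^i_n)(i) \simeq \rH^i_{\et}(X_{\bar\eta},\Z/p^n\Z)(i)$ via \Cref{thm-main-1}, \Cref{prop-Galois-compatible}, and \Cref{cor-pris is Kisin}/\Cref{rem-G-compatible}, and then verify $G_K$-compatibility by invoking \Cref{rem-compatible} together with the identification in \S\ref{subsection-GK-action} of the abstract $\nabla$-action \eqref{eqn-action} with the geometric Galois action \eqref{eqn-action-3}. The only point the paper makes slightly more explicit is the appeal to the functoriality of the comparison in \Cref{global comparison} to ensure that the base-changed identification $\rH^i_\Prism(\mathcal{X}_n/A_{\inf,n})\otimes_{A_{\inf},\varphi}A_{\cris}\simeq\rH^i_{\cris}(\mathcal{X}_n/A_{\cris,n})$ is itself $G_K$-equivariant, which you fold into your final bookkeeping remark.
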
  
\begin{proof} Theorem \ref{thm-main-1} together with Proposition \ref{prop-Galois-compatible} have already shown the following isomorphisms
\[  T_S (\rH^i_{\cris} (X_n /S_n)) \overset {\iota_1}{\simeq} T_\s (\rH^i _{\Prism} (X_n / \s_n)) (i) \overset {\iota_2}{\simeq} 
\rH^i_{\et} (X_{\bar{\eta}}, \Z/ p ^n \Z) (i).\]
The main point here is to check these two isomorphisms $\iota_1 , \iota_2 $ here are compatible with $G_K$-actions. 
Let $\mathcal{X} \coloneqq X_{\mathcal{O}_\C}$.

First we have $A_{ \inf} \otimes_{\s} \m ^i_n \simeq \rH^i _{\Prism} (\mathcal{X}_n/ A_{\inf, n})$ which admits natural $G_K$-action. 
Since $A_{\inf}$ is a perfect prism, \cite[Theorem 1.8.(4)]{BS19} proves that $T_\s (\m^i _n)= (\rH^i_\Prism (\mathcal{X}_n/ A_{\inf, n}))^{\varphi=1 } \simeq \rH^i_{\et} (X_{\bar{\eta}} , \Z/ p ^n \Z) $ is compatible with $G_K$-action, as explained in Remark \ref{rem-G-compatible}. 
This concludes that $\iota_2$ is compatible with $G_K$-actions. 

Now Theorem \ref{global comparison} shows that the comparison isomorphism 
\[
\bar \iota: \rH^i_{\Prism} (\mathcal{X}_n/ A_{\inf, n} ) \otimes_{A_{\inf},\varphi } A_{\cris}\simeq \rH^i_{\cris} (\mathcal{X}_n / A_{\cris, n})
\]
is functorial. 
So $\bar \iota$ is compatible with natural  $G_K$-actions on the both sides. 
Also $\bar \iota$ is compatible with the isomorphism 
$\iota: \underline\calM (\m^i_n)\simeq \rH^i_{\cris} (X_n/ S_n)$. 
Applying Remark \ref{rem-compatible} here then concludes that 
\[
\iota_1:  T_S (\rH^i_{\cris} (X_n /S_n)) {\simeq} T_\s (\rH^i _{\Prism} (X_n / \s_n)) (i)
\]
is compatible with $G_K$-actions \emph{if} we define the $G_K$-action on $\rH ^i _ {\cris} (X_n / S_n) \otimes_S A_{\cris}$ via the identification
$\rH ^i _ {\cris} (X_n / S_n) \otimes_S A_{\cris}= \rH ^i_{\cris} (\mathcal{X}_n / A_{\cris, n})$. 
Recall the $G_K$-action on $\rH ^i _ {\cris} (X_n / S_n) \otimes_S A_{\cris} $ is defined via  Formula \eqref{eqn-action},
and we have showed in \S \ref{subsection-GK-action} that these two $G_K$-actions are the same. 
This proves that $\iota_1$ is also compatible with $G_K$-actions. 
\end{proof}

In the end of this subsection, we explain how our results are related to Fontaine--Messing theory in \cite{FontaineMessing} 
(see also \cite{Katovanishingcycles})
for a proper smooth \emph{formal} scheme $X$ over $W(k)$.
For any $n \geq 1$, the scheme $X_n$ is smooth proper over $\Spec(W_n (k))$. 
So when $0 \leq j \leq i \leq p -1$, the triple $ M^i: = (\rH^i_{\cris}(X_n / W_n  (k)), \rH ^i _{\cris}(X_n / W_n (k) , \cI ^{[j]}_{\cris} ), \varphi_j)$ is known to be a Fontaine--Laffaille data. 

Now let $i \leq p-2$ and one wants to show that $T_{\cris} (M^i)\simeq \rH ^i _{\et} (X_{\bar \eta}, \Z/ p ^n \Z)(i)$. 
We recall the construction of $T_{\cris} (M^i)$ in the following: write $\Fil^j M ^i : = \rH_{\cris}^i (X_n / W_n (k), \cI ^{[j]}_{\cris})$ and let \[\Fil ^i (A_{\cris}\otimes_{W(k)} M ^i) = \sum^i_{j = 0} \Fil^{j} A_{\cris} \otimes_{W(k)}\Fil^{i -j} M ^i \subset A_{\cris} \otimes_{W(k)} M ^i. \]Then one can define  
$\varphi_i : \Fil^i (A_{\cris} \otimes _{W(k)} M ^i) \to A_{\cris} \otimes_{W(k)} M ^i$ by $\varphi_i : = \sum\limits_{j = 0}^i \varphi_j|_{\Fil^j A_{\cris}} \otimes \varphi_{i -j}|_{\Fil^{i-j} M^i}$. Now 
$T_{\cris} (M^i): = (\Fil ^i (A_{\cris} \otimes M^i))^{\varphi_i=1} $. 

Let $\calM ^i : = \rH^i _{\cris} (X_n/S_n)$ which is an object of $\Mod^{\varphi, i, \nabla}_{S, \tor}$ by \Cref{cor-small-Breuil}. 
It is clear that the base change map $\iota: S \otimes_ {W(k)} M ^i \to \calM ^i$ is an isomorphism as $W(k) \to S$ is flat. Define 
$\Fil^i (S \otimes_{W(k)}M^i) : = \sum\limits_{j = 0}^i  \Fil^j S \otimes_{W(k)} \Fil^{i -j } M ^i\subset S \otimes_{W(k)}  M ^i$. Since $\Fil^j M^i$ is direct summand of $\Fil^{j -1} M ^i$, 
the natural map  $\Fil^i (S \otimes_{W(k)} M ^i) \to \rH^i _{\cris} (X_n /S_n, \cI^{[i]}_{\cris})$ induced by $\iota$ is injective.  Therefore, we obtain the following commutative diagram
\[\xymatrix{ 0 \ar[r]& T_{\cris} (M^i)\ar[r]\ar@{^{(}->}[d]&  \Fil^i (A_{\cris} \otimes_{W(k)} M^i) \ar[r]^-{\varphi_i-1}\ar@{^{(}->}[d]  &  A_{\cris} \otimes_{W(k)} M ^i \ar[d] ^ \wr  \\ 0 \ar[r] & T_S (\calM^i) \ar[r] &  \Fil^i (A_{\cris} \otimes_{S} \calM^i) \ar[r]^-{\varphi_i -1}  &  A_{\cris} \otimes_{S} \calM ^i.}\]
It is well-known from Fontaine--Laffaille theory that $\Len_{\Z} T_{\cris} (M^i) = \Len_{W(k)} M^i$. 
By Corollary \ref{cor-length-compare}, we know  $\Len_{\Z} (T_S(\calM^i)) = \Len_{W(k)} (\calM^i/ I^+ S) = \Len_{W(k)} M^i$. Therefore, the left column must be bijective.  By Proposition \ref{prop-connection to etale via TS}, it remains to check that the isomorphism $T_{\cris} (M^i) \to T_S(\calM^i)$ is compatible with $G_K$-actions. 
Since the $G_K$-action on $T_S(\calM^i)$ is the $G_K$-action on $A_{\cris} \otimes_S \calM^i$ via \eqref{eqn-action}, 
it suffices to show that $M^i \subset (\calM ^i)^{\nabla = 0}$,
which follows from Proposition \ref{prop-connection for perfectoid} (1). 
\begin{corollary}\label{cor-FM}  Fontaine-Messing theory in \cite{FontaineMessing} and \cite{Katovanishingcycles} accommodates $X$ being proper smooth formal scheme over $W(k)$.  
\end{corollary}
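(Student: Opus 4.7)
The plan is to follow the sketch laid out in the discussion immediately preceding the corollary, filling in the length comparison and Galois-equivariance steps.  First, since $e=1$ and $i\le p-2$, by \Cref{cor-small-Breuil} the $p^n$-torsion prismatic cohomology $\rH^j_{\Prism}(X_n/\s_n)$ has no $u$-torsion for $j=i,i+1$, and $\calM^i \coloneqq \rH^i_{\cris}(X_n/S_n)$ carries the structure of a Breuil module in the sense of \Cref{def-Breuil-modules}.  Next, because $W(k)\to S$ is flat, the natural map $S\otimes_{W(k)} M^i \to \calM^i$ is an isomorphism, and since each $\Fil^j M^i$ is a direct summand of $M^i$, the induced map $\sum_{j=0}^i \Fil^j S\otimes_{W(k)}\Fil^{i-j}M^i \hookrightarrow \rH^i_{\cris}(X_n/S_n,\cI^{[i]}_{\cris})$ is an injection compatible with $\varphi_i$.

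Tensoring further with $A_{\cris}$ yields the commutative diagram displayed before the statement, with exact rows giving $T_{\cris}(M^i)$ and $T_S(\calM^i)$ as kernels of $\varphi_i-1$.  The right-most vertical arrow is an isomorphism (again by flatness), and the middle vertical arrow is an injection, so the five-lemma (or rather, snake lemma on the kernel column) produces an injection $T_{\cris}(M^i)\hookrightarrow T_S(\calM^i)$.  To promote this injection to an isomorphism, I will compare lengths: classical Fontaine--Laffaille theory gives $\Len_{\Z}T_{\cris}(M^i)=\Len_{W(k)}M^i$, while \Cref{cor-length-compare} combined with $\calM^i/I^+S \cong M^i$ (since $\calM^i = S\otimes_{W(k)}M^i$) gives $\Len_{\Z}T_S(\calM^i)=\Len_{W(k)}(\calM^i/I^+S)=\Len_{W(k)}M^i$.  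Hence the inclusion is an equality.

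Once $T_{\cris}(M^i)\cong T_S(\calM^i)$ as $\Z_p$-modules, \Cref{prop-connection to etale via TS} identifies $T_S(\calM^i)$ with $\rH^i_{\et}(X_{\bar\eta},\Z/p^n\Z)(i)$ as $G_K$-representations.  It therefore only remains to verify that the isomorphism $T_{\cris}(M^i)\to T_S(\calM^i)$ intertwines the Galois actions.  The action on $T_S(\calM^i)$ is induced by the formula \eqref{eqn-action} via the connection $\nabla$, so it suffices to check that elements of $M^i\subset\calM^i$ are annihilated by $\nabla$; that inclusion is precisely \Cref{prop-connection for perfectoid}(1) applied to the triple $W(k)\to W(k)\to\O_K$-algebra setting (i.e., the kernel of $\nabla$ on $\rH^i(\dR^\wedge_{R/\mathfrak{S}})$ is $\rH^i(\dR^\wedge_{R/W})$).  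This completes the identification and hence the corollary.

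The only step requiring some care is the length computation for $T_S(\calM^i)$: one must check that Corollary \ref{cor-length-compare}, which is stated for torsion \'etale Kisin modules, applies here via $\calM^i\simeq\underline{\calM}(\m^i_n)$ with $\m^i_n=\rH^i_{\Prism}(X_n/\s_n)$ $u$-torsion free (guaranteed by \Cref{cor-small-Breuil}), and that $\calM^i/I^+S$ really recovers $M^i/pM^i$-style length data.  Everything else is bookkeeping of filtrations and standard diagram chases, and no new input beyond results already established in the paper is required.
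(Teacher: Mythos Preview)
Your proposal is correct and follows essentially the same argument as the paper's own discussion preceding the corollary: apply \Cref{cor-small-Breuil} to get the Breuil module structure, use flatness of $W(k)\to S$ and the direct-summand property of the Fontaine--Laffaille filtration to build the commutative diagram, compare lengths via Fontaine--Laffaille theory and \Cref{cor-length-compare} to upgrade the injection $T_{\cris}(M^i)\hookrightarrow T_S(\calM^i)$ to a bijection, and then check $G_K$-equivariance by showing $M^i\subset(\calM^i)^{\nabla=0}$. The only minor imprecision is in your parenthetical gloss of \Cref{prop-connection for perfectoid}(1)---what you actually need is just that the image of $\rH^i(\dR_{X/W}^\wedge)$ in $\rH^i(\dR_{X/\s}^\wedge)$ lies in $\ker\nabla$, which follows already from the triangle in \Cref{connection on ddR}---but this matches the paper's own citation and does not affect the argument.
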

\section{Some calculations on $T_S$}
\subsection{Identification on \eqref{eqn-action-N} and \eqref{eqn-action}}\label{subsec-two-equations}
In this section, we show that \eqref{eqn-action-N} and \eqref{eqn-action} are the same. 
\begin{lemma}If we write $N^n = \sum\limits_{i = 1}^n A_{i, n} u ^i  \nabla^i$ then $A_{i, n+1} = A_{i-1, n} + i A_{i , n}$ and $A_{1, n}= A_ { n , n}=1$
\end{lemma}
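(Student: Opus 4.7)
The plan is to proceed by induction on $n$, using only the derivation property of $\nabla$ and the definition $N = u\nabla$.

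First I would establish the base case $n=1$: since $N = u\nabla$ by construction, we have $N^1 = 1 \cdot u^1 \nabla^1$, so $A_{1,1} = 1$, which matches both $A_{1,n} = 1$ and $A_{n,n} = 1$ at $n = 1$.

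For the inductive step, assume $N^n = \sum_{i=1}^n A_{i,n} u^i \nabla^i$. I would compute $N^{n+1} = u\nabla \circ N^n$ by applying the Leibniz rule for $\nabla$ term by term. Concretely, since $\nabla(u^i x) = i u^{i-1} x + u^i \nabla(x)$, we get
\[
u\nabla(u^i \nabla^i) = i u^i \nabla^i + u^{i+1} \nabla^{i+1}.
\]
Summing over $i$ and collecting by the power of $u\nabla$,
\[
N^{n+1} = \sum_{i=1}^n A_{i,n}\bigl(i u^i \nabla^i + u^{i+1} \nabla^{i+1}\bigr)
= \sum_{i=1}^{n+1}\bigl(i A_{i,n} + A_{i-1,n}\bigr) u^i \nabla^i,
\]
with the convention that $A_{0,n} = 0 = A_{n+1,n}$. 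This is exactly the recursion $A_{i,n+1} = A_{i-1,n} + i A_{i,n}$.

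Finally I would read off the boundary values from this recursion. Taking $i = 1$, the recursion gives $A_{1,n+1} = A_{0,n} + A_{1,n} = A_{1,n}$, so by induction $A_{1,n} = A_{1,1} = 1$. Taking $i = n+1$, the recursion gives $A_{n+1,n+1} = A_{n,n} + (n+1) A_{n+1,n} = A_{n,n}$, so again by induction $A_{n,n} = 1$. There is no real obstacle here; the only thing to be careful about is the range of indices (so that the ``edge'' terms $A_{0,n}$ and $A_{n+1,n}$ vanish by convention) and the fact that $\nabla$ is a $W(k)$-linear derivation on $S$, which is precisely what allows the Leibniz computation above.
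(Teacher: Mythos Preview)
Your proof is correct and is exactly the induction the paper has in mind; the paper's own proof is the single sentence ``An easy induction on $n$ by $N = u\nabla$,'' and you have simply written that induction out in full.
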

\begin{proof} An easy induction on $n$ by $N = u \nabla$.
\end{proof}

Recall that $\gamma_i (t)$ denote the $i$-divided power of $t$.  
\begin{lemma} $\sum_{n \geq i} A_{i, n} \gamma_n (t) = \gamma_i ((e^t -1)).  $
\end{lemma}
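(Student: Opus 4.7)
The plan is to reduce the identity to a classical formula relating Stirling numbers of the second kind to the exponential generating function $(e^t-1)^i$. Both sides of the claimed equality are $\mathbb{Z}$-linear combinations of the divided powers $\gamma_n(t)$, so it suffices to verify the identity universally, for instance in $\mathbb{Q}[\![t]\!]$ where $\gamma_n(t) = t^n/n!$. Once established there, the identity transfers to any PD-algebra.

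First I would observe that the recursion $A_{i,n+1} = A_{i-1,n} + i A_{i,n}$ together with the boundary values $A_{1,n} = A_{n,n} = 1$ and $A_{i,n} = 0$ for $i > n$ is exactly the defining recursion of the Stirling numbers of the second kind $S(n,i)$. This identification, combined with the classical exponential generating function identity
\[
\frac{(e^t-1)^i}{i!} \;=\; \sum_{n \geq i} S(n,i)\,\frac{t^n}{n!},
\]
rewrites in our divided power notation as $\gamma_i(e^t-1) = \sum_{n \geq i} A_{i,n} \gamma_n(t)$, which is precisely the desired statement.

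If one prefers to bypass Stirling numbers, a self-contained argument proceeds by induction on $i$. Write $f_i(t) := \sum_{n \geq i} A_{i,n}\gamma_n(t)$ and $g_i(t) := \gamma_i(e^t-1)$. The base case $i=1$ is immediate: $f_1(t) = \sum_{n \geq 1} \gamma_n(t) = e^t - 1 = g_1(t)$. For the inductive step, differentiate both sides. Using $\frac{d}{dt}\gamma_n(t) = \gamma_{n-1}(t)$ and the recursion, one computes $\frac{d}{dt} f_i(t) = f_{i-1}(t) + i f_i(t)$. On the right-hand side, the identity $(e^t - 1)\gamma_{i-1}(e^t - 1) = i\gamma_i(e^t-1)$ combined with $\frac{d}{dt}\gamma_i(e^t-1) = e^t \gamma_{i-1}(e^t-1)$ yields $\frac{d}{dt} g_i(t) = g_{i-1}(t) + i g_i(t)$. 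Both $f_i$ and $g_i$ vanish at $t=0$ for $i \geq 1$, so by uniqueness of solutions to this linear ODE the induction closes.

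There is no genuine obstacle here: the content is combinatorial and the verification either cites a standard identity or is a short inductive computation. The only mild care needed is to work in characteristic zero for the computation and then invoke the universal nature of the divided-power expression to transfer the result to the $p$-adic setting in which it is applied.
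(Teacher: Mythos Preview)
Your proof is correct. Both your approaches and the paper's are valid, but they differ in presentation. The paper expands $\gamma_i(e^t-1) = \frac{1}{i!}\sum_{m=0}^i \binom{i}{m}(-1)^{i-m}e^{mt}$ via the binomial theorem, reads off the explicit formula $B_{i,n} = \frac{1}{i!}\sum_m \binom{i}{m}(-1)^{i-m}m^n$ for the Taylor coefficients of the right-hand side, and then verifies by a direct binomial-coefficient computation that these $B_{i,n}$ satisfy the same recursion $B_{i,n+1} = B_{i-1,n} + iB_{i,n}$ as the $A_{i,n}$. Your first approach is more conceptual: you recognize that the recursion pins down the $A_{i,n}$ as Stirling numbers of the second kind and then invoke the classical generating function identity, which is exactly the explicit formula the paper rederives by hand. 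Your second approach, differentiating both sides and matching a first-order linear ODE by induction on $i$, is a repackaging of the same recursion check at the level of generating functions rather than coefficients; it is slicker and avoids the binomial manipulation entirely. Either of your arguments is shorter than the paper's, at the cost of citing or using slightly more background (Stirling numbers, or ODE uniqueness in $\mathbb{Q}[\![t]\!]$).
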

\begin{proof}
It suffices to show that Taylor's expansion as functions of $t$ on both sides are equal. This is clear to see that the coefficients of $t^n$, which is first  nonzero term,  coincide  on the both sides. If we write 
$\gamma_i (e^t -1) =\sum_{n \geq i} B_{i, n} \gamma_n (t)$ then it suffices to show that $B_{i, n}$ satisfies the recursive formula: $B_{i, n+ 1} =   B     _{i- 1, n} + i B_ {i , n }$  for $n \geq i$. Note that 
\[\gamma_i (e^t -1)= \frac {1}{i !}\left (\sum_{m = 0}^i {i \choose m} (-1)^{i-m} e^{m t} \right ). \]
Therefore, $B_{i, n}= \frac{1}{i !} \left (\sum \limits_{m = 0}^i {i \choose m} (-1)^{i-m} m ^n \right )$. So to check that $ B _{i-1  , n }+ i B_{i,  n} = B_{i , n+1}$ is equivalent to check that 
\[ \frac{1}{(i -1) !}\left ( \sum_{m = 0} ^{i -1} {i-1 \choose m} (-1) ^{i-1 -m} m ^n +  \sum_{m = 0} ^{i } {i \choose m} (-1) ^{i -m} m ^n \right ) = \frac{1}{i!} \sum_{m = 0} ^{i } {i \choose m} (-1) ^{i-m} m ^{n +1}. \]
This follows that $i \left ( {i \choose m} - {i-1 \choose m} \right ) = {i \choose m } m . $
\end{proof}

Now by the above Lemmas,  $$ \sum _{n  =0}^\infty N ^n (x) \gamma_n  (\log (\underline{\epsilon}(\sigma)) =  \sum _{n  =0}^\infty \nabla ^n (x)  u ^n\gamma_n  ( e^{\log(\underline{\epsilon} (\sigma))}-1 ) = \sum _{n  =0}^\infty \nabla ^n (x)  \gamma_n  (u ( \underline{\epsilon} (\sigma)-1 )) =   \sum _{n  =0}^\infty \nabla ^n (x)  \gamma_n  (\sigma (u)-u ). $$
This proves that  \eqref{eqn-action-N} and \eqref{eqn-action} are the same. 
\subsection{$T_S$ and $T_{\st, \star}$}\label{subsec-two-T}
 In this subsection, we explain our functor $T_S$ and functor $T_{\st , \star}$ used in \cite{CarusoInvent} are the same. 
 For this purpose, we have to review period ring $\widehat A_{\st}$ from \cite{CarusoInvent}. 
 Let $\wh A_{\st}= A_{\cris} \langle X \rangle$ be  the $p$-adic completion of PD algebra of $A_{\cris}$. 
 We extend Frobenius $\varphi$ and filtration of $A_{\cris}$ to $\widehat  A_{\st}$ as follows: Let $\varphi (X) = (1+ X)^p -1$ and 
 \[
 \Fil ^i \wh A_{\st} \coloneqq \left \{\sum_{j = 0} ^\infty a_j \gamma_j (X), a_j \in \Fil^{\max \{i -j, 0\} } A_{\cris}, \lim_{j \to \infty }a_j \to 0  \  \ p-\text{adically}\right \}. 
 \]
 It is easy to see that we can define  $\varphi_r: \Fil ^r \wh A_{\st} \to \wh A_{\st} $ similar to that of $A_{\cris}$.  To extend $G_K$-action to $\wh A_{\st}, $ for any $g \in G_K , $ recall that  $\underline{\varepsilon} (g) = \frac{ g ([\upi])}{[\upi]} \in A_{\inf}$ defined before \eqref{eqn-action-N}. Set $g (X)= \underline{\varepsilon} (g) X+ \underline{\varepsilon} (g)-1$. Finally define an $A_{\cris}$-linear monodromy by set $N(X)= -(1 + X)$. We embed $S $ inside $\wh A_{\st }$ via $u \mapsto [\upi] (1+ X)^{-1}$. At this point, we have two embeddings $S\to \wh A_{\st}$:  $\iota_1: S\inj A_{\cris}  \subset \wh A_{\st } $ via $ u \mapsto [\upi]\in A_{\inf}$ and $\iota_2: S \inj \wh A_{\st} $ via $u \mapsto [\upi] (1+X)^{-1}$. We will use both embeddings in the following. Notice that there is an $A_{\cris}$-linear  projection $q: \wh A_{\st} \to A_{\cris}$ by sending $\gamma_i (X) \mapsto 0 $.  It is easy to check that  $q$ is compatible with filtration, Frobenius, $G_K$-actions, and \emph{both} embeddings $\iota_i : S \inj \wh A_{\st}$. Set $\beta: = \log (1+X)\in \wh A_{\st}$.  
 \begin{remark} Breuil--Caruso's theory set $N(1+X) = 1+X$. Our setting is different by $-1$ sign to fit our setting $\nabla (u) = 1$. There is  no difference for these two different settings up to changing some signs. 
 \end{remark}
 Given a Breuil module $\calM \in \Mod _{S , \tor}^{\varphi, N , h}$, we extends  filtration, $\varphi_h$, monodromy and $G_K$-actions to $\wh A_{\st} \otimes_{\iota_2, S } \calM$ as follows
\[ \Fil ^h  \wh A_{\st} \otimes_{\iota_2, S }  \calM =  \wh A_{\st} \otimes_{\iota_2, S } \Fil ^h \calM+ \Fil ^h \wh A_{\st} \otimes _{\iota_2, S} \calM. \]
For $a\otimes m \in \wh A_{\st} \otimes_{\iota_2, S}  \Fil ^h \calM$, set $\varphi _h (a\otimes m) = \varphi (a) \otimes \varphi _h (m )$, and for $a \otimes m \in  \Fil ^h \wh A_{\st} \otimes _{\iota_2 S} \calM$, set $\varphi _h (a\otimes m) = \varphi_h (a) \otimes \varphi_h (E^h m).$ It is easy to check these $\varphi_h$ are compatible with intersection so that $\varphi_h$ extends to $\wh A_{\st} \otimes_{\iota_2, S} \calM$. 
 We extend  $G_K$-action  from $\wh A_{\st}$ to $\wh A_{\st} \otimes_{\iota _2 , S} \calM  $ by acting  on $\calM$-trivially, and $N (a \otimes m) = N(a) \otimes m + a \otimes N(m), \forall a \in \wh A_{\st}, m \in \calM$. 
 Now set 
 $$T _{\st} (\calM): =  (\Fil ^h (\wh A_{\st} \otimes_{\iota_2, S }\calM))^{\varphi_h =1 , N = 0}. $$
 \begin{proposition} There is an isomorphism $T_ S (\calM)\simeq T_{\st , \star} (\calM)$ as $G_K$-representations. 
 \end{proposition}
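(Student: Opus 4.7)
The strategy is to interpolate between the two $S$-algebra structures $\iota_1,\iota_2\colon S\to \widehat{A}_{\st}$ using the monodromy on $\calM$. Set $\beta = \log(1+X) \in \widehat{A}_{\st}$, so that $\beta \in \Fil^1 \widehat{A}_{\st}$, $N(\beta) = -1$, $\varphi(\beta) = p\beta$, and $\sigma(\beta) = \beta + \log(\underline{\varepsilon}(\sigma))$ for $\sigma \in G_K$. The key observation is that $\iota_1(u) = \iota_2(u)(1+X) = \iota_2(u)e^{\beta}$, i.e. the two embeddings differ by the unit $e^\beta$; this discrepancy will be absorbed by the Breuil module operator $N = u\nabla$ on $\calM$ via a Taylor-type formula, entirely parallel to the computation in \S\ref{subsec-two-equations}.

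First I would introduce the $A_{\cris}$-linear map
\[
\Psi\colon A_{\cris}\otimes_{\iota_1,S}\calM \longrightarrow \widehat{A}_{\st}\otimes_{\iota_2,S}\calM,\qquad \Psi(a\otimes m) = \sum_{i\ge 0} a\,\gamma_i(-\beta)\otimes N^i(m).
\]
Well-definedness reduces to the identity $\Psi(\iota_1(u)a\otimes m) = \Psi(a\otimes um)$, which upon expanding $e^{\beta} = \sum\gamma_i(\beta)$ becomes precisely the combinatorial identity proved in \S\ref{subsec-two-equations}. Compatibility with $\Fil^h$ uses $\gamma_i(-\beta)\in \Fil^i\widehat{A}_{\st}$ together with Griffiths transversality $E\,N(\Fil^h\calM)\subset \Fil^h\calM$, which inductively gives $E^iN^i(\Fil^h\calM)\subset \Fil^h\calM$; this lets one rewrite the summand with index $i$ as lying in $\Fil^i\widehat{A}_{\st}\otimes\Fil^{h-i}\calM$ once the factor $E^i$ is redistributed using $\iota_2(E(u)) = \iota_1(E(u))\cdot (\text{unit})$. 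Compatibility with $\varphi_h$ follows from $\varphi_i(\gamma_i(-\beta)) = (-\beta)^i/i!$ combined with the commutation relation $c_1 N\circ \varphi_h = u^{p-1}\varphi_h\circ EN$ built into the definition of $\Mod^{\varphi,h,\nabla}_{S,\tor}$.

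Next I would verify that $\Psi$ lands in the $N=0$ subspace: using $N(\gamma_i(-\beta)) = \gamma_{i-1}(-\beta)$ and $N(-\otimes m) = -\otimes N(m)$ on the codomain, one computes
\[
N(\Psi(a\otimes m)) = \sum_{i\ge 0}\Bigl(a\,\gamma_{i-1}(-\beta)\otimes N^i(m) + a\,\gamma_i(-\beta)\otimes N^{i+1}(m)\Bigr) = 0
\]
after reindexing. For $G_K$-equivariance, the formulas $\sigma(e^{\beta}) = \underline{\varepsilon}(\sigma)e^{\beta}$ and $\sigma([\underline{\pi}]) = \underline{\varepsilon}(\sigma)[\underline{\pi}]$ together with the definition of the $G_K$-action on $A_{\cris}\otimes_{\iota_1,S}\calM$ via \eqref{eqn-action-N} match term by term with the natural semilinear $G_K$-action on $\widehat{A}_{\st}\otimes_{\iota_2,S}\calM$ (where $G_K$ acts trivially on $\calM$); this is again essentially the same combinatorial identity as in \S\ref{subsec-two-equations}.

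Finally, the inverse is provided by the projection $q\colon \widehat{A}_{\st}\to A_{\cris}$, $\gamma_i(X)\mapsto 0$ for $i\ge 1$, which is compatible with both embeddings of $S$ and with all extra structure. Setting $q_{*}=q\otimes \mathrm{id}\colon \widehat{A}_{\st}\otimes_{\iota_2,S}\calM \to A_{\cris}\otimes_{\iota_1,S}\calM$, one gets $q_{*}\circ \Psi = \mathrm{id}$ immediately from $q(\gamma_i(-\beta))=0$ for $i\ge 1$; conversely, if $z\in \widehat{A}_{\st}\otimes_{\iota_2,S}\calM$ satisfies $N(z)=0$, then the standard Taylor expansion in the PD polynomial algebra $\widehat{A}_{\st} = A_{\cris}\langle X\rangle$ (where $N$ plays the role of $-\partial/\partial\beta$) forces $z = \Psi(q_{*}(z))$. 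Passing to the $\Fil^h$ and $\varphi_h=1$ subspaces then yields the desired $G_K$-equivariant isomorphism $T_S(\calM)\xrightarrow{\sim} T_{\st,\star}(\calM)$. The main obstacle is the bookkeeping in the $\Fil^h$-compatibility step, where one must carefully track the interaction between $\Fil^{\bullet}\widehat{A}_{\st}$ and the iterates $N^i(\Fil^h\calM)$; this is nonetheless manageable by induction on $i$ using Griffiths transversality, exactly as in the classical theory of Breuil modules.
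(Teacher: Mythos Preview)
Your strategy is exactly the paper's: use the Taylor-type series in $\beta=\log(1+X)$ and $N$ to pass between the two $S$-embeddings, land in the $N=0$ part, and then identify with $T_S$. However there is a sign slip that breaks the argument as written. With $N(\beta)=-1$ one has $N(\gamma_i(-\beta))=+\gamma_{i-1}(-\beta)$, so in your displayed computation of $N(\Psi(a\otimes m))$ the two reindexed sums are \emph{equal}, not opposite, and do not cancel. The same sign also spoils well-definedness over $S$: with your formula one finds $\Psi(a\otimes um)=e^{-\beta}\iota_2(u)\,\Psi(a\otimes m)$, whereas $\Psi(\iota_1(u)a\otimes m)=e^{+\beta}\iota_2(u)\,\Psi(a\otimes m)$. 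Replacing $\gamma_i(-\beta)$ by $\gamma_i(\beta)$ throughout (i.e.\ taking $m\mapsto\sum_i\gamma_i(\beta)\otimes N^i(m)$, as the paper does) fixes both issues simultaneously.

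On the endgame, the paper proceeds slightly differently from you. It only establishes the \emph{injection} $T_S(\calM)\hookrightarrow T_{\st,\star}(\calM)$ from the filtered $\varphi_h$-compatibility of $\alpha$, and then concludes bijectivity by d\'evissage to the case $p\calM=0$ together with the known equality $\dim_{\mathbb F_p}T_{\st,\star}(\calM)=\mathrm{rank}_{S_1}\calM=\dim_{\mathbb F_p}T_S(\calM)$. Your alternative, arguing $\Psi q_\ast=\mathrm{id}$ on the $N=0$ subspace by a Taylor-inversion, is morally right but the sentence ``the standard Taylor expansion \ldots\ forces $z=\Psi(q_\ast(z))$'' hides a nontrivial step: one needs that $\widehat A_{\st}\otimes_{\iota_2,S}\calM$ decomposes suitably along powers of $\beta$ (or $X$), which requires a flatness/basis argument for $\widehat A_{\st,n}$ over $A_{\mathrm{cris},n}$ that you have not supplied. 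The paper's length count sidesteps this. Either route works once the sign is corrected, but if you keep yours you should justify that inversion step explicitly.
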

\begin{proof} %Set $\beta =\log (1+X) \in \wh A_{\st }$. 
For $m \in \calM \subset \wh A_{\st} \otimes_{\iota_2, S} \calM$, set 
$m  ^\nabla \coloneqq \sum\limits_{i = 0} ^\infty N^i (m) \gamma_i(\beta)$ and  $\calM ^{\nabla} = \{ m  ^\nabla | m  \in \calM \}  \subset \wh A_{\st} \otimes_{\iota_2, S} \calM.$ To understand the map $\alpha : \calM  \to \calM^\nabla$,  consider the following diagram induced by $q: \wh A_{\st} \onto A_{\cris}$: 
\[\xymatrix{  & \calM ^\nabla \ar[ldd]^{\alpha'} \ar@{^{(}->}[dr] ^j & \\ \calM \ar@{=}[d] \ar@{^{(}->}[rr] \ar@{->>}[ur] ^\alpha   & &   \wh A_{\st} \otimes_{\iota_2, S} \calM \ar@{->>}[d] ^{q}   \\ \calM \ar@{^{(}->}[rr] &  &   A_{\cris} \otimes_{ S} \calM.}\]
Where $\alpha':  \calM^\nabla \to q(\calM)= \calM$ is induced by $q$. By definition of $\alpha$, it  is easy to show that $\alpha $ and $\alpha '$ is bijective. Also $\alpha $ is an isomorphism of $S$-modules  in the sense of $\alpha (\iota _2 (s) m  ) = \iota _1(s) \alpha (m)  $ for $s\in S $ and $m \in \calM$. Using that $N$ satisfies Griffiths transversality and diagram\eqref{eqn-N-diagram} together with facts that $\beta \in \Fil^1 \wh A_{\st}$ and $\varphi (\beta) = p \beta$,  a similar argument in Lemma \ref{lem-action-from_N} (by replacing $a$ with $\beta$) show that for any $m \in \Fil ^h \calM$ we have  $m ^ \nabla\in \Fil ^h (\wh A_{\st} \otimes_{\iota_2 , S} \calM)$ and 
$\varphi_h (m ^\nabla) = (\varphi_h (m))^\nabla$. In summary,  $\alpha: \calM \to \calM^\nabla$ is an isomorphism in $\Mod_{S, \tor} ^{\varphi, h}$ and injections $\calM \overset \alpha \simeq \calM ^\nabla \subset \wh A_{\st} \otimes_{\iota_2 , S} \calM$ are compatible with with filtrations and $\varphi_h$. 

Now consider the natural map $A_{\cris} \otimes_S \calM ^\nabla \in \wh A_{\st}\otimes_{\iota_2 , S} \calM$ induced by inclusion $j : \calM^\nabla \subset \wh A_{\st} \otimes_{\iota_2 , S} \calM$ which is still denoted by  $\tilde j$. Since $q \circ \tilde j $ is an isomorphism (as $A_{\cris}\otimes_S (q\circ \alpha)$ is an isomorphism), we conclude that 
$ A_{\cris} \otimes_S \calM ^\nabla   $ is an $A_{\cris}$-submodule of $\wh A_{\st} \otimes_{\iota_2 , S} \calM$ which is compatible with filtration and $\varphi _h$.  

Using that $N(\beta) = -1$, we easily see that $\calM^\nabla \subset (\wh A_{\st} \otimes_S \calM) ^{N = 0}$. In particular, we have an injection $\tilde j:  A_{\cris}\otimes_ S M ^ \nabla \to (\wh A_{\st} \otimes_{\iota_2, S} \calM) ^{N = 0}$ compatible with filtration and $\varphi_h$.
Therefore $\tilde j$ induces an injection 
\[ T_S(\calM) = (\Fil ^h (A _{\cris} \otimes_S \calM) )^{\varphi _h =1} \overset \alpha \simeq   ( \Fil ^h (A_{\cris}\otimes_S \calM ^\nabla) ) ^{\varphi _h =1} \subset  (\Fil ^h (\wh A_{\st} \otimes_{\iota_2, S} \calM)  )^{\varphi _h =1 , N = 0}= T_{\st, \star} (\calM).\]
To see this this injection is an isomorphism, we can d\'evissage to the case that $\calM$ is killed by $p$ because both $T_S$ and $T_{\st , \star}$ are exact functors (\cite[Corollary 2.3.10]{CarusoInvent}). In this case, it also well-known  that $\dim_{{\mathbb F}_p } T_{\st, \star}= \rank _{S_1} \calM = \dim_{{\mathbb F}_p } T_S(\calM)$. This establish the isomorphism 
$T_S (\calM) \simeq T_{\st , \star} (\calM)$. Finally, we check this isomorphism is compatible with $G_K$-actions. Note that $T_S(\calM)$ has $G_K$-action via \eqref{eqn-action-N}, while $T_{\st , \star}$ has $G_K$-action from that on $ \wh A_{\st} \otimes_{\iota_2 , S }\calM $ with trivial $G_K$-action on $\calM$. We have to show that $A_{\cris}\otimes_{S}\calM^\nabla$  has $G_K$-action as the subspace of $\wh A_{\st} \otimes_{\iota_2 , S }\calM$ is the same as that defined as \eqref{eqn-action-N}. But this easily follows from the formula that $m  ^\nabla \coloneqq \sum\limits_{i = 0} ^\infty N^i (m) \gamma_i(\beta)$ and $g (\beta) = \log (\underline \varepsilon(g)) + \beta$. 
\end{proof}

\bibliographystyle{amsalpha}
\bibliography{mybib}

\end{document}